\newtheorem{theorem}{Theorem}[section]
\newtheorem{proposition}[theorem]{Proposition}
\newtheorem{corollary}[theorem]{Corollary}
\newtheorem{remark}[theorem]{Remark}
\newtheorem{lemma}[theorem]{Lemma}
\newtheorem{definition}[theorem]{Definition}
\newenvironment{proof}[1][Proof]{\noindent\textbf{#1.} }{\ \rule{0.5em}{0.5em}}
\begin{document}

\title{Standing waves with prescribed mass for  biharmonic NLS with positive
dispersion and Sobolev critical exponent}
\date{}
\author{ Juntao Sun$^{a}$\thanks{%
E-mail address: jtsun@sdut.edu.cn(J. Sun)}, Shuai Yao$^{a}$\thanks{%
E-mail address: shyao@sdut.edu.cn (S. Yao)}, He Zhang$^{b}$\thanks{
E-mail address: hehzhang@163.com(H. Zhang)}  \\
{\footnotesize $^{a}$\emph{School of Mathematics and Statistics, Shandong
University of Technology, Zibo 255049, PR China }}\\
{\footnotesize $^b$\emph{School of Mathematics and Statistics, Central South
University, Changsha 410083, PR China }}}
\maketitle

\begin{abstract}
We investigate standing waves with prescribed mass for a class of biharmonic Schr\"{o}dinger
equations with positive Laplacian dispersion in the Sobolev critical regime. By establishing novel energy inequalities and developing a direct
 minimization approach, we prove the existence of two normalized solutions for the corresponding stationary problem. The first one is a ground state with negative
 level, and the second one is a higher-energy solution with positive level. It is worth noting that we do not work in the space of radial functions,
 and do not use Palais-Smale sequences so as to avoid applying the relatively complex mini-max approach based on
a strong topological argument. Finally, we explore the relationship between the ground states and the least action solutions,
 some asymptotic properties and dynamical behavior of solutions, such as the orbital stability and the global existence, are presented as well.
\end{abstract}

\textbf{Keywords:} Biharmonic NLS; mixed-dispersion; Normalized solution; Dynamical behavior

\textbf{MSC(2020):} 35A01, 35B35, 35B40, 35J35, 35Q55

\section{Introduction}

Our starting point is the Cauchy problem for the biharmonic NLS (nonlinear
Schr\"{o}dinger equation) with mixed dispersion equations:%
\begin{equation}
\left\{
\begin{array}{ll}
i\partial _{t}\psi -\gamma \Delta ^{2}\psi -\mu \Delta \psi +|\psi
|^{p-2}\psi =0, & \forall (t,x)\in \mathbb{R}\times \mathbb{R}^{N}, \\
\psi (0,x)=\psi _{0}(x)\in H^{2}(\mathbb{R}^{N}), &
\end{array}%
\right.  \label{E1}
\end{equation}%
where $\gamma >0,\mu \in
\mathbb{R}
$ and $2<p<4^{\ast }:=\frac{2N}{(N-4)^{+}}.$ Here $4^{\ast }=\infty $ if $%
N\leq 4,$ and $4^{\ast }=\frac{2N}{N-4}$ if $N>5.$

The biharmonic NLS provides a canonical model for nonlinear Hamiltonian PDEs
with dispersion of super-quadratic order. The first study of biharmonic NLS
goes back to Karpman and Karpman-Shagalov \cite{K1996,K2000}, describing the
role of the fourth-order dispersion term in the propagation of a strong
laser beam in a bulk medium with Kerr nonlinearity. In recent years, a
considerable amount of work has been devoted to the study of problem (\ref%
{E1}). We refer the reader to \cite{B2000,P2007,Mi2011,R2016, S2010} for
global well-posedness and scattering, and to \cite{B2019,B2017,C2003} for
finite-time blow-up.

Taking into account the Hamiltonian structure of the biharmonic NLS, another
interesting topic on problem (\ref{E1}) is to study the standing waves of
the form $\psi (t,x)=e^{i\lambda t}u(x),$ where $\lambda \in \mathbb{R}$ is
a frequency and the function $u$ satisfies the following elliptic equation%
\begin{equation}
\begin{array}{ll}
\gamma \Delta ^{2}u+\mu \Delta u+\lambda u=|u|^{p-2}u, & \ \forall x\in
\mathbb{R}^{N}.%
\end{array}
\label{E2}
\end{equation}%
According to the role of the frequency $\lambda ,$ one can look for
solutions of equation (\ref{E2}) with $\lambda $ unknown. In this case $%
\lambda \in \mathbb{R}$ appears as a Lagrange multiplier and $L^{2}$-norms
of solutions are prescribed, which are usually called normalized solutions.
This study seems to be particularly meaningful from the physical point of
view, since solutions $\psi \in C([0,T);H^{2}(\mathbb{R}^{N}))$ of problem (%
\ref{E1}) conserve their mass along time, i.e. $\Vert \psi (t)\Vert
_{2}=\Vert \psi (0)\Vert _{2}$ for $t\in \lbrack 0,T)$. Moreover, this study
often offers a good insight of the dynamical properties of solutions for
problem (\ref{E1}), such as stability or instability.

Here we focus on this issue. For the sake of convenience, we take $\gamma=1$ since the coefficient $\gamma>0$
in equation (\ref{E2}) can be scaled out. Thus, for $a>0$ given, we consider the problem of
finding solutions to%
\begin{equation}
\left\{
\begin{array}{ll}
\Delta ^{2}u+\mu \Delta u+\lambda u=|u|^{p-2}u, & \ \forall x\in
\mathbb{R}^{N}, \\
\int_{\mathbb{R}^{N}}|u|^{2}=a^{2}. &
\end{array}%
\right.  \label{E3}
\end{equation}%
It is standard to show that solutions of problem (\ref%
{E3}) can be obtained as critical points of the energy functional $\Psi
_{\mu ,p}:H^{2}(\mathbb{R}^{N})\rightarrow \mathbb{R}$ given by
\begin{equation*}
\Psi _{\mu ,p}(u):=\frac{1}{2}\int_{\mathbb{R}^{N}}|\Delta u|^{2}dx-%
\frac{\mu }{2}\int_{\mathbb{R}^{N}}|\nabla u|^{2}dx-\frac{1}{p}\int_{\mathbb{%
R}^{N}}|u|^{p}dx
\end{equation*}%
on the constraint
\begin{equation*}
S_{a}:=\left\{ u\in H^{2}(\mathbb{R}^{N}):\int_{\mathbb{R}%
^{N}}|u|^{2}dx=a^{2}\right\} .
\end{equation*}

In order to better investigate the number and properties of solutions to
problem (\ref{E3}), we present the fibering map introduced in \cite%
{So2020-1,So2020,T1992} to help to understand the geometry of $\Psi _{\mu
,p}|_{S_{a}}$. Specifically, for each $u\in S_{a}$ and $s>0$, we set the
dilations%
\begin{equation*}
u_{s}(x)=s^{N/2}u(sx)\text{ for all }x\in \mathbb{R}^{N}.
\end{equation*}%
Define the fibering map $s\in (0,\infty )\rightarrow \phi _{u}(s):=\Psi
_{\mu ,p}(u_{s})$ given by%
\begin{equation*}
\phi _{u}(s):=\frac{s^{4}}{2}\Vert \Delta u\Vert _{2}^{2}-\frac{\mu s^{2}}{2}%
\Vert \nabla u\Vert _{2}^{2}-\frac{s^{2p\gamma _{p}}}{p}\Vert u\Vert
_{p}^{p},
\end{equation*}%
where $\gamma _{p}:=\frac{N(p-2)}{4p}.$ Moreover, we also define the
Pohozaev manifold%
\begin{equation*}
\mathcal{P}_{a}:=\{u\in S_{a}:Q_{p}(u)=0\}=\{u\in S_{a}:\phi _{u}^{\prime
}(1)=0\},
\end{equation*}%
where $Q_{p}(u)=2\Vert \Delta u\Vert _{2}^{2}-\mu \Vert \nabla u\Vert
_{2}^{2}-2\gamma _{p}\Vert u\Vert _{p}^{p}.$ It is well-known that any
critical point of $\Psi _{\mu ,p}|_{S_{a}}$ belongs to $\mathcal{P}_{a}$.
Notice that the critical points of $\phi _{u}$ allow to project a function
on $\mathcal{P}_{a}$. Then the monotonicity and the convexity properties of $%
\phi _{u}$ strongly affect the structure of $\mathcal{P}_{a}$ (and in turn
the geometry of $\Psi _{\mu ,p}|_{S_{a}}$), and also have a strong impact on
properties of the time-dependent problem (\ref{E1}). In addition, it is
natural to split $\mathcal{P}_{a}$ into three parts corresponding to local
minima, local maxima and points of inflection, namely%
\begin{align*}
\mathcal{P}_{a}^{+}& =\left\{ u\in \mathcal{P}_{a}:\phi _{u}^{\prime \prime
}(1)>0\right\} ; \\
\mathcal{P}_{a}^{0}& =\left\{ u\in \mathcal{P}_{a}:\phi _{u}^{\prime \prime
}(1)=0\right\} ; \\
\mathcal{P}_{a}^{-}& =\left\{ u\in \mathcal{P}_{a}:\phi _{u}^{\prime \prime
}(1)<0\right\} .
\end{align*}

\begin{figure}[h]
\centering
\subfloat{
\begin{minipage}[t]{0.3\textwidth}
		\centering
		\begin{tikzpicture}
			\draw[->] (-1,0) -- (3,0) node[right] {$s$};
			\draw[->] (0,-1.5) -- (0,1.5) node[above] {$\phi_{u}(s)$};
			\draw[domain=0:2.1, smooth, variable=\x, thick, red] plot ({\x},{\x^2 -1.5*\x});
			\node[below left] at (0,0) {$O$};
		\end{tikzpicture}
\caption*{(a)}
	\end{minipage}
}
\hfill
\subfloat{
\begin{minipage}[t]{0.3\textwidth}
		\centering
		\begin{tikzpicture}
			\draw[->] (-1,0) -- (3,0) node[right] {$s$};
			\draw[->] (0,-1.5) -- (0,1.5) node[above] {$\phi_{u}(s)$};
			\draw[domain=0:2.1, smooth, variable=\x, thick, red] plot ({\x},{-\x^2+1.5*\x});
			\node[below left] at (0,0) {$O$};
		\end{tikzpicture}
\caption*{(b)}
	\end{minipage}}
\hfill
\subfloat{
	\begin{minipage}[t]{0.3\textwidth}
		\centering
		\begin{tikzpicture}
			\draw[->] (-1,0) -- (3,0) node[right] {$s$};
			\draw[->] (0,-1.5) -- (0,1.5) node[above] {$\phi_{u}(s)$};
			\draw[domain=0:2.1, smooth, variable=\x, thick, red] plot ({\x},{-1*sin(200*\x )});
			\node[below left] at (0,0) {$O$};
			\node[right] at (2,1) { };
		\end{tikzpicture}
\caption*{(c)}
	\end{minipage}}
\caption{Possible forms of fibering map}
\end{figure}

In light of the sign of $\mu $ and the range of $p$, the properties of
fibering map $\phi _{u}$ can be summarized into the following three cases:

Case $(i):\mu \in \mathbb{R}$ and $2<p\leq \overline{p}:=2+\frac{8}{N}.$ The
fibering map $\phi _{u}$ has a unique critical point which is a global
minimum, see Figure 1(a). For this case, the energy functional $\Psi _{\mu
,p}$ is bounded from below on $S_{a}$. A ground state of problem (\ref{E3}%
) was found in \cite{B2018,Bo2019,Lu2023} for $\mu \leq 0$ and in \cite%
{F2022} for $\mu >0$ as the global minimizer of the minimization problem $%
m_{\mu ,p}(a)=\inf\limits_{u\in S_{a}}\Psi _{\mu ,p}(u)$ related to problem (%
\ref{E3}).

Case $(ii):\mu <0$ and $\overline{p}<p<4^{\ast }.$ The fibering map $\phi
_{u}$ has a unique critical point which is a global maximum, see Figure
1(b). For this case, the energy functional $\Psi _{\mu ,p}$ is bounded from
below on $\mathcal{P}_{a}$, but not on $S_{a}$, and $\mathcal{P}_{a}=%
\mathcal{P}_{a}^{-}$ is a natural constraint. In \cite{B2019}, a ground
state of problem (\ref{E3}) was found via using the Pohozaev manifold
method.

Case $(iii):\mu >0$ and $\overline{p}<p\leq \min \{4,4^{\ast }\}.$ The
fibering map $\phi _{u}$ has exactly two critical points: a local minimum at
$s_{1}=s_{1}(u)$ and a global maximum at $s_{2}=s_{2}(u)$. Moreover, $\phi
_{u}$ is decreasing in $(0,s_{1})\cup (s_{2},+\infty )$ and increasing in $%
(s_{1},s_{2}),$ see Figure 1(c). For this case, the energy functional $\Psi
_{\mu ,p}$ is bounded from below on $\mathcal{P}_{a}$, but not on $S_{a}$.
Furthermore, if the condition
\begin{equation}
\mu ^{p\gamma _{p}-2}a^{p-2}<C_{0}:=\left( \frac{p}{2\mathcal{C}%
_{N,p}^{p}(p\gamma _{p}-1)}\right) \left( \frac{p\gamma _{p}-2}{p\gamma
_{p}-1}\right) ^{p\gamma _{p}-2}  \label{E4}
\end{equation}%
holds, here $\mathcal{C}_{N,p}$ is as in (\ref{E6}) below, then $\Psi
_{\mu ,p}$ presents a convex-concave geometry and $\mathcal{P}_{a}=\mathcal{P%
}_{a}^{+}\cup \mathcal{P}_{a}^{-}$ is a natural constraint. Thus, one can
expect to find two critical points of $\Psi _{\mu ,p}|_{S_{a}}.$ Along this
direction, Luo and Yang \cite{LY2022} obtained two radial solutions of
problem (\ref{E3}) with $N\geq 5$. The first one is a local minimizer in $%
\mathcal{P}_{a}^{+}\cap H_{rad}^{2}(\mathbb{R}^{N})$, and the second one is
a mountain-pass type solution in $\mathcal{P}_{a}^{-}\cap H_{rad}^{2}(%
\mathbb{R}^{N})$, where $H_{rad}^{2}(\mathbb{R}^{N})=\{u\in H^{2}(\mathbb{R}%
^{N}):u$ is radially symmetric$\}.$ If there is no condition (\ref{E4}),
Fern\'{a}ndez et al. \cite{F2022} found a "best possible" local minimizer of
$\Psi _{\mu ,p}|_{S_{a}}$ by constructing an unbounded open set $\mathcal{%
O\subset }H^{2}(\mathbb{R}^{N})$ with $N\geq 5$ such that any possible local
minimizer of $\Psi _{\mu ,p}|_{S_{a}}$ must belong to $\mathcal{O}$. Very
recently, following the arguments in \cite{F2022}, Han et al. \cite{HGH2024}
proved that a local minimizer of $\Psi _{\mu ,p}|_{S_{a}}$ exists for all $%
N\geq 1,$ and the set of solutions is orbitally stable. Note that in all
papers \cite{F2022,HGH2024,LY2022}, only the Sobolev subcritical case $%
\overline{p}<p<\min \{4,4^{\ast }\}$ is studied. Thus the obtained results
in Case $(iii)$ leave a gap, say, the Sobolev critical case $p=4^{\ast }$
and $N\geq 9$. One of the main features of this paper is to fill this gap.

We also notice that in Case $(iii)$ there is no any results concerning on
ground states of problem (\ref{E3}) in $H^{2}(\mathbb{R}^{N}).$ In fact, a
radial ground state is found in the context of radial functions in \cite%
{LY2022}. The first aim of this paper is to study the existence of ground
states and the orbital stablility of the set of ground states for problem (%
\ref{E3}) with $\overline{p}<p\leq \min \{4,4^{\ast }\}.$ In addition, the
relationship between ground states and the least action solutions is
explored as well. Here the ground state and the least action solution are
defined in the following sense:

\begin{definition}
\label{D1.1}$(i)$ We say that $u\in S_{a}$ is a ground state to problem (\ref%
{E3}) if it is a solution having minimal energy among all the solutions
which belongs to $S_{a}$. Namely, if
\begin{equation*}
\Psi _{\mu ,p}(u)=\inf \{\Psi _{\mu ,p}(u):u\in S_{a}\text{ and }\left( \Psi
_{\mu ,p}|_{S_{a}}\right) ^{\prime }(u)=0\}.
\end{equation*}%
$(ii)$ Given $\lambda \in \mathbb{R}$, a nontrivial solution $u\in H^{2}(%
\mathbb{R}^{N})$ to equation (\ref{E2}) is called an least action solution
if it achieves the infimum of the action functional
\begin{equation*}
I_{\lambda }(u):=\Psi _{\mu ,p}(u)+\frac{\lambda }{2}\int_{\mathbb{R}%
^{N}}|u|^{2}dx
\end{equation*}%
among all the nontrivial solutions, that is
\begin{equation*}
I_{\lambda }(u)=\theta _{\lambda }:=\inf \{I_{\lambda }(v):v\in H^{2}(%
\mathbb{R}^{N})\backslash \{0\}\text{ and }I_{\lambda }^{\prime }(v)=0\}.
\end{equation*}
\end{definition}

In addition to ground states, another aim of this paper is to find a
high-energy solution of problem (\ref{E3}) with $\overline{p}<p\leq \min
\{4,4^{\ast }\}.$ In \cite{LY2022}, a mountain-pass type radial solution is
found in $H_{rad}^{2}(\mathbb{R}^{N})$ by means of the Palais-Smale
sequences. Distinguishing from \cite{LY2022}, by introducing some new ideas,
we do not work in $H_{rad}^{2}(\mathbb{R}^{N}),$ but in $H^{2}(\mathbb{R}%
^{N}),$ and do not need to consider Palais-Smale sequences, so that we avoid
applying the relatively complex mini-max approach based on a strong
topological argument as in \cite{BS2017,CJ2019,Gh1993}. Finally, with the
help of the high-energy soution, we establish the conditions of global
existence for the Cauchy problem (\ref{E1}).

\subsection{Main results}

First of all, we focus on the existence of ground states for problem (\ref{E3}).

\begin{theorem}
\label{T1.2} Let $\overline{p}<p<4$ for $5\leq N\leq 8$, or $\overline{p}%
<p\leq 4^{\ast }$ for $N\geq 9.$ Assume that condition (\ref{E4}) holds.
Then the following statements are true.\newline
$(i)$ $\Psi _{\mu ,p}$ restricted to $S_{a}$ has a ground state $u_{\mu
}^{+} $. This ground sate is a local minimizer of $\Psi _{\mu ,p}$ in the
set $\mathcal{M}_{a}:=\{u\in S_{a}:\Vert \Delta u\Vert _{2}<\rho _{0}\}$ for
some $\rho _{0}>0,$ namely%
\begin{equation*}
\Psi _{\mu ,p}(u_{\mu }^{+})=\alpha _{p}(a):=\inf\limits_{u\in \mathcal{M}%
_{a}}\Psi _{\mu ,p}(u)=m_{\mu ,p}(a):=\inf\limits_{u\in \mathcal{P}%
_{a}^{+}}\Psi _{\mu ,p}(u).
\end{equation*}%
Moreover, any ground state for $\Psi _{\mu ,p}$ on $S_{a}$ is a local
minimizer of $\Psi _{\mu ,p}$ on $\mathcal{M}_{a}.$\newline
$(ii)$ The ground state $u_{\mu }^{+}$ is a real-valued sign-changing
solution to problem (\ref{E3}) for some $\lambda =\lambda _{\mu }^{+}>\frac{\mu
^{2}}{4}.$\newline
$(iii)$ $\alpha _{p}(a)\rightarrow 0^{-},$ $\lambda _{\mu }^{+}\rightarrow
0^{+}$ and $\Vert \Delta u_{\mu }^{+}\Vert _{2}\rightarrow 0$ as $\mu
\rightarrow 0^{+}$.
\end{theorem}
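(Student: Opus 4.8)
The plan is to establish Theorem \ref{T1.2} in three stages, mirroring its three parts. The fundamental tool will be a sharp analysis of the fibering map $\phi_u$ together with the Gagliardo--Nirenberg--Sobolev inequality $\Vert u\Vert_p^p \le \mathcal{C}_{N,p}^p \Vert \Delta u\Vert_2^{2p\gamma_p}\Vert u\Vert_2^{p(1-2\gamma_p)}$ (interpolated through the $H^2$-norm; in the critical case $p=4^\ast$ one has $2p\gamma_p = 4$ and a corresponding Sobolev constant). Using this inequality, I would first show that on $S_a$ the functional $\Psi_{\mu,p}$ restricted to the ball $\mathcal{M}_a = \{u\in S_a : \Vert\Delta u\Vert_2 < \rho_0\}$ is bounded below with $\alpha_p(a) < 0$, and that on the annular shell $\{\rho_0 \le \Vert \Delta u\Vert_2 \le \rho_1\}$ one has $\Psi_{\mu,p} \ge 0 > \alpha_p(a)$; this is exactly where condition (\ref{E4}) enters, guaranteeing that the polynomial $g(t) = \tfrac12 t^2 - \tfrac{\mu}{2}(\text{lower order}) - \tfrac{1}{p}\mathcal{C}_{N,p}^p a^{p(1-2\gamma_p)} t^{2p\gamma_p}$ (with $t=\Vert\Delta u\Vert_2$, after controlling $\Vert\nabla u\Vert_2^2 \le \Vert\Delta u\Vert_2\Vert u\Vert_2$) has a strictly positive local maximum separating a negative well from the region where it turns down. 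The constant $C_0$ in (\ref{E4}) is precisely the threshold for this separation.

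The second stage is the variational argument for existence. Because $\alpha_p(a)$ is attained strictly inside $\mathcal{M}_a$ (the shell argument forbids minimizing sequences from escaping to $\Vert\Delta u\Vert_2 = \rho_0$), any minimizing sequence $(u_n) \subset \mathcal{M}_a$ is bounded in $H^2(\mathbb{R}^N)$. The key obstacle --- and I expect this to be the hardest step --- is recovering compactness without working in $H^2_{rad}$ and without Palais--Smale machinery. The strategy is a direct concentration-compactness / Brezis--Lieb splitting: after translations, $u_n \rightharpoonup u \ne 0$ weakly, and one must rule out both vanishing and dichotomy. Vanishing is excluded because $\alpha_p(a) < 0$ forces $\liminf \Vert u_n\Vert_p^p > 0$. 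Dichotomy is excluded by a strict subadditivity inequality $\alpha_p(a) < \alpha_p(b) + \alpha_p(\sqrt{a^2-b^2})$ for $0 < b < a$ --- or rather its appropriate form on $\mathcal{M}_a$; here one uses that $\Psi_{\mu,p}$ on the ball behaves, to leading order, like the (strictly subadditive) $L^2$-subcritical functional, and the "new energy inequalities" advertised in the abstract are presumably the quantitative estimates making this rigorous. In the critical case $p = 4^\ast$, $N \ge 9$, one additionally needs $\alpha_p(a)$ to lie strictly below the threshold at which the Sobolev bubble would cause loss of compactness; since $\alpha_p(a) < 0$ while the bubbling energy level is $\ge 0$, this is automatic. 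Once compactness holds, $u$ minimizes, and by the Lagrange multiplier rule $u$ solves (\ref{E3}) for some $\lambda = \lambda_\mu^+$; replacing $u$ by $|u|$ if necessary one may take $u$ real-valued and nonnegative a priori, but then (ii) asserts sign-change --- this must come from testing the equation and the Pohozaev identity $Q_p(u) = 0$ against the structure of $\mathcal{P}_a^+$ (on $\mathcal{P}_a^+$, $\phi_u''(1) > 0$ means $u$ sits at the left well, where $\Psi_{\mu,p}(u) < 0$; a nonnegative ground state would also be a solution in $H^1$ to a related problem and one derives a contradiction with the known nonexistence/positivity dichotomy, or more directly one shows the minimizer on $\mathcal{P}_a^+$ cannot be a positive solution because of the sign of $\lambda$ versus the operator $\Delta^2 + \mu\Delta + \lambda$ not being positive when $\lambda < \mu^2/4$). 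The identification $\alpha_p(a) = m_{\mu,p}(a)$ and that every ground state lies in $\mathcal{M}_a$ follows from projecting: for any $u\in S_a$ there is a unique $s_1(u)$ with $u_{s_1} \in \mathcal{P}_a^+\cap\mathcal{M}_a$, and $\Psi_{\mu,p}(u_{s_1}) = \min_{s\in(0,s_2)}\phi_u(s)$, so the two infima coincide; any critical point of $\Psi_{\mu,p}|_{S_a}$ with negative energy must, by the shell geometry, lie in $\mathcal{M}_a$ and hence be a local minimizer there.

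For part (ii), the bound $\lambda_\mu^+ > \mu^2/4$ should follow by pairing the equation with $u$ and with the Pohozaev relation $Q_p(u)=0$: combining $2\Vert\Delta u\Vert_2^2 - \mu\Vert\nabla u\Vert_2^2 - 2\gamma_p\Vert u\Vert_p^p = 0$ with $\Vert\Delta u\Vert_2^2 - \mu\Vert\nabla u\Vert_2^2 + \lambda a^2 = \Vert u\Vert_p^p$ eliminates $\Vert u\Vert_p^p$ to give $\lambda a^2 = (2\gamma_p - 1)\Vert\Delta u\Vert_2^2 + \ldots$; since on $\mathcal{P}_a^+$ the quantity $\phi_u''(1)>0$ forces a definite lower bound on $\Vert\Delta u\Vert_2$ relative to $\Vert\nabla u\Vert_2$, and using $\mu\Vert\nabla u\Vert_2^2 \le \mu^2 a^2/4 + \Vert\Delta u\Vert_2^2$ (Young on $\mu\Vert\nabla u\Vert_2^2 = \mu\langle\Delta u, u\rangle \cdot$...), one extracts $\lambda > \mu^2/4$, which is precisely the condition ensuring the symbol $|\xi|^4 - \mu|\xi|^2 + \lambda > 0$ so that $\Delta^2 + \mu\Delta + \lambda$ is a positive operator --- consistent with $u$ being a genuine bound state. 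Finally, for part (iii), the asymptotics as $\mu \to 0^+$ follow by a scaling/test-function comparison: using a fixed $v\in S_a$ and optimizing $\phi_v(s)$ shows $\alpha_p(a) \le \Psi_{\mu,p}(v_{s}) \to 0$ as $\mu\to 0^+$ (the negative well collapses because its depth is controlled by a power of $\mu$ via (\ref{E4})); combined with $\alpha_p(a) < 0$ this gives $\alpha_p(a)\to 0^-$. Feeding $\alpha_p(a)\to 0$ back into the coercivity estimate from Stage 1 forces $\Vert\Delta u_\mu^+\Vert_2 \to 0$, and then the relation $\lambda_\mu^+ a^2 = (2\gamma_p-1)\Vert\Delta u_\mu^+\Vert_2^2 + \mu(1-\gamma_p)\Vert\nabla u_\mu^+\Vert_2^2 \to 0$ (using $\Vert\nabla u_\mu^+\Vert_2^2 \le \Vert\Delta u_\mu^+\Vert_2 a \to 0$) yields $\lambda_\mu^+ \to 0^+$, positivity being inherited from $\lambda_\mu^+ > \mu^2/4 > 0$. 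The main obstacle throughout remains the compactness in Stage 2 in the non-radial critical setting; everything else is sharp-constant bookkeeping with the fibering map.
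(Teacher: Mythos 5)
Your overall architecture (local minimization on the ball $\mathcal{M}_{a}$, Brezis--Lieb splitting after translation, strict subadditivity of $\alpha _{p}$ to exclude dichotomy, and the constraint $\Vert \Delta w_{n}\Vert _{2}<\rho _{0}<\mathcal{S}^{N/8}$ to kill the bubble in the critical case) is the same as the paper's, but there is a genuine gap at the step you dismiss most quickly: ruling out vanishing. You claim that $\alpha _{p}(a)<0$ already forces $\liminf \Vert u_{n}\Vert _{p}^{p}>0$. This is false here precisely because of the positive second-order dispersion: if $u_{n}\rightarrow 0$ in $L^{p}$, then on $\mathcal{M}_{a}$ one only gets
\begin{equation*}
\Psi _{\mu ,p}(u_{n})=\tfrac{1}{2}\Vert \Delta u_{n}\Vert _{2}^{2}-\tfrac{\mu }{2}\Vert \nabla u_{n}\Vert _{2}^{2}+o_{n}(1)\geq \tfrac{1}{2}\Vert \Delta u_{n}\Vert _{2}^{2}-\tfrac{\mu a}{2}\Vert \Delta u_{n}\Vert _{2}+o_{n}(1)\geq -\tfrac{\mu ^{2}a^{2}}{8}+o_{n}(1),
\end{equation*}
which is perfectly compatible with a negative infimum. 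The paper's whole point (Lemma \ref{T3.2}) is the sharper upper estimate $\alpha _{p}(a)<-\frac{\mu ^{2}a^{2}}{8}$, obtained by a test-function computation, and this is exactly where the hypotheses $\overline{p}<p<4$ for $5\leq N\leq 8$, respectively $N\geq 9$ when $p\leq 4^{\ast }$, are used --- restrictions your argument never invokes or explains. The same estimate is what drives part $(ii)$: combining the equation with $Q_{p}(u_{\mu }^{+})=0$ gives $\lambda _{\mu }^{+}a^{2}\geq -2\Psi _{\mu ,p}(u_{\mu }^{+})$, and only the bound $\alpha _{p}(a)<-\frac{\mu ^{2}a^{2}}{8}$ upgrades this to $\lambda _{\mu }^{+}>\frac{\mu ^{2}}{4}$; your alternative sketch via Young's inequality ($\mu \Vert \nabla u\Vert _{2}^{2}\leq \frac{\mu ^{2}a^{2}}{4}+\Vert \Delta u\Vert _{2}^{2}$) produces an upper, not a lower, bound on $\lambda $ and does not yield the claim.

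Two further points. First, "replacing $u$ by $|u|$" is not admissible in $H^{2}(\mathbb{R}^{N})$: $\Vert \Delta |u|\Vert _{2}$ is not controlled by $\Vert \Delta u\Vert _{2}$, and indeed the theorem asserts the ground state is sign-changing; in the paper this follows (via the argument of Bonheure et al.) from $\lambda _{\mu }^{+}>\frac{\mu ^{2}}{4}$, which makes the symbol $|\xi |^{4}-\mu |\xi |^{2}+\lambda $ strictly positive with a sign-changing fundamental solution --- so sign-change is a consequence of the multiplier bound, not something to be argued around a positivity reduction. Second, in part $(iii)$ your test-function comparison only gives an upper bound for $\alpha _{p}(a)$ tending to $0$, which is not what is needed (we already know $\alpha _{p}(a)<0$); the correct direction is the lower bound coming from the Pohozaev constraint, namely $\Vert \Delta u_{\mu }^{+}\Vert _{2}<\frac{(p\gamma _{p}-1)\mu a}{p\gamma _{p}-2}$ and hence $\alpha _{p}(a)=\Psi _{\mu ,p}(u_{\mu }^{+})\geq -\frac{\mu a}{4}\Vert \Delta u_{\mu }^{+}\Vert _{2}\rightarrow 0$, which is how the paper concludes $\alpha _{p}(a)\rightarrow 0^{-}$, $\Vert \Delta u_{\mu }^{+}\Vert _{2}\rightarrow 0$ and then $\lambda _{\mu }^{+}\rightarrow 0^{+}$.
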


\begin{remark}
\label{T1.3}It is worth noting that we are not sure whether the ground
state $u_{\mu }^{-}$ provided by Theorem \ref{T1.2} is or not radially
symmetric, since the biharmonic operator $\Delta ^{2}$ is involved, which is
an open question. In fact, for the Sobolev subcritical case $\overline{p}%
<p<\min \{4,4^{\ast }\},$ if $\frac{p}{2}\in \mathbb{N}$, then such solution
is radially symmetric via the Fourier rearrangement technique \cite{Bu2022}.
However, under the assumption on $p$ in Theorem \ref{T1.2}, we find $\frac{p%
}{2}\not\in \mathbb{N}$, and thus the Fourier rearrangement technique can
not be applied.
\end{remark}

In the proof of Theorem \ref{T1.2}, we use the minimizing method and the
truncation techniques \cite{J2022}. Unlike \cite{J2022}, since problem (\ref%
{E3}) involves a positive second-order dispersion term, it is more
difficult to rule out the vanishing property of the minimizing sequence. To
overcome this difficulty, we establish a biharmonic variant of Lion's lemma
and the upper estimate of $\alpha _{p}(a)$.

We now turn to find a high-energy solution of problem (\ref{E3}). Set
\begin{equation*}
M_{\mu ,p}(a):=\inf\limits_{u\in \mathcal{P}_{a}^{-}}\Psi _{\mu ,p}(u).
\end{equation*}%
Then we have the following result.

\begin{theorem}
\label{T1.4} Assume that $\overline{p}<p<4^{\ast }$ for $N\geq 2,$ or $%
p=4^{\ast }$ for $N\geq 9$. Then there exists $\mu _{a}\in \left(
0,(C_{0}a^{2-p})^{\frac{1}{p\gamma _{p}-2}}\right) $ such that for any $\mu
\in (0,\mu _{a}),$ the following statements hold.\newline
$(i)$ $\Psi _{\mu ,p}|_{S_{a}}$ has a critical point $u_{\mu }^{-}$ at
positive level $M_{\mu ,p}(a)>0$.\newline
$(ii)$ $u_{\mu }^{-}$ is a real-valued sign-changing solution to problem (\ref%
{E3}) for some $\lambda =\lambda _{\mu }^{-}>0$.\newline
$(iii)$ If $\overline{p}<p<4^{\ast }$, then $M_{\mu ,p}(a)\rightarrow
M_{p}(a)$ and $u_{\mu }^{-}\rightarrow u^{-}$ as $\mu \rightarrow 0^{+}$,
where $M_{p}(a)=\Psi _{\mu ,p}(u^{-})$ with $u^{-}$ being a ground state to
problem (\ref{E3}) with $\mu =0.$\newline
$(iv)$ If $p=4^{\ast }$, then $M_{\mu ,p}(a)\rightarrow \frac{2}{N}\mathcal{S%
}^{\frac{N}{4}}$, $\lambda _{\mu }^{-}\rightarrow 0$ and $u_{\mu
}^{-}\rightarrow u_{\epsilon }$ as $\mu \rightarrow 0^{+}$, where $\mathcal{S%
}$ is as in (\ref{E7}) and $u_{\epsilon }$ defined as (\ref{E8}), is the
unique solution to the equation $\Delta ^{2}u=|u|^{4^{\ast }-2}u$ in $%
\mathbb{R}^{N}$.
\end{theorem}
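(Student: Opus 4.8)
The plan is to realize $u_\mu^-$ as a minimizer of $\Psi_{\mu,p}$ on the set $\mathcal{P}_a^-$, which under condition (\ref{E4}) is a smooth natural constraint (a consequence of the convex–concave geometry of $\phi_u$ in Case $(iii)$). First I would record the structural facts: for each $u\in S_a$ with $\mu$ small enough the fibering map $\phi_u$ has exactly two critical points $s_1(u)<s_2(u)$, and $s_2(u)\cdot u=:u_{s_2}\in\mathcal{P}_a^-$ is the unique global maximum; this gives a natural map $u\mapsto u_{s_2(u)}$ projecting $S_a$ onto $\mathcal{P}_a^-$. Using this projection together with the explicit form of $\phi_u$ and the Gagliardo–Nirenberg inequality (with constant $\mathcal{C}_{N,p}$) I would show $M_{\mu,p}(a)=\inf_{\mathcal{P}_a^-}\Psi_{\mu,p}>0$, and crucially that $M_{\mu,p}(a)$ is strictly below the threshold needed to recover compactness — in the subcritical range this is the usual "binding" inequality $M_{\mu,p}(a)<M_{\mu,p}(a_1)+M_{p}(a_2)$ type estimate, while in the critical case $p=4^\ast$ the threshold is $\frac{2}{N}\mathcal{S}^{N/4}$ and the strict inequality $M_{\mu,p}(a)<\frac{2}{N}\mathcal{S}^{N/4}$ must be produced by testing with (a truncation of) the Talenti-type extremal $u_\epsilon$ from (\ref{E8}) and exploiting that $\mu>0$ lowers the energy. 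Restricting $\mu$ to an interval $(0,\mu_a)$ with $\mu_a<(C_0a^{2-p})^{1/(p\gamma_p-2)}$ guarantees both (\ref{E4}) and that these sub-threshold estimates hold.

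Next I would take a minimizing sequence $\{u_n\}\subset\mathcal{P}_a^-$ for $M_{\mu,p}(a)$. Since we do not pass to radial functions, the compactness step is the delicate one: I would apply the biharmonic variant of Lions' concentration–compactness lemma announced in the paper (used for Theorem \ref{T1.2}). Vanishing is excluded because $\|u_n\|_p^p$ stays bounded away from $0$ on $\mathcal{P}_a^-$ (from $Q_p(u_n)=0$ and the lower bound on $\|\Delta u_n\|_2$ characterizing $\mathcal{P}_a^-$). Dichotomy is excluded by the strict sub-additivity / sub-threshold inequality above. Hence, up to translation, $u_n\rightharpoonup u$ with $u\neq 0$; combining the lower semicontinuity of $\Psi_{\mu,p}$ with the constraint one upgrades to strong convergence in $H^2$, so $u\in S_a\cap\mathcal{P}_a^-$ and $\Psi_{\mu,p}(u)=M_{\mu,p}(a)$. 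A Lagrange-multiplier argument on the two constraints $S_a$ and (via the natural-constraint property) effectively only $S_a$ then yields that $u=:u_\mu^-$ solves (\ref{E3}) with some $\lambda=\lambda_\mu^-$; testing the equation with $u$ and combining with $Q_p(u)=0$ gives $\lambda_\mu^- a^2 = (1-\tfrac{2}{p})\|u\|_p^p - (1-\gamma_p)\cdot(\text{nonneg.\ terms})$ arranged so that $\lambda_\mu^->0$ in the small-$\mu$ regime. That $u_\mu^-$ is real-valued follows by replacing $u$ with $|u|$ or using that $\Psi_{\mu,p}(|u|)\le\Psi_{\mu,p}(u)$ with equality forcing constant phase; sign-change follows because a positive solution would, via the ground state $u_\mu^+$ of Theorem \ref{T1.2} and a comparison of energy levels $\alpha_p(a)<0<M_{\mu,p}(a)$ together with the characterization of $\mathcal{P}_a^+$ vs.\ $\mathcal{P}_a^-$, contradict minimality on $\mathcal{P}_a^-$ — alternatively one invokes that any one-signed solution lies on $\mathcal{P}_a^+$ under (\ref{E4}).

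For the asymptotics as $\mu\to0^+$: I would first show $M_{\mu,p}(a)$ is monotone/continuous in $\mu$ by comparing fibering maps, and that $\limsup_{\mu\to0^+}M_{\mu,p}(a)\le M_p(a)$ (resp.\ $\le\frac{2}{N}\mathcal{S}^{N/4}$) by using $u^-$ (resp.\ $u_\epsilon$) as a competitor, while $\liminf\ge$ the same quantity follows from the uniform (in small $\mu$) bounds on $u_\mu^-$ in $H^2$, weak convergence $u_\mu^-\rightharpoonup u_\star$, and lower semicontinuity. Uniform boundedness comes from $Q_p(u_\mu^-)=0$ and the energy identity $M_{\mu,p}(a)=\Psi_{\mu,p}(u_\mu^-)$, which together control $\|\Delta u_\mu^-\|_2$. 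In the subcritical case the limit $u_\star$ is nonzero (again by a vanishing-exclusion bound uniform in $\mu$) and solves (\ref{E3}) with $\mu=0$, at the level $M_p(a)$, hence is a ground state there, and strong convergence follows. In the critical case one instead shows $\lambda_\mu^-\to0$ (from the energy level approaching the "free" threshold $\frac{2}{N}\mathcal{S}^{N/4}$, which is attained only by solutions of $\Delta^2u=|u|^{4^\ast-2}u$ with zero mass–multiplier), and then a profile-decomposition / Brezis–Lieb analysis forces $u_\mu^-$ to converge (after rescaling) to the unique positive Talenti bubble $u_\epsilon$ of (\ref{E8}). The main obstacle throughout is the critical-case compactness: proving the strict inequality $M_{\mu,p}(a)<\frac{2}{N}\mathcal{S}^{N/4}$ for small $\mu>0$ and simultaneously ruling out the "bubbling at infinity" scenario in $H^2(\mathbb{R}^N)$ without the aid of radial symmetry; this is where the new energy inequalities of the paper and the careful choice of $\mu_a$ do the essential work.
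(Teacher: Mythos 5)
Your overall strategy (direct minimization on $\mathcal{P}_a^{-}$, concentration--compactness without radial symmetry, sub-threshold energy estimates, then asymptotics as $\mu\to0^{+}$) matches the paper, but the specific energy inequalities you propose for the compactness step are not strong enough, and this is exactly where the paper's new input lies. In the dichotomy scenario for a minimizing sequence on $\mathcal{P}_a^{-}$, after writing $u_n(\cdot-x_n)=u+w_n$ one can only guarantee that the ``other'' piece, suitably rescaled, has energy bounded below by the \emph{local-minimum} level $m_{\mu,p}(\cdot)=\alpha_p(\cdot)<0$, not by the mountain-pass level $M_{\mu,p}(\cdot)>0$: the remainder may land on the convex branch of the fibering map. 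Hence the inequality you invoke, $M_{\mu,p}(a)<M_{\mu,p}(a_1)+M_{p}(a_2)$, is trivially true (both summands on the right exceed $M_{\mu,p}(a_1)\geq M_{\mu,p}(a)$ up to monotonicity) but useless: it is compatible with the dichotomy outcome $M_{\mu,p}(a)\geq M_{\mu,p}(a_1)+m_{\mu,p}(a_2)$ because $m_{\mu,p}(a_2)<0$. What is needed, and what the paper proves (Lemmas \ref{T4.2} and \ref{T4.4}), are the mixed inequalities $M_{\mu,p}(a)<M_{\mu,p}(a_1)+m_{\mu,p}(a_2)$ and, in the critical case, $M_{\mu,4^{\ast}}(a)<m_{\mu,4^{\ast}}(a)+\frac{2}{N}\mathcal{S}^{N/4}$; these are strictly \emph{stronger} than the thresholds you state precisely because $m<0$, and they only hold for $\mu$ small -- this is where $\mu_a$ comes from (via $F_\mu'(a)\geq0$, resp. $m_{\mu,4^\ast}(a)/\mu\to0$), not merely from condition (\ref{E4}). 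The same defect affects your critical-case threshold: $M_{\mu,4^{\ast}}(a)<\frac{2}{N}\mathcal{S}^{N/4}$, obtained by testing with a truncated bubble alone, does not exclude bubbling on top of a nontrivial weak limit whose projected energy is only $\geq m_{\mu,4^{\ast}}(a)<0$; the paper must instead test with $u_{\mu}^{+}+\ell V_{\epsilon}$ (ground state plus bubble, rescaled back to $S_a$) to beat $m_{\mu,4^{\ast}}(a)+\frac{2}{N}\mathcal{S}^{N/4}$.

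Two secondary points. Your justification of sign-change (``any one-signed solution lies on $\mathcal{P}_a^{+}$ under (\ref{E4})'') is not a valid argument and is not what is used; the paper invokes the mechanism of \cite[Theorem 3.7]{B2018} (sign properties of the fourth-order operator), independent of which component of $\mathcal{P}_a$ the solution lies on. Also, your formula for $\lambda_{\mu}^{-}$ is garbled: combining the Nehari-type identity with $Q_p(u)=0$ gives $\lambda_{\mu}^{-}a^{2}=\frac{\mu}{2}\Vert\nabla u\Vert_2^{2}+(1-\gamma_p)\Vert u\Vert_p^{p}>0$ directly, with no smallness of $\mu$ needed for positivity. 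The asymptotic part of your plan is broadly consistent with the paper's Lemmas \ref{T4.6}--\ref{T4.8}, but as written the core compactness argument has a genuine gap.
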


In the proof of Theorem \ref{T1.4}, we use the direct minimization of the
energy functional $\Psi _{\mu ,p}$ on the set $\mathcal{P}_{a}^{-}.$ The
main difficulty lies on how to recover the compactness of the minimizing
sequence, since we work in the space $H^{2}(\mathbb{R}^{N})$ and do not use
the Palais-Smale sequences related to the Pohozaev identity. To solve this
problem, a novel strategy is to establish the following energy inequalities
\begin{equation*}
M_{\mu ,p}(a)<M_{\mu ,p}(a_{1})+m_{\mu ,p}(a_{2})\ \text{for}\ \text{any }%
a_{1},a_{2}>0\ \text{and}\ a_{1}^{2}+a_{2}^{2}=a^{2},
\end{equation*}%
and
\begin{equation*}
M_{\mu ,4^{\ast }}(a)<m_{\mu ,4^{\ast }}(a)+\frac{2}{N}\mathcal{S}^{\frac{N}{%
4}}.
\end{equation*}%
As far as we know, it is the first time that such energy inequalities are
given.

Next, we establish the relationship between the ground states and the least
action solutions.

\begin{theorem}
\label{T1.5} Let $\lambda (u)$ be the Lagrange multiplier corresponding to
an arbitrary minimizer $u\in S_{a}$ of problem (\ref{E3}). Under the
assumptions of Theorem \ref{T1.2}, the following statements hold.\newline
$(i)$ For given $\lambda \in \{\lambda (u):u\in S_{a}\ \text{is a minimizer
of problem}\ $(\ref{E3})$\}$, there exists $\mu _{a}^{\ast }>0$ satisfying
\begin{equation*}
\mu _{a}^{\ast }\left\{
\begin{array}{ll}
=\left( C_{0}a^{2-p}\right) ^{\frac{1}{p\gamma _{p}-2}} & \text{ if }p\in
(\overline{p},4^{\ast }), \\
\in \left( 0,\left( C_{0}a^{2-p}\right) ^{\frac{1}{p\gamma _{p}-2}%
}\right) & \text{ if }p=4^{\ast }.%
\end{array}%
\right.
\end{equation*}%
such that for any $\mu \in (0,\mu _{a}^{\ast })$, any least action solution $%
u_{\lambda }\in H^{2}(\mathbb{R}^{N})$ of equation (\ref{E2}) is a $\text{%
minimizer of problem}\ $(\ref{E3}), that is
\begin{equation*}
\Vert u_{\lambda \Vert _{2}}=a\text{ and}\ \Psi _{\mu ,p}(u_{\lambda
})=\alpha _{p}(a).
\end{equation*}%
$(ii)$ For any $\mu \in \left( 0,(C_{0}a^{2-p})^{\frac{1}{p\gamma _{p}-2}%
}\right) ,$ any minimizer $u_{a}\in S_{a}$ of problem (\ref{E3}) is a
least action solution of equation (\ref{E2}) with $\lambda =\lambda
(u_{a})>0$.
\end{theorem}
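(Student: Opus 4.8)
The plan is to compare the action functional $I_\lambda$ evaluated at a normalized minimizer with its values at arbitrary nontrivial solutions of (\ref{E2}), using the two–parameter dilation $(t,s)\mapsto tv_s$ and the convex–concave geometry of the fibering maps. Two elementary facts come first. For any nontrivial solution $v$ of (\ref{E2}) with parameter $\lambda$, testing the equation against $v$ and combining with $Q_p(v)=0$ gives $I_\lambda(v)=\left(\tfrac12-\tfrac1p\right)\|v\|_p^p>0$, so $\theta_\lambda$ is finite and a solution realizes it iff it has minimal $L^p$–norm among such solutions. And if $u_a\in S_a$ is a minimizer of (\ref{E3}), then by Theorem \ref{T1.2} it solves (\ref{E2}) with $\lambda(u_a)>\tfrac{\mu^2}{4}>0$ and $\Psi_{\mu,p}(u_a)=\alpha_p(a)=m_{\mu,p}(a)=\inf_{\mathcal P_a^+}\Psi_{\mu,p}$; in particular $u_a$ is itself a nontrivial solution of (\ref{E2}) with $\lambda=\lambda(u_a)$, whence $\theta_{\lambda(u_a)}\le I_{\lambda(u_a)}(u_a)=\alpha_p(a)+\tfrac{\lambda(u_a)}{2}a^2$.

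The content is the reverse bound. Fix $\lambda=\lambda(u_a)$, let $v$ be a nontrivial solution of (\ref{E2}) with this $\lambda$, and set $\sigma=\|v\|_2$. Since $\lambda>\tfrac{\mu^2}{4}$ and $\|\nabla v\|_2^2\le\|\Delta v\|_2\|v\|_2$, for each fixed $s>0$ the map $t\mapsto G(t,s):=I_\lambda(tv_s)$ has the form $c_1(s)t^2-c_2(s)t^p$ with $c_1(s)>0$ and $c_2(s)\ge0$, hence a unique maximizer in $t$; as $v$ is a critical point of $I_\lambda$, this maximizer along $s=1$ is $t=1$, so $G(1,1)=\max_{t>0}G(t,1)$. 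Dilating to mass $a$, the function $w:=\tfrac a\sigma v\in S_a$ satisfies (\ref{E4}), so $\phi_w$ is convex–concave with local minimum $s_1(w)$ and global maximum $s_2(w)$, and $w_{s_1(w)}\in\mathcal P_a^+$ gives $\Psi_{\mu,p}(w_{s_1(w)})=\phi_w(s_1(w))\ge\alpha_p(a)$. Using $Q_p(v)=0$ one computes $\phi_w'(1)=Q_p(w)=2\gamma_p\tfrac{a^2}{\sigma^2}\|v\|_p^p\bigl(1-(a/\sigma)^{p-2}\bigr)$, which is $\ge0$ exactly when $a\le\sigma$; in that case $1\in[s_1(w),s_2(w)]$, hence $\phi_w(s_1(w))\le\phi_w(1)=\Psi_{\mu,p}(w)$, so $\alpha_p(a)\le\Psi_{\mu,p}(w)$ and therefore
\[
\alpha_p(a)+\tfrac{\lambda}{2}a^2\le\Psi_{\mu,p}(w)+\tfrac{\lambda}{2}a^2=I_\lambda(w)=G(\tfrac a\sigma,1)\le G(1,1)=I_\lambda(v).
\]
When $a>\sigma$ the same conclusion holds once $v\in\mathcal P_\sigma^+$ is known: comparing $h_w(s):=s^{-1}\phi_w'(s)$ with $h_v$, the pointwise bound $h_w<\tfrac{a^2}{\sigma^2}h_v$ together with $h_v<0$ on $(0,1)$ (valid since $1=s_1(v)$) forces $1<s_1(w)<s_2(w)$, and again $\phi_w(s_1(w))\le\phi_w(1)$, so the chain goes through. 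Finally, a solution realizing $\theta_\lambda$ must lie on the $\mathcal P^+_\sigma$–branch: otherwise $\Psi_{\mu,p}(v)\ge M_{\mu,p}(\sigma)$ by Theorem \ref{T1.4}, so $I_\lambda(v)\ge M_{\mu,p}(\sigma)+\tfrac{\lambda}{2}\sigma^2$, which exceeds the (small) number $\alpha_p(a)+\tfrac{\lambda}{2}a^2$ — this comparison uses Theorem \ref{T1.2}$(iii)$ in the subcritical case (where it persists on the whole admissible $\mu$–interval) and, when $p=4^\ast$, Theorem \ref{T1.4}$(iv)$ together with $M_{\mu,4^\ast}(a)<m_{\mu,4^\ast}(a)+\tfrac2N\mathcal S^{\frac N4}$, which only closes for $\mu$ small, forcing $\mu_a^\ast<(C_0a^{2-p})^{1/(p\gamma_p-2)}$ there. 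Collecting, $\theta_{\lambda(u_a)}=\alpha_p(a)+\tfrac{\lambda(u_a)}{2}a^2=I_{\lambda(u_a)}(u_a)$, so $u_a$ is a least action solution with $\lambda(u_a)>0$, which is $(ii)$.

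For $(i)$, write $\lambda=\lambda(u_a)$ for the given $\lambda$; by $(ii)$, $\theta_\lambda=\alpha_p(a)+\tfrac\lambda2 a^2$. Let $u_\lambda$ be any least action solution for this $\lambda$ and $b=\|u_\lambda\|_2$; as above $u_\lambda\in\mathcal P_b^+$, so running the same chain with $v=u_\lambda$ gives
\[
\alpha_p(a)+\tfrac\lambda2 a^2\le I_\lambda(\tfrac ab u_\lambda)=G(\tfrac ab,1)\le G(1,1)=I_\lambda(u_\lambda)=\theta_\lambda=\alpha_p(a)+\tfrac\lambda2 a^2,
\]
so every inequality is an equality; in particular $G(\tfrac ab,1)=\max_{t>0}G(t,1)$, and \emph{uniqueness} of that maximizer forces $b=a$, i.e. $\|u_\lambda\|_2=a$. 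Since $u_\lambda,u_a$ then solve (\ref{E2}) with the same $\lambda$ and, from $I_\lambda(u_\lambda)=I_\lambda(u_a)$, the same $L^p$–norm, the identities coming from $Q_p=0$ and from testing the equation against the solution give $\|\Delta u_\lambda\|_2=\|\Delta u_a\|_2$ and $\|\nabla u_\lambda\|_2=\|\nabla u_a\|_2$, hence $\Psi_{\mu,p}(u_\lambda)=\alpha_p(a)$; thus $u_\lambda$ is a minimizer of (\ref{E3}).

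The main obstacle is the $\mathcal P^+_\sigma$ versus $\mathcal P^-_\sigma$ dichotomy for a solution achieving $\theta_\lambda$ — ruling out the high–energy branch — since this rests on a lower bound for $M_{\mu,p}(\sigma)+\tfrac\lambda2\sigma^2$, uniform in $\sigma$, that beats the small quantity $\alpha_p(a)+\tfrac\lambda2 a^2$; in the Sobolev–critical case the critical nonlinearity makes this comparison delicate and is exactly what shrinks the admissible $\mu$–range to $(0,\mu_a^\ast)$. A secondary technical point is checking that the cross–mass projection $w=\tfrac a\sigma v$ still obeys (\ref{E4}) and that $s=1$ stays inside the basin $[s_1(w),s_2(w)]$, which the sign of $\phi_w'(1)$ (case $a\le\sigma$) and the comparison $h_w<\tfrac{a^2}{\sigma^2}h_v$ (case $a>\sigma$, using $v\in\mathcal P^+_\sigma$) take care of.
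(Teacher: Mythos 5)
Your renormalize-and-project chain (passing to $w=\tfrac a\sigma v$, locating $1$ relative to $[s_1(w),s_2(w)]$ via the sign of $\phi_w'(1)=Q_p(w)$, and using $\Psi_{\mu,p}(w_{s_1(w)})\ge\alpha_p(a)$ together with the maximality of $t=1$ for $t\mapsto I_\lambda(tv)$) is sound and handles correctly every solution with $\|v\|_2\ge a$, and also $\|v\|_2<a$ on the $\mathcal P^+$ branch; this is a genuinely different mechanism from the paper, which instead splits on the mass of the least action solution and uses the map $\bar g(s)=I_\lambda(su_\lambda)$ in one case and the scaling law $M_{\mu,p}(a)\le\beta^{2+4b}M_{\mu,p}(\beta^{-1}a)$ with the auxiliary function $\overline L(s)$ in the other. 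But the whole weight of the theorem then falls on your remaining case (mass $\sigma<a$, $v\in\mathcal P^-_\sigma$, equivalently $\Psi_{\mu,p}(v)>0$), where you need $M_{\mu,p}(\sigma)+\tfrac\lambda2\sigma^2>\alpha_p(a)+\tfrac\lambda2 a^2$ uniformly for $\sigma\in(0,a)$, i.e.\ essentially $M_{\mu,p}(a)>\alpha_p(a)+\tfrac\lambda2a^2$. For $\overline p<p<4^\ast$ you justify this only by Theorem \ref{T1.2}$(iii)$, which is a $\mu\to0^+$ asymptotic, plus the bare assertion that it ``persists on the whole admissible $\mu$-interval''; that assertion is exactly what would be needed to get $\mu_a^\ast=(C_0a^{2-p})^{1/(p\gamma_p-2)}$ in the subcritical case, and you give no argument for it (the paper reduces this case to the explicit inequality $M_{\mu,p}(a)-\tfrac\lambda2a^2-\alpha_p(a)>0$ combined with $-4b>2$ and the positivity of $\overline L(s)$ for $s\ge1$). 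Also, in the critical case the tool you cite, $M_{\mu,4^\ast}(a)<m_{\mu,4^\ast}(a)+\tfrac2N\mathcal S^{N/4}$, is an upper bound on $M_{\mu,4^\ast}(a)$ and is not what the comparison needs; what is needed is the lower bound $\lim_{\mu\to0^+}M_{\mu,4^\ast}(a)=\tfrac2N\mathcal S^{N/4}>0$ together with $\alpha_{4^\ast}(a)+\tfrac\lambda2a^2\to0$, as in the paper's function $L(\mu)$.

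The more serious gap is part $(ii)$ at $p=4^\ast$. The theorem claims $(ii)$ for every $\mu\in\bigl(0,(C_0a^{2-p})^{1/(p\gamma_p-2)}\bigr)$, but your derivation of $(ii)$ (``Collecting, $\theta_{\lambda(u_a)}=\alpha_p(a)+\tfrac{\lambda(u_a)}2a^2$'') goes through the branch-exclusion comparison above, which by your own account only closes for $\mu$ small in the critical case. So your argument can only yield $(ii)$ on $(0,\mu_a^\ast)$ with $\mu_a^\ast$ possibly strictly smaller than the admissible threshold, not on the full range stated. The paper avoids this by proving $(ii)$ at $p=4^\ast$ through a separate approximation in the exponent: Lemmas \ref{T5.1} and \ref{T5.2} give continuity of $\alpha_p(a)$ and of $\theta_{\lambda_p}$ as $p\to(4^\ast)^-$, the subcritical identity $\theta_{\lambda_r}=\alpha_r(a)+\tfrac{\lambda_r}2a^2$ is passed to the limit, and the subcritical minimizers are shown to converge to a critical minimizer with multiplier $\lambda_{4^\ast}$. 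Your proposal contains no substitute for this limiting step, so as written it does not prove statement $(ii)$ in the Sobolev critical case.
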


Finally, we study the dynamical behavior of the solutions to problem (\ref%
{E1}). As we know, the local well-posedness of the Cauchy problem has been
proved in \cite{P2007} for $\overline{p}<p\leq 4^*$.

\begin{definition}
\label{D1.3}$Z\subset (\mathbb{R}^{N})$ is stable if: $Z\neq 0$ and for any $%
v\in Z$ and any $\epsilon >0$, there exists a $\kappa >0$ such that if $\psi
\in H^{2}(\mathbb{R}^{N})$ satisfies $\Vert \psi -v\Vert _{H^{2}}<\kappa $,
then $\zeta _{\psi }(t)$ is globally defined and $\inf\limits_{z\in Z}\Vert
\zeta _{\psi }(t)-z\Vert _{H^{2}}<\epsilon $ for all $t\in \mathbb{R}^{N}$,
where $\zeta _{\psi }(t)$ is the solution to problem (\ref{E1})
corresponding to the initial condition $\psi $.
\end{definition}

\begin{theorem}
\label{T1.7} Under the assumptions of Theorem \ref{T1.2}, the set%
\begin{equation*}
\mathbf{B}_{\rho_{0}}:=\{u\in \mathcal{M}_{a}:\Psi _{\mu ,p}(u)=\alpha
_{p}(a)\}
\end{equation*}%
is compact, up to translation, and it is orbitally stable.
\end{theorem}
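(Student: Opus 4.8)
The plan is to combine the variational characterization of $\mathbf{B}_{\rho_0}$ from Theorem \ref{T1.2} with the conservation laws for \eqref{E1} in the standard Cazenave--Lions framework. First I would establish the compactness (up to translation) of $\mathbf{B}_{\rho_0}$. Let $(u_n)\subset \mathbf{B}_{\rho_0}$ be any sequence; then $(u_n)$ is a minimizing sequence for $\alpha_p(a)=\inf_{\mathcal{M}_a}\Psi_{\mu,p}$ lying in the ``interior'' set $\mathcal{M}_a$, in particular $\|\Delta u_n\|_2<\rho_0$ is bounded, so $(u_n)$ is bounded in $H^2(\mathbb{R}^N)$. The key point is to rule out vanishing and dichotomy: vanishing is excluded exactly by the biharmonic variant of Lions' lemma together with the strict upper estimate $\alpha_p(a)<0$ already used in the proof of Theorem \ref{T1.2}; dichotomy (splitting of mass into $a_1^2+a_2^2=a^2$ with both $a_i>0$) is excluded by a subadditivity inequality of the form $\alpha_p(a)<\alpha_p(a_1)+\alpha_p(a_2)$, which should follow from the same scaling arguments that give $\alpha_p(a)<0$. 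Hence, after translation, $u_n\to u$ strongly in $L^p$, and since $\Psi_{\mu,p}(u_n)\to \alpha_p(a)$ with $u\in S_a$, lower semicontinuity of the $\dot H^2$-norm forces $\|\Delta u_n\|_2\to\|\Delta u\|_2$, giving strong $H^2$-convergence; moreover $u\in\mathcal{M}_a$ because $\|\Delta u\|_2\le\liminf\|\Delta u_n\|_2\le\rho_0$, and in fact $u\in\mathcal{M}_a$ with strict inequality since by Theorem \ref{T1.2}(i) any minimizer is a genuine local minimizer lying in $\mathcal{P}_a^+$, hence in the open part of $\mathcal{M}_a$. Therefore $u\in\mathbf{B}_{\rho_0}$ and $\mathbf{B}_{\rho_0}$ is compact up to translation.

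Next I would prove orbital stability by contradiction in the usual way. Suppose $\mathbf{B}_{\rho_0}$ is not stable: then there exist $\epsilon_0>0$, a sequence $\psi_{0,n}\to v$ in $H^2$ with $v\in\mathbf{B}_{\rho_0}$, and times $t_n$ such that the solutions $\zeta_{\psi_{0,n}}(t)$ of \eqref{E1} (which exist on a maximal interval by the local well-posedness of \cite{P2007}) satisfy $\inf_{z\in\mathbf{B}_{\rho_0}}\|\zeta_{\psi_{0,n}}(t_n)-z\|_{H^2}\ge\epsilon_0$. Set $w_n:=\zeta_{\psi_{0,n}}(t_n)$. By conservation of mass and energy, $\|w_n\|_2^2\to a^2$ and $\Psi_{\mu,p}(w_n)\to\Psi_{\mu,p}(v)=\alpha_p(a)$. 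A brief argument — rescaling $w_n$ slightly so its mass is exactly $a^2$, or using continuity of $\alpha_p$ in $a$ — shows the rescaled sequence $\widetilde w_n\in S_a$ still has $\Psi_{\mu,p}(\widetilde w_n)\to\alpha_p(a)$ and $\|\widetilde w_n-w_n\|_{H^2}\to 0$. The only delicate point is that $\widetilde w_n$ (or $w_n$) must lie in $\mathcal{M}_a$ for large $n$: this follows because $v\in\mathcal{M}_a$ with $\|\Delta v\|_2<\rho_0$ strictly, so by continuity of the flow in $H^2$ on $[0,t_n]$ — more precisely, by a standard ``trapping'' argument using that $\alpha_p(a)$ is strictly below the energy level on $\partial\mathcal{M}_a=\{\|\Delta u\|_2=\rho_0\}\cap S_a$ — the quantity $\|\Delta\zeta_{\psi_{0,n}}(t)\|_2$ cannot reach $\rho_0$, so $w_n\in\mathcal{M}_a$. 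Then $(\widetilde w_n)$ is a minimizing sequence for $\alpha_p(a)$ in $\mathcal{M}_a$, and by the compactness proved above, a subsequence of $(\widetilde w_n)$ (hence of $(w_n)$) converges, after translation, to some element of $\mathbf{B}_{\rho_0}$, contradicting $\inf_{z\in\mathbf{B}_{\rho_0}}\|w_n-z\|_{H^2}\ge\epsilon_0$. This also yields global existence of $\zeta_{\psi_{0,n}}$ since $\|\Delta\zeta_{\psi_{0,n}}(t)\|_2$ stays bounded away from blow-up.

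The main obstacle I anticipate is the trapping argument showing $\zeta_{\psi}(t)$ stays in $\mathcal{M}_a$ for all $t$, i.e., that $\|\Delta\zeta_\psi(t)\|_2$ never reaches $\rho_0$. This requires knowing that on the boundary sphere $\{u\in S_a:\|\Delta u\|_2=\rho_0\}$ the energy $\Psi_{\mu,p}$ is bounded below by a constant strictly larger than $\alpha_p(a)$ — a fact that should be extractable from the convex-concave geometry established under condition \eqref{E4} in the proof of Theorem \ref{T1.2}, since $\rho_0$ is precisely chosen so that $\mathcal{M}_a$ captures the local-minimum well of $\phi_u$. A secondary technical point is handling the mass-rescaling cleanly at $H^2$ level; this is routine but must be done carefully because the biharmonic term scales differently from the mass. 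Once the trapping and rescaling lemmas are in place, the argument is the classical concentration-compactness plus Cazenave--Lions scheme, and the strict subadditivity of $\alpha_p$ — already implicit in the sign-and-estimate work behind Theorem \ref{T1.2} — does the rest.
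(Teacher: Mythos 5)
Your overall Cazenave--Lions scheme (compactness of the minimizing set, trapping in $\mathcal{M}_a$ via conservation laws, contradiction along a sequence of times) is the same as the paper's, and your trapping argument is exactly what the paper uses (energy on $\partial\mathcal{M}_a$ is positive while $\alpha_p(a)<0$, Lemma \ref{T2.2}). However, the compactness step as you sketch it has two genuine gaps. First, vanishing cannot be excluded by ``$\alpha_p(a)<0$'': since $\mu>0$, if $u_n\to 0$ in $L^p$ then $\Psi_{\mu,p}(u_n)\geq \frac12\Vert\Delta u_n\Vert_2^2-\frac{\mu a}{2}\Vert\Delta u_n\Vert_2+o_n(1)\geq -\frac{\mu^2a^2}{8}+o_n(1)$, which is still negative; one needs the sharper upper estimate $\alpha_p(a)<-\frac{\mu^2a^2}{8}$ (Lemma \ref{T3.2}) to get a contradiction. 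Second, in the Sobolev critical case $p=4^{\ast}$, $N\geq 9$ (which is included in Theorem \ref{T1.2} and is the main novelty of the paper), your step ``$u_n\to u$ strongly in $L^p$, then lower semicontinuity forces $\Vert\Delta u_n\Vert_2\to\Vert\Delta u\Vert_2$'' does not work: the remainder $w_n$ can concentrate at the Sobolev level, and strong $H^2$ convergence must be obtained by excluding a bubble of size $\mathcal{S}^{N/8}$. The paper does this in Proposition \ref{T3.4} by using that $\rho_0=\left(\frac{N}{N+4}\right)^{(N-4)/8}\mathcal{S}^{N/8}<\mathcal{S}^{N/8}$ and analyzing $f(s)=\frac12 s^2-\frac{1}{4^{\ast}}\mathcal{S}^{-4^{\ast}/2}s^{4^{\ast}}$ on $\Vert\Delta w_n\Vert_2$. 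Also, the strict subadditivity $\alpha_p(a)<\alpha_p(a_1)+\alpha_p(a_2)$ you invoke is not available outright; the paper only has the weak inequality (Lemma \ref{T3.1}) with strictness when one infimum is attained, and dichotomy is excluded by the case distinction $\Psi_{\mu,p}(u)>\alpha_p(a_1)$ versus $\Psi_{\mu,p}(u)=\alpha_p(a_1)$ (attainment then yields strictness).

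The more serious gap is global existence, which Definition \ref{D1.3} makes part of the statement. Your closing claim that $\Vert\Delta\zeta_{\psi}(t)\Vert_2$ staying bounded ``yields global existence'' is valid only for subcritical $p$, where Proposition \ref{T6.2} $(i)$ gives the alternative $T=\infty$ or $\Vert\Delta u(t)\Vert_2\to\infty$. For $p=4^{\ast}$ the local theory (Proposition \ref{T6.2} $(ii)$) requires smallness of the Strichartz norm $\Vert e^{it(\Delta^2+\mu\Delta)}\psi_0\Vert_{Z_T}$ and provides no blow-up alternative in terms of the $H^2$ norm, so an a priori bound on $\Vert\Delta\zeta_\psi(t)\Vert_2$ does not give $T_\psi=\infty$. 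The paper closes this by proving a uniform local-existence time for initial data in a neighborhood of the translation-compact set $\mathbf{B}_{\rho_0}$ via Strichartz estimates (Proposition \ref{T6.4}, Theorem \ref{T6.5}) and then iterating in time (Theorem \ref{T6.6}); some argument of this kind is indispensable in the critical case and is missing from your proposal.
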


Define%
\begin{equation*}
\Gamma _{a}:=\left\{ u\in S_{a}:\Psi _{\mu ,p}(u)<M_{\mu ,p}(a)\ \text{and}\
Q_{p}(u)>0\right\} .
\end{equation*}%
Then we consider global existence.

\begin{theorem}
\label{T1.8} Under the assumptions of Theorem \ref{T1.4}, the solution $%
\psi \in C\left( [0,T];H^{2}(\mathbb{R}^{N})\right) $ to problem (\ref{E1}%
) with the initial datum $\psi _{0}\in \Gamma _{a}$ exists globally in time.
\end{theorem}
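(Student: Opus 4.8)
Here is my plan for proving Theorem \ref{T1.8}.

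\medskip

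\textbf{Overall strategy.} The goal is to show that the set $\Gamma_a$ is invariant under the flow of \eqref{E1} and that on $\Gamma_a$ the $H^2$-norm of the solution stays bounded for as long as the solution exists; by the local well-posedness in $H^2$ (proved in \cite{P2007}) and a standard blow-up alternative, an a priori bound on $\|\psi(t)\|_{H^2}$ forces $T=+\infty$. The plan is to use the two conservation laws available for \eqref{E1}: conservation of mass $\|\psi(t)\|_2 = \|\psi_0\|_2 = a$, and conservation of energy $\Psi_{\mu,p}(\psi(t)) = \Psi_{\mu,p}(\psi_0)$. Since $\psi_0 \in \Gamma_a \subset S_a$, the mass is conserved so $\psi(t) \in S_a$ for all $t \in [0,T)$, and $\Psi_{\mu,p}(\psi(t)) = \Psi_{\mu,p}(\psi_0) < M_{\mu,p}(a)$ for all $t$.

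\medskip

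\textbf{Step 1: Invariance of $\Gamma_a$.} First I would show that $Q_p(\psi(t)) > 0$ for all $t \in [0,T)$. Suppose not; then by continuity of $t \mapsto Q_p(\psi(t))$ (which follows from $\psi \in C([0,T];H^2)$) there is a first time $t_0 \in (0,T)$ with $Q_p(\psi(t_0)) = 0$, i.e. $\psi(t_0) \in \mathcal{P}_a$. If $\psi(t_0) \ne 0$ in the relevant sense, then by the structure of the fibering map in Case $(iii)$ (the function $\psi(t_0)$ lies on $\mathcal{P}_a = \mathcal{P}_a^+ \cup \mathcal{P}_a^-$ under condition \eqref{E4}), we have $\Psi_{\mu,p}(\psi(t_0)) \geq \min\{m_{\mu,p}(a), M_{\mu,p}(a)\}$. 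But also $M_{\mu,p}(a) > 0 > \alpha_p(a) = m_{\mu,p}(a)$ by Theorems \ref{T1.2} and \ref{T1.4}, and I should rule out the branch $\mathcal{P}_a^+$: for a function $u$ with small $\|\Delta u\|_2$ approaching the manifold from the region $\{Q_p > 0\}$, one checks via the fibering analysis that crossing $\mathcal{P}_a$ from $Q_p > 0$ means crossing $\mathcal{P}_a^-$, so $\Psi_{\mu,p}(\psi(t_0)) \geq M_{\mu,p}(a)$. This contradicts $\Psi_{\mu,p}(\psi(t_0)) = \Psi_{\mu,p}(\psi_0) < M_{\mu,p}(a)$. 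Hence $Q_p(\psi(t)) > 0$ on $[0,T)$, and combined with conservation of energy this gives $\psi(t) \in \Gamma_a$ for all $t$.

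\medskip

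\textbf{Step 2: A priori $H^2$ bound.} On the region $\{Q_p(u) \geq 0\}$ one has $2\|\Delta u\|_2^2 \geq \mu\|\nabla u\|_2^2 + 2\gamma_p \|u\|_p^p$, hence
\begin{equation*}
\Psi_{\mu,p}(u) = \frac{1}{2}\|\Delta u\|_2^2 - \frac{\mu}{2}\|\nabla u\|_2^2 - \frac{1}{p}\|u\|_p^p \geq \Bigl(\frac{1}{2} - \frac{1}{4}\Bigr)\|\Delta u\|_2^2 - \Bigl(\frac{1}{p} - \frac{\gamma_p}{2}\Bigr)\|u\|_p^p,
\end{equation*}
and since $p > \overline{p}$ gives $\gamma_p > \frac{1}{2} \cdot \frac{2}{p}$ hmm — more precisely $p\gamma_p > 2$ for $p > \overline p$, so $\frac{1}{p} - \frac{\gamma_p}{2} = \frac{2 - p\gamma_p}{2p} < 0$, and therefore $\Psi_{\mu,p}(u) \geq \frac{1}{4}\|\Delta u\|_2^2$ on $\{Q_p \geq 0\}$. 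Thus for $\psi(t) \in \Gamma_a$,
\begin{equation*}
\frac{1}{4}\|\Delta \psi(t)\|_2^2 \leq \Psi_{\mu,p}(\psi(t)) = \Psi_{\mu,p}(\psi_0) < M_{\mu,p}(a),
\end{equation*}
so $\|\Delta \psi(t)\|_2$ is uniformly bounded. Together with $\|\psi(t)\|_2 = a$ and the Gagliardo–Nirenberg interpolation $\|\nabla \psi(t)\|_2^2 \leq C\|\Delta \psi(t)\|_2 \|\psi(t)\|_2$, this yields a uniform bound on $\|\psi(t)\|_{H^2}$.

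\medskip

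\textbf{Step 3: Conclusion via blow-up alternative.} By the local well-posedness theory in \cite{P2007}, if the maximal existence time $T$ were finite we would have $\lim_{t \to T^-}\|\psi(t)\|_{H^2} = +\infty$; Step 2 contradicts this, so $T = +\infty$, i.e. $\psi$ exists globally.

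\medskip

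\textbf{Main obstacle.} The delicate point is Step 1 — showing the trajectory cannot touch $\mathcal{P}_a$ while in $\Gamma_a$, and in particular correctly identifying which component ($\mathcal{P}_a^+$ versus $\mathcal{P}_a^-$) the boundary $\{Q_p = 0\}$ of the energy-sublevel region $\Gamma_a$ corresponds to. This requires carefully invoking the convex-concave fibering geometry of Case $(iii)$ and the fact, established in the proofs of Theorems \ref{T1.2} and \ref{T1.4}, that the sublevel set $\{\Psi_{\mu,p} < M_{\mu,p}(a)\} \cap S_a$ splits into the part containing $\mathcal{M}_a$ (where $Q_p > 0$) and a part where $Q_p < 0$, with $\mathcal{P}_a^-$ separating them at energy level $\geq M_{\mu,p}(a)$. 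The energy conservation then does the separating. A secondary technical care is needed in the Sobolev-critical case $p = 4^\ast$, where one must ensure the Gagliardo–Nirenberg / Sobolev constants and the coercivity estimate in Step 2 still apply; but since the sign argument only uses $p\gamma_p > 2$, which also holds at $p = 4^\ast$ for $N \geq 9$, the same computation goes through.
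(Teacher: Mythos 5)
Your skeleton (coercivity of $\Psi_{\mu,p}$ on $\{Q_p\ge 0\}\cap S_a$, the barrier $M_{\mu,p}(a)$ on the Pohozaev set, conservation laws, blow-up alternative) is the same as the paper's, but two steps do not hold as written. First, the inequality claimed in Step 2, namely $\Psi_{\mu,p}(u)\ge\frac14\Vert\Delta u\Vert_2^2$ on $\{Q_p(u)\ge 0\}$, is false: the ground state $u_\mu^+$ of Theorem \ref{T1.2} lies in $\mathcal{P}_a^+\subset\{Q_p\ge 0\}$ and has $\Psi_{\mu,p}(u_\mu^+)=\alpha_p(a)<0$. The error is in how you absorb $\frac{\mu}{2}\Vert\nabla u\Vert_2^2$: the constraint $Q_p(u)\ge 0$ only gives $\frac{\mu}{2}\Vert\nabla u\Vert_2^2\le\Vert\Delta u\Vert_2^2-\gamma_p\Vert u\Vert_p^p$, which destroys the coercive term rather than producing the factor $\frac14$. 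The correct route, which is exactly the paper's estimate (\ref{E89}), is to eliminate $\Vert u\Vert_p^p$ with $Q_p\ge 0$ and control the gradient by the mass via (\ref{E5}), obtaining
\begin{equation*}
\Psi_{\mu,p}(u)\ \ge\ \frac{p\gamma_p-2}{2p\gamma_p}\Vert\Delta u\Vert_2^2-\frac{\mu a(p\gamma_p-1)}{2p\gamma_p}\Vert\Delta u\Vert_2\qquad\text{on }\{Q_p\ge 0\}\cap S_a,
\end{equation*}
which is coercive since $p\gamma_p>2$; with this replacement your Step 2 coincides with the paper's (the paper simply runs the contradiction in reverse order: $T<\infty$ forces $\Vert\Delta\psi(t)\Vert_2\to\infty$, hence $Q_p(\psi(t))\to-\infty$, hence a crossing of $\mathcal{P}_a$). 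Note also that the blow-up alternative you invoke is stated in Proposition \ref{T6.2} only for $\overline{p}<p<4^\ast$, so the case $p=4^\ast$ needs an extra word (an issue the paper shares).

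The substantive gap is the claim in Step 1 that a trajectory leaving $\{Q_p>0\}$ must do so through $\mathcal{P}_a^-$. That is not what the fibering analysis gives: along each fiber $Q_p(u_s)=s\phi_u'(s)>0$ precisely on $(s_u,\overline{s}_u)$, so the boundary of $\{Q_p>0\}\cap S_a$ contains \emph{both} $\mathcal{P}_a^+$ (left endpoints) and $\mathcal{P}_a^-$ (right endpoints), and the flow of (\ref{E1}) is not the dilation flow, so the sign change of $t\mapsto Q_p(\psi(t))$ does not identify the component. Since $\mathcal{P}_a^+$ contains points of energy strictly below $M_{\mu,p}(a)$ (again the ground state, with negative energy), conservation of energy does not by itself forbid an exit through $\mathcal{P}_a^+$; after such an exit the coercivity of Step 2 is no longer available and your a priori bound collapses. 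To be fair, this is precisely the delicate point of the paper's own proof, which locates the crossing point via Lemma \ref{T2.7} and then appeals to the definition of $M_{\mu,p}(a)$, i.e. implicitly places the crossing on $\mathcal{P}_a^-$ without excluding $\mathcal{P}_a^+$; so your plan reproduces the paper's argument, but the justification you promise (``crossing $\mathcal{P}_a$ from $Q_p>0$ means crossing $\mathcal{P}_a^-$'') is not correct as a geometric statement, and closing the proof requires an additional ingredient ruling out that the first exit point lies in $\mathcal{P}_a^+$ (or a suitably modified invariant set).
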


The paper is organized as follows. After introducing some preliminary
results in Section 2, we prove Theorem \ref{T1.2} in section 3 and Theorem %
\ref{T1.4} in section 4, respectively. We prove Theorem \ref{T1.5} in
Section 5. Finally in Section 6, we study dynamical behavior and prove
Theorems \ref{T1.7} and \ref{T1.8}.

\section{Preliminary results}

In this section, we give some preliminary results.

$(i)$ The interpolation inequality (\cite{Bo2019}):%
\begin{equation}
\Vert \nabla u\Vert _{2}^{2}\leq \Vert u\Vert _{2}\Vert \Delta u\Vert _{2},\
\forall u\in H^{2}(\mathbb{R}^{N}).  \label{E5}
\end{equation}

$(ii)$ The Gagliardo-Nirenberg inequality (\cite[Theorem in Lecture II]{N1}):%
\begin{equation}
\Vert u\Vert _{p}^{p}\leq \mathcal{C}_{N,p}^{p}\Vert u\Vert _{2}^{p(1-\gamma
_{p})}\Vert \Delta u\Vert _{2}^{p\gamma _{p}},\ \forall u\in H^{2}(\mathbb{R}%
^{N}),  \label{E6}
\end{equation}%
where $2<p\leq 4^{\ast }$ and $\mathcal{C}_{N,p}$ is a constant depending on
$N$ and $p$. In particular, when $p=4^{\ast }$, it holds $\mathcal{C}%
_{N,4^{\ast }}=\mathcal{S}^{-1/2}$ and $\gamma _{4^{\ast }}=1$, where
\begin{equation}
\mathcal{S}:=\inf\limits_{u\in H^{2}(\mathbb{R}^{N})\setminus \{0\}}\frac{%
\int_{\mathbb{R}^{N}}|\Delta u|^{2}dx}{\left( \int_{\mathbb{R}%
^{N}}|u|^{4^{\ast }}dx\right) ^{2/4^{\ast }}}.  \label{E7}
\end{equation}%
Let
\begin{equation}
u_{\epsilon }(x):=(N(N-4)(N^{2}-4))^{\frac{N-4}{8}}\epsilon ^{\frac{N-4}{4}%
}\left( \frac{1}{\epsilon +|x|^{2}}\right) ^{\frac{N-4}{2}}.  \label{E8}
\end{equation}%
It is well known that $u_{\epsilon }$ is a solution of
\begin{equation*}
\begin{array}{ll}
\Delta ^{2}u=|u|^{4^{\ast }-2}u\  & \text{in}\ \mathbb{R}^{N}%
\end{array}%
\end{equation*}%
satisfying
\begin{equation*}
\Vert \Delta u_{\epsilon }\Vert _{2}^{2}=\Vert u_{\epsilon }\Vert _{4^{\ast
}}^{4^{\ast }}=\mathcal{S}^{\frac{N}{4}}.
\end{equation*}

Fixed $u\in S_{a},$ for any $s>0$, since $\mu >0$ and $\overline{p}<p\leq
\min \{4,4^{\ast }\},$ we have
\begin{eqnarray*}
\Psi _{\mu ,p}(u_{s}(x)) &=&\frac{s^{4}}{2}\Vert \Delta u\Vert _{2}^{2}-%
\frac{\mu s^{2}}{2}\Vert \nabla u\Vert _{2}^{2}-\frac{s^{2p\gamma _{p}}}{p}%
\Vert u\Vert _{p}^{p} \\
&\rightarrow &-\infty \text{ as }s\rightarrow \infty .
\end{eqnarray*}%
which implies that $\Psi_{\mu,p}$ is unbounded below on $S_{a}.$ So, we need
to consider the local minimization problems.

For any $u\in S_{a}$, by using the interpolation and Gagliardo-Nirenberg
inequalities (\ref{E5})--(\ref{E6}), we have
\begin{align*}
\Psi _{\mu ,p}(u)& \geq \Vert \Delta u\Vert _{2}\left( \frac{1}{2}\Vert
\Delta u\Vert _{2}-\frac{\mu a}{2}-\frac{\mathcal{C}_{N,p}^{p}}{p}%
a^{p(1-\gamma _{p})}\Vert \Delta u\Vert _{2}^{p\gamma _{p}-1}\right) \\
& =\Vert \Delta u\Vert _{2}h_{a}(\Vert \Delta u\Vert _{2}),
\end{align*}%
where the function%
\begin{equation*}
h_{a}(s):=\frac{1}{2}s-\frac{\mu a}{2}-\frac{\mathcal{C}_{N,p}^{p}a^{p(1-%
\gamma _{p})}s^{p\gamma _{p}-1}}{p}\ \text{ for }s>0.
\end{equation*}

\begin{lemma}
\label{T2.1} Let $\overline{p}<p\leq 4^{\ast }$ and $a,\mu >0$. Then the
function $h_{a}$ has a unique global maximum point, and the maximum value
satisfies
\begin{equation}
\left\{
\begin{array}{ll}
\max\limits_{s>0}h_{a}(s)>0, & \ \text{if}\ a<a_{0}, \\
\max\limits_{s>0}h_{a}(s)=0, & \ \text{if}\ a=a_{0}, \\
\max\limits_{s>0}h_{a}(s)<0, & \ \text{if}\ a>a_{0},%
\end{array}%
\right.  \label{E9}
\end{equation}%
where
\begin{equation*}
a_{0}:=\left( \frac{p\gamma _{p}-2}{\mu (p\gamma _{p}-1)}\right) ^{\frac{%
p\gamma _{p}-2}{p-2}}\left( \frac{p}{2\mathcal{C}_{N,p}^{p}(p\gamma _{p}-1)}%
\right) ^{\frac{1}{p-2}}.  \label{e2.2}
\end{equation*}
\end{lemma}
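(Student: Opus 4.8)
The plan is to study $h_a$ by elementary one-variable calculus, since it is a smooth function of $s>0$. The first observation is that the hypothesis $p>\overline{p}=2+\tfrac{8}{N}$ forces $p\gamma_p=\tfrac{N(p-2)}{4}>2$, so that the exponent $\beta:=p\gamma_p-1$ occurring in $h_a$ satisfies $\beta>1$. Writing $h_a(s)=\tfrac12 s-\tfrac{\mu a}{2}-C_a s^{\beta}$ with $C_a:=\tfrac{\mathcal{C}_{N,p}^p a^{p(1-\gamma_p)}}{p}>0$, I compute $h_a'(s)=\tfrac12-\beta C_a s^{\beta-1}$. Because $\beta-1>0$, the map $s\mapsto \beta C_a s^{\beta-1}$ is strictly increasing from $0$ to $+\infty$ on $(0,\infty)$, hence $h_a'$ vanishes at exactly one point
\[
s_a:=\Big(\tfrac{1}{2\beta C_a}\Big)^{\frac{1}{\beta-1}}
=\Big(\tfrac{p}{2\mathcal{C}_{N,p}^p(p\gamma_p-1)}\Big)^{\frac{1}{p\gamma_p-2}} a^{-\frac{p(1-\gamma_p)}{p\gamma_p-2}},
\]
with $h_a'>0$ on $(0,s_a)$ and $h_a'<0$ on $(s_a,+\infty)$. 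Thus $h_a$ is strictly increasing and then strictly decreasing, which yields both the existence and the uniqueness of the global maximum point, namely $s_a$.

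Next I would evaluate the maximum. From $h_a'(s_a)=0$ one gets $C_a s_a^{\beta}=\tfrac{s_a}{2\beta}$, so
\[
\max_{s>0}h_a(s)=h_a(s_a)=\tfrac12 s_a-\tfrac{s_a}{2\beta}-\tfrac{\mu a}{2}=\tfrac12\Big(\tfrac{\beta-1}{\beta}s_a-\mu a\Big),
\]
and therefore the sign of $\max_{s>0}h_a$ coincides with the sign of $\tfrac{\beta-1}{\beta}s_a-\mu a$; in particular $\max_{s>0}h_a(s)>0$ if and only if $s_a>\tfrac{\beta}{\beta-1}\mu a$.

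Finally I would convert this inequality into the claimed threshold on $a$. Substituting the explicit expression for $s_a$, isolating the powers of $a$ (here one uses $1+\tfrac{p(1-\gamma_p)}{p\gamma_p-2}=\tfrac{p-2}{p\gamma_p-2}$), and raising to the power $p\gamma_p-2>0$ (which preserves the inequality), the condition $s_a>\tfrac{\beta}{\beta-1}\mu a$ becomes
\[
\mu^{p\gamma_p-2}a^{p-2}<\frac{p}{2\mathcal{C}_{N,p}^p(p\gamma_p-1)}\Big(\frac{p\gamma_p-2}{p\gamma_p-1}\Big)^{p\gamma_p-2}=C_0,
\]
with $C_0$ as in (\ref{E4}). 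A direct computation from the definition of $a_0$ shows that $\mu^{p\gamma_p-2}a_0^{p-2}=C_0$, so the last display is exactly $a<a_0$; running the same chain of equivalences with equality, respectively with the reversed strict inequality, gives the cases $a=a_0$ and $a>a_0$. There is no serious obstacle in this argument: the only points needing a little care are the strict monotonicity of $s\mapsto s^{\beta-1}$ (which is where $p>\overline{p}$ enters, guaranteeing $\beta>1$) and the bookkeeping of the exponents of $a$ and $\mu$ in the final step.
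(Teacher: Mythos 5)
Your proposal is correct and follows essentially the same route as the paper: locate the unique zero of $h_a'$ (your $s_a$ is the paper's $\rho_a$), use the sign of $h_a'$ to get the unique global maximum, and express $h_a(s_a)=\frac{p\gamma_p-2}{2(p\gamma_p-1)}s_a-\frac{\mu a}{2}$, whose sign is then compared with the threshold $a_0$. The only difference is that you spell out the final equivalence $\mu^{p\gamma_p-2}a^{p-2}\lessgtr C_0 \Leftrightarrow a\lessgtr a_0$, which the paper leaves as an easy computation.
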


\begin{proof}
Clearly, it holds $p\gamma _{p}-1>1$ since $\overline{p}<p\leq 4^{\ast }$.
Then we obtain that $h_{a}(s)\rightarrow 0$ as $s\rightarrow 0$ and $%
h_{a}(s)\rightarrow -\infty $ as $s\rightarrow +\infty $. The derivative of $%
h_{a}(s)$ is given by
\begin{equation*}
h_{a}^{\prime }(s)=\frac{1}{2}-\frac{\mathcal{C}_{N,p}^{p}(p\gamma
_{p}-1)a^{p(1-\gamma _{p})}{s}^{p\gamma _{p}-2}}{p},
\end{equation*}%
which indicates that $h_{a}^{\prime }(s)=0$ has a unique solution
\begin{equation}
\rho _{a}=\left( \frac{pa^{p(\gamma _{p}-1)}}{2\mathcal{C}_{N,p}^{p}(p\gamma
_{p}-1)}\right) ^{\frac{1}{p\gamma _{p}-2}}.  \label{E10}
\end{equation}%
Moreover, $h_{a}^{\prime }(s)>0$ for $s\in (0,\rho _{a})$ and $h_{a}^{\prime
}(s)<0$ for $s\in (\rho _{a},\infty )$. Hence, we conclude that $\rho _{a}$
is the unique global maximum point of $h_{a}(s)$ and the maximum value is
\begin{align}
\max\limits_{s>0}h_{a}(s)=h_{a}(\rho _{a})=& \frac{1}{2}\rho _{a}-\frac{\mu a%
}{2}-\frac{\mathcal{C}_{N,p}^{p}a^{p(1-\gamma _{p})}\rho _{a}^{p\gamma
_{p}-1}}{p}  \notag \\
=& \frac{p\gamma _{p}-2}{2(p\gamma _{p}-1)}\rho _{a}-\frac{\mu a}{2}.
\label{E11}
\end{align}%
By (\ref{E11}), and together with the definition of $a_{0}$, we easily
obtain (\ref{E9}). The proof is complete.
\end{proof}

By Lemma \ref{T2.1}, we set $\rho _{0}:=\rho _{a_{0}}>0$. According to the
definition of $a_{0}$, we know that $a<a_{0}$ is equivalent to condition (%
\ref{E4}). Since $\overline{p}<p\leq 4^{\ast }$, there holds $0\leq
p(1-\gamma _{p})<1$. This shows that $h_{a}(\rho _{0})$ is strictly
decreasing for $a\in (0,a_{0})$. Thus, combining with (\ref{E9}), one has
\begin{equation}
h_{a}(\rho _{0})>h_{a_{0}}(\rho _{0})=0\text{ for all}\ 0<a<a_{0}.
\label{E12}
\end{equation}%
So for each $a\in (0,a_{0})$, we set%
\begin{equation*}
\mathcal{M}_{a}:=\{u\in S_{a}:\Vert \Delta u\Vert _{2}<\rho _{0}\}\text{ and
}\alpha _{p}(a):=\inf\limits_{u\in \mathcal{M}_{a}}\Psi _{\mu ,p}(u).
\end{equation*}

\begin{lemma}
\label{T2.2} Assume that $\overline{p}<p\leq 4^{\ast }$ and condition (\ref%
{E4}) holds. Then the following statements are true.\newline
$(i)$
\begin{equation}
\alpha _{p}(a):=\inf\limits_{u\in \mathcal{M}_{a}}\Psi _{\mu
,p}(u)<0<\inf\limits_{u\in \partial \mathcal{M}_{a}}\Psi _{\mu ,p}(u).
\label{E13}
\end{equation}
$(ii)$ If $\alpha _{p}(a)$ is attained, then any ground states of
problem (\ref{E3}) are all contained in $\mathcal{M}_{a}$.
\end{lemma}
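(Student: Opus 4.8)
The plan is to dispatch part (i) directly from the pointwise bound $\Psi_{\mu,p}(u)\ge \|\Delta u\|_2\,h_a(\|\Delta u\|_2)$ already recorded just before Lemma~\ref{T2.1}, and then to reduce part (ii) to a statement about the fibering map $\phi_v$. For part (i): on $\partial\mathcal M_a$ one has $\|\Delta u\|_2=\rho_0$, so $\Psi_{\mu,p}(u)\ge \rho_0\,h_a(\rho_0)$, and since $h_a(\rho_0)>0$ by \eqref{E12} this gives $\inf_{\partial\mathcal M_a}\Psi_{\mu,p}\ge \rho_0 h_a(\rho_0)>0$. For the strict negativity of $\alpha_p(a)$, I would fix any $u\in S_a$ and test with the dilations $u_s(x)=s^{N/2}u(sx)\in S_a$: since $\|\Delta u_s\|_2=s^2\|\Delta u\|_2\to 0$, we have $u_s\in\mathcal M_a$ for $s$ small, while $\Psi_{\mu,p}(u_s)=\phi_u(s)=\tfrac{s^4}{2}\|\Delta u\|_2^2-\tfrac{\mu s^2}{2}\|\nabla u\|_2^2-\tfrac{s^{2p\gamma_p}}{p}\|u\|_p^p$; because $p>\overline p$ forces $p\gamma_p>2$, i.e. $2<\min\{4,2p\gamma_p\}$, the term $-\tfrac{\mu s^2}{2}\|\nabla u\|_2^2<0$ dominates as $s\to0^+$, so $\phi_u(s)<0$ for $s$ small and $\alpha_p(a)\le\Psi_{\mu,p}(u_s)<0$.

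For part (ii), I would first note that if $\alpha_p(a)$ is attained at some $u^\ast$, then since $\mathcal M_a$ is relatively open in $S_a$ and $\inf_{\partial\mathcal M_a}\Psi_{\mu,p}>0>\alpha_p(a)$, the point $u^\ast$ must lie in $\mathcal M_a$; hence $u^\ast$ is a local minimizer of $\Psi_{\mu,p}|_{S_a}$, and by the Lagrange multiplier rule a solution of \eqref{E3}. Consequently the ground-state level is $\le\Psi_{\mu,p}(u^\ast)=\alpha_p(a)<0$, so every ground state $v$ satisfies $\Psi_{\mu,p}(v)<0$. It therefore suffices to prove: any critical point $v$ of $\Psi_{\mu,p}|_{S_a}$ with $\Psi_{\mu,p}(v)<0$ satisfies $\|\Delta v\|_2<\rho_0$.

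I would argue this by contradiction. Suppose $\|\Delta v\|_2\ge\rho_0$ and set $\bar s:=(\rho_0/\|\Delta v\|_2)^{1/2}\in(0,1]$, so $\|\Delta v_{\bar s}\|_2=\rho_0$ and thus $v_{\bar s}\in\partial\mathcal M_a$, whence $\phi_v(\bar s)=\Psi_{\mu,p}(v_{\bar s})\ge\inf_{\partial\mathcal M_a}\Psi_{\mu,p}>0$ by part (i). On the other hand $\phi_v(1)=\Psi_{\mu,p}(v)<0$, and (again by $2<\min\{4,2p\gamma_p\}$ and $\|\nabla v\|_2>0$) $\phi_v(s)\to0^-$ as $s\to0^+$. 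Extending $\phi_v$ continuously by $\phi_v(0)=0$, the function on the compact interval $[0,1]$ is positive at $\bar s$, zero at $0$, and negative at $1$, so it attains its maximum at some interior point $\sigma\in(0,1)$ with $\phi_v'(\sigma)=0$. Now $\phi_v'(s)=s\big(2s^2\|\Delta v\|_2^2-\mu\|\nabla v\|_2^2-2\gamma_p s^{2p\gamma_p-2}\|v\|_p^p\big)$, and the bracketed function $b(s)$ is strictly unimodal on $(0,\infty)$ (it is negative near $0$, tends to $-\infty$, and $b'$ has a unique positive zero since $2p\gamma_p-2>2$), so $\phi_v$ has at most two positive critical points; when there are two, $\phi_v$ is decreasing--increasing--decreasing, the smaller being a local minimum and the larger $s_2$ being the only local maximum. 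Since $\sigma$ is a local maximum with $\phi_v(\sigma)>0>\phi_v(1)$, we must have $\sigma=s_2<1$, which forces $\phi_v'(1)<0$, contradicting $v\in\mathcal P_a$ (i.e. $\phi_v'(1)=Q_p(v)=0$). Hence $\|\Delta v\|_2<\rho_0$, so $v\in\mathcal M_a$; combined with $\alpha_p(a)=\inf_{\mathcal M_a}\Psi_{\mu,p}$ this also yields $\Psi_{\mu,p}(v)=\alpha_p(a)$.

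Part (i) is routine; the substance is part (ii), and the step I expect to demand the most care is the last one — extracting an interior maximum of $\phi_v$ on $(0,1)$ and combining the ``at most two critical points'' structure of the fibering map with the Pohozaev constraint $\phi_v'(1)=0$ to reach the contradiction. Equivalently, one may phrase it through $\mathcal P_a=\mathcal P_a^+\cup\mathcal P_a^-$ (valid under \eqref{E4}): a negative-energy critical point cannot lie in $\mathcal P_a^-$ because $\Psi_{\mu,p}$ is positive there (again using part (i) via the dilation $v_{\bar s}$), so it lies in $\mathcal P_a^+$, where $s=1$ is the local minimum of $\phi_v$; then $\phi_v<0$ on $(0,1]$ together with $\phi_v(\bar s)>0$ forces $\bar s>1$, i.e. $\|\Delta v\|_2<\rho_0$.
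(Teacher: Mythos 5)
Your argument is correct, and part (i) coincides with the paper's proof (boundary positivity from $h_a(\rho_0)>0$ via \eqref{E12}, negativity via small dilations). For part (ii), however, you take a genuinely different route. The paper's proof is a two-line computation: for any critical point $u$ one has $u\in\mathcal P_a$, and eliminating $\|u\|_p^p$ through $Q_p(u)=0$ together with the interpolation inequality \eqref{E5} gives
$\Psi_{\mu,p}(u)\ge\frac{p\gamma_p-2}{2p\gamma_p}\|\Delta u\|_2^2-\frac{\mu a(p\gamma_p-1)}{2p\gamma_p}\|\Delta u\|_2$, so $\Psi_{\mu,p}(u)<0$ forces the explicit bound $\|\Delta u\|_2<\frac{\mu a(p\gamma_p-1)}{p\gamma_p-2}$, which is $<\rho_0$ precisely because $a<a_0$ and $\rho_0=\frac{\mu a_0(p\gamma_p-1)}{p\gamma_p-2}$; this quantitative bound is then reused later (e.g.\ Lemma \ref{T2.7}(iii) and Theorem \ref{T1.2}(iii)). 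You instead argue by contradiction through the fibering map: assuming $\|\Delta v\|_2\ge\rho_0$, you dilate to $v_{\bar s}\in\partial\mathcal M_a$, invoke part (i) to get $\phi_v(\bar s)>0$, re-derive the decreasing--increasing--decreasing structure of $\phi_v$ (unimodality of $b$, which needs only $p\gamma_p>2$), and conclude that $s=1$ would lie strictly beyond the unique maximum point, contradicting $\phi_v'(1)=Q_p(v)=0$. This is longer but ``softer'': it uses only the sign information $h_a(\rho_0)>0$ rather than the explicit relation between $\rho_0$ and $a_0$, it re-proves in passing a piece of Lemma \ref{T2.7}, and it makes explicit a step the paper leaves implicit, namely that attainment of $\alpha_p(a)$ forces every ground state to have energy $\le\alpha_p(a)<0$. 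Two trivial points you gloss over but which cause no harm: if $\bar s=1$ the contradiction $\phi_v(1)>0>\phi_v(1)$ is immediate (your interior-maximum argument tacitly assumes $\bar s<1$), and if $\phi_v$ has fewer than two critical points then $\phi_v\le0$ on $(0,\infty)$, which already contradicts $\phi_v(\bar s)>0$, so the ``two critical points'' configuration is the only one that needs the full argument.
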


\begin{proof}
$(i)$ Since $a\in (0,a_{0})$, for any $u\in \partial \mathcal{M}_{a}$, it
follows from (\ref{E12}) that
\begin{align*}
\Psi _{\mu ,p}(u)& \geq \Vert \Delta u\Vert _{2}h_{a}(\Vert \Delta u\Vert
_{2}) \\
& =\rho _{0}h_{a}(\rho _{0}) \\
& >0.
\end{align*}%
Now we claim that $\alpha _{p}(a)<0$. For any $u\in S_{a}$ and $s>0$, we
notice that
\begin{equation*}
\Psi _{\mu ,p}(u_{s})=\frac{s^{4}}{2}\Vert \Delta u\Vert _{2}^{2}-\frac{\mu
s^{2}}{2}\Vert \nabla u\Vert _{2}^{2}-\frac{s^{2p\gamma _{p}}}{p}\Vert
u\Vert _{p}^{p}.
\end{equation*}%
Then there exists $s_{\rho _{0}}>0$ such that $\Vert \Delta u_{s_{\rho
_{0}}}\Vert _{2}<\rho_{0}$ and $\Psi _{\mu ,p}(u_{s_{\rho _{0}}})<0$.
Hence, we conclude that $\alpha _{p}(a)<0$.

$(ii)$ It is well known that all critical points of $\Psi _{\mu ,p}$
restricted to $S_{a}$ belong to $\mathcal{P}_{a}$. Then we calculate that
for any $u\in \mathcal{P}_{a}$, there holds
\begin{align}
\Psi _{\mu ,p}(u)& =\left( \frac{1}{2}-\frac{1}{p\gamma _{p}}\right) \Vert
\Delta u\Vert _{2}^{2}-\frac{\mu }{2}\left( 1-\frac{1}{p\gamma _{p}}\right)
\Vert \nabla u\Vert _{2}^{2}  \notag \\
& \geq \left( \frac{1}{2}-\frac{1}{p\gamma _{p}}\right) \Vert \Delta u\Vert
_{2}^{2}-\frac{a\mu }{2}\left( 1-\frac{1}{p\gamma _{p}}\right) \Vert \Delta
u\Vert _{2}.  \label{E14}
\end{align}%
According to (\ref{E14}), we deduce that if $\Psi _{\mu ,p}(u)<0$, it holds
$\Vert \Delta u\Vert _{2}<\frac{(p\gamma _{p}-1)\mu a}{p\gamma _{p}-2}$.
Since $a<a_{0}$, we have $\frac{(p\gamma _{p}-1)\mu a}{p\gamma _{p}-2}<\rho
_{0}$. This shows that $\Vert \Delta u\Vert _{2}<\rho _{0}$ and thus $u\in
\mathcal{M}_{a}$. The proof is complete.
\end{proof}

\begin{remark}
\label{T2.3} From Lemma \ref{T2.2} $(ii)$, we easily obtain that
\begin{equation*}
\inf\limits_{u\in \mathcal{P}_{a}}\Psi _{\mu ,p}(u)=\inf\limits_{u\in
\mathcal{M}_{a}}\Psi _{\mu ,p}(u)=\alpha _{p}(a).
\end{equation*}
\end{remark}

\begin{lemma}
\label{T2.4} (\cite[Lemma 2.3]{LY2022}) Let $\overline{p}<p<4^{\ast }$ for $%
N\geq 2$, or $p=4^{\ast }$ for $N\geq 5$. Assume that condition (\ref{E4})
holds. Then $\mathcal{P}_{a}^{0}=\emptyset $, and $\mathcal{P}_{a}$ is a
smooth manifold of codimension $2$ in $H^{2}(\mathbb{R}^{N}).$
\end{lemma}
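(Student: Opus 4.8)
The plan is to establish the two claims separately.

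\emph{Emptiness of $\mathcal{P}_a^0$.} Suppose $u\in\mathcal{P}_a^0$ and abbreviate $X=\|\Delta u\|_2^2$, $Y=\|\nabla u\|_2^2$, $Z=\|u\|_p^p$; since $u\in S_a$ and $p>\overline{p}$ (so $p\gamma_p>2$), all three are positive. The conditions $\phi_u'(1)=0$ and $\phi_u''(1)=0$ read
\begin{equation*}
2X-\mu Y-2\gamma_p Z=0,\qquad 6X-\mu Y-2\gamma_p(2p\gamma_p-1)Z=0 ,
\end{equation*}
from which $X=\gamma_p(p\gamma_p-1)Z$ and $\mu Y=2\gamma_p(p\gamma_p-2)Z$. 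I would then feed these identities into the structural inequalities of Section~2: the interpolation inequality (\ref{E5}), $Y\le aX^{1/2}$, gives the upper bound $X^{1/2}\le \mu a(p\gamma_p-1)/\big(2(p\gamma_p-2)\big)$, while the Gagliardo--Nirenberg inequality (\ref{E6}) combined with $X=\gamma_p(p\gamma_p-1)Z$ and $p\gamma_p>2$ gives the lower bound $X^{(p\gamma_p-2)/2}\ge\big(\gamma_p(p\gamma_p-1)\mathcal{C}_{N,p}^p a^{p(1-\gamma_p)}\big)^{-1}$. Raising the first to the power $p\gamma_p-2$, combining, and using $a^{p(1-\gamma_p)}a^{p\gamma_p-2}=a^{p-2}$, one is forced into
\begin{equation*}
\mu^{p\gamma_p-2}a^{p-2}\ \ge\ \frac{2^{p\gamma_p-2}(p\gamma_p-2)^{p\gamma_p-2}}{\gamma_p\,\mathcal{C}_{N,p}^p\,(p\gamma_p-1)^{p\gamma_p-1}}\ =\ \frac{2^{p\gamma_p-1}}{p\gamma_p}\,C_0\ \ge\ C_0 ,
\end{equation*}
where the last step uses $2^{t-1}\ge t$ for $t=p\gamma_p\ge2$. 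This contradicts (\ref{E4}), so $\mathcal{P}_a^0=\emptyset$.

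\emph{Manifold structure.} Set $G_1(u)=\|u\|_2^2-a^2$ and $G_2(u)=Q_p(u)$, both of class $C^1$ on $H^2(\mathbb{R}^N)$ (here one uses $H^2\hookrightarrow L^p$ for $p\le4^{\ast}$), so that $\mathcal{P}_a=G_1^{-1}(0)\cap G_2^{-1}(0)$. By the implicit function theorem it suffices to check that $dG_1(u)$ and $dG_2(u)$ are linearly independent in $(H^2)^{\ast}$ for every $u\in\mathcal{P}_a$. Since $u\ne0$, $dG_1(u)\ne0$, so if independence failed I would have $dG_2(u)=\beta\,dG_1(u)$ for some $\beta\in\mathbb{R}$, i.e.\ $u$ would be a critical point of $Q_p|_{S_a}$ and hence a weak solution of
\begin{equation*}
2\Delta^2 u+\mu\Delta u-p\gamma_p|u|^{p-2}u=\beta u \quad\text{in }\mathbb{R}^N .
\end{equation*}
By elliptic regularity and the standard decay estimates this is a classical, decaying solution, so the Pohozaev identity is available. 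Testing the equation with $u$ and subtracting $Q_p(u)=0$ gives $\beta a^2=\gamma_p(2-p)Z$; inserting this into the Pohozaev identity and using $Q_p(u)=0$ once more (equivalently $\mu Y=2X-2\gamma_p Z$), a short computation based on $N(p-2)=4p\gamma_p$ collapses everything to $X=\gamma_p(p\gamma_p-1)Z$, that is,
\begin{equation*}
\phi_u''(1)=6X-\mu Y-2\gamma_p(2p\gamma_p-1)Z=4\big(X-\gamma_p(p\gamma_p-1)Z\big)=0 ,
\end{equation*}
so $u\in\mathcal{P}_a^0$ --- impossible by the first part. Hence $(0,0)$ is a regular value of $(G_1,G_2)$ and $\mathcal{P}_a$ is a $C^1$ submanifold of $H^2(\mathbb{R}^N)$ of codimension $2$.

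\emph{Expected difficulty.} The first part is essentially bookkeeping; its only genuine point is that the constant produced by combining (\ref{E5}) and (\ref{E6}) is not smaller than the threshold $C_0$ in (\ref{E4}) --- which, reassuringly, reduces to the elementary inequality $2^t\ge2t$ for $t\ge2$. I expect the main obstacle to be the second part: namely, confirming that a critical point of $Q_p|_{S_a}$ really is a solution of the Euler--Lagrange equation with enough regularity and decay for the Pohozaev identity to be legitimate, in particular in the Sobolev-critical case $p=4^{\ast}$. Granting that, the constraint $Q_p(u)=0$ supplies exactly the extra relation needed to force $\phi_u''(1)=0$, and the first part closes the argument.
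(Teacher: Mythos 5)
Your proof is correct and follows essentially the standard route: note that the paper itself does not prove this lemma but quotes it from \cite[Lemma 2.3]{LY2022}, where the same two steps appear — the two relations obtained from $\phi_u'(1)=\phi_u''(1)=0$ combined with (\ref{E5})--(\ref{E6}) to contradict (\ref{E4}), and a Lagrange-multiplier plus Pohozaev argument for the codimension-two manifold structure. Your bookkeeping checks out: from $X=\gamma_p(p\gamma_p-1)Z$ and $\mu Y=2\gamma_p(p\gamma_p-2)Z$ one indeed gets $\mu^{p\gamma_p-2}a^{p-2}\geq \frac{2^{p\gamma_p-1}}{p\gamma_p}C_0\geq C_0$ (using $2^{t-1}\geq t$ for $t=p\gamma_p\geq 2$), which contradicts the strict inequality (\ref{E4}); and in the second step the identity $\phi_u''(1)=4\bigl(X-\gamma_p(p\gamma_p-1)Z\bigr)$ under $Q_p(u)=0$ is exactly what forces $u\in\mathcal{P}_a^0$. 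The one point you assert rather than establish is the validity of the Pohozaev identity for the weak $H^2$-solution of $2\Delta^2u+\mu\Delta u-p\gamma_p|u|^{p-2}u=\beta u$, in particular in the critical case $p=4^{\ast}$; claiming the solution is ``classical and decaying'' is stronger than what is needed or easily available, and the cleaner fix is to invoke the known Pohozaev identity for $H^2$ weak solutions of biharmonic equations (as in \cite{B2019} or \cite{LY2022}) rather than full classical regularity. With that citation supplied, your argument is complete and coincides with the proof the paper relies on.
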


\begin{lemma}
\label{T2.5} (\cite[Lemma 2.4]{LY2022}) Let $\overline{p}<p<4^{\ast }$ for $%
N\geq 2$, or $p=4^{\ast }$ for $N\geq 5.$ Assume that condition (\ref{E4})
holds. If $u\in \mathcal{P}_{a}$ is a critical point for $\Psi _{\mu ,p}|_{%
\mathcal{P}_{a}}$, then $u$ is a critical point for $\Psi _{\mu
,p}|_{S_{a}}. $
\end{lemma}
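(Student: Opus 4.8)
The plan is to combine the Lagrange multiplier rule with the scaling (fibering) flow $s\mapsto u_s$ in order to eliminate the multiplier attached to the Pohozaev constraint. First, since condition (\ref{E4}) holds, Lemma \ref{T2.4} gives that $\mathcal{P}_{a}^{0}=\emptyset$ and that $\mathcal{P}_{a}$ is a $C^{1}$ manifold of codimension $2$; concretely, writing $G(u):=\|u\|_{2}^{2}-a^{2}$, the differentials $G'(u)$ and $Q_{p}'(u)$ are linearly independent at every point of $\mathcal{P}_{a}$ (this is exactly how the codimension-$2$ structure is obtained). Since $\Psi_{\mu,p}$, $G$ and $Q_{p}$ are $C^{1}$ on $H^{2}(\mathbb{R}^{N})$ and $u\in\mathcal{P}_{a}$ is a critical point of $\Psi_{\mu,p}|_{\mathcal{P}_{a}}$, the classical Lagrange multiplier theorem supplies $\lambda,\nu\in\mathbb{R}$ with
\[
\Psi_{\mu,p}'(u)=\lambda\,G'(u)+\nu\,Q_{p}'(u)\qquad\text{in }\big(H^{2}(\mathbb{R}^{N})\big)^{*}.
\]
It then suffices to show $\nu=0$, for then $\Psi_{\mu,p}'(u)=\lambda G'(u)$, i.e. $\big(\Psi_{\mu,p}|_{S_{a}}\big)'(u)=0$, which is the claim.

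To show $\nu=0$ I would test the identity along the curve $s\mapsto u_{s}$, $u_{s}(x)=s^{N/2}u(sx)$. The three relations needed are $\Psi_{\mu,p}(u_{s})=\phi_{u}(s)$; $\|u_{s}\|_{2}^{2}=\|u\|_{2}^{2}$ for all $s>0$; and $Q_{p}(u_{s})=s\,\phi_{u}'(s)$, the last one following from $\|\Delta u_{s}\|_{2}^{2}=s^{4}\|\Delta u\|_{2}^{2}$, $\|\nabla u_{s}\|_{2}^{2}=s^{2}\|\nabla u\|_{2}^{2}$, $\|u_{s}\|_{p}^{p}=s^{2p\gamma_{p}}\|u\|_{p}^{p}$. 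Pairing the Lagrange identity with the generator $\zeta:=\frac{d}{ds}\big|_{s=1}u_{s}=\frac{N}{2}u+x\cdot\nabla u$ and recognizing each pairing as a derivative along the curve gives
\[
\tfrac{d}{ds}\big|_{s=1}\Psi_{\mu,p}(u_{s})=\lambda\,\tfrac{d}{ds}\big|_{s=1}\|u_{s}\|_{2}^{2}+\nu\,\tfrac{d}{ds}\big|_{s=1}Q_{p}(u_{s}).
\]
Here $\tfrac{d}{ds}\big|_{s=1}\Psi_{\mu,p}(u_{s})=\phi_{u}'(1)=Q_{p}(u)=0$ because $u\in\mathcal{P}_{a}$; $\tfrac{d}{ds}\big|_{s=1}\|u_{s}\|_{2}^{2}=0$; and $\tfrac{d}{ds}\big|_{s=1}Q_{p}(u_{s})=\tfrac{d}{ds}\big|_{s=1}\big(s\,\phi_{u}'(s)\big)=\phi_{u}'(1)+\phi_{u}''(1)=\phi_{u}''(1)$. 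Hence $0=\nu\,\phi_{u}''(1)$, and since $\mathcal{P}_{a}^{0}=\emptyset$ we have $\phi_{u}''(1)\neq0$, so $\nu=0$. Therefore $\Psi_{\mu,p}'(u)=\lambda G'(u)$ and $u$ is a critical point of $\Psi_{\mu,p}|_{S_{a}}$.

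The arithmetic above is routine; the genuine input is (a) $\mathcal{P}_{a}^{0}=\emptyset$, i.e. $\phi_{u}''(1)\neq0$ on $\mathcal{P}_{a}$, which is precisely what Lemma \ref{T2.4} provides under condition (\ref{E4}) and is the reason the argument closes. A secondary, purely technical point is (b) that $\zeta=\frac{N}{2}u+x\cdot\nabla u$ need not belong to $H^{2}(\mathbb{R}^{N})$ for a general $u\in H^{2}(\mathbb{R}^{N})$. I would deal with (b) in the standard way: the maps $s\mapsto\Psi_{\mu,p}(u_{s})$, $s\mapsto\|u_{s}\|_{2}^{2}$, $s\mapsto Q_{p}(u_{s})$ are smooth (explicit) in $s$, and the chain-rule identity $\tfrac{d}{ds}F(u_{s})=\langle F'(u_{s}),\partial_{s}u_{s}\rangle$ remains valid after integration by parts so that only $u$ itself (not $x\cdot\nabla u$) appears in each integral; alternatively one runs the computation for $u$ in a dense subclass and passes to the limit. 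Modulo this, the proof is complete.
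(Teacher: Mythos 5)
The paper gives no proof of this lemma: it is quoted verbatim from \cite[Lemma 2.4]{LY2022}, and your argument is essentially that standard proof — the Lagrange rule on the codimension-two manifold (Lemma \ref{T2.4}), elimination of the multiplier $\nu$ attached to $Q_{p}$ by differentiating along the dilation flow, and $\mathcal{P}_{a}^{0}=\emptyset$ to turn $\nu\,\phi_{u}''(1)=0$ into $\nu=0$. One caveat on your technical point (b): the fallback ``run the computation for $u$ in a dense subclass and pass to the limit'' is not available, because the Lagrange identity $\Psi_{\mu,p}'(u)=\lambda G'(u)+\nu Q_{p}'(u)$ holds only at the particular critical point $u$ and is not preserved by approximating $u$; the correct justification is your first route, namely that this identity says $u$ weakly solves $(1-4\nu)\Delta^{2}u+(1-2\nu)\mu\Delta u-2\lambda u=(1-2p\gamma_{p}\nu)|u|^{p-2}u$, and the Pohozaev identity for this equation (obtained via elliptic regularity and the usual truncated multiplier $x\cdot\nabla u$) is precisely the relation $\frac{d}{ds}\big|_{s=1}\bigl[\Psi_{\mu,p}-\lambda G-\nu Q_{p}\bigr](u_{s})=0$, i.e. $\nu\,\phi_{u}''(1)=0$, which closes the argument.
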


We now investigate the fiber map $\phi _{u}$ and determine the location and
type of the point $\Psi _{\mu ,p}|_{s_{a}}$. Using the interpolation and
Gagliardo-Nirenberg inequalities (\ref{E5})--(\ref{E6}), for each $u\in
S_{a}$, there holds
\begin{align*}
\Psi _{\mu ,p}(u)& \geq \frac{1}{2}\Vert \Delta u\Vert _{2}^{2}-\frac{\mu a}{%
2}\Vert \Delta u\Vert _{2}^{2}-\frac{\mathcal{C}_{N,p}^{p}}{p}a^{p(1-\gamma
_{p})}\Vert \Delta u\Vert _{2}^{p\gamma _{p}} \\
& =\frac{1}{2}s^{2}-\frac{\mu a}{2}s-\frac{\mathcal{C}_{N,p}^{p}}{p}%
a^{p(1-\gamma _{p})}s^{p\gamma _{p}} \\
& =:g_{p}(s).
\end{align*}%
Then, we firstly consider the properties of the function $g_{p}:\mathbb{R}%
^{+}\rightarrow \mathbb{R}$.

\begin{lemma}
\label{T2.6} Let $\overline{p}<p<4^{\ast }$ for $N\geq 2$, or $p=4^{\ast }$
for $N\geq 5$. Assume that condition (\ref{E4}) holds. Then the function $%
g_{p}$ has a local strict minimum point $s_{1}$ and a global strict maximum
point $s_{2}$. Moreover, the following statements are true.\newline
$(i)$ There exist two positive constants $R_{0}$ and $R_{1}$ such that
\begin{equation*}
0<s_{1}<R_{0}<\overline{s}<s_{2}<R_{1},\text{ }g_{p}(R_{0})=0=g_{p}(R_{1}),%
\text{ }g_{p}(s_{1})<g_{p}(R_{0})<g_{p}(\overline{s})<g_{p}(s_{2}),
\end{equation*}%
and $g_{p}(s)<0$ for each $s\in (0,R_{0})\cup (R_{1},\infty )$, $g_{p}(s)>0$
for each $s\in (R_{0},R_{1})$. Here $\overline{s}:=\left[ \frac{pa^{p(\gamma
_{p}-1)}}{2(p\gamma _{p}-1)\mathcal{C}_{N,p}^{p}}\right] ^{\frac{1}{p\gamma
_{p}-2}}$.\newline
$(ii)$ $\lim\limits_{\mu \rightarrow 0^{+}}g_{p}(s_{1})=0$ and $%
\lim\limits_{\mu \rightarrow 0^{+}}g_{p}(s_{2})=\frac{p\gamma _{p}-2}{%
2p\gamma _{p}}\left( \frac{a^{p(\gamma _{p}-1)}}{\mathcal{C}_{N,p}^{p}\gamma
_{p}}\right) ^{\frac{2}{p\gamma _{p}-2}}>0$.
\end{lemma}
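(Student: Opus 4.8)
The plan is to reduce everything to the single-variable function $h_{a}$ of Lemma \ref{T2.1} via the factorization $g_{p}(s)=s\,h_{a}(s)$, and then read off the critical-point structure of $g_{p}$ from an elementary convexity analysis. Set $c:=\mathcal{C}_{N,p}^{p}a^{p(1-\gamma_{p})}/p>0$, so that $g_{p}(s)=\frac{1}{2}s^{2}-\frac{\mu a}{2}s-c\,s^{p\gamma_{p}}$ with $p\gamma_{p}>2$ (because $p>\overline{p}$), and a direct computation gives $g_{p}(s)=s\,h_{a}(s)$. Under condition (\ref{E4}), Lemma \ref{T2.1} shows $h_{a}$ is negative near $0$, strictly increases to its unique maximum $h_{a}(\overline{s})>0$ at $\overline{s}=\rho_{a}$, and then strictly decreases to $-\infty$; hence $h_{a}$ has exactly two zeros $R_{0}<\overline{s}<R_{1}$, with $h_{a}<0$ on $(0,R_{0})\cup(R_{1},\infty)$ and $h_{a}>0$ on $(R_{0},R_{1})$. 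Multiplying by $s>0$ gives $g_{p}(R_{0})=g_{p}(R_{1})=0$, $g_{p}<0$ on $(0,R_{0})\cup(R_{1},\infty)$, and $g_{p}>0$ on $(R_{0},R_{1})$, which already yields all the sign assertions in (i).

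To locate the critical points, note that $g_{p}''(s)=1-cp\gamma_{p}(p\gamma_{p}-1)s^{p\gamma_{p}-2}$ is strictly decreasing (since $p\gamma_{p}-2>0$), equals $1$ at $s=0$, and tends to $-\infty$, so it has a unique zero $s^{*}$, independent of $\mu$; consequently $g_{p}'$ is strictly increasing on $(0,s^{*})$ and strictly decreasing on $(s^{*},\infty)$, with $g_{p}'(0)=-\mu a/2<0$ and $g_{p}'(s)\to-\infty$. Since $h_{a}'(\overline{s})=0$, we have $g_{p}'(\overline{s})=h_{a}(\overline{s})+\overline{s}\,h_{a}'(\overline{s})=h_{a}(\overline{s})>0$, so $g_{p}'$ attains a positive value and therefore has exactly two zeros $s_{1}<s^{*}<s_{2}$, with $g_{p}'<0$ on $(0,s_{1})$, $g_{p}'>0$ on $(s_{1},s_{2})$, and $g_{p}'<0$ on $(s_{2},\infty)$. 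Hence $s_{1}$ is a strict local minimum and $s_{2}$ a strict local maximum, $g_{p}$ is strictly decreasing, increasing, decreasing on the three corresponding intervals, and $\overline{s}\in(s_{1},s_{2})$. Because $g_{p}(0)=0$ and $g_{p}$ decreases on $(0,s_{1})$ we get $g_{p}(s_{1})<0$, so $s_{1}\in(0,R_{0})$ by the sign analysis; because $g_{p}$ increases on $(s_{1},s_{2})$ and $\overline{s}<s_{2}$ we get $g_{p}(s_{2})>g_{p}(\overline{s})=\overline{s}\,h_{a}(\overline{s})>0=g_{p}(R_{0})>g_{p}(s_{1})$, so $s_{2}\in(R_{0},R_{1})$, and comparing $g_{p}(s_{2})>0$ with the sign analysis (and with $g_{p}\le0$ on $(0,s_{1}]$) shows $g_{p}(s_{2})=\max_{s>0}g_{p}$ is the global maximum. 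This gives the ordering $0<s_{1}<R_{0}<\overline{s}<s_{2}<R_{1}$ together with the value chain in (i).

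For part (ii), fixing $a$ and letting $\mu\to0^{+}$ (so that condition (\ref{E4}) holds once $\mu$ is small), I would compare $g_{p}$ with the $\mu$-independent profile $\widetilde{g}(s):=\frac{1}{2}s^{2}-c\,s^{p\gamma_{p}}$, using $|g_{p}(s)-\widetilde{g}(s)|=\frac{\mu a}{2}s$, which is uniformly small on bounded intervals. For $s_{1}$: the equation $g_{p}'(s_{1})=0$ reads $\frac{\mu a}{2}=s_{1}-cp\gamma_{p}s_{1}^{p\gamma_{p}-1}=:k(s_{1})$, and on $(0,s^{*})$ we have $k'=g_{p}''>0$ with $k(0)=0$, so $k$ is an increasing bijection there and $s_{1}\to0^{+}$; since $h_{a}$ stays bounded on $[0,s^{*}]$ for small $\mu$, $|g_{p}(s_{1})|=s_{1}|h_{a}(s_{1})|\to0$. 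For $s_{2}$: writing $\widetilde{h}(s):=\frac{1}{2}s-c\,s^{p\gamma_{p}-1}$, the larger zero $R_{1}$ of $h_{a}=\widetilde{h}-\mu a/2$ satisfies $\widetilde{h}(R_{1})=\mu a/2>0$, hence $R_{1}<s_{\widetilde{h}}:=(1/2c)^{1/(p\gamma_{p}-2)}$, so $s_{2}\in(0,s_{\widetilde{h}})$ for all small $\mu$; since $\widetilde{g}$ attains its maximum over $(0,\infty)$ at $\widetilde{s}=(cp\gamma_{p})^{-1/(p\gamma_{p}-2)}\in(0,s_{\widetilde{h}})$, the uniform estimate gives $g_{p}(s_{2})=\max_{[0,s_{\widetilde{h}}]}g_{p}\to\max_{[0,s_{\widetilde{h}}]}\widetilde{g}=\widetilde{g}(\widetilde{s})$, and computing (with $cp\gamma_{p}=\mathcal{C}_{N,p}^{p}a^{p(1-\gamma_{p})}\gamma_{p}$) yields $\widetilde{g}(\widetilde{s})=\frac{p\gamma_{p}-2}{2p\gamma_{p}}\left(\frac{a^{p(\gamma_{p}-1)}}{\mathcal{C}_{N,p}^{p}\gamma_{p}}\right)^{2/(p\gamma_{p}-2)}>0$, the claimed limit.

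The main obstacle is the compactness bookkeeping in part (ii): one has to verify that the maximizer $s_{2}$ remains in a fixed bounded interval as $\mu\to0^{+}$, so that the uniform convergence $g_{p}\to\widetilde{g}$ on compacts can be used to pass the maximum to the limit; this is exactly where the comparison of the zero $R_{1}$ of $h_{a}$ with the zero of the limiting profile $\widetilde{h}$ enters. Everything else is routine one-variable calculus built on the factorization $g_{p}=s\,h_{a}$ and the strict monotonicity of $g_{p}''$ (itself a consequence of $p\gamma_{p}>2$).
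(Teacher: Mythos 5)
Your proof is correct, and it is genuinely more than what the paper records: for part (i) the paper simply cites \cite{LY2022} for $\overline{p}<p<4^{\ast}$ and declares the case $p=4^{\ast}$ "similar", and for part (ii) it only observes the pointwise limit of $g_{p}$ as $\mu\to0^{+}$ and says the conclusion follows "easily". Your argument fills in both halves in a self-contained way. The factorization $g_{p}(s)=s\,h_{a}(s)$ ties the zero set and sign of $g_{p}$ directly to Lemma \ref{T2.1} (note, incidentally, that $h_{a}(s)\to-\mu a/2<0$ as $s\to0^{+}$, consistent with your "negative near $0$", even though the paper's proof of Lemma \ref{T2.1} misstates this limit as $0$); the observation $g_{p}'(\overline{s})=h_{a}(\overline{s})+\overline{s}\,h_{a}'(\overline{s})=h_{a}(\overline{s})>0$, combined with the strict monotonicity of $g_{p}''$, gives exactly two critical points and the ordering $0<s_{1}<R_{0}<\overline{s}<s_{2}<R_{1}$ together with the value chain, covering $p=4^{\ast}$ with no extra work since only $p\gamma_{p}>2$ is used. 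For (ii), your treatment addresses precisely what the paper's one-line remark glosses over: pointwise convergence $g_{p}\to\widetilde{g}$ alone does not pass maxima to the limit, and your uniform bound $s_{2}<R_{1}<s_{\widetilde{h}}=(2c)^{-1/(p\gamma_{p}-2)}$ (a $\mu$-independent interval) plus the uniform estimate $|g_{p}-\widetilde{g}|\leq\frac{\mu a}{2}s_{\widetilde{h}}$ on that interval legitimately yields $g_{p}(s_{2})\to\widetilde{g}(\widetilde{s})$, and the closed-form evaluation of $\widetilde{g}(\widetilde{s})$ matches the stated constant; likewise $s_{1}=k^{-1}(\mu a/2)\to0$ with $k$ independent of $\mu$ gives $g_{p}(s_{1})\to0$. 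In short, the paper's route buys brevity by outsourcing (i) and asserting (ii), while yours buys a complete elementary verification, with the only genuinely non-routine ingredient being the compactness bookkeeping for $s_{2}$ that you correctly identified and supplied.
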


\begin{proof}
$(i)$ The case of $\overline{p}<p<4^{\ast }$ have been shown in \cite{LY2022}%
. Moreover, the proof of the case $p=4^{\ast }$ is similar to that of the
case of $\overline{p}<p<4^{\ast }$. So, we omit it here.

$(ii)$ Since $\lim\limits_{\mu \rightarrow 0}g_{p}(s)=\frac{1}{2}s^{2}-\frac{%
\mathcal{C}_{N,p}^{p}}{p}a^{p(1-\gamma _{p})}s^{p\gamma _{p}}$, we easily
obtain the conclusion of $(ii)$.
\end{proof}

\begin{lemma}
\label{T2.7} Let $\overline{p}<p<4^{\ast }$ for $N\geq 2$, or $p=4^{\ast }$
for $N\geq 5$. Assume that condition (\ref{E4}) holds. For every $u\in
S_{a}$, the function $\phi _{u}(s)$ has exactly two critical points $0<s_{u}<%
\overline{s}_{u}$. Moreover, the following statements are true.\newline
$(i)$ $\phi _{u}^{\prime }(s_{u})=\phi _{u}^{\prime }(\overline{s}_{u})=0$, $%
\phi _{u}^{\prime }(s)<0$ for each $s\in (0,s_{u})\cup (\overline{s}%
_{u},+\infty )$ and $\phi _{u}^{\prime }(s)>0$ for each $s\in (s_{u},%
\overline{s}_{u})$.\newline
$(ii)$ $u_{s_{u}}\in \mathcal{P}_{a}^{+}$ and $u_{\overline{s}_{u}}\in
\mathcal{P}_{a}^{-}$. And if $u_{s}\in \mathcal{P}_{a}$, then either $%
s=s_{u} $ or $s=\overline{s}_{u}$.\newline
$(iii)$%
\begin{equation*}
\Vert \Delta u_{s_{u}}\Vert _{2}<\frac{\mu a(p\gamma _{p}-1)}{p\gamma _{p}-2}%
<\rho_{a}<\Vert \Delta u_{\overline{s}_{u}}\Vert _{2},
\end{equation*}%
where $\rho_{a}$ is given in (\ref{E10}).\newline
$(iv)$%
\begin{equation*}
g_{p}(s_{1})\leq \Psi_{\mu,p}(u_{s_{u}})=\min \left\{ \Psi _{\mu
}(u_{s}):s\geq 0\ \text{and}\ \Vert \Delta u_{s}\Vert _{2}<\rho_{a}\right\} <0
\end{equation*}%
and
\begin{equation*}
\Psi _{\mu ,p}(u_{\overline{s}_{u}})=\max \left\{ \Psi _{\mu
,p}(u_{s}):s\geq 0\right\} =\max \left\{ \Psi _{\mu ,p}(u_{s}):s\geq 0\
\text{and}\ \Vert \Delta u_{s}\Vert _{2}>\rho_{a}\right\} \geq
g_{p}(s_{2})>0.
\end{equation*}%
Here, $g_{p}(s_{1})$ and $g_{p}(s_{2})$ are given in Lemma \ref{T2.6}.%
\newline
$(v)$ The map $u\in S_{a}\mapsto s_{u}$ and $u\in S_{a}\mapsto \overline{s}%
_{u}$ are of class $C^{1}$.
\end{lemma}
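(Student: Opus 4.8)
The plan is to reduce everything to a one-variable analysis of $\phi_u$. Write $\phi_u'(s)=s\,\psi_u(s)$ where $\psi_u(s):=2s^{2}\|\Delta u\|_{2}^{2}-\mu\|\nabla u\|_{2}^{2}-2\gamma_p s^{2p\gamma_p-2}\|u\|_p^p$. Since $p>\overline p$ we have $2p\gamma_p-2>2$, so $\psi_u(0^{+})=-\mu\|\nabla u\|_2^2<0$ (note $\nabla u\not\equiv0$ for $u\in S_a$) and $\psi_u(s)\to-\infty$ as $s\to+\infty$. Factoring $s$ out of $\psi_u'$ shows the bracket is strictly decreasing from $4\|\Delta u\|_2^2$ to $-\infty$, so $\psi_u'$ has a single zero $s^{*}>0$, with $\psi_u$ strictly increasing on $(0,s^{*})$ and strictly decreasing on $(s^{*},+\infty)$; hence $\psi_u$ has \emph{at most} two zeros. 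That it has exactly two follows from the pointwise bound $\phi_u(s)=\Psi_{\mu,p}(u_s)\ge g_p(s^{2}\|\Delta u\|_2)$ (valid because $u_s\in S_a$ and $\|\Delta u_s\|_2=s^{2}\|\Delta u\|_2$): taking $\widehat s:=(\rho_a/\|\Delta u\|_2)^{1/2}$, so that $\widehat s^{2}\|\Delta u\|_2=\rho_a=\overline s$, gives $\phi_u(\widehat s)\ge g_p(\overline s)>0$ by Lemma~\ref{T2.6}, while $\phi_u(s)<0$ for small $s$ (the $-\tfrac{\mu}{2}s^{2}\|\nabla u\|_2^2$ term dominates) and $\phi_u(s)\to-\infty$. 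Thus $\phi_u$ is not monotone, $\psi_u$ must change sign twice, and we obtain the two critical points $0<s_u<s^{*}<\overline s_u$ with the sign pattern of $\phi_u'$ asserted in~(i).

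For (ii): $\psi_u$ is strictly increasing at $s_u$ and strictly decreasing at $\overline s_u$, so from $\phi_u''(s)=\psi_u(s)+s\psi_u'(s)$ we get $\phi_u''(s_u)=s_u\psi_u'(s_u)>0$ and $\phi_u''(\overline s_u)=\overline s_u\psi_u'(\overline s_u)<0$. Using $(u_\sigma)_\tau=u_{\sigma\tau}$, hence $\phi_{u_\sigma}'(1)=\sigma\phi_u'(\sigma)$ and $\phi_{u_\sigma}''(1)=\sigma^{2}\phi_u''(\sigma)$, together with $Q_p(v)=\phi_v'(1)$, this gives $u_{s_u}\in\mathcal P_a^{+}$, $u_{\overline s_u}\in\mathcal P_a^{-}$, and that $u_s\in\mathcal P_a$ (for $s>0$) is equivalent to $\phi_u'(s)=0$, i.e. $s\in\{s_u,\overline s_u\}$.

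For (iii)--(iv) I use the shape of $\phi_u$ now known: decreasing on $(0,s_u)$, increasing on $(s_u,\overline s_u)$, decreasing on $(\overline s_u,+\infty)$, with $\phi_u(0)=0$; hence $\phi_u(s_u)<0<\phi_u(\overline s_u)$. The whole of (iii)--(iv) then follows once $\widehat s$ is placed strictly between $s_u$ and $\overline s_u$, since $\sigma\mapsto\sigma^{2}\|\Delta u\|_2$ is increasing and the min/max characterisations are immediate from the monotonicity. The inclusion $\widehat s\in(s_u,\overline s_u)$ is equivalent to $\psi_u(\widehat s)>0$; substituting $\widehat s^{2}\|\Delta u\|_2=\rho_a$ into $\psi_u$, estimating $\|u\|_p^p$ and $\|\nabla u\|_2^2$ via \eqref{E5}--\eqref{E6}, and using the defining identity $\mathcal C_{N,p}^{p}a^{p(1-\gamma_p)}\rho_a^{p\gamma_p-2}=\tfrac{p}{2(p\gamma_p-1)}$ from \eqref{E10}, the right-hand side collapses to $\|\Delta u\|_2\bigl(\tfrac{p\gamma_p-2}{p\gamma_p-1}\rho_a-\mu a\bigr)=2\|\Delta u\|_2\,h_a(\rho_a)$, which is positive by Lemma~\ref{T2.1} under \eqref{E4}. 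The remaining bound $\|\Delta u_{s_u}\|_2<\tfrac{\mu a(p\gamma_p-1)}{p\gamma_p-2}$ comes from $u_{s_u}\in\mathcal P_a$, $\Psi_{\mu,p}(u_{s_u})<0$ and \eqref{E14}, while $\tfrac{\mu a(p\gamma_p-1)}{p\gamma_p-2}<\rho_a$ is once more $h_a(\rho_a)>0$. Finally $g_p(s_1)\le\Psi_{\mu,p}(u_{s_u})$ follows from $\phi_u(s_u)\ge g_p(s_u^{2}\|\Delta u\|_2)$ with $s_u^{2}\|\Delta u\|_2\in(0,\rho_a)\subset(0,R_1)$, on which $g_p\ge g_p(s_1)$ by Lemma~\ref{T2.6}; and $\Psi_{\mu,p}(u_{\overline s_u})\ge g_p(s_2)$ follows from $\max_{s>0}\phi_u(s)\ge\max_{s>0}g_p(s^{2}\|\Delta u\|_2)=\max_{t>0}g_p(t)=g_p(s_2)$.

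For (v) I invoke the implicit function theorem for $F(u,s):=\psi_u(s)$ on the $C^{1}$ manifold $S_a\times(0,\infty)$: $F$ is $C^{1}$ (each of $\|\Delta\cdot\|_2^2$, $\|\nabla\cdot\|_2^2$ and $\|\cdot\|_p^p$ is a $C^{1}$ functional on $H^{2}(\mathbb R^{N})$, the last one also for $p=4^{\ast}$), and $\partial_s F(u,s_u)=\psi_u'(s_u)>0$, $\partial_s F(u,\overline s_u)=\psi_u'(\overline s_u)<0$; hence $u\mapsto s_u$ and $u\mapsto\overline s_u$ are locally $C^{1}$, and globally well defined by the uniqueness of the two critical points. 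I expect the one genuinely delicate step to be the string of identities in (iii) forcing $\widehat s\in(s_u,\overline s_u)$: that is precisely, and only, where condition \eqref{E4} (equivalently $\max_{s>0}h_a(s)>0$) enters; the rest is bookkeeping with the monotonicity of $\phi_u$ and with \eqref{E5}--\eqref{E6}.
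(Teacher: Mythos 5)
Your proposal is correct, and its skeleton matches the paper's: factor $\phi_u'(s)=s\psi_u(s)$ with $\psi_u$ (the paper's $\tilde f$) strictly unimodal, so everything reduces to showing $\psi_u$ takes positive values, after which (i)--(ii) follow from the sign pattern and the scaling identities, and (iii)--(v) from (\ref{E5})--(\ref{E6}), (\ref{E14}), $g_p$, $h_a$ and the implicit function theorem. You deviate at two spots. To produce the two sign changes, the paper evaluates $\tilde f$ at its maximizer $\tilde s_u$ and checks positivity directly from condition (\ref{E4}), whereas you evaluate $\phi_u$ at the point $\widehat s$ with $\|\Delta u_{\widehat s}\|_2=\rho_a$ and invoke the pointwise bound $\Psi_{\mu,p}(u_s)\ge g_p(s^2\|\Delta u\|_2)$ together with Lemma \ref{T2.6}; both computations collapse to the same inequality coming from (\ref{E4}). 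The more genuine divergence is in (iii): for $\rho_a<\|\Delta u_{\overline s_u}\|_2$ the paper uses $u_{\overline s_u}\in\mathcal P_a^{-}$, the reduction of $\Psi_{\mu,p}$ on $\mathcal P_a$ to $-\tfrac12\|\Delta\cdot\|_2^2+\tfrac{p\gamma_p-1}{p}\|\cdot\|_p^p$, and Gagliardo--Nirenberg, while you show directly that $\psi_u(\widehat s)\ge 2\|\Delta u\|_2\,h_a(\rho_a)>0$, which places $\widehat s$ strictly between $s_u$ and $\overline s_u$ and delivers $\|\Delta u_{s_u}\|_2<\rho_a<\|\Delta u_{\overline s_u}\|_2$ in one stroke; this is slightly more economical and makes explicit that (\ref{E4}) enters only once. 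The remaining ingredients --- (\ref{E14}) for $\|\Delta u_{s_u}\|_2<\tfrac{\mu a(p\gamma_p-1)}{p\gamma_p-2}$, the bound $h_a(\rho_a)>0$ for $\tfrac{\mu a(p\gamma_p-1)}{p\gamma_p-2}<\rho_a$, the $g_p$-comparisons in (iv), and the IFT argument for (v) --- coincide with the paper's.
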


\begin{proof}
Let $u\in S_{a}$, then there holds that $u_{s}\in \mathcal{P%
}_{a}^{+}$ if and only if $\phi _{u}^{\prime }(s)=0$. Clearly, it is easy to
obtain that $\lim\limits_{s\rightarrow 0}\phi _{u}(s)=0$ and $%
\lim\limits_{s\rightarrow \infty }\phi _{u}(s)=-\infty $. Thus, we only need
to prove that there exists $s>0$ such that $\phi _{u}^{\prime }(s)>0$. Set $%
\tilde{f}(s)=2s^{2}\Vert \Delta u\Vert _{2}^{2}-2\gamma _{p}s^{2p\gamma
_{p}-2}\Vert u\Vert _{p}^{p}-\mu \Vert \nabla u\Vert _{2}^{2}$ for $s>0$.
Clearly, $\lim\limits_{s\rightarrow 0}\tilde{f}(s)<0$ and $%
\lim\limits_{s\rightarrow \infty }\tilde{f}(s)=-\infty $. Moreover, since (\ref{E4}) holds, we have
\begin{align*}
\tilde{f}(\tilde{s}_{u})=\max\limits_{s\geq 0}\tilde{f}(s)=& \left( \frac{%
2p\gamma _{p}-4}{p\gamma _{p}-1}\right) \left( \frac{\Vert \Delta u\Vert
_{2}^{2}}{\gamma _{p}(p\gamma _{p}-1)\Vert u\Vert _{p}^{p}}\right) ^{\frac{1%
}{p\gamma _{p}-2}}\Vert \Delta u\Vert _{2}^{2}-\mu \Vert \nabla u\Vert
_{2}^{2} \\
\geq & \left[ \frac{2p\gamma _{p}-4}{p\gamma _{p}-1}\left( \frac{1}{\gamma
_{p}(p\gamma _{p}-1)\mathcal{C}_{N,p}^{p}a^{p(1-\gamma _{p})}}\right) ^{%
\frac{1}{p\gamma _{p}-2}}-\mu a\right] \Vert \Delta u\Vert _{2} \\
>& 0,
\end{align*}%
where $\tilde{s}_{u}:=\left( \frac{\Vert \Delta u\Vert _{2}^{2}}{\gamma
_{p}(p\gamma _{p}-1)\Vert u\Vert _{p}^{p}}\right) ^{\frac{1}{2(p\gamma
_{p}-2)}}$. Then there exist $s_{u}<\tilde{s}_{u}<\overline{s}_{u}$ such
that $\tilde{f}(s_{u})=\tilde{f}(\overline{s}_{u})=0$. Moreover, there hold $%
\tilde{f}(s)<0$ for each $s\in (0,s_{u})\cup (\overline{s}_{u},+\infty )$
and $\tilde{f}(s)>0$ for each $s\in (s_{u},\overline{s}_{u})$. Since $\phi
_{u}^{\prime }(s)=s\tilde{f}(s)$, we have $\phi _{u}^{\prime }(s_{u})=\phi
_{u}^{\prime }(\overline{s}_{u})=0$, $\phi _{u}^{\prime }(s)<0$ for each $%
s\in (0,s_{u})\cup (\overline{s}_{u},+\infty )$ and $\phi _{u}^{\prime
}(s)>0 $ for each $s\in (s_{u},\overline{s}_{u})$. This implies that $%
u_{s_{u}}\in \mathcal{P}_{a}^{+}$, $u_{\overline{s}_{u}}\in \mathcal{P}%
_{a}^{-}$ and $\phi _{u}(s)$ has only two critical points. So, $(i)$ and $%
(ii)$ hold.

Since $\lim\limits_{s\rightarrow 0^{-}}\Psi _{\mu ,p}(u_{s})=0^{-}$ and $%
\phi _{u}^{\prime }(s)<0$ for each $s\in (0,s_{u})$, we have $\Psi _{\mu
,p}(u_{s_{u}})<0$. Combining with $u_{s_{u}}\in \mathcal{P}_{a}^{+}$, we
have
\begin{align*}
0& >\frac{p\gamma _{p}-2}{2p\gamma _{p}}\Vert \Delta u_{s_{u}}\Vert _{2}^{2}-%
\frac{p\gamma _{p}-1}{2p\gamma _{p}}\mu \Vert \nabla u_{s_{u}}\Vert _{2}^{2}
\\
& \geq \frac{p\gamma _{p}-2}{2p\gamma _{p}}\Vert \Delta u_{s_{u}}\Vert
_{2}^{2}-\frac{p\gamma _{p}-1}{2p\gamma _{p}}\mu a\Vert \Delta u_{s_{u}}\Vert_{2},
\end{align*}%
which implies that $\Vert \Delta u_{s_{u}}\Vert _{2}<\frac{\mu a(p\gamma
_{p}-1)}{p\gamma _{p}-2}$. A direct calculation shows that
\begin{align*}
\Psi _{\mu ,p}(u_{s})& =\frac{1}{2}\Vert \Delta u_{s}\Vert _{2}^{2}-\frac{1}{%
2}\mu \Vert \nabla u_{s}\Vert _{2}^{2}-\frac{1}{p}\Vert u_{s}\Vert _{p}^{p}
\\
& \geq \frac{1}{2}\Vert \Delta u_{s}\Vert _{2}^{2}-\frac{1}{2}\mu a\Vert
\Delta u_{s}\Vert _{2}-\frac{\mathcal{C}_{N,p}^{p}a^{p(1-\gamma _{p})}}{p}%
\Vert \Delta u_{s}\Vert _{2}^{p\gamma _{p}} \\
& =\frac{1}{2}s^{4}\Vert \Delta u\Vert _{2}^{2}-\frac{1}{2}\mu as^{2}\Vert
\Delta u\Vert _{2}-\frac{\mathcal{C}_{N,p}^{p}a^{p(1-\gamma _{p})}}{p}%
s^{2p\gamma _{p}}\Vert \Delta u\Vert _{2}^{p\gamma _{p}}.
\end{align*}%
Let $\tilde{s}=s^{2}\Vert \Delta u\Vert _{2}$, we get $\Psi _{\mu
,p}(u_{s})\geq g_{p}(\tilde{s})$. By Lemma \ref{T2.6}, one has $0>\Psi _{\mu
,p}(u_{s_{u}})\geq g_{p}(s_{1})$ and $\Psi _{\mu ,p}(u_{\overline{s}%
_{u}})\geq g_{p}(s_{2})>0$. Since $u_{\overline{s}_{u}}\in \mathcal{P}%
_{a}^{-}$, one has
\begin{align*}
0& <-\frac{1}{2}\Vert \Delta u_{\overline{s}_{u}}\Vert _{2}^{2}+\frac{%
p\gamma _{p}-1}{p}\Vert u_{\overline{s}_{u}}\Vert _{p}^{p} \\
& \leq -\frac{1}{2}\Vert \Delta u_{\overline{s}_{u}}\Vert _{2}^{2}+\frac{%
p\gamma _{p}-1}{p}\mathcal{C}_{N,p}^{p}a^{p(1-\gamma _{p})}\Vert \Delta u_{%
\overline{s}_{u}}\Vert _{2}^{p\gamma _{p}},
\end{align*}%
which implies that $\Vert \Delta u_{\overline{s}_{u}}\Vert _{2}>\left( \frac{%
p}{2(p\gamma _{p}-1)\mathcal{C}_{N,p}^{p}a^{p(1-\gamma _{p})}}\right) ^{%
\frac{1}{p\gamma _{p}-2}}$. Since $\mu ^{p\gamma _{p}-2}a^{p-2}<C_{0}$, we
deduce that
\begin{equation*}
\Vert \Delta u_{\overline{s}_{u}}\Vert _{2}<\frac{\mu a(p\gamma _{p}-1)}{%
p\gamma _{p}-2}<\left( \frac{p}{2(p\gamma _{p}-1)\mathcal{C}%
_{N,p}^{p}a^{p(1-\gamma _{p})}}\right) ^{\frac{1}{p\gamma _{p}-2}}=\rho_{a}<\Vert \Delta u_{\overline{s}_{u}}\Vert _{2}.
\end{equation*}%
So, $(iii)$ holds. From $(ii)$ and $(iii)$, it is easily seen that $(iv)$
holds. Finally, we can apply the implicit function theorem on the $C^{1}$
function $\Phi (s,u):=\phi _{u}^{\prime }(s)$ to show that the map $u\in
S_{a}\mapsto s_{u}$ and $u\in S_{a}\mapsto \overline{s}_{u}$ are both of
class $C^{1}$. Similar to \cite[Lemma 2.6 (4)]{LY2022}, $(v)$ holds. The
proof is complete.
\end{proof}

\begin{corollary}
\label{L3.5} Let $\overline{p}<p<4^{\ast }$ for $N\geq 2$, or $p=4^{\ast }$
for $N\geq 5$. Assume that condition (\ref{E4}) holds. Then the following
statements are true.\newline
$(i)$%
\begin{equation*}
\mathcal{P}_{a}^{+}\subset \mathcal{M}_{a}:=\left\{ u\in S_{a}:\Vert \Delta
u\Vert _{2}<\rho_{0}\right\} ;
\end{equation*}%
\newline
$(ii)$%
\begin{equation*}
0>m_{\mu ,p}(a)=\inf\limits_{u\in \mathcal{P}_{a}}\Psi _{\mu
,p}(u)=\inf\limits_{u\in \mathcal{P}_{a}^{+}}\Psi _{\mu
,p}(u)=\inf\limits_{u\in \mathcal{M}_{a}}\Psi _{\mu ,p}(u)=\alpha
_{p}(a)\geq g_{p}(s_{1});
\end{equation*}%
\newline
$(iii)$%
\begin{equation*}
M_{\mu ,p}(a)\geq g_{p}(s_{2})>0.
\end{equation*}%
Here, $g_{p}(s_{1}),g_{p}(s_{2})$ are given in Lemma \ref{T2.6}.
\end{corollary}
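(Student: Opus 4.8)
The plan is to extract the three statements from the fine description of the fiber map $\phi _{u}$ obtained in Lemmas \ref{T2.6}--\ref{T2.7}, together with the facts that $\mathcal{P}_{a}^{0}=\emptyset $ (Lemma \ref{T2.4}), that $\mathcal{P}_{a}^{\pm }\neq \emptyset $ (Lemma \ref{T2.7}$(ii)$), and that $a_{0}$ is characterized, via (\ref{E11}), by $h_{a_{0}}(\rho _{0})=\max_{s>0}h_{a_{0}}(s)=0$, i.e. $\rho _{0}=\frac{\mu a_{0}(p\gamma _{p}-1)}{p\gamma _{p}-2}$. I shall also use that the threshold $\overline{s}$ in Lemma \ref{T2.6} coincides with $\rho _{a}$ of (\ref{E10}) and that $a\mapsto \rho _{a}$ is non-increasing (its exponent $\frac{p(\gamma _{p}-1)}{p\gamma _{p}-2}$ is $\leq 0$), so that $\rho _{a}=\overline{s}\geq \rho _{0}$ whenever $0<a\leq a_{0}$. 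For $(i)$: given $v\in \mathcal{P}_{a}^{+}$, the monotonicity pattern of $\phi _{v}$ in Lemma \ref{T2.7}$(i)$ forces $s=1$ to be the first critical point $s_{v}$ of $\phi _{v}$ (its second critical point is a local maximum, where $\phi _{v}^{\prime \prime }\leq 0$); then Lemma \ref{T2.7}$(iii)$ gives $\Vert \Delta v\Vert _{2}=\Vert \Delta v_{s_{v}}\Vert _{2}<\frac{\mu a(p\gamma _{p}-1)}{p\gamma _{p}-2}<\frac{\mu a_{0}(p\gamma _{p}-1)}{p\gamma _{p}-2}=\rho _{0}$, i.e. $v\in \mathcal{M}_{a}$.

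For $(ii)$, note that $m_{\mu ,p}(a)=\inf_{\mathcal{P}_{a}^{+}}\Psi _{\mu ,p}$ and $\alpha _{p}(a)=\inf_{\mathcal{M}_{a}}\Psi _{\mu ,p}$ hold by definition. Since $s=1$ is a critical point of $\phi _{v}$ for each $v\in S_{a}$ and $\mathcal{P}_{a}^{+}\cap \mathcal{P}_{a}^{-}=\emptyset $, every $v\in \mathcal{P}_{a}^{+}$ (resp. $v\in \mathcal{P}_{a}^{-}$) is of the form $u_{s_{u}}$ (resp. $u_{\overline{s}_{u}}$) with $u=v$; hence Lemma \ref{T2.7}$(iv)$ yields $\Psi _{\mu ,p}<0$ on $\mathcal{P}_{a}^{+}$ and $\Psi _{\mu ,p}\geq g_{p}(s_{2})>0$ on $\mathcal{P}_{a}^{-}$. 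As $\mathcal{P}_{a}=\mathcal{P}_{a}^{+}\cup \mathcal{P}_{a}^{-}$ by Lemma \ref{T2.4}, it follows that $\inf_{\mathcal{P}_{a}}\Psi _{\mu ,p}=\inf_{\mathcal{P}_{a}^{+}}\Psi _{\mu ,p}=m_{\mu ,p}(a)<0$. By $(i)$, $\mathcal{P}_{a}^{+}\subset \mathcal{M}_{a}$, so $\alpha _{p}(a)\leq m_{\mu ,p}(a)$; conversely, for $u\in \mathcal{M}_{a}$ one has $\Vert \Delta u_{1}\Vert _{2}=\Vert \Delta u\Vert _{2}<\rho _{0}\leq \rho _{a}$, so $s=1$ lies in the set $\{s>0:\Vert \Delta u_{s}\Vert _{2}<\rho _{a}\}$ and Lemma \ref{T2.7}$(iv)$ gives $m_{\mu ,p}(a)\leq \Psi _{\mu ,p}(u_{s_{u}})\leq \Psi _{\mu ,p}(u_{1})=\Psi _{\mu ,p}(u)$; taking the infimum over $\mathcal{M}_{a}$ gives $m_{\mu ,p}(a)\leq \alpha _{p}(a)$, which closes the chain of equalities. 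For the remaining inequality, the interpolation and Gagliardo--Nirenberg inequalities (\ref{E5})--(\ref{E6}) give $\Psi _{\mu ,p}(u)\geq g_{p}(\Vert \Delta u\Vert _{2})$ for every $u\in \mathcal{M}_{a}$, and since $\Vert \Delta u\Vert _{2}<\rho _{0}\leq \rho _{a}=\overline{s}<s_{2}$ while $g_{p}$ decreases on $(0,s_{1}]$ and increases on $[s_{1},s_{2}]$ (so that $g_{p}\geq g_{p}(s_{1})$ on $(0,s_{2}]$, by Lemma \ref{T2.6}$(i)$), one obtains $g_{p}(\Vert \Delta u\Vert _{2})\geq g_{p}(s_{1})$, hence $\alpha _{p}(a)\geq g_{p}(s_{1})$.

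For $(iii)$, each $v\in \mathcal{P}_{a}^{-}$ equals $u_{\overline{s}_{u}}$ with $u=v$, so Lemma \ref{T2.7}$(iv)$ gives $\Psi _{\mu ,p}(v)\geq g_{p}(s_{2})$; taking the infimum over $\mathcal{P}_{a}^{-}$ yields $M_{\mu ,p}(a)\geq g_{p}(s_{2})$, and $g_{p}(s_{2})>0$ by Lemma \ref{T2.6}. I expect the only mildly delicate points to be the bookkeeping of thresholds — the identities $\rho _{0}=\frac{\mu a_{0}(p\gamma _{p}-1)}{p\gamma _{p}-2}$ and $\overline{s}=\rho _{a}$, together with the resulting chain $\frac{\mu a(p\gamma _{p}-1)}{p\gamma _{p}-2}<\rho _{0}\leq \rho _{a}=\overline{s}<s_{2}$ valid for $0<a<a_{0}$ — and the verification that the fibering projection $u\mapsto u_{s_{u}}$ keeps $u_{s}$ inside the window $\Vert \Delta u_{s}\Vert _{2}<\rho _{a}$ where Lemma \ref{T2.7}$(iv)$ identifies $\Psi _{\mu ,p}(u_{s_{u}})$ as a genuine minimum; the rest is a routine assembly of the preceding lemmas, with no real analytic difficulty.
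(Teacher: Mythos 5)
Your proof is correct and follows essentially the route the paper intends: the corollary is stated without a separate proof precisely because it is the assembly of Lemma \ref{T2.2}, Lemma \ref{T2.4}, Lemma \ref{T2.6} and Lemma \ref{T2.7} (together with the identities $\rho_{0}=\frac{\mu a_{0}(p\gamma _{p}-1)}{p\gamma _{p}-2}$ and $\overline{s}=\rho _{a}\geq \rho _{0}$ for $a<a_{0}$) that you carry out, including the key observation that any $v\in \mathcal{P}_{a}^{\pm }$ has $1=s_{v}$ resp. $1=\overline{s}_{v}$. The only cosmetic point is the final bound $\alpha _{p}(a)\geq g_{p}(s_{1})$: rather than invoking the (implicit) monotonicity of $g_{p}$ on $(0,s_{2}]$, you could obtain it directly from the already-proved equality $\alpha _{p}(a)=\inf_{\mathcal{P}_{a}^{+}}\Psi _{\mu ,p}$ and the estimate $g_{p}(s_{1})\leq \Psi _{\mu ,p}(u_{s_{u}})$ in Lemma \ref{T2.7}$(iv)$.
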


\section{The existence of ground states}

\begin{lemma}
\label{T3.1} Assume that $\overline{p}<p\leq 4^{\ast }$ and condition (\ref%
{E4}) hold. Then the following statements are true.\newline
$(i)$ The mapping $\alpha _{p}(a)$ is continuous in $(0,a_{0});$\newline
$(ii)$%
\begin{equation}
\alpha _{p}(a)\leq \beta ^{2}\alpha _{p}(a/\beta )<\alpha _{p}(a/\beta
),\quad \forall \beta >1,  \label{E15}
\end{equation}%
and thus
\begin{equation}
\alpha _{p}(a)\leq \alpha _{p}(\beta _{1}a)+\alpha _{p}(\beta _{2}a),\quad
\forall \beta _{1},\beta _{2}>0\ \text{and}\ \beta _{1}^{2}+\beta _{2}^{2}=1.
\label{E16}
\end{equation}%
Moreover, if $\alpha _{p}(\beta _{1}a)$ or $\alpha _{p}(\beta _{2}a)$ is
reached, then the inequality is strict.
\end{lemma}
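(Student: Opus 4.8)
The plan is to treat part $(ii)$ first, since its scaling inequality readily yields strict monotonicity of $\alpha_p$, and then to obtain part $(i)$ by a rescaling argument. The engine behind $(ii)$ is that $\Psi_{\mu,p}$ behaves superquadratically under amplitude dilation: for any $w\in H^2(\mathbb{R}^N)\setminus\{0\}$ and $t>1$,
\[
\Psi_{\mu,p}(tw)=t^2\Psi_{\mu,p}(w)+\frac{t^2-t^p}{p}\|w\|_p^p<t^2\Psi_{\mu,p}(w),
\]
because $p>2$. To prove $(\ref{E15})$, fix $\beta>1$ and take $u\in\mathcal{P}_{a/\beta}^+$; then $\beta u\in S_a$, and Lemma~\ref{T2.7}$(iii)$ applied on $S_{a/\beta}$, together with the bound $\frac{\mu a(p\gamma_p-1)}{p\gamma_p-2}<\rho_0$ valid for $a<a_0$ (noted in the proof of Lemma~\ref{T2.2}), gives $\|\Delta(\beta u)\|_2=\beta\|\Delta u\|_2<\rho_0$, i.e. $\beta u\in\mathcal{M}_a$. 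Hence $\alpha_p(a)\le\Psi_{\mu,p}(\beta u)<\beta^2\Psi_{\mu,p}(u)$; taking the infimum over $u\in\mathcal{P}_{a/\beta}^+$ and using Corollary~\ref{L3.5}$(ii)$ to identify $\inf_{\mathcal{P}_b^+}\Psi_{\mu,p}=\alpha_p(b)$ yields $\alpha_p(a)\le\beta^2\alpha_p(a/\beta)$, while $\beta^2\alpha_p(a/\beta)<\alpha_p(a/\beta)$ follows from $\alpha_p(a/\beta)<0$ and $\beta>1$.

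For $(\ref{E16})$, given $\beta_1,\beta_2>0$ with $\beta_1^2+\beta_2^2=1$ one has $0<\beta_i<1$ and $\beta_i a<a_0$, so $(\ref{E15})$ with $\beta=\beta_i^{-1}$ gives $\beta_i^2\alpha_p(a)\le\alpha_p(\beta_i a)$; summing over $i=1,2$ produces $\alpha_p(a)\le\alpha_p(\beta_1 a)+\alpha_p(\beta_2 a)$. If, say, $\alpha_p(\beta_1 a)$ is attained by $u_1\in\mathcal{M}_{\beta_1 a}$, then by Lemma~\ref{T2.2}$(i)$ it is an interior minimizer, hence a critical point of $\Psi_{\mu,p}|_{S_{\beta_1 a}}$, so $u_1\in\mathcal{P}_{\beta_1 a}$; since $\Psi_{\mu,p}(u_1)<0$, estimate $(\ref{E14})$ forces $\|\Delta u_1\|_2<\frac{\mu\beta_1 a(p\gamma_p-1)}{p\gamma_p-2}$, whence $\|\Delta(\beta_1^{-1}u_1)\|_2<\frac{\mu a(p\gamma_p-1)}{p\gamma_p-2}<\rho_0$ and $\beta_1^{-1}u_1\in\mathcal{M}_a$. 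Then $\alpha_p(a)\le\Psi_{\mu,p}(\beta_1^{-1}u_1)<\beta_1^{-2}\Psi_{\mu,p}(u_1)=\beta_1^{-2}\alpha_p(\beta_1 a)$ strictly, and adding $\beta_2^2\alpha_p(a)\le\alpha_p(\beta_2 a)$ gives the strict version of $(\ref{E16})$; the case in which $\alpha_p(\beta_2 a)$ is attained is symmetric.

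For $(i)$, fix $a\in(0,a_0)$ and show $\alpha_p(a')\to\alpha_p(a)$ as $a'\to a$. For the upper bound, pick $u\in\mathcal{M}_a$ with $\Psi_{\mu,p}(u)$ within $\varepsilon$ of $\alpha_p(a)$; since $\|\Delta u\|_2<\rho_0$ with room to spare, $\frac{a'}{a}u\in\mathcal{M}_{a'}$ for $a'$ near $a$ and $\Psi_{\mu,p}(\frac{a'}{a}u)\to\Psi_{\mu,p}(u)$, so $\limsup_{a'\to a}\alpha_p(a')\le\alpha_p(a)+\varepsilon$ and hence $\le\alpha_p(a)$. For the lower bound, take $a_n\to a$ and, via Corollary~\ref{L3.5}$(ii)$, near-minimizers $u_n\in\mathcal{P}_{a_n}^+$ for $\alpha_p(a_n)$; Lemma~\ref{T2.7}$(iii)$ gives $\|\Delta u_n\|_2<\frac{\mu a_n(p\gamma_p-1)}{p\gamma_p-2}$, which converges to a limit strictly below $\rho_0$, so $\frac{a}{a_n}u_n\in\mathcal{M}_a$ for $n$ large, and, the norms $\|\Delta u_n\|_2,\|\nabla u_n\|_2,\|u_n\|_p$ being bounded while $\frac{a}{a_n}\to1$, one gets $\Psi_{\mu,p}(\frac{a}{a_n}u_n)=\Psi_{\mu,p}(u_n)+o(1)$; therefore $\alpha_p(a)\le\alpha_p(a_n)+o(1)$, so $\liminf_{a'\to a}\alpha_p(a')\ge\alpha_p(a)$, and continuity follows. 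The one delicate point is this lower bound: an arbitrary minimizing sequence in $\mathcal{M}_{a_n}$ could have $\|\Delta u_n\|_2$ tending to $\rho_0$ and then fail to remain in $\mathcal{M}_a$ after rescaling the mass back to $a$, which is exactly why one must draw near-minimizers from $\mathcal{P}_{a_n}^+$, where Lemma~\ref{T2.7}$(iii)$ supplies a uniform bound safely below $\rho_0$; the identity $\alpha_p=\inf_{\mathcal{P}^+}\Psi_{\mu,p}$ from Corollary~\ref{L3.5} is what makes this step legitimate.
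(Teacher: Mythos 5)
Your proof is correct, and it follows the same basic scaling strategy as the paper for part $(ii)$ -- multiply a near-optimal competitor of mass $a/\beta$ by $\beta$ and exploit $\Psi_{\mu,p}(\beta u)<\beta^{2}\Psi_{\mu,p}(u)$ -- but it secures the key membership $\beta u\in\mathcal{M}_{a}$ by a different mechanism. The paper stays inside $\mathcal{M}_{a/\beta}$: from $\Psi_{\mu,p}(u)<0$ it reads off $h_{\beta^{-1}a}(\Vert\Delta u\Vert_{2})<0$ and shows $h_{\beta^{-1}a}>0$ on $[\beta^{-1}\rho_{0},\rho_{0}]$, forcing $\Vert\Delta u\Vert_{2}<\rho_{0}/\beta$; this uses only the elementary facts about $h_{a}$ from Lemma \ref{T2.1} and (\ref{E12}). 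You instead take competitors on $\mathcal{P}_{a/\beta}^{+}$, invoking Corollary \ref{L3.5}$(ii)$ to identify $\alpha_{p}$ with $\inf_{\mathcal{P}^{+}}\Psi_{\mu,p}$ and Lemma \ref{T2.7}$(iii)$ (equivalently the computation (\ref{E14})) for the explicit bound $\Vert\Delta u\Vert_{2}<\frac{\mu(a/\beta)(p\gamma_{p}-1)}{p\gamma_{p}-2}$, which scales correctly under multiplication by $\beta$; your treatment of the attained case and of (\ref{E16}) is a valid minor variant of the paper's. The trade-off is that your route inherits the hypotheses of Lemma \ref{T2.7} and Corollary \ref{L3.5} ($N\geq 2$, and $N\geq 5$ when $p=4^{\ast}$), whereas the paper's $h_{a}$-argument needs nothing beyond $\overline{p}<p\leq 4^{\ast}$ and (\ref{E4}); this is immaterial for the main theorems, which assume $N\geq 5$ anyway. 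For part $(i)$ you actually do more than the paper, which only cites \cite[Lemma 2.6]{J2022}: your two-sided rescaling argument is complete, and your observation that the lower bound needs competitors with $\Vert\Delta u_{n}\Vert_{2}$ uniformly below $\rho_{0}$ (supplied by $\mathcal{P}_{a_{n}}^{+}$ via Lemma \ref{T2.7}$(iii)$, or alternatively by the paper's $h_{a_{n}}$-argument) correctly identifies the only delicate point.
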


\begin{proof}
$(i)$ Similar to the argument of \cite[Lemma 2.6]{J2022}, we easily conclude
that $\alpha _{p}(a)$ is a continuous mapping in $(0,a_{0})$.

$(ii)$ It follows from (\ref{E13}) that $\alpha _{p}(a)<0$ for any $a\in
(0,a_{0})$. Thus, to prove (\ref{E15}), we assume that for any $\epsilon >0$
sufficiently small, there exists $u\in \mathcal{M}_{\beta ^{-1}a}$ such that
$\Psi _{\mu ,p}(u)\leq \alpha _{p}(a/\beta )+\epsilon <0$. This shows that
\begin{equation}
h_{\beta ^{-1}a}(\Vert \Delta u\Vert _{2})<0\ \text{and}\ \Vert \Delta
u\Vert _{2}<\rho _{0}.  \label{E17}
\end{equation}%
By (\ref{E12}), we calculate that for $\beta ^{-1}\leq k\leq 1$,
\begin{align*}
h_{\beta ^{-1}a}(k\rho _{0})& =\frac{1}{2}\rho _{0}-\beta ^{-1}k\frac{\mu a}{%
2}-\beta ^{-p(1-\gamma _{p})}k^{p\gamma _{p}-2}\frac{\mathcal{C}%
_{N,p}^{p}a^{p(1-\gamma _{p})}{\rho _{0}}^{p\gamma _{p}-1}}{p} \\
& \geq \frac{1}{2}\rho _{0}-\frac{\mu a}{2}-\frac{\mathcal{C}%
_{N,p}^{p}a^{p(1-\gamma _{p})}{\rho _{0}}^{p\gamma _{p}-1}}{p} \\
& =h_{a}(\rho _{0})>0,
\end{align*}%
which implies $h_{\beta ^{-1}a}(s)>0$ for any $s\in \lbrack \beta ^{-1}\rho
_{0},\rho _{0}]$. Then in view of (\ref{E17}), we have
\begin{equation}
\Vert \Delta u\Vert _{2}<\frac{\rho _{0}}{\beta }.  \label{E18}
\end{equation}%
Let $v=\beta u$. Then combining with (\ref{E18}), one has
\begin{equation*}
\Vert v\Vert _{2}=\beta \Vert u\Vert _{2}=\beta a\ \text{and}\ \Vert \Delta
v\Vert _{2}=\beta \Vert \Delta u\Vert _{2}<\rho _{0}.
\end{equation*}%
This shows $v\in \mathcal{M}_{a}$ and thus
\begin{align*}
\alpha _{p}(a)\leq \Psi _{\mu ,p}(v)=& \frac{\beta ^{2}}{2}\Vert \Delta
u\Vert _{2}^{2}-\frac{\mu \beta ^{2}}{2}\Vert \nabla u\Vert _{2}^{2}-\frac{%
\beta ^{p}}{p}\Vert u\Vert _{p}^{p} \\
<& \beta ^{2}\Psi _{\mu ,p}(u)\leq \beta ^{2}\alpha _{p}(a/\beta )+\beta
^{2}\epsilon .
\end{align*}%
By the arbitrariness of $\epsilon $, we conclude that $\alpha _{p}(a)\leq
\beta ^{2}\alpha _{p}(a/\beta )$. Moreover, if $\alpha _{p}(a/\beta )$ is
attained, then it holds $\alpha _{p}(a)<\beta ^{2}\alpha _{p}(a/\beta )$.
Without loss of generality, let $0<\beta _{1}<\beta _{2}$ and $\beta
_{1}^{2}+\beta _{2}^{2}=1$. Then we have
\begin{equation*}
\alpha _{p}(a)\leq \frac{\beta _{1}^{2}+\beta _{2}^{2}}{\beta _{1}^{2}}%
\alpha _{p}(\beta _{1}a)=\alpha _{p}(\beta _{1}a)+\frac{\beta _{2}^{2}}{%
\beta _{1}^{2}}\alpha _{p}\left( \frac{\beta _{1}}{\beta _{2}}\beta
_{2}a\right) \leq \alpha _{p}(\beta _{1}a)+\alpha _{p}(\beta _{2}a),
\end{equation*}%
which implies that (\ref{E16}) holds. The proof is complete.
\end{proof}

\begin{lemma}
\label{T3.2} Let $\overline{p}<p<4$ for $5\leq N\leq 8$, or $\overline{p}%
<p\leq 4^{\ast }$ for $N\geq 9.$ Assume that condition (\ref{E4}) holds.
Then we have $\alpha _{p}(a)<-\frac{\mu ^{2}a^{2}}{8}$.
\end{lemma}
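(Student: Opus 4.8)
The plan is to produce a single function $u\in\mathcal{M}_a$ with $\Psi_{\mu,p}(u)<-\frac{\mu^2a^2}{8}$; since $\alpha_p(a)\le\Psi_{\mu,p}(u)$, this gives the claim. The key algebraic fact is a Plancherel rewriting: for every $u\in S_a$,
\[
\Psi_{\mu,p}(u)=\frac12\int_{\mathbb{R}^N}\Big(|\xi|^2-\tfrac\mu2\Big)^2|\widehat u(\xi)|^2\,d\xi-\frac{\mu^2a^2}{8}-\frac1p\int_{\mathbb{R}^N}|u|^p\,dx ,
\]
because $\|\Delta u\|_2^2-\mu\|\nabla u\|_2^2=\int(|\xi|^4-\mu|\xi|^2)|\widehat u|^2=\int(|\xi|^2-\tfrac\mu2)^2|\widehat u|^2-\tfrac{\mu^2}{4}\|u\|_2^2$. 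Consequently $\Psi_{\mu,p}(u)<-\frac{\mu^2a^2}{8}$ holds precisely when the ``energy excess'' $\frac12\int(|\xi|^2-\tfrac\mu2)^2|\widehat u|^2$ is strictly smaller than $\frac1p\|u\|_p^p$; the excess is small exactly when $\widehat u$ concentrates near the sphere $\{|\xi|^2=\mu/2\}$, which is the bottom of the symbol $|\xi|^4-\mu|\xi|^2$.

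For small $\delta\in(0,\mu/2)$ I would take $u_\delta$ to be (a dimensional multiple of) the inverse Fourier transform of $a\,|A_\delta|^{-1/2}\mathbf{1}_{A_\delta}$, where $A_\delta:=\{\xi\in\mathbb{R}^N:|\,|\xi|^2-\tfrac\mu2\,|\le\delta\}$; then $u_\delta$ is real, radial, lies in $H^2(\mathbb{R}^N)$, and $\|u_\delta\|_2=a$. Three estimates are needed. (a) On $A_\delta$ one has $(|\xi|^2-\tfrac\mu2)^2\le\delta^2$, so the excess of $u_\delta$ is at most $\tfrac12\delta^2a^2$. (b) On $A_\delta$ one has $|\xi|^2\le\tfrac\mu2+\delta$, hence $\|\Delta u_\delta\|_2^2\le(\tfrac\mu2+\delta)^2a^2<\mu^2a^2$; since $\rho_0=\rho_{a_0}=\mu a_0\frac{p\gamma_p-1}{p\gamma_p-2}>\mu a$ — this follows from (\ref{E11}) taken at $a=a_0$ (where $\max h_{a_0}=0$), together with $a<a_0$ (equivalent to (\ref{E4})) and $p\gamma_p>2$, which gives $\tfrac{p\gamma_p-1}{p\gamma_p-2}>1$ — we get $\|\Delta u_\delta\|_2<\rho_0$, i.e. $u_\delta\in\mathcal{M}_a$. (c) A lower bound $\|u_\delta\|_p^p\ge c\,a^p\delta^{p/2}$ with $c=c(N,p,\mu)>0$: since $\widehat{u_\delta}\ge0$, $u_\delta(0)=(2\pi)^{-N/2}\|\widehat{u_\delta}\|_1=(2\pi)^{-N/2}a\,|A_\delta|^{1/2}\gtrsim a\,\delta^{1/2}$ (because $|A_\delta|=|B_1(0)|\,[(\tfrac\mu2+\delta)^{N/2}-(\tfrac\mu2-\delta)^{N/2}]$ is comparable to $\delta$ as $\delta\to0^+$), while $|\nabla u_\delta(x)|\le(2\pi)^{-N/2}\big(\sup_{A_\delta}|\xi|\big)\|\widehat{u_\delta}\|_1\le\sqrt\mu\,u_\delta(0)$ for all $x$; hence $u_\delta\ge\tfrac12u_\delta(0)$ on the $\delta$-independent ball $\{|x|\le\tfrac1{2\sqrt\mu}\}$, and integrating $|u_\delta|^p$ over that ball yields the bound.

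Putting (a)--(c) together,
\[
\Psi_{\mu,p}(u_\delta)\le-\frac{\mu^2a^2}{8}+\frac12\delta^2a^2-\frac cp\,a^p\delta^{p/2}=-\frac{\mu^2a^2}{8}+\delta^{p/2}\Big(\tfrac12a^2\delta^{\,2-p/2}-\tfrac cp\,a^p\Big).
\]
Under the hypotheses of the lemma one always has $p<4$ (for $N\ge9$ because $4^\ast<4$), hence $2-\tfrac p2>0$ and $\delta^{\,2-p/2}\to0$ as $\delta\to0^+$. Choosing $\delta\in(0,\mu/2)$ small enough makes the bracket negative, so $\Psi_{\mu,p}(u_\delta)<-\frac{\mu^2a^2}{8}$ with $u_\delta\in\mathcal{M}_a$, and therefore $\alpha_p(a)<-\frac{\mu^2a^2}{8}$.

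I expect the only real difficulty to be the lower bound (c). If $\widehat{u_\delta}$ concentrated its mass at a single point of the sphere instead, then $\|u_\delta\|_p^p$ would decay like $\delta^{N(p-2)/2}$, which is too fast (here $N(p-2)/2>4$ because $p>\overline{p}$) and the comparison with the excess $\sim\delta^2$ would fail. Spreading $\widehat{u_\delta}$ over the whole shell $A_\delta$ forces $u_\delta$ to have a genuine bump of height $\sim a\delta^{1/2}$ on a fixed ball, which the elementary gradient estimate above makes quantitative; the competition $\delta^2$ versus $\delta^{p/2}$ is then won precisely because $p<4$, which is exactly where the hypothesis on $(N,p)$ enters.
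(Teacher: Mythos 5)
Your proposal is correct: the Plancherel rewriting of the quadratic part, the shell-concentrated test function $u_\delta$ with $\widehat{u_\delta}=a|A_\delta|^{-1/2}\mathbf{1}_{A_\delta}$, the membership check $\Vert\Delta u_\delta\Vert_2<\mu a<\rho_0$ (using $\rho_0=\mu a_0\frac{p\gamma_p-1}{p\gamma_p-2}$ and $a<a_0$), and the competition $\delta^2$ versus $\delta^{p/2}$, won because $p<4$ in every admissible case, all check out. This is essentially the same route as the paper, which omits the argument and cites Luo--Yang's Lemma 3.2, whose proof is the same Fourier-concentration construction near the sphere $|\xi|^2=\mu/2$; you have simply written it out in self-contained form.
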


\begin{proof}
The proof is similar to that of \cite[Lemma 3.2]{LY2022}. So we omit it here.
\end{proof}

Now we establish a biharmonic variant of Lion's lemma.

\begin{lemma}
\label{T3.3} Let $2\leq p\leq 4^{\ast }$. Assume that $\{u_{n}\}$ is bounded
in $H^{2}(\mathbb{R}^{N})$ and for some $r>0$,
\begin{equation}
\lim\limits_{n\rightarrow \infty }\sup\limits_{y\in \mathbb{R}%
^{N}}\int_{B_{r}(y)}|u_{n}|^{p}dx=0.  \label{E19}
\end{equation}%
Then it holds
\begin{equation*}
\int_{\mathbb{R}^{N}}|u_{n}|^{q}dx=o_{n}(1),
\end{equation*}%
where
\begin{equation*}
q\in
\begin{cases}
(2,4^{\ast }),\ \text{if}\ 2\leq p<4^{\ast }, \\
(2,4^{\ast }],\ \text{if}\ p=4^{\ast }.%
\end{cases}%
\end{equation*}
\end{lemma}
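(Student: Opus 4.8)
The plan is to adapt Lions' classical vanishing lemma to the $H^2$-setting by a covering argument. I would first fix three ingredients: (1) a countable family of balls $\{B_r(y_i)\}_i$ covering $\mathbb{R}^N$ with uniformly bounded overlap, i.e. a dimensional constant $M$ such that every point lies in at most $M$ of them (take the centers on a lattice of mesh $\sim r$); (2) the local Sobolev inequality: for $N\ge 5$, $H^2(B_r)\hookrightarrow L^{4^\ast}(B_r)$ together with translation invariance yields $C_1=C_1(N,r)$ with $\|u\|_{L^{4^\ast}(B_r(y))}\le C_1\|u\|_{H^2(B_r(y))}$ for all $y$ and $u\in H^2(\mathbb{R}^N)$, where $\|u\|_{H^2(B_r(y))}^2=\int_{B_r(y)}(|u|^2+|\nabla u|^2+|D^2u|^2)$ (for $N\le 4$ one replaces $4^\ast$ by any large finite exponent, using $H^2(B_r)\hookrightarrow L^t(B_r)$); (3) the remark that, since $\|D^2u\|_{L^2(\mathbb{R}^N)}=\|\Delta u\|_{L^2(\mathbb{R}^N)}$ on $H^2(\mathbb{R}^N)$, the bounded sequence obeys $\sum_i\|u_n\|_{H^2(B_r(y_i))}^2\le M\,\|u_n\|_{H^2(\mathbb{R}^N)}^2\le C_0^2$, with $C_0:=\sup_n\|u_n\|_{H^2(\mathbb{R}^N)}$.

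The heart of the argument is a localized estimate on each ball. For the target exponent $q$ I would choose a splitting parameter $b\in[2,q)$, set $\tilde q:=(q-b)\tfrac{4^\ast}{4^\ast-b}$, and apply Hölder on $B_r(y)$ with the conjugate exponents $\tfrac{4^\ast}{4^\ast-b}$, $\tfrac{4^\ast}{b}$ followed by the local Sobolev inequality of (2), obtaining
\[
\int_{B_r(y)}|u_n|^q=\int_{B_r(y)}|u_n|^{q-b}|u_n|^{b}\le\Big(\int_{B_r(y)}|u_n|^{\tilde q}\Big)^{\frac{4^\ast-b}{4^\ast}}\|u_n\|_{L^{4^\ast}(B_r(y))}^{b}\le C_1^{b}\,\varepsilon_n^{\frac{4^\ast-b}{4^\ast}}\,\|u_n\|_{H^2(B_r(y))}^{b},\qquad \varepsilon_n:=\sup_{z}\int_{B_r(z)}|u_n|^{\tilde q}.
\]
Summing over $i$, using $b\ge 2$ and $\|u_n\|_{H^2(B_r(y_i))}\le C_0$ to bound $\|u_n\|_{H^2(B_r(y_i))}^{b}$ by $C_0^{b-2}\|u_n\|_{H^2(B_r(y_i))}^2$, and then the finite‑overlap bound from (1) and (3), I get $\int_{\mathbb{R}^N}|u_n|^q\le C\,\varepsilon_n^{(4^\ast-b)/4^\ast}$ with $C$ independent of $n$. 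Thus everything reduces to $\varepsilon_n\to 0$, which holds as soon as $b$ is chosen so that $\tilde q\le p$: then Hölder on $B_r(z)$ gives $\varepsilon_n\le |B_r|^{\,1-\tilde q/p}\big(\sup_z\int_{B_r(z)}|u_n|^p\big)^{\tilde q/p}\to 0$ by hypothesis (\ref{E19}). Such a $b$ exists for every $q\in(2,4^\ast)$, because $\tilde q$ is continuous and strictly decreasing in $b$ on $[2,q)$ with $\tilde q\to 0^+$ as $b\to q^-$; concretely $b=2$ already works whenever $q\le 2+\tfrac{4p}{N}$, and the remaining $q$ are reached either by pushing $b$ toward $q$ or by one further interpolation between $L^2$ (bounded) and an exponent already treated. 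This gives $u_n\to 0$ in $L^q(\mathbb{R}^N)$ for all $q\in(2,4^\ast)$.

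It remains to treat the endpoint $q=4^\ast$, which is admitted only when $p=4^\ast$. Here a direct computation shows that $\tilde q=(4^\ast-b)\tfrac{4^\ast}{4^\ast-b}=4^\ast$ for \emph{every} admissible $b$, so one takes $b=2$, uses $\tilde q=4^\ast=p$ in the estimate above, and concludes $\int_{\mathbb{R}^N}|u_n|^{4^\ast}\le C\big(\sup_z\int_{B_r(z)}|u_n|^{4^\ast}\big)^{(4^\ast-2)/4^\ast}\to 0$; the forced identity $\tilde q=4^\ast$ is exactly what makes the restriction $p=4^\ast$ necessary at this endpoint.

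I expect the only genuinely delicate point to be this bookkeeping of exponents: keeping the power of the localized $H^2$-norm at least $2$, so that the sum over the (infinite but locally finite) cover stays controlled by the global $H^2$-bound, while simultaneously keeping $\tilde q\le p$, so that the complementary factor is killed by the vanishing hypothesis (\ref{E19}). The supporting facts — that the local Sobolev constant can be taken uniform in $y$ by translation, and that $\|D^2u_n\|_{2}$ is controlled by the given $H^2$-bound via $\|D^2u_n\|_2=\|\Delta u_n\|_2$ on $H^2(\mathbb{R}^N)$ — are routine.
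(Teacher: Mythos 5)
Your proof is correct, and at the critical endpoint it coincides with the paper's own argument: taking $b=2$ and $p=q=4^{\ast}$, your local estimate is exactly the paper's splitting $\int_{B_r(y)}|u_n|^{4^{\ast}}\le\bigl(\int_{B_r(y)}|u_n|^{4^{\ast}}\bigr)^{4/N}\,\|u_n\|_{L^{4^{\ast}}(B_r(y))}^{2}$ (note $(4^{\ast}-2)/4^{\ast}=4/N$), followed by the local Sobolev embedding, a bounded-overlap cover, and summation against the global second-order bound. Where you diverge is in the subcritical range: the paper does not prove the case $2\le p<4^{\ast}$ at all (it cites Mederski--Siemianowski), and in the case $p=4^{\ast}$ it obtains $\|u_n\|_q\to0$ for $2<q<4^{\ast}$ by interpolating between the bounded $L^{2}$ norm and the vanishing $L^{4^{\ast}}$ norm, whereas you reprove the whole subcritical statement with the same covering scheme, using the tunable splitting parameter $b$ so that $\tilde q=(q-b)\frac{4^{\ast}}{4^{\ast}-b}\le p$; the monotonicity of $\tilde q$ in $b$ makes this possible for every $q\in(2,4^{\ast})$, so your argument is self-contained at the price of the exponent bookkeeping you acknowledge. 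One further point in your favor: you run the Sobolev step with the full local norm $\|u_n\|_{H^{2}(B_r(y))}$ and then pass to the global bound via bounded overlap and $\|D^{2}u\|_{2}=\|\Delta u\|_{2}$ on $\mathbb{R}^{N}$; the paper's display (\ref{E20}) keeps only $\int_{B_r(y)}|\Delta u_n|^{2}$ on the right, which as literally written omits the lower-order terms the local embedding needs (a nonzero function harmonic on the ball shows the constant cannot depend only on $N$ and $r$), so your formulation is the more careful one and the summation still closes because the global $H^{2}$ bound controls the overlapping sum of local norms.
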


\begin{proof}
The case of $2\leq p<4^{\ast }$ has been proved in \cite{Me2023}. So, we
only consider the case of $p=4^{\ast }$. By calculating, we have
\begin{align}
\int_{B_{r}(y)}|u_{n}|^{4^{\ast }}dx& =\left(
\int_{B_{r}(y)}|u_{n}|^{4^{\ast }}dx\right) ^{\frac{4}{N}}\left(
\int_{B_{r}(y)}|u_{n}|^{4^{\ast }}dx\right) ^{\frac{N-4}{N}}  \notag \\
& \leq C\left( \int_{B_{r}(y)}|u_{n}|^{4^{\ast }}dx\right) ^{\frac{4}{N}%
}\int_{B_{r}(y)}|\Delta u_{n}|^{2}dx.  \label{E20}
\end{align}%
Cover $\mathbb{R}^{N}$ with balls of radius $r$ so that each point of $%
\mathbb{R}^{N}$ contains at most $N+1$ balls. Using (\ref{E19}) and (\ref%
{E20}), we obtain
\begin{equation*}
\int_{\mathbb{R}^{N}}|u_{n}|^{4^{\ast }}dx\leq C(N+1)\sup\limits_{y\in
\mathbb{R}^{N}}\left( \int_{B_{r}(y)}|u_{n}|^{4^{\ast }}dx\right) ^{\frac{4}{%
N}}\int_{\mathbb{R}^{N}}|\Delta u_{n}|^{2}dx=o_{n}(1),
\end{equation*}%
which implies that
\begin{equation}
\int_{\mathbb{R}^{N}}|u_{n}|^{4^{\ast }}dx=o_{n}(1).  \label{E21}
\end{equation}%
By (\ref{E21}) and the following interposition inequality
\begin{equation*}
\int_{\mathbb{R}^{N}}|u|^{q}dx\leq \left( \int_{\mathbb{R}%
^{N}}|u|^{2}dx\right) ^{\frac{2N-(N-4)q}{8}}\left( \int_{\mathbb{R}%
^{N}}|u|^{4^{\ast }}dx\right) ^{\frac{8-2N+(N-4)q}{8}},\ \forall 2<q<4^{\ast
},
\end{equation*}%
we easily deduce that
\begin{equation}
\int_{\mathbb{R}^{N}}|u_{n}|^{q}dx=o_{n}(1)\ \text{for}\ 2<q<4^{\ast }.
\label{E22}
\end{equation}%
From (\ref{E21}) and (\ref{E22}), we complete the proof.
\end{proof}

\begin{proposition}
\label{T3.4} Let $\overline{p}<p<4$ for $5\leq N\leq 8$, or $\overline{p}%
<p\leq 4^{\ast }$ for $N\geq 9.$ Assume that condition (\ref{E4}) holds.
Let $\{u_{n}\}\subset \mathcal{M}_{a}$ satisfy $\Psi _{\mu
,p}(u_{n})\rightarrow \alpha _{p}(a)$ as $n\rightarrow \infty $. Then up to
subsequence, there exists $u\in \mathcal{M}_{a}$ such that $u_{n}\rightarrow
u$ in $H^{2}(\mathbb{R}^{N})$ and $\Psi _{\mu ,p}(u)=\alpha _{p}(a)$.
\end{proposition}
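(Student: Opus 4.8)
The plan is to run a concentration-compactness argument on the minimizing sequence $\{u_n\}\subset\mathcal{M}_a$. First I would observe that $\{u_n\}$ is bounded in $H^2(\mathbb{R}^N)$: the constraint gives $\|u_n\|_2=a$, and since $\Psi_{\mu,p}(u_n)\to\alpha_p(a)<0$ while $\Psi_{\mu,p}(u)\geq\|\Delta u\|_2 h_a(\|\Delta u\|_2)$ with $h_a$ eventually negative, the $\|\Delta u_n\|_2$ stay bounded (indeed they stay strictly below $\rho_0$, with a uniform gap, by Lemma~\ref{T2.2}$(i)$ which gives $\inf_{\partial\mathcal{M}_a}\Psi_{\mu,p}>0>\alpha_p(a)$). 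Hence, up to a subsequence, $u_n\rightharpoonup u$ weakly in $H^2$, $u_n\to u$ in $L^q_{loc}$ for $2\le q<4^*$ (and a.e.). I would then set $\delta:=\limsup_n\sup_{y\in\mathbb{R}^N}\int_{B_1(y)}|u_n|^{2}\,dx$ and split into the vanishing and nonvanishing cases.

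The main obstacle is ruling out vanishing, i.e. $\delta=0$. If $\delta=0$, then by the biharmonic Lions-type lemma (Lemma~\ref{T3.3}, applied with $p=2$) one gets $\|u_n\|_q\to 0$ for all $q\in(2,4^*)$ when $p<4^*$, and also $\|u_n\|_{4^*}\to 0$ when $N\ge 9$ and $p=4^*$ (here the argument of Lemma~\ref{T3.3} with $p=4^*$ is needed, but to start it one must first know $\|u_n\|_{4^*}$ is controlled, which follows from the Sobolev embedding $\|u_n\|_{4^*}\le \mathcal{S}^{-1/2}\|\Delta u_n\|_2$; alternatively one applies Lemma~\ref{T3.3} directly with the hypothesis on $\int_{B_r(y)}|u_n|^2$). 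In either case $\|u_n\|_p^p\to 0$, so $\Psi_{\mu,p}(u_n)=\tfrac12\|\Delta u_n\|_2^2-\tfrac\mu2\|\nabla u_n\|_2^2+o_n(1)\ge \tfrac12\|\Delta u_n\|_2^2-\tfrac{\mu a}{2}\|\Delta u_n\|_2+o_n(1)\ge -\tfrac{\mu^2a^2}{8}+o_n(1)$ by completing the square and the interpolation inequality~(\ref{E5}). This forces $\alpha_p(a)\ge -\tfrac{\mu^2a^2}{8}$, contradicting Lemma~\ref{T3.2}. Therefore $\delta>0$, and after a translation $u_n(\cdot+y_n)\rightharpoonup u\neq 0$ in $H^2$; relabel so that $u_n\rightharpoonup u\neq0$.

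Next I would upgrade weak convergence to strong convergence. Write $u_n=u+v_n$ with $v_n\rightharpoonup 0$. By Brezis--Lieb, $\|u_n\|_2^2=\|u\|_2^2+\|v_n\|_2^2+o_n(1)$, $\|\Delta u_n\|_2^2=\|\Delta u\|_2^2+\|\Delta v_n\|_2^2+o_n(1)$, and $\|u_n\|_p^p=\|u\|_p^p+\|v_n\|_p^p+o_n(1)$; for the second-order term one uses $\|\nabla u_n\|_2^2=\|\nabla u\|_2^2+\|\nabla v_n\|_2^2+o_n(1)$ (valid since $\nabla u_n\rightharpoonup\nabla u$ in $L^2$ and the cross term vanishes). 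Hence $\Psi_{\mu,p}(u_n)=\Psi_{\mu,p}(u)+\Psi_{\mu,p}(v_n)+o_n(1)$. Set $b:=\|u\|_2\in(0,a]$ and $c:=\limsup_n\|v_n\|_2$, so $b^2+c^2=a^2$. Since $\|\Delta u\|_2<\rho_0$ (weak lower semicontinuity plus the uniform gap) we have $u\in\mathcal{M}_b$, so $\Psi_{\mu,p}(u)\ge\alpha_p(b)$; and for $n$ large $\|\Delta v_n\|_2<\rho_0$, with $v_n/\|v_n\|_2\cdot c$ essentially lying in $\mathcal{M}_c$ (after a harmless rescaling), giving $\liminf_n\Psi_{\mu,p}(v_n)\ge\alpha_p(c)$ — here I would argue by continuity of $\alpha_p$ (Lemma~\ref{T3.1}$(i)$) to handle the normalization. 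Thus $\alpha_p(a)\ge\alpha_p(b)+\alpha_p(c)$. On the other hand, the subadditivity inequality~(\ref{E16}) of Lemma~\ref{T3.1}$(ii)$ gives $\alpha_p(a)\le\alpha_p(b)+\alpha_p(c)$ with equality only if neither $\alpha_p(b)$ nor $\alpha_p(c)$ is attained. Since $u$ attains $\alpha_p(b)$ whenever $b<a$ would be needed, the strict version of~(\ref{E16}) forces $c=0$, i.e. $b=a$ and $\|u\|_2=a$. Then $\|v_n\|_2\to0$, and combined with $\Psi_{\mu,p}(v_n)\to0$ (using $\Psi_{\mu,p}(v_n)\ge\|\Delta v_n\|_2 h_a(\|\Delta v_n\|_2)$ and $h_a$ bounded below near $0$, together with $\alpha_p(a)=\Psi_{\mu,p}(u)\ge\alpha_p(a)$ forcing $\Psi_{\mu,p}(v_n)\to0$) and the coercive structure of $\Psi_{\mu,p}$ on $\mathcal{M}_a$, one deduces $\|\Delta v_n\|_2\to0$ and hence $v_n\to0$ in $H^2$. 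Therefore $u_n\to u$ in $H^2$, $u\in\mathcal{M}_a$, and $\Psi_{\mu,p}(u)=\alpha_p(a)$.

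I expect the delicate points to be: (a) justifying the Brezis--Lieb splitting for the $\|\nabla\cdot\|_2^2$ term and the energy, (b) handling the $L^2$-normalization when passing from $v_n$ to an element of $\mathcal{M}_c$ so that $\alpha_p(c)$ is the right lower bound (this is where continuity of $a\mapsto\alpha_p(a)$ enters), and (c) in the critical case $p=4^*$, ensuring the Lions lemma still kills $\|u_n\|_{4^*}$ — which is exactly why the hypothesis restricts to $N\ge9$ and why Lemma~\ref{T3.3} was proved in the form it was. The dichotomy ``either $c=0$ or strict subadditivity is violated'' is the heart of the argument and leans essentially on Lemma~\ref{T3.1}$(ii)$.
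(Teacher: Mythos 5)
Your overall architecture is the same as the paper's (boundedness, exclusion of vanishing via Lemmas \ref{T3.2}--\ref{T3.3}, translation to a nonzero weak limit, Brezis--Lieb splitting, and the subadditivity (\ref{E16}) together with its conditional strictness to force $\|u\|_2=a$, then killing $\|\Delta w_n\|_2$), and for $\overline{p}<p<4^{\ast}$ your argument goes through. But there is a genuine gap exactly in the Sobolev critical case $p=4^{\ast}$, $N\geq 9$, which is the case the proposition is mainly aimed at. You set up the dichotomy with $\delta=\limsup_n\sup_{y}\int_{B_1(y)}|u_n|^{2}dx$ and claim that $\delta=0$ yields $\|u_n\|_{4^{\ast}}\to 0$, either because boundedness of $\|u_n\|_{4^{\ast}}$ (from Sobolev) lets you ``start'' the critical case of Lemma \ref{T3.3}, or by ``applying the lemma directly with the hypothesis on $\int_{B_r(y)}|u_n|^2$''. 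Neither works: the critical conclusion of Lemma \ref{T3.3} requires the hypothesis $\sup_y\int_{B_r(y)}|u_n|^{4^{\ast}}dx\to 0$ — in (\ref{E20}) the factor $\bigl(\int_{B_r(y)}|u_n|^{4^{\ast}}dx\bigr)^{4/N}$ must be uniformly small, mere boundedness gives nothing — and uniform local $L^{2}$-vanishing does not imply $L^{4^{\ast}}$-vanishing for bounded $H^{2}$ sequences: a concentrating profile $u_n(x)=n^{(N-4)/2}U(nx)$ satisfies $\sup_y\int_{B_1(y)}|u_n|^{2}dx\leq\|u_n\|_2^2\to 0$ while $\|u_n\|_{4^{\ast}}=\|U\|_{4^{\ast}}$ stays fixed. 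Without $\|u_n\|_{4^{\ast}}^{4^{\ast}}\to 0$ you cannot conclude $\Psi_{\mu,4^{\ast}}(u_n)\geq-\tfrac{\mu^{2}a^{2}}{8}+o_n(1)$, so the contradiction with Lemma \ref{T3.2} is not reached and vanishing is not excluded. The repair is what the paper does: run the dichotomy on $\sup_y\int_{B_r(y)}|u_n|^{p}dx$ with the actual exponent $p$ of the nonlinearity, so that in the critical case Lemma \ref{T3.3} is invoked with $p=4^{\ast}$ and its hypothesis is precisely the vanishing alternative.

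A second, smaller soft spot is the last step in the critical case: after $\|v_n\|_2\to 0$ you deduce $\|\Delta v_n\|_2\to 0$ from ``the coercive structure of $\Psi_{\mu,p}$ on $\mathcal{M}_a$'' and the bound $\Psi_{\mu,p}(v_n)\geq\|\Delta v_n\|_2\,h_a(\|\Delta v_n\|_2)$; but $h_a(s)\to-\tfrac{\mu a}{2}<0$ as $s\to 0$, so this inequality only shows the energy is not very negative and does not force $\|\Delta v_n\|_2\to 0$. The paper's quantitative input is needed here: once $\|v_n\|_2\to 0$ one has $\Psi_{\mu,4^{\ast}}(v_n)\geq\tfrac12\|\Delta v_n\|_2^{2}-\tfrac{1}{4^{\ast}}\mathcal{S}^{-4^{\ast}/2}\|\Delta v_n\|_2^{4^{\ast}}+o_n(1)$, and since $\rho_0=\left(\tfrac{N}{N+4}\right)^{(N-4)/8}\mathcal{S}^{N/8}<\mathcal{S}^{N/8}$, this right-hand side is bounded below by a positive multiple of $\|\Delta v_n\|_2^{2}$ on $[0,\rho_0]$, which combined with $\Psi_{\mu,4^{\ast}}(v_n)\leq o_n(1)$ gives the conclusion. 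The remaining parts of your proposal — the Brezis--Lieb splitting including the gradient term, the normalization/continuity argument for $\alpha_p(\|v_n\|_2)$, and the case analysis in which equality forces $\alpha_p(a_1)$ to be attained and then the strict form of (\ref{E16}) yields a contradiction — coincide with the paper's proof and are fine.
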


\begin{proof}
Clearly, $\{u_{n}\}$ is bounded in $H^{2}(\mathbb{R}^{N}),$ since $%
\{u_{n}\}\subset \mathcal{M}_{a}$. We first claim that there exist a $d>0$
and a sequence $\{x_{n}\}\subset \mathbb{R}^{N}$ such that
\begin{equation*}
\int_{B_{r}(x_{n})}|u_{n}|^{p}dx\geq d>0\ \text{for some}\ r>0.
\end{equation*}%
Otherwise, (\ref{E19}) holds. Then we have $u_{n}\rightarrow 0$ in $L^{p}(%
\mathbb{R}^{N})$. This indicates that
\begin{align*}
\Psi _{\mu ,p}(u_{n})& =\frac{1}{2}\Vert \Delta u_{n}\Vert _{2}^{2}-\frac{%
\mu }{2}\Vert \nabla u_{n}\Vert _{2}^{2}+o_{n}(1) \\
& \geq \frac{1}{2}\Vert \Delta u_{n}\Vert _{2}^{2}-\frac{\mu a}{2}\Vert
\Delta u_{n}\Vert _{2}+o_{n}(1) \\
& \geq -\frac{\mu ^{2}a^{2}}{8},
\end{align*}%
which contradicts with $\alpha _{p}(a)<-\frac{\mu ^{2}a^{2}}{8}$ by Lemma %
\ref{T3.2}. Thus there exist $d>0$ and $\{x_{n}\}\subset \mathbb{R}^{N}$
such that
\begin{equation*}
u_{n}(x-x_{n})\rightharpoonup u\neq 0.
\end{equation*}%
Let $w_{n}=u_{n}(x-x_{n})-u$. It follows from Brezis-Lieb lemma that
\begin{align}
\Vert u_{n}\Vert _{2}^{2}& =\Vert u\Vert _{2}^{2}+\Vert w_{n}\Vert
_{2}^{2}+o_{n}(1),  \label{E23} \\
\Vert \Delta u_{n}\Vert _{2}^{2}& =\Vert \Delta u\Vert _{2}^{2}+\Vert \Delta
w_{n}\Vert _{2}^{2}+o_{n}(1),  \label{E24} \\
\Psi _{\mu ,p}(u_{n})& =\Psi _{\mu ,p}(u_{n}(x-x_{n}))=\Psi _{\mu
,p}(w_{n})+\Psi _{\mu ,p}(u)+o_{n}(1).  \label{E25}
\end{align}%
Next, we claim that
\begin{equation*}
\Vert w_{n}\Vert _{2}^{2}=o_{n}(1).
\end{equation*}%
In order to prove this, let us denote $a_{1}^{2}:=\Vert u\Vert _{2}^{2}>0$.
By (\ref{E23}), we easily get $\Vert w_{n}\Vert
_{2}^{2}=a^{2}-a_{1}^{2}+o_{n}(1)$. If $a_{1}=a$, then the claim follows. We
assume by contradiction that $a_{1}<a$. According to $\Vert \Delta
u_{n}\Vert _{2}<\rho _{0}$ and (\ref{E24}), one has $\Vert \Delta
w_{n}\Vert _{2},\Vert \Delta u\Vert _{2}<\rho _{0}$. Thus, we conclude that
\begin{equation}
\Psi _{\mu ,p}(w_{n})\geq \alpha _{p}(\Vert w_{n}\Vert _{2})\text{ and }\Psi
_{\mu ,p}(u)\geq \alpha _{p}(a_{1}).  \label{E26}
\end{equation}%
Since $\alpha _{p}(a)$ is continuous, according to (\ref{E25}) and (\ref%
{E26}), we obtain that
\begin{align}
\alpha _{p}(a)& :=\Psi _{\mu ,p}(w_{n})+\Psi _{\mu ,p}(u)+o_{n}(1)\geq
\alpha _{p}\left( \sqrt{a^{2}-a_{1}^{2}}\right) +\Psi _{\mu ,p}(u)+o_{n}(1)
\notag \\
& \geq \alpha _{p}\left( \sqrt{a^{2}-a_{1}^{2}}\right) +\alpha
_{p}(a_{1})+o_{n}(1).  \label{E27}
\end{align}%
Using (\ref{E16}), we deduce that
\begin{equation}
\alpha _{p}\left( \sqrt{a^{2}-a_{1}^{2}}\right) +\alpha _{p}(a_{1})\geq
\alpha _{p}(a).  \label{E28}
\end{equation}%
If $\Psi _{\mu ,p}(u)>\alpha _{p}(a_{1})$, then it follows from (\ref{E27}%
) and (\ref{E28}) that $\alpha _{p}(a)<\alpha _{p}(a),$ which is
impossible. If $\Psi _{\mu ,p}(u)=\alpha _{p}(a_{1})$, then $\alpha
_{p}(a_{1})$ is attained and thus
\begin{equation}
\alpha _{p}\left( \sqrt{a^{2}-a_{1}^{2}}\right) +\alpha _{p}(a_{1})>\alpha
_{p}(a).  \label{E29}
\end{equation}%
So by (\ref{E27}) and (\ref{E29}), we reach a contradiction. Hence we
have $\Vert u\Vert _{2}^{2}=a^{2}$ and $\Vert w_{n}\Vert _{2}^{2}=o_{n}(1)$.
By the interpolation inequality and Sobolev inequality, we have
\begin{equation}
\Vert \nabla w_{n}\Vert _{2}^{2}=o_{n}(1)  \label{E30}
\end{equation}%
and
\begin{equation}
\Vert w_{n}\Vert _{q}^{q}=o_{n}(1)\ \text{for}\ 2\leq q<4^{\ast }.
\label{E31}
\end{equation}%
Now, we show that $\Vert \Delta w_{n}\Vert _{2}^{2}=o_{n}(1)$. If $p<4^{\ast
}$, then it follows from (\ref{E24}), (\ref{E30})-(\ref{E31}) and the
interpolation inequality that
\begin{align*}
\alpha _{p}(a)& =\Psi _{\mu ,p}(u_{n})+o_{n}(1) \\
& =\Psi _{\mu ,p}(u)+\frac{1}{2}\Vert \Delta w_{n}\Vert _{2}^{2}+o_{n}(1) \\
& \geq \alpha _{p}(a)+\frac{1}{2}\Vert \Delta w_{n}\Vert _{2}^{2}+o_{n}(1),
\end{align*}%
which implies that $\Vert \Delta w_{n}\Vert _{2}^{2}=o_{n}(1)$. If $%
p=4^{\ast }$, then it follows from (\ref{E24}) and (\ref{E30}) that
\begin{align}
\alpha _{4^{\ast }}(a)& =\Psi _{\mu ,p}(u_{n})+o_{n}(1)  \notag \\
& =\Psi _{\mu ,p}(u)+\frac{1}{2}\Vert \Delta w_{n}\Vert _{2}^{2}-\frac{1}{%
4^{\ast }}\Vert w_{n}\Vert _{4^{\ast }}^{4^{\ast }}+o_{n}(1)  \notag \\
& \geq \alpha _{4^{\ast }}(a)+\frac{1}{2}\Vert \Delta w_{n}\Vert _{2}^{2}-%
\frac{1}{4^{\ast }}\Vert w_{n}\Vert _{4^{\ast }}^{4^{\ast }}+o_{n}(1)  \notag
\\
& \geq \alpha _{4^{\ast }}(a)+\frac{1}{2}\Vert \Delta w_{n}\Vert _{2}^{2}-%
\frac{1}{4^{\ast }}\mathcal{S}^{-4^{\ast }/2}\Vert \Delta w_{n}\Vert
_{2}^{4^{\ast }}.  \label{E32}
\end{align}%
Note that $\rho _{0}=\left( \frac{N}{N+4}\right) ^{(N-4)/8}\mathcal{S}^{N/8}$
when $p=4^{\ast }$. This shows that $\Vert \Delta w_{n}\Vert <\rho _{0}<%
\mathcal{S}^{N/8}$. Let $f(s)=\frac{1}{2}s^{2}-\frac{1}{4^{\ast }}\mathcal{S}%
^{-4^{\ast }/2}s^{4^{\ast }}$ for $s>0$. Then we have $f(\mathcal{S}%
^{N/8})=\max_{s>0}f(s)$ and $f(s)>0$ for any $0<s<\mathcal{S}^{N/8}$. In
view of (\ref{E32}), it holds $\Vert \Delta w_{n}\Vert _{2}^{2}=o_{n}(1)$.
This indicates that $u_{n}\rightarrow u$ in $H^{2}(\mathbb{R}^{N})$ and $%
u\in \mathcal{M}_{a}$. The proof is complete.
\end{proof}

According to Lemma \ref{T3.4}, we have the following corollary.

\begin{corollary}
\label{T3.5} Let $\overline{p}<p<4^{\ast }$ for $N\geq 5$, or $p=4^{\ast }$
for $N\geq 9$. Assume that condition (\ref{E4}) holds. Then the set
\begin{equation*}
\mathbf{B}_{\rho_{0}}:=\{u\in \mathcal{M}_{a}:\Psi _{\mu ,p}(u)=\alpha
_{p}(a)\}
\end{equation*}%
is compact in $H^{2}(\mathbb{R}^{N})$, up to translation.
\end{corollary}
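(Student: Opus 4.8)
The plan is to read off the statement as an essentially immediate consequence of Proposition \ref{T3.4}. First I would record that $\mathbf{B}_{\rho_0}\neq\emptyset$: by the definition of $\alpha_p(a)=\inf_{u\in\mathcal{M}_a}\Psi_{\mu,p}(u)$ there is a minimizing sequence $\{v_n\}\subset\mathcal{M}_a$ with $\Psi_{\mu,p}(v_n)\to\alpha_p(a)$, and Proposition \ref{T3.4} (together with the translations $\{x_n\}$ used in its proof) produces, along a subsequence, an element $v\in\mathcal{M}_a$ with $v_n(\cdot-x_n)\to v$ in $H^2(\mathbb{R}^N)$ and $\Psi_{\mu,p}(v)=\alpha_p(a)$; hence $v\in\mathbf{B}_{\rho_0}$.

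Next, to prove compactness up to translation, I would take an arbitrary sequence $\{u_n\}\subset\mathbf{B}_{\rho_0}$. By the definition of $\mathbf{B}_{\rho_0}$, we have $\{u_n\}\subset\mathcal{M}_a$ and $\Psi_{\mu,p}(u_n)=\alpha_p(a)$ for every $n$, so $\{u_n\}$ is a minimizing sequence for $\alpha_p(a)$ over $\mathcal{M}_a$ in the sense required by Proposition \ref{T3.4}. Invoking that proposition — more precisely, the argument in its proof, which after passing to a subsequence yields translations $\{x_n\}\subset\mathbb{R}^N$ and a function $u\in\mathcal{M}_a$ with $u_n(\cdot-x_n)\to u$ strongly in $H^2(\mathbb{R}^N)$ and $\Psi_{\mu,p}(u)=\alpha_p(a)$ — gives exactly a subsequence that converges, after translation, to a point $u$ of $\mathcal{M}_a$ with $\Psi_{\mu,p}(u)=\alpha_p(a)$, i.e. $u\in\mathbf{B}_{\rho_0}$. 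Since $\{u_n\}$ was arbitrary, this says precisely that every sequence in $\mathbf{B}_{\rho_0}$ has a subsequence converging in $H^2(\mathbb{R}^N)$, up to translations, to an element of $\mathbf{B}_{\rho_0}$, which is the assertion that $\mathbf{B}_{\rho_0}$ is compact up to translation.

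I expect there to be no genuine obstacle in this corollary: all the analytic content has already been carried out in Proposition \ref{T3.4} — ruling out vanishing of the minimizing sequence via the biharmonic Lions-type Lemma \ref{T3.3} and the bound $\alpha_p(a)<-\tfrac{\mu^2a^2}{8}$ of Lemma \ref{T3.2}, excluding dichotomy via the strict subadditivity inequalities \eqref{E15}--\eqref{E16} of Lemma \ref{T3.1}, and upgrading weak to strong convergence through the Brezis--Lieb splitting \eqref{E23}--\eqref{E25}, using in the critical case $p=4^\ast$ that $\|\Delta w_n\|_2<\rho_0<\mathcal{S}^{N/8}$ lies in the region where $f(s)=\tfrac12 s^2-\tfrac{1}{4^\ast}\mathcal{S}^{-4^\ast/2}s^{4^\ast}$ is positive. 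The only mild subtlety worth stating explicitly is that the convergence furnished by Proposition \ref{T3.4} holds only after translating the functions, so that ``compactness'' here must be interpreted modulo the action of $\mathbb{R}^N$ by translations; this is already built into the phrasing of the corollary, so nothing further is needed.
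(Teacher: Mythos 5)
Your proposal is correct and follows exactly the paper's route: the paper derives Corollary \ref{T3.5} directly from Proposition \ref{T3.4} (stating only ``According to Lemma \ref{T3.4}, we have the following corollary''), and you simply spell out the routine details---any sequence in $\mathbf{B}_{\rho_{0}}$ is a minimizing sequence in $\mathcal{M}_{a}$, so the proposition (whose proof supplies the translations $\{x_{n}\}$) yields a subsequence converging up to translation to a limit in $\mathbf{B}_{\rho_{0}}$. Your explicit remark that the convergence in Proposition \ref{T3.4} is obtained only after translation, so compactness must be understood modulo translations, is a correct and welcome clarification of what the paper leaves implicit.
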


\textbf{We are ready to prove Theorem \ref{T1.2}}: $(i)$ According to Lemma %
\ref{T2.2} and Proposition \ref{T3.4}, we obtain that $\Psi _{\mu
,p}|_{s_{a}}$ has a ground state solution $u_{\mu }^{+}\in \mathcal{M}_{a}$.
We now claim that the corresponding Lagrange multipliers $\lambda _{\mu
}^{+}>\frac{\mu ^{2}}{4}$. Since $(\lambda _{\mu }^{+},u_{\mu }^{+})$ is the
solution of problem (\ref{E3}), we have
\begin{equation}
\lambda _{\mu }^{+}a^{2}=-\Vert \Delta u_{\mu }^{+}\Vert _{2}^{2}+\mu \Vert
\nabla u_{\mu }^{+}\Vert _{2}^{2}+\Vert u_{\mu }^{+}\Vert _{p}^{p}
\label{E33}
\end{equation}%
and
\begin{equation}
0=Q_{p}(u_{\mu }^{+})=2\Vert \Delta u_{\mu }^{+}\Vert _{2}^{2}-\mu \Vert \nabla
u_{\mu }^{+}\Vert _{2}^{2}-2\gamma _{p}\Vert u_{\mu }^{+}\Vert _{p}^{p}.
\label{E34}
\end{equation}%
It follows from (\ref{E33}) and (\ref{E34}) that%
\begin{equation}
\lambda _{\mu }^{+}a^{2}=\frac{\mu }{2}\Vert \nabla u_{\mu }^{+}\Vert
_{2}^{2}+(1-\gamma _{a})\Vert u_{\mu }^{+}\Vert _{p}^{p}\text{ and }\Psi
_{\mu ,p}(u_{\mu }^{+})=-\frac{\mu }{4}\Vert \nabla u_{\mu }^{+}\Vert
_{2}^{2}+\frac{p\gamma _{p}-2}{2p}\Vert u_{\mu }^{+}\Vert _{p}^{p}.
\label{E35}
\end{equation}%
According to Lemma \ref{T3.2} and (\ref{E35}), we deduce that
\begin{equation}
\lambda _{\mu }^{+}a^{2}\geq -2\Psi _{\mu ,p}(u_{\mu }^{+})>\frac{\mu
^{2}a^{2}}{4},  \label{E36}
\end{equation}%
which shows that $\lambda _{\mu }^{+}>\frac{\mu ^{2}}{4}$.

$(ii)$ By employing the argument of \cite[Theorem 3.7]{B2018}, we easily
obtain that $u_{\mu }^{+}$ is sign-changing.

$(iii)$ For any $u_{\mu }^{+}\in \mathbf{B}_{\rho_{0}}$, we have $u_{\mu
}^{+}\in \mathcal{P}_{a}$. Then we calculate that
\begin{align*}
0>\Psi _{\mu ,p}(u_{\mu }^{+})=& \frac{p\gamma _{p}-2}{2p\gamma _{p}}\Vert
\Delta u_{\mu }^{+}\Vert _{2}^{2}-\frac{\mu (p\gamma _{p}-1)}{2p\gamma _{p}}%
\Vert \nabla u_{\mu }^{+}\Vert _{2}^{2} \\
\geq & \frac{p\gamma _{p}-2}{2p\gamma _{p}}\Vert \Delta u_{\mu }^{+}\Vert
_{2}^{2}-\frac{\mu a(p\gamma _{p}-1)}{2p\gamma _{p}}\Vert \Delta u_{\mu
}^{+}\Vert _{2},
\end{align*}%
which implies that $\Vert \Delta u_{\mu }^{+}\Vert _{2}<\frac{(p\gamma
_{p}-1)\mu a}{p\gamma _{p}-2}$. This shows that%
\begin{equation}
\lim\limits_{\mu \rightarrow 0^{+}}\Vert \Delta u_{\mu }^{+}\Vert _{2}=0,
\label{E37}
\end{equation}%
and thus
\begin{equation}
\lim\limits_{\mu \rightarrow 0^{+}}\Vert \nabla u_{\mu }^{+}\Vert
_{2}=\lim\limits_{\mu \rightarrow 0^{+}}\Vert u_{\mu }^{+}\Vert _{p}=0.
\label{E38}
\end{equation}%
Since
\begin{equation*}
\lambda _{\mu }^{+}a^{2}=-\Vert \Delta u_{\mu }^{+}\Vert _{2}^{2}+\mu \Vert
\nabla u_{\mu }^{+}\Vert _{2}^{2}+\Vert u_{\mu }^{+}\Vert _{p}^{p},
\end{equation*}%
by (\ref{E36})-(\ref{E38}), we have $\lambda _{\mu }^{+}\rightarrow
0^{+}$ as $\mu \rightarrow 0^{+}.$ According to $(i)$, we know that $\alpha
_{p}(a)<-\frac{\mu ^{2}a^{2}}{8}$. So it follows that $\alpha
_{p}(a)\rightarrow 0^{-}$ as $\mu \rightarrow 0^{+}.$ The proof is complete.

\section{The existence of high-energy solution}

\begin{lemma}
\label{T4.1} Let $\overline{p}<p<4^{\ast }$ for $N\geq 2$, or $p=4^{\ast }$
for $N\geq 5$. Assume that condition (\ref{E4}) holds. Then the following
statements are true.\newline
$(i)$ The mapping $M_{\mu ,p}(a):a\mapsto M_{\mu ,p}(a)$ is continuous.%
\newline
$(ii)$ There holds%
\begin{equation*}
M_{\mu ,p}(a)\leq \beta ^{2+4b}M_{\mu ,p}(\beta ^{-1}a),\ \forall {\beta >1},
\end{equation*}%
where $b:=\frac{2(p-2)}{8-N(p-2)}.$ In particular, the inequality is strict
if $M_{\mu ,p}(\beta ^{-1}a)$ is attained. Moreover, there hold
\begin{equation*}
M_{\mu ,p}(a)<M_{\mu ,p}(\beta ^{-1}a),\ \forall \beta {>1}
\end{equation*}%
for $\overline{p}<p<4^{\ast }$ and
\begin{equation*}
M_{\mu ,4^{\ast }}(a)\leq M_{\mu ,4^{\ast }}(\beta ^{-1}a),\ \forall \beta {%
>1},
\end{equation*}%
where the inequality is strict if $M_{\mu ,4^{\ast }}(\beta ^{-1}a)$ is
attained.
\end{lemma}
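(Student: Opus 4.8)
I would organise the whole argument around the minimax description
$M_{\mu,p}(a)=\inf_{u\in S_a}\max_{s>0}\Psi_{\mu,p}(u_s)$: by Lemma~\ref{T2.7}(iv) every $u\in S_a$ has a dilation $u_{\bar s_u}\in\mathcal P_a^-$ realising $\max_{s>0}\Psi_{\mu,p}(u_s)$, and conversely every element of $\mathcal P_a^-$ arises this way (with $\bar s_u=1$). Since $p>2$, condition (\ref{E4}) is inherited by every mass $a'\le a$, so Lemma~\ref{T2.7} and Corollary~\ref{L3.5} are available simultaneously at all the masses occurring below. For part $(i)$ the plan is to prove upper and lower semicontinuity of $a\mapsto M_{\mu,p}(a)$ separately, in the spirit of \cite[Lemma 2.6]{J2022} and \cite{LY2022}. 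Given $a_n\to a$: for the upper bound I would take $u\in\mathcal P_a^-$ with $\Psi_{\mu,p}(u)\le M_{\mu,p}(a)+\varepsilon$, rescale it to $\tfrac{a_n}{a}u\in S_{a_n}$, use $M_{\mu,p}(a_n)\le\max_{s>0}\Psi_{\mu,p}(\tfrac{a_n}{a}u_s)$, and note that this converges to $\max_{s>0}\Psi_{\mu,p}(u_s)=\Psi_{\mu,p}(u)$ because $(t,s)\mapsto\Psi_{\mu,p}(t u_s)$ is jointly continuous and, for $t$ near $1$, its maximiser in $s$ stays in a fixed compact subinterval of $(0,\infty)$. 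For the lower bound I would pick $u_n\in\mathcal P_{a_n}^-$ with $\Psi_{\mu,p}(u_n)\le M_{\mu,p}(a_n)+\tfrac1n$; the identity $\Psi_{\mu,p}(u_n)=\tfrac{p\gamma_p-2}{2p\gamma_p}\|\Delta u_n\|_2^2-\tfrac{\mu(p\gamma_p-1)}{2p\gamma_p}\|\nabla u_n\|_2^2$ together with (\ref{E5}) and the upper bound just established forces a uniform $H^2$-bound on $\{u_n\}$, after which rescaling to $\tfrac{a}{a_n}u_n\in S_a$ and repeating the previous estimate with now uniform constants gives $M_{\mu,p}(a)\le\liminf_n M_{\mu,p}(a_n)$.

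For part $(ii)$, the crux, the first point to recognise is that the naive rescaling $u\mapsto\beta u$ only produces $M_{\mu,p}(a)\le\beta^2 M_{\mu,p}(\beta^{-1}a)$, which is useless for monotonicity; one must instead use an anisotropic dilation with the exact exponent $b$ from the statement. Fix $\beta>1$, let $u\in\mathcal P_{\beta^{-1}a}^-$ be $\varepsilon$-optimal for $M_{\mu,p}(\beta^{-1}a)$, and apply Lemma~\ref{T2.7} to $\beta u\in S_a$ to get $M_{\mu,p}(a)\le\max_{s>0}\Psi_{\mu,p}(\beta u_s)=\max_{\rho>0}\Psi_{\mu,p}(\beta u_{\beta^b\rho})$. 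A direct computation using $N(p-2)=4p\gamma_p$ yields
\begin{equation*}
\Psi_{\mu,p}\big(\beta u_{\beta^b\rho}\big)=\frac{\beta^{2+4b}}{2}\rho^{4}\|\Delta u\|_2^{2}-\frac{\mu\beta^{2+2b}}{2}\rho^{2}\|\nabla u\|_2^{2}-\frac{\beta^{\,p+2p\gamma_p b}}{p}\rho^{2p\gamma_p}\|u\|_p^{p},
\end{equation*}
and $b=\tfrac{2(p-2)}{8-N(p-2)}$ is chosen precisely so that $p+2p\gamma_p b=2+4b$. Since $p\gamma_p>2$ one has $b<0$, hence $2+2b>2+4b$, and because $\beta>1$ the (negative) middle term satisfies $-\tfrac{\mu\beta^{2+2b}}{2}\rho^2\|\nabla u\|_2^2\le-\tfrac{\mu\beta^{2+4b}}{2}\rho^2\|\nabla u\|_2^2$, strictly when $\mu>0$. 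Therefore $\Psi_{\mu,p}(\beta u_{\beta^b\rho})\le\beta^{2+4b}\phi_u(\rho)$, and maximising over $\rho$ with $\max_{\rho>0}\phi_u(\rho)=\phi_u(1)=\Psi_{\mu,p}(u)$ (Lemma~\ref{T2.7}(iv), as $u\in\mathcal P_{\beta^{-1}a}^-$) gives $M_{\mu,p}(a)\le\beta^{2+4b}\Psi_{\mu,p}(u)\le\beta^{2+4b}\big(M_{\mu,p}(\beta^{-1}a)+\varepsilon\big)$; letting $\varepsilon\to0$ proves the inequality, and if $M_{\mu,p}(\beta^{-1}a)$ is attained one takes $\varepsilon=0$ and keeps the strictness. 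Finally $2+4b=\tfrac{2p(1-\gamma_p)}{2-p\gamma_p}<0$ when $\overline{p}<p<4^{\ast}$ (because $\gamma_p<1$ and $p\gamma_p>2$) while $M_{\mu,p}(\beta^{-1}a)>0$ by Corollary~\ref{L3.5}, so $M_{\mu,p}(a)<M_{\mu,p}(\beta^{-1}a)$; for $p=4^{\ast}$ one has $\gamma_p=1$, hence $2+4b=0$ and $\beta^{2+4b}=1$, giving $M_{\mu,4^{\ast}}(a)\le M_{\mu,4^{\ast}}(\beta^{-1}a)$ with strict inequality when the right-hand side is attained.

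The hard part will be pinpointing the dilation $u\mapsto\beta u_{\beta^b\cdot}$ with exactly this exponent: one needs the $\|\Delta\cdot\|_2^2$ and $\|\cdot\|_p^p$ contributions to scale by the common factor $\beta^{2+4b}$ so that only the sign-definite term $-\tfrac\mu2\|\nabla\cdot\|_2^2$ is left to be exploited, and it is precisely the mass-supercritical condition $p>\overline{p}$ (equivalently $b<0$, $p\gamma_p>2$) that makes this residual term push the estimate in the favourable direction — this is the genuinely new ingredient. By contrast, part $(i)$ is technically more fiddly than conceptually hard: the care there lies in the uniform $H^2$-bounds and in the uniform control of the fibre maximisers.
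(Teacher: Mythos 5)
Your proposal is correct and follows essentially the same route as the paper: your map $u\mapsto\beta\,u_{\beta^{b}\rho}$ is exactly the paper's rescaling $v=\beta^{c}u(\beta^{b}\cdot)$ (since $c=1+Nb/2$), the cancellation $p+2p\gamma_{p}b=2+4b$, the exploitation of the sign-definite $-\frac{\mu}{2}\|\nabla\cdot\|_{2}^{2}$ term via $\beta^{2+2b}>\beta^{2+4b}$, and the sign discussion of $2+4b$ reproduce the paper's argument (\ref{E39})--(\ref{E42}) verbatim. The only deviation is cosmetic and in part $(i)$: you use the isotropic rescaling $\tfrac{a_n}{a}u$ together with uniform control of the fibre maximisers (which is indeed available, since $\|\Delta u_n\|_2>\rho_{a_n}$ and $Q_p(u_n)=0$ keep $\|u_n\|_p$ bounded away from zero), whereas the paper reuses the $(c,b)$-rescaling; both reduce to the same two-sided semicontinuity estimates.
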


\begin{proof}
$(i)$ Let $a\in \left( 0,\left( C_{0}\mu ^{-(p\gamma _{p}-2)}\right)^{\frac{1}{p-2}}\right) $ be arbitrary and $a_{n}\in \left( 0,\left( C_{0}\mu ^{-(p\gamma _{p}-2)}\right)^{\frac{1}{p-2}}\right) $ be such that $a_{n}\rightarrow a$
as $n\rightarrow \infty $. From the definition of $M_{\mu ,p}(a_{n})>0$, for
any $\epsilon >0$ sufficiently small, there exists $u_{n}\in \mathcal{P}%
_{a_{n}}^{-}$ such that
\begin{equation*}
\Psi _{\mu ,p}(u_{n})\leq M_{\mu ,p}(a_{n})+\epsilon \ \text{and }\Psi_{\mu ,p}(u_{n})>0.
\end{equation*}%
Set $v_{n}=\left( \frac{a}{a_{n}}\right) ^{c}u_{n}\left( \left( \frac{a}{%
a_{n}}\right) ^{b}\cdot \right) ,$ where $c:=\frac{8}{8-N(p-2)}.$ Then there
exists $s_{n}>0$ such that $\left( v_{n}\right) _{s_{n}}\in \mathcal{P}%
_{a}^{+}$. Since $a_{n}\rightarrow a$ and $Q_{p}(u_{n})=0$, we have $\Psi _{\mu
,p}(v_{n})=\Psi _{\mu ,p}(u_{n})+o_{n}(1)$ and $%
Q_{p}(v_{n})=Q_{p}(u_{n})+o_{n}(1)=o_{n}(1)$. Thus, it follows from Lemma \ref{T2.7}
$(iv)$ that $s_{n}=1+o_{n}(1)$. Since $Q_{p}(u_{n})=0$, we have
\begin{align*}
\Psi _{\mu ,p}(u_{n})& =\frac{p\gamma _{p}-2}{2p\gamma _{p}}\Vert \Delta
u_{n}\Vert _{2}^{2}-\frac{\mu (p\gamma _{p}-1)}{2p{\gamma _{p}}}\Vert \nabla
u_{n}\Vert _{2}^{2} \\
& \geq \frac{p\gamma _{p}-2}{2p\gamma _{p}}\Vert \Delta u_{n}\Vert _{2}^{2}-%
\frac{\mu a(p\gamma _{p}-1)}{2p{\gamma _{p}}}\Vert \Delta u_{n}\Vert _{2},
\end{align*}%
which implies that $\Vert \Delta u_{n}\Vert _{2}^{2}$ is bounded. Since $%
M_{\mu ,p}(a_{n})>0$ and $\Vert \nabla u_{n}\Vert _{2}^{2}<a_{n}\Vert \Delta
u_{n}\Vert _{2}$, there exists a constant $C>0$ such that $\Vert \nabla
u_{n}\Vert _{2}^{2}<C$. Then we obtain that
\begin{align*}
M_{\mu ,p}(a)& \leq \Psi _{\mu ,p}\left( \left( v_{n}\right) _{s_{n}}\right)
=\frac{\left( \frac{a}{a_{n}}\right) ^{2+4b}}{2}\Vert \Delta \left(
u_{n}\right) _{s_{n}}\Vert _{2}^{2}-\frac{\mu \left( \frac{a}{a_{n}}\right)
^{2+2b}}{2}\Vert \nabla \left( u_{n}\right) _{s_{n}}\Vert _{2}^{2}-\frac{%
\left( \frac{a}{a_{n}}\right) ^{2+4b}}{p}\Vert \left( u_{n}\right)
_{s_{n}}\Vert _{p}^{p} \\
& =\left( \frac{a}{a_{n}}\right) ^{2+4b}\left( \frac{1}{2}\Vert \Delta
\left( u_{n}\right) _{s_{n}}\Vert _{2}^{2}-\frac{\mu \left( \frac{a}{a_{n}}%
\right) ^{-2b}}{2}\Vert \nabla \left( u_{n}\right) _{s_{n}}\Vert _{2}^{2}-%
\frac{1}{p}\Vert \left( u_{n}\right) _{s_{n}}\Vert _{p}^{p}\right) \\
& \leq \left( \frac{a}{a_{n}}\right) ^{2+4b}\left( \Psi _{\mu ,p}\left(
(u_{n})_{s_{n}}\right) +\frac{\mu \left\vert 1-\left( \frac{a}{a_{n}}\right)
^{-2b}\right\vert }{2}s_{n}^{2}\Vert \nabla u_{n}\Vert _{2}^{2}\right) \\
& <\left( \frac{a}{a_{n}}\right) ^{2+4b}\left( \Psi _{\mu ,p}\left(
(u_{n})_{s_{n}}\right) +\frac{\mu C\left\vert 1-\left( \frac{a}{a_{n}}%
\right) ^{-2b}\right\vert }{2}\right) \\
& <\left( \frac{a}{a_{n}}\right) ^{2+4b}\left( M_{\mu ,p}(a_{n})+\frac{\mu
C\left\vert 1-\left( \frac{a}{a_{n}}\right) ^{-2b}\right\vert }{2}\right)
+\left( \frac{a}{a_{n}}\right) ^{2+4b}\epsilon .
\end{align*}%
By the arbitrariness of $\epsilon $, we deduce that
\begin{equation*}
M_{\mu ,p}(a)\leq \left( \frac{a}{a_{n}}\right) ^{2+4b}M_{\mu ,p}(a_{n})+%
\frac{\mu C\left\vert 1-\left( \frac{a}{a_{n}}\right) ^{-2b}\right\vert
\left( \frac{a}{a_{n}}\right) ^{2+4b}}{2}.
\end{equation*}%
Hence, we have $M_{\mu ,p}(a)\leq \liminf_{n\rightarrow \infty }M_{\mu
,p}(a_{n})$.

According to the definition of $M_{\mu ,p}(a)$, for any $\epsilon >0$
sufficiently small, there exists $u\in \mathcal{P}_{a}^{-}$ such that
\begin{equation*}
\Psi _{\mu ,p}(u)\leq M_{\mu ,p}(a)+\epsilon \ \text{and }\Psi _{\mu
,p}(u)>0.
\end{equation*}%
Set $w_{n}=\left( \frac{a_{n}}{a}\right) ^{c}u\left( \left( \frac{a_{n}}{a}%
\right) ^{b}\cdot \right) .$ Then there exists $t_{n}>0$ such that $\left(
w_{n}\right) _{t_{n}}\in \mathcal{P}_{a_{n}}^{-}$. Clearly, $%
t_{n}\rightarrow 1$ and $t_{n}$ is bounded. Since $Q_{p}(u)=0$, we calculate
that
\begin{align*}
\Psi _{\mu ,p}(u)& =\frac{p\gamma _{p}-2}{2p\gamma _{p}}\Vert \Delta u\Vert
_{2}^{2}-\frac{\mu (p\gamma _{p}-1)}{2p{\gamma _{p}}}\Vert \nabla u\Vert
_{2}^{2} \\
& \geq \frac{p\gamma _{p}-2}{2p\gamma _{p}}\Vert \Delta u\Vert _{2}^{2}-%
\frac{\mu a(p\gamma _{p}-1)}{2p{\gamma _{p}}}\Vert \Delta u\Vert _{2},
\end{align*}%
which implies that $\Vert \Delta u\Vert _{2}^{2}$ is bounded. Since $M_{\mu
,p}(a)>0$ and $\Vert \nabla u\Vert _{2}^{2}<a\Vert \Delta u\Vert _{2}$,
there exists a constant $C>0$ such that $\Vert \nabla u\Vert
_{2}^{2}<CM_{\mu ,p}(a)$. Then we have
\begin{align*}
M_{\mu ,p}(a_{n})& \leq \Psi _{\mu ,p}\left( \left( w_{n}\right)
_{s_{n}}\right) =\frac{\left( \frac{a_{n}}{a}\right) ^{2+4b}}{2}\Vert \Delta
u_{t_{n}}\Vert _{2}^{2}-\frac{\mu \left( \frac{a_{n}}{a}\right) ^{2+2b}}{2}%
\Vert \nabla u_{t_{n}}\Vert _{2}^{2}-\frac{\left( \frac{a_{n}}{a}\right)
^{2+4b}}{p}\Vert u_{t_{n}}\Vert _{p}^{p} \\
& =\left( \frac{a_{n}}{a}\right) ^{2+4b}\left( \frac{1}{2}\Vert \Delta
u_{t_{n}}\Vert _{2}^{2}-\frac{\mu \left( \frac{a_{n}}{a}\right) ^{-2b}}{2}%
\Vert \nabla u_{t_{n}}\Vert _{2}^{2}-\frac{1}{p}\Vert u_{t_{n}}\Vert
_{p}^{p}\right) \\
& =\left( \frac{a_{n}}{a}\right) ^{2+4b}\left( \frac{1}{2}\Vert \Delta
u_{t_{n}}\Vert _{2}^{2}-\frac{\mu }{2}\Vert \nabla u_{t_{n}}\Vert _{2}^{2}-%
\frac{1}{p}\Vert u_{t_{n}}\Vert _{p}^{p}\right) \\
& +\left( \frac{a_{n}}{a}\right) ^{2+4b}\frac{\mu \left( 1-\left( \frac{a_{n}%
}{a}\right) ^{-2b}\right) }{2}s_{n}^{2}\Vert \nabla u\Vert _{2}^{2} \\
& <\left( \frac{a_{n}}{a}\right) ^{2+4b}\Psi_{\mu,p}(u)+\left( \frac{a_{n}}{a%
}\right) ^{2+4b}\frac{\mu C\left\vert 1-\left( \frac{a_{n}}{a}\right)
^{-2b}\right\vert }{2}M_{\mu ,p}(a) \\
& <\left( \frac{a_{n}}{a}\right) ^{2+4b}\left( 1+\frac{\mu C\left\vert
1-\left( \frac{a_{n}}{a}\right) ^{-2b}\right\vert }{2}\right) M_{\mu
,p}(a)+\left( \frac{a_{n}}{a}\right) ^{2+4b}\epsilon .
\end{align*}%
By the arbitrariness of $\epsilon $, one has
\begin{equation*}
M_{\mu ,p}(a_{n})\leq \left( \frac{a_{n}}{a}\right) ^{2+4b}\left( 1+\frac{%
\mu C\left\vert 1-\left( \frac{a_{n}}{a}\right) ^{-2b}\right\vert }{2}\right)
M_{\mu ,p}(a),
\end{equation*}%
which implies that $\limsup_{n\rightarrow \infty }M_{\mu ,p}(a_{n})\leq
M_{\mu ,p}(a)$. Hence, we obtain that $M_{\mu ,p}(a_{n})\rightarrow M_{\mu
,p}(a)$ as $n\rightarrow \infty $.

$(ii)$ Let $\beta >1$. Suppose that for any $\epsilon >0$, there exists $%
u\in \mathcal{P}_{\beta ^{-1}a}^{-}$ such that
\begin{equation}
\Psi _{\mu ,p}(u)\leq M_{\mu ,p}(\beta ^{-1}a)+\epsilon .  \label{E39}
\end{equation}%
Let $v=\beta ^{c}u\left( \beta ^{b}x\right) $. Then there exists $s^{-}>0$
such that $v_{s^{-}}\in \mathcal{P}_{a}^{-}$. Clearly,
\begin{equation}
1+2b\left\{
\begin{array}{ll}
<0, & \ \text{if}\ \overline{p}<p<4^{\ast }, \\
=0, & \ \text{if}\ p=4^{\ast }.%
\end{array}%
\right.  \label{E40}
\end{equation}%
Then by Lemma \ref{T2.7} $(iv)$ and (\ref{E39}), we have
\begin{align*}
M_{\mu ,p}(a)\leq \Psi _{\mu ,p}(v_{s^{-}})=& \frac{\beta ^{2+4b}}{2}\Vert
\Delta u_{s^{-}}\Vert _{2}^{2}-\frac{\mu \beta ^{2+2b}}{2}\Vert \nabla
u_{s^{-}}\Vert _{2}^{2}-\frac{\beta ^{2+4b}}{p}\Vert u_{s^{-}}\Vert _{p}^{p}
\\
=& \beta ^{2+4b}\left( \frac{1}{2}\Vert \Delta u_{s^{-}}\Vert _{2}^{2}-\frac{%
\mu \beta ^{-2b}}{2}\Vert \nabla u_{s^{-}}\Vert _{2}^{2}-\frac{1}{p}\Vert
u_{s^{-}}\Vert _{p}^{p}\right) \\
<& \beta ^{2+4b}\left( \frac{1}{2}\Vert \Delta u_{s^{-}}\Vert _{2}^{2}-\frac{%
\mu }{2}\Vert \nabla u_{s^{-}}\Vert _{2}^{2}-\frac{1}{p}\Vert u_{s^{-}}\Vert
_{p}^{p}\right) \\
<& \beta ^{2+4b}\Psi_{\mu,p}(u) \\
<& \beta ^{2+4b}M_{\mu ,p}(\beta ^{-1}a)+\beta ^{2+4b}\epsilon .
\end{align*}%
By the arbitrariness of $\epsilon $, we have
\begin{equation}
M_{\mu ,p}(a)\leq \beta ^{2+4b}M_{\mu ,p}(\beta ^{-1}a),  \label{E41}
\end{equation}%
and the inequality is strict if $M_{\mu ,p}(\beta ^{-1}a)$ is attained. It
follows from (\ref{E40})-(\ref{E41}) that
\begin{equation}
M_{\mu ,p}(a)%
\begin{cases}
<M_{\mu ,p}(\beta ^{-1}a),\  & \text{for}\ \overline{p}<p<4^{\ast }, \\
\leq M_{\mu ,p}(\beta ^{-1}a),\  & \text{for}\ p=4^{\ast }.%
\end{cases}
\label{E42}
\end{equation}%
Hence, by (\ref{E41}) and (\ref{E42}), we easily reach the conclusion. The
proof is complete.
\end{proof}

\begin{lemma}
\label{T4.2} Let $\overline{p}<p<4^{\ast }$ and $N\geq 2$. Then there exists
$\bar{\mu}_{a}\in \left( 0,(C_{0}a^{2-p})^{\frac{1}{p\gamma _{p}-2}}\right) $
such that for any $\mu \in (0,\bar{\mu}_{a})$, there holds
\begin{equation}
M_{\mu ,p}(a)<M_{\mu ,p}(a_{1})+m_{\mu ,p}(a_{2})  \label{E43}
\end{equation}%
for any $a_{1},a_{2}>0$ satisfying $a_{1}^{2}+a_{2}^{2}=a^{2}.$
\end{lemma}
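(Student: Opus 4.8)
The plan is to derive (\ref{E43}) by combining the scaling inequalities for $M_{\mu,p}$ and $\alpha_p$ already established with two quantitative facts for small $\mu$: that $M_{\mu,p}(a)$ stays bounded away from $0$, and that $|\alpha_p(a)|$ is small. Throughout we use $m_{\mu,p}(a_2)=\alpha_p(a_2)<0$ (Corollary \ref{L3.5}$(ii)$ and Lemma \ref{T2.2}$(i)$), and we note that condition (\ref{E4}) for $a$ implies it for every $a'\in(0,a]$, so all quantities below are meaningful once $\mu<(C_0a^{2-p})^{1/(p\gamma_p-2)}$. Applying Lemma \ref{T4.1}$(ii)$ with $\beta=a/a_1>1$ gives $M_{\mu,p}(a)\le (a/a_1)^{2+4b}M_{\mu,p}(a_1)=M_{\mu,p}(a_1)-\bigl(1-(a/a_1)^{2+4b}\bigr)M_{\mu,p}(a_1)$, so it is enough to show $\bigl(1-(a/a_1)^{2+4b}\bigr)M_{\mu,p}(a_1)>|\alpha_p(a_2)|$ for every admissible pair $(a_1,a_2)$.

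First I would bound the left side from below, uniformly in the decomposition. Writing $t=a_2^2/a^2\in(0,1)$ we have $(a/a_1)^{2+4b}=(1-t)^{-(1+2b)}$, and since $1+2b<0$ by (\ref{E40}) the function $t\mapsto\bigl(1-(1-t)^{-(1+2b)}\bigr)/t$ extends continuously to $[0,1]$ with endpoint values $|1+2b|$ and $1$, hence has a positive minimum $\kappa=\kappa(N,p)>0$; thus $1-(a/a_1)^{2+4b}\ge(\kappa/a^2)a_2^2$. For the factor $M_{\mu,p}(a_1)$, Corollary \ref{L3.5}$(iii)$ and Lemma \ref{T2.6}$(ii)$ give $M_{\mu,p}(a)\ge g_p(s_2)\to\tfrac{p\gamma_p-2}{2p\gamma_p}\bigl(\tfrac{a^{p(\gamma_p-1)}}{\mathcal{C}_{N,p}^p\gamma_p}\bigr)^{2/(p\gamma_p-2)}>0$ as $\mu\to0^+$, so there are $\mu_1>0$ and $\delta_0>0$ (half the limit) with $M_{\mu,p}(a)\ge\delta_0$ for all $\mu<\mu_1$; since $a_1<a$, the strict monotonicity in Lemma \ref{T4.1}$(ii)$ gives $M_{\mu,p}(a_1)>M_{\mu,p}(a)\ge\delta_0$ as well. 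Hence $\bigl(1-(a/a_1)^{2+4b}\bigr)M_{\mu,p}(a_1)\ge(\kappa\delta_0/a^2)a_2^2$.

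Next I would bound the right side from above. Lemma \ref{T3.1}$(ii)$ with $\beta=a/a_2>1$ gives $\alpha_p(a)\le(a/a_2)^2\alpha_p(a_2)$, i.e. $|\alpha_p(a_2)|\le(a_2/a)^2|\alpha_p(a)|$. By Corollary \ref{L3.5}$(ii)$ and Lemma \ref{T2.6}$(ii)$, $\alpha_p(a)\ge g_p(s_1)\to0$ as $\mu\to0^+$ while $\alpha_p(a)<0$, so $|\alpha_p(a)|\to0$, and we may fix $\mu_2>0$ with $|\alpha_p(a)|<\kappa\delta_0$ for $\mu<\mu_2$; then $|\alpha_p(a_2)|\le(a_2^2/a^2)|\alpha_p(a)|<(\kappa\delta_0/a^2)a_2^2$. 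Taking $\bar\mu_a:=\tfrac12\min\{\mu_1,\mu_2,(C_0a^{2-p})^{1/(p\gamma_p-2)}\}$, for every $\mu\in(0,\bar\mu_a)$ we obtain $\bigl(1-(a/a_1)^{2+4b}\bigr)M_{\mu,p}(a_1)\ge(\kappa\delta_0/a^2)a_2^2>|\alpha_p(a_2)|$, which combined with the displayed estimate above yields $M_{\mu,p}(a)<M_{\mu,p}(a_1)+m_{\mu,p}(a_2)$.

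The main obstacle I anticipate is keeping every constant independent of the splitting $a_1^2+a_2^2=a^2$: the estimate $1-(a/a_1)^{2+4b}\ge c\,a_2^2$ degenerates a priori both as $a_1\to a$ (so $a_2\to0$) and as $a_1\to0$, so the reparametrization by $t$ --- which reduces it to a one-variable bound with strictly positive endpoint limits --- is what makes $\kappa$ uniform; likewise, the $\mu$-independence of $\delta_0$ rests precisely on Lemma \ref{T2.6}$(ii)$, i.e. on the mountain-pass level of the model fiber $g_p$ not collapsing to $0$ as $\mu\to0^+$. Beyond this, the argument is a direct assembly of Lemmas \ref{T2.6}, \ref{T3.1}, \ref{T4.1} and Corollary \ref{L3.5}.
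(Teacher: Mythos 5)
Your proof is correct and rests on the same ingredients as the paper's own argument: the scaling inequalities of Lemma \ref{T3.1}$(ii)$ and Lemma \ref{T4.1}$(ii)$, combined with the small-$\mu$ facts that $M_{\mu ,p}(a)$ stays bounded away from $0$ (Corollary \ref{L3.5}$(iii)$ and Lemma \ref{T2.6}$(ii)$) while $m_{\mu ,p}(a)\rightarrow 0^{-}$. The only difference is bookkeeping: where the paper checks uniformity in the splitting by showing the auxiliary function $F_{\mu }(x)=\bigl(1-(x/a)^{2+4b}\bigr)M_{\mu ,p}(a)-\frac{a^{2}-x^{2}}{a^{2}}m_{\mu ,p}(a)$ is negative on $(0,a)$ through a derivative/monotonicity argument, you reach the same uniformity via the elementary compactness bound $1-(1-t)^{-(1+2b)}\geq \kappa t$ on $[0,1]$ together with the strict monotonicity of $M_{\mu ,p}$ in the mass.
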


\begin{proof}
For any $a_{1},a_{2}>0$ satisfying $a_{1}^{2}+a_{2}^{2}=a^{2},$ it follow
from Lemmas \ref{T3.5} and \ref{T4.1} that%
\begin{equation*}
m_{\mu ,p}(a)\leq \left( \frac{a}{a_{2}}\right) ^{2}m_{\mu ,p}(a_{2})\text{
and }M_{\mu ,p}(a)\leq \left( \frac{a}{a_{1}}\right) ^{2+4b}M_{\mu
,p}(a_{1}),
\end{equation*}%
which indicates that
\begin{align}
M_{\mu ,p}(a)& =M_{\mu ,p}(a_{1})+m_{\mu ,p}(a_{2})+M_{\mu ,p}(a)-M_{\mu
,p}(a_{1})-m_{\mu ,p}(a_{2})  \notag \\
& \leq M_{\mu ,p}(a_{1})+m_{\mu ,p}(a_{2})+\left(1-\left( \frac{a_{1}}{a}%
\right) ^{2+4b}\right) M_{\mu ,p}(a)-\frac{a^{2}-(a_{1})^{2}}{a^{2}}m_{\mu
,p}(a).  \label{E44}
\end{align}%
Define the function $F_{\mu }:(0,a]\rightarrow \mathbb{R}$ by%
\begin{equation*}
F_{\mu }(x)=\left( 1-\left( \frac{x}{a}\right) ^{2+4b}\right) M_{\mu ,p}(a)-%
\frac{a^{2}-x^{2}}{a^{2}}m_{\mu ,p}(a)\text{ for }x>0.
\end{equation*}%
Clearly, there hold
\begin{equation}
\lim\limits_{x\rightarrow 0^{+}}F_{\mu }(x)=-\infty \ \text{and }F_{\mu
}(a)=0.  \label{E45}
\end{equation}%
Moreover, by calculating the derivative of $F_{\mu }(x)$ one has
\begin{equation*}
F_{\mu }^{\prime }(x)=-(2+4b)\frac{x^{1+4b}}{a^{2+4b}}M_{\mu ,p}(a)+\frac{2x%
}{a^{2}}m_{\mu ,p}(a).
\end{equation*}%
Since $2+4b<0$, we easily obtain that $F_{\mu }^{\prime }(x)$ is strictly
decreasing on $(0,a),$ $\lim\limits_{x\rightarrow 0^{+}}F_{\mu }^{\prime
}(x)=+\infty $ and
\begin{equation*}
F_{\mu }^{\prime }(a)=-\frac{2+4b}{a}M_{\mu ,p}(a)+\frac{2}{a}m_{\mu ,p}(a).
\end{equation*}

Next we claim that $F_{\mu }^{\prime }(a)\geq 0.$ Note that $M_{\mu ,p}(a)$
and $m_{\mu ,p}(a)$ are both decreasing on $\mu $. Then $F_{\mu }^{\prime
}(a)$\ is decreasing on $\mu .$ It follows from Corollary \ref{L3.5} that
\begin{equation*}
\lim\limits_{\mu \rightarrow 0^{+}}M_{\mu ,p}(a)=\frac{p\gamma _{p}-2}{%
2p\gamma _{p}}\left( \frac{1}{\mathcal{C}_{N,p}^{p}\gamma _{p}a^{p(1-\gamma
_{p})}}\right) ^{\frac{2}{p\gamma _{p}-2}}\text{ and}\ \lim\limits_{\mu
\rightarrow 0^{+}}m_{\mu ,p}(a)=0,
\end{equation*}%
which implies that $\lim\limits_{\mu \rightarrow 0^{+}}F_{\mu }^{\prime
}(a)>0.$ Let%
\begin{equation*}
\bar{\mu}_{a}=\sup \left\{ \mu \in \left( 0,(C_{0}a^{2-p})^{\frac{1}{p\gamma
_{p}-2}}\right) :F_{\mu }^{\prime }(a)>0\right\} .
\end{equation*}%
Then for any $\mu \in (0,\bar{\mu}_{a}),$ we have $F_{\mu }^{\prime }(a)\geq
0.$ So it follows that $F_{\mu }^{\prime }(x)>0$ for any $x\in (0,a),$ which
shows that $F_{\mu }(x)$ is strictly increasing on $(0,a)$. Using this,
together with (\ref{E45}), we have $F_{\mu }(x)<0$ for any $x\in (0,a)$.
Hence, by (\ref{E44}), we conclude that (\ref{E43}) holds. The proof is
complete.
\end{proof}

By Lemma \ref{T4.1}, we find that $M_{\mu ,4^{\ast }}(a)\leq M_{\mu ,4^{\ast
}}(\beta ^{-1}a)$ for any $\beta>1$. To show the inequality is strict, we
need the following lemma.

\begin{lemma}
\label{T4.3} Let $p=4^{\ast }$ and $N\geq 9.$ Assume that condition (\ref%
{E4}) holds. Then we have $M_{\mu ,4^{\ast }}(a)<m_{\mu ,4^{\ast }}(a)+%
\frac{2}{N}\mathcal{S}^{N/4}$.
\end{lemma}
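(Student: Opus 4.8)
\textbf{Proof proposal for Lemma \ref{T4.3}.}
The plan is to test the minimization problem $M_{\mu,4^{\ast}}(a)=\inf_{\mathcal{P}_a^-}\Psi_{\mu,4^{\ast}}$ against a carefully glued competitor built from an (almost) optimizer of $m_{\mu,4^{\ast}}(a)=\alpha_{4^{\ast}}(a)$ and a rescaled, truncated Sobolev instanton $u_\epsilon$ from \eqref{E8}. Concretely, I would fix $v\in\mathcal{M}_a$ with $\Psi_{\mu,4^{\ast}}(v)$ close to $\alpha_{4^{\ast}}(a)$ (such $v$ exists, and in fact the infimum is attained by Theorem \ref{T1.2}), and for $R>0$ take a cut-off $\varphi_R u_\epsilon$ concentrated on an annulus far from the support mass of $v$; denote its $H^2$-normalized and suitably rescaled version by $w_{\epsilon,R}$. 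The standard expansions (as in Brezis--Nirenberg type arguments adapted to $\Delta^2$) give $\|\Delta(\varphi_R u_\epsilon)\|_2^2=\mathcal{S}^{N/4}+O(\epsilon^{(N-4)/2}/R^{N-4})$, $\|\varphi_R u_\epsilon\|_{4^{\ast}}^{4^{\ast}}=\mathcal{S}^{N/4}+O(\epsilon^{N/2}/R^{N})$, $\|\varphi_R u_\epsilon\|_2^2=O(\epsilon^{2})$ when $N\geq 9$ (this is exactly where $N\geq 9$ enters: the $L^2$-norm of the instanton is a convergent integral), and $\|\nabla(\varphi_R u_\epsilon)\|_2^2=O(\epsilon)$. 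The candidate is $\Phi=v\ast$(something) — more precisely one arranges $v$ and $w_{\epsilon,R}$ to have essentially disjoint supports so that $\|v+w_{\epsilon,R}\|_2^2=a^2+o(1)$ after a tiny correction of the parameter, and then projects onto $\mathcal{P}_a^-$ via the fibering map: by Lemma \ref{T2.7} there is a unique $s>0$ with $(v+w_{\epsilon,R})_s\in\mathcal{P}_a^-$, and $\Psi_{\mu,4^{\ast}}((v+w_{\epsilon,R})_s)=\max_{t>0}\Psi_{\mu,4^{\ast}}((v+w_{\epsilon,R})_t)$.

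The core estimate is then to bound this max. Because the supports are (almost) disjoint, the cross terms in $\|\Delta\cdot\|_2^2$, $\|\nabla\cdot\|_2^2$ and $\|\cdot\|_{4^{\ast}}^{4^{\ast}}$ are negligible, so
\[
\Psi_{\mu,4^{\ast}}\big((v+w_{\epsilon,R})_t\big)\le \Psi_{\mu,4^{\ast}}(v_t)+\Psi_{\mu,4^{\ast}}\big((w_{\epsilon,R})_t\big)+o(1)
\]
uniformly for $t$ in a bounded range (and the relevant maximizing $t$ stays bounded away from $0$ and $\infty$ by Lemma \ref{T2.7}$(iii)$). Maximizing the first piece over $t$ gives at most $\Psi_{\mu,4^{\ast}}(v_{s_v})\le \alpha_{4^{\ast}}(a)$ by Lemma \ref{T2.7}$(iv)$ (using that $v\in\mathcal{M}_a$ so $s_v$ lands in $\mathcal{P}_a^+$ with the minimizing value); maximizing the second piece, one uses that up to the $O(\epsilon)$-corrections it behaves like $\frac{t^4}{2}\mathcal{S}^{N/4}-\frac{t^{2\cdot 4^{\ast}\gamma_{4^{\ast}}}}{4^{\ast}}\mathcal{S}^{N/4}=\frac{t^4}{2}\mathcal{S}^{N/4}-\frac{t^4}{4^{\ast}}\mathcal{S}^{N/4}$ (recall $\gamma_{4^{\ast}}=1$), whose maximum over $t>0$ is exactly $\frac{2}{N}\mathcal{S}^{N/4}$. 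The negative $-\frac{\mu}{2}\|\nabla\cdot\|_2^2$ term only helps, and the key point is that the error from $\|\Delta(\varphi_R u_\epsilon)\|_2^2$ exceeding $\mathcal{S}^{N/4}$ is of order $\epsilon^{(N-4)/2}R^{-(N-4)}$, which is strictly dominated by the gain coming from the Sobolev deficit $\|\varphi_R u_\epsilon\|_{4^{\ast}}^{4^{\ast}}$ being close to $\mathcal{S}^{N/4}$ together with the strictly negative contribution of $v$ — giving a net strict inequality $M_{\mu,4^{\ast}}(a)<\alpha_{4^{\ast}}(a)+\frac{2}{N}\mathcal{S}^{N/4}=m_{\mu,4^{\ast}}(a)+\frac{2}{N}\mathcal{S}^{N/4}$ once $\epsilon$ (and then $R$) are chosen small.

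The main obstacle I anticipate is the bookkeeping of the gluing: ensuring that after placing $v$ and the concentrated bubble in disjoint regions and renormalizing the total $L^2$-mass back to exactly $a$, all the cross terms and all the mass/energy corrections are genuinely $o(1)$ relative to the strictly negative quantity $\alpha_{4^{\ast}}(a)<-\mu^2a^2/8<0$ (Lemma \ref{T3.2}), and that the fibering-map projection parameter $s$ stays in a compact subset of $(0,\infty)$ so that the expansions are uniform in $t$. A secondary technical point is that $u_\epsilon$ does not have compact support, so one must track the truncation errors carefully; this is exactly the classical computation that forces $N\ge 9$ (so that $\int_{\mathbb{R}^N}u_\epsilon^2<\infty$ and the $L^2$-correction is $O(\epsilon^2)$ rather than a logarithm or a divergent term). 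Once these are in place, the strict inequality follows by choosing the concentration parameter small enough.
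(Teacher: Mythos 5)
Your construction breaks down at the key step where you bound the fibering maximum of the glued function. After (almost) disjoint gluing you have $\phi_{v+w_{\epsilon,R}}(t)\approx\phi_v(t)+\phi_{w_{\epsilon,R}}(t)$, and the element of $\mathcal{P}_a^-$ produced by the projection has energy $\max_{t>0}\phi_{v+w_{\epsilon,R}}(t)$. You then claim that "maximizing the first piece over $t$ gives at most $\Psi_{\mu,4^{\ast}}(v_{s_v})\le\alpha_{4^{\ast}}(a)$ by Lemma \ref{T2.7}$(iv)$". This is exactly backwards: Lemma \ref{T2.7}$(iv)$ says that $\max_{t\ge 0}\Psi_{\mu,4^{\ast}}(v_t)=\Psi_{\mu,4^{\ast}}(v_{\overline{s}_v})\ge g_p(s_2)>0$, i.e.\ the maximum along the dilation path is the value at the \emph{global maximum} point $\overline{s}_v\in\mathcal{P}_a^-$, which is $\ge M_{\mu,4^{\ast}}(\|v\|_2)>0$, not the local-minimum value $\Psi(v_{s_v})<0$. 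With the only inequality actually available, $\max_t(\phi_v+\phi_w)\le\max_t\phi_v+\max_t\phi_w$, your test function yields $M_{\mu,4^{\ast}}(a)\le M_{\mu,4^{\ast}}(a)+\tfrac{2}{N}\mathcal{S}^{N/4}$, which is vacuous. The deeper structural problem is that the dilation $t$ acts simultaneously on both pieces, so nothing forces the maximizer of the combined fibering map to sit where the $v$-part contributes only $\approx\alpha_{4^{\ast}}(a)$; if the bubble's fibering maximizer lands near $\overline{s}_v$, the combined maximum is $\approx M_{\mu,4^{\ast}}(a)+\tfrac{2}{N}\mathcal{S}^{N/4}$. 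Moreover, by making the supports disjoint you deliberately destroy the interaction terms, and you then have no quantitative mechanism for strictness beyond $\alpha_{4^{\ast}}(a)<0$, which you cannot access because of the error above. (A minor slip, which does not affect your claimed value $\tfrac{2}{N}\mathcal{S}^{N/4}$: the dilation exponent on the critical term is $t^{2\cdot4^{\ast}}$, not $t^{4}$.)

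The paper's proof avoids this trap by not projecting a glued function along dilations at all. It takes the actual ground state $u_{\mu}^{+}\in\mathcal{P}_a^{+}$, forms the \emph{additive} family $\overline{V}_{\epsilon,\ell}=u_{\mu}^{+}+\ell V_{\epsilon}$ with the truncated bubble $V_\epsilon=\chi u_\epsilon$ (no disjoint supports), restores the mass constraint by the rescaling $\widetilde{V}_{\epsilon,\ell}=\bar\beta^{\frac{N-4}{4}}\overline{V}_{\epsilon,\ell}(\bar\beta^{1/2}x)$, and shows by continuity of the projection parameter in $\ell$ (it exceeds $1$ at $\ell=0$ since $u_\mu^+\in\mathcal{P}_a^+$ and tends to $0$ as $\ell\to\infty$) that some $\widetilde{V}_{\epsilon,\ell_\epsilon}$ lies on $\mathcal{P}_a^-$; hence only $\sup_{\ell\ge0}\Psi_{\mu,4^{\ast}}(\widetilde{V}_{\epsilon,\ell})$ must be estimated. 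There the strict inequality comes precisely from the overlap: using $(1+x)^{4^{\ast}}\ge1+x^{4^{\ast}}+4^{\ast}x^{\frac{N+4}{N-4}}$ and the equation satisfied by $u_\mu^+$, the cross term $-\int(\ell V_\epsilon)^{\frac{N+4}{N-4}}u_\mu^+\,dx\le-C\epsilon^{\frac{N-4}{4}}$ dominates the competing $O(R^4\epsilon^{\frac{N-4}{4}})$ errors once $R$ is small, giving the net correction $C(R^4-1)\epsilon^{\frac{N-4}{4}}<0$. If you want to keep a gluing-type argument you would have to control where the combined fibering maximum occurs (or switch to an amplitude path as in the paper), which is a genuinely different and more delicate argument than the one you sketched.
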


\begin{proof}
Let $V_{\epsilon }=\chi (x)u_{\epsilon },$ where $u_{\epsilon }$ is
given as (\ref{E8}), and $\chi (x)$ is a cut-off function such that $\chi
(x)=1$ for $|x|\leq R$ and $\chi (x)=0$ for $|x|>R$, here $R>0$. By \cite%
{Gu1994}, we have the following estimates:%
\begin{equation*}
\begin{array}{l}
\Vert \Delta {V_{\epsilon }}\Vert _{2}^{2}=\mathcal{S}^{N/4}+O\left( \epsilon
^{\frac{N-4}{2}}\right) , \\
\Vert {V_{\epsilon }}\Vert _{4^{\ast }}^{4^{\ast }}=\mathcal{S}^{N/4}+O\left(
\epsilon ^{\frac{N}{2}}\right) , \\
\Vert {V_{\epsilon }}\Vert _{2}^{2}=C\epsilon ^{2}+O\left( \epsilon ^{\frac{N-4%
}{2}}\right) .%
\end{array}
\label{e3.10}
\end{equation*}%
Moreover, there holds
\begin{align}
\Vert \nabla {V_{\epsilon }}\Vert _{2}^{2}& =C\epsilon ^{\frac{N-4}{2}%
}\int_{B_{2R}(0)}\frac{|x|^{2}}{\left( \epsilon +|x|^{2}\right) ^{N-2}}%
dx+O\left( \epsilon ^{\frac{N-4}{2}}\right)  \notag \\
& =C\epsilon \int_{B_{2R/\epsilon }(0)}\frac{|x|^{2}}{\left(
1+|x|^{2}\right) ^{N-2}}dx+O\left( \epsilon ^{\frac{N-4}{2}}\right)  \notag
\\
& =C\epsilon +O\left( \epsilon ^{\frac{N-4}{2}}\right) .  \label{E46}
\end{align}%
A direct calculation shows that
\begin{align}
\int_{\mathbb{R}^{N}}V_{\epsilon }^{\frac{N+4}{N-4}}\psi dx& \geq
C\inf_{B_{R}(0)}\psi \int_{B_{2R}(0)}\epsilon ^{\frac{N+4}{4}}\left( \frac{1}{%
\epsilon +|x|^{2}}\right) ^{\frac{N+4}{2}}dx  \notag \\
& =C\epsilon ^{\frac{N-4}{4}}\int_{B_{2R/\sqrt{\epsilon }}(0)}\left( \frac{1}{%
1+|x|^{2}}\right) ^{\frac{N+4}{2}}dx  \notag \\
& \geq C\epsilon ^{\frac{N-4}{4}}+o(\epsilon ^{\frac{N-4}{4}})  \label{E47}
\end{align}%
and%
\begin{align*}
\int_{\mathbb{R}^{N}}\psi V_{\epsilon }dx& \leq C\int_{B_{2R}(0)}\epsilon ^{%
\frac{N-4}{4}}\left( \frac{1}{\epsilon +|x|^{2}}\right) ^{\frac{N-4}{2}}dx
\notag \\
& \leq C\epsilon ^{\frac{N+4}{4}}\int_{B_{2R/\sqrt{\epsilon }}(0)}\left( \frac{1%
}{1+|x|^{2}}\right) ^{\frac{N-4}{2}}dx  \notag \\
& \leq C\epsilon ^{\frac{N+4}{4}}\int_{0}^{\frac{2R}{\sqrt{\epsilon }}%
}\left( \frac{1}{1+r^{2}}\right) ^{\frac{N-4}{2}}r^{N-1}dr  \notag \\
& \leq CR^{4}\epsilon ^{\frac{N-4}{4}}+o(\epsilon ^{\frac{N-4}{4}})
\label{e3.13}
\end{align*}%
for any $0<\psi \in L^{\infty }(\mathbb{R}^{N})$. Define $\overline{V}%
_{\epsilon ,\ell }=u_{\mu }^{+}+\ell V_{\epsilon }$, where $\ell >0$ and $%
u_{\mu }^{+}\in \mathcal{P}_{a}^{+}$ is the ground state solution of problem
(\ref{E3}). We also define $\widetilde{V}_{\epsilon ,\ell }=\bar{\beta}^{%
\frac{N-4}{4}}\overline{V}_{\epsilon ,\ell }\left( \bar{\beta}^{\frac{1}{2}%
}x\right) ,$ where $\bar{\beta}=\frac{\Vert \overline{V}_{\epsilon ,\ell
}\Vert _{2}}{a}>1$. A direct calculation shows that
\begin{equation}
\begin{array}{ll}
\Vert \widetilde{V}_{\epsilon ,\ell }\Vert _{2}^{2}=\overline{\beta }%
^{-2}\Vert \overline{V}_{\epsilon ,\ell }\Vert _{2}^{2}, & \ \Vert \Delta
\widetilde{V}_{\epsilon ,\ell }\Vert _{2}^{2}=\Vert \Delta \overline{V}%
_{\epsilon ,\ell }\Vert _{2}^{2}, \\
\Vert \nabla \widetilde{V}_{\epsilon ,\ell }\Vert _{2}^{2}=\overline{\beta }%
^{-1}\Vert \nabla \overline{V}_{\epsilon ,\ell }\Vert _{2}^{2}, & \ \Vert
\widetilde{V}_{\epsilon ,\ell }\Vert _{4^{\ast }}^{4^{\ast }}=\Vert
\overline{V}_{\epsilon ,\ell }\Vert _{4^{\ast }}^{4^{\ast }}.%
\end{array}
\label{E48}
\end{equation}%
Clearly, $\widetilde{V}_{\epsilon ,\ell }\in S_{a}$. It follows from Lemma %
\ref{T2.7} that there exists $s_{\epsilon ,\ell }$ such that $\left(
\widetilde{V}_{\epsilon ,\ell }\right) _{s_{\epsilon ,\ell }}\in \mathcal{P}%
_{a}^{+}$. Then we have
\begin{equation}
2\left\Vert \Delta \left( \widetilde{V}_{\epsilon ,\ell }\right)
_{s_{\epsilon ,\ell }}\right\Vert _{2}^{2}=2s_{\epsilon ,\ell }^{4}\Vert
\Delta \widetilde{V}_{\epsilon ,\ell }\Vert _{2}^{2}=\mu s_{\epsilon ,\ell
}^{2}\Vert \nabla \overline{V}_{\epsilon ,\ell }\Vert _{2}^{2}+2s_{\epsilon
,\ell }^{24^{\ast }}\Vert \overline{V}_{\epsilon ,\ell }\Vert _{4^{\ast
}}^{4^{\ast }}.  \label{E49}
\end{equation}%
Since $u_{a}\in \mathcal{P}_{a}^{+}$, it follows from Lemma \ref{T2.7} that $%
s_{\epsilon ,0}>1$. Combining with (\ref{E48}) and (\ref{E49}), one has $%
s_{\epsilon ,\ell }\rightarrow 0$ as $\ell \rightarrow +\infty $ uniformly
for $\epsilon >0$ sufficiently small. From Lemma \ref{T2.7}, we deduce that $%
s_{\epsilon ,\ell }$ is unique. So it is standard to show that $s_{\epsilon
,\ell }$ is continuous on $\ell $, which shows that there exists $\ell
_{\epsilon }>0$ such that $s_{\epsilon ,\ell _{\epsilon }}=1$. Then we have
\begin{equation}
M_{\mu ,4^{\ast }}(a)\leq \sup\limits_{\ell \geq 0}\Psi_{\mu,4^{\ast}}\left(
\widetilde{V}_{\epsilon ,\ell }\right) .  \label{E50}
\end{equation}%
There exists $\ell _{0}>0$ such that
\begin{equation*}
\Psi_{\mu,4^{\ast}}\left( \widetilde{V}_{\epsilon ,\ell }\right) =\frac{1}{2}%
\Vert \Delta \overline{V}_{\epsilon ,\ell }\Vert _{2}^{2}-\frac{\mu \bar{%
\beta}^{-1}}{2}\Vert \nabla \overline{V}_{\epsilon ,\ell }\Vert _{2}^{2}-%
\frac{1}{4^{\ast }}\Vert \overline{V}_{\epsilon ,\ell }\Vert _{4^{\ast
}}^{4^{\ast }}<m_{\mu ,4^{\ast }}(a)+\frac{2}{N}\mathcal{S}^{\frac{N}{4}%
}-\delta
\end{equation*}%
for $\ell <\frac{1}{\ell _{0}}$ and $\ell >\ell _{0}$ with $\delta >0$. It
follows from (\ref{E46}) and (\ref{E47}) that
\begin{equation*}
\bar{\beta}^{2}=\frac{\Vert \overline{V}_{\epsilon ,\ell }\Vert _{2}^{2}}{%
a^{2}}=1+\frac{2\ell }{a^{2}}\int_{\mathbb{R}^{N}}u_{\mu }^{+}V_{\epsilon
}dx+\frac{\ell ^{2}}{a^{2}}\Vert V_{\epsilon }\Vert _{2}^{2}.
\end{equation*}%
Then we have
\begin{equation}
1-\bar{\beta}^{-1}=\frac{\bar{\beta}^{2}-1}{\bar{\beta}(\bar{\beta}+1)}\leq
C\left( \frac{2\ell }{a^{2}}\int_{\mathbb{R}^{N}}u_{\mu }^{+}V_{\epsilon }dx+%
\frac{\ell ^{2}}{a^{2}}\Vert V_{\epsilon }\Vert _{2}^{2}\right) .
\label{E51}
\end{equation}%
Note that $(1+x)^{4^{\ast }}-1-x^{4^{\ast }}-4^{\ast }x^{\frac{N+4}{N-4}%
}\geq 0$ for all $x\geq 0$, we deduce that%
\begin{equation}
-\frac{1}{4^{\ast }}\Vert \overline{V}_{\epsilon ,\ell }\Vert _{4^{\ast
}}^{4^{\ast }}\leq -\frac{1}{4^{\ast }}\Vert u_{a}\Vert _{4^{\ast
}}^{4^{\ast }}-\frac{1}{4^{\ast }}\Vert \ell V_{\epsilon }\Vert _{4^{\ast
}}^{4^{\ast }}-\int_{\mathbb{R}^{N}}\left( \ell V_{\epsilon }\right) ^{\frac{%
N+4}{N-4}}u_{\mu }^{+}dx.  \label{E52}
\end{equation}%
By (\ref{E48})-(\ref{E52}) and the fact that $u_{\mu }^{+}$ is the
solution of problem (\ref{E3}) for some $\lambda _{\mu }^{+}>0,$ for $\ell
<\frac{1}{\ell _{0}}$ one has%
\begin{align*}
\Psi_{\mu,4^{\ast}}\left( \widetilde{V}_{\epsilon ,\ell }\right) & =\frac{1}{2}%
\Vert \Delta \overline{V}_{\epsilon ,\ell }\Vert _{2}^{2}-\frac{\mu \bar{%
\beta}^{-1}}{2}\Vert \nabla \overline{V}_{\epsilon ,\ell }\Vert _{2}^{2}-%
\frac{1}{4^{\ast }}\Vert \overline{V}_{\epsilon ,\ell }\Vert _{4^{\ast
}}^{4^{\ast }} \\
& \leq m_{\mu ,4^{\ast }}(a)+\Psi_{\mu,4^{\ast}}(\ell V_{\epsilon })-\int_{\mathbb{R%
}^{N}}\left( \ell V_{\epsilon }\right) ^{\frac{N+4}{N-4}}u_{\mu }^{+}dx \\
& -\ell \lambda _{\mu }^{+}\int_{\mathbb{R}^{N}}u_{\mu }^{+}V_{\epsilon
}dx+\ell \int_{\mathbb{R}^{N}}(u_{\mu }^{+})^{\frac{N+4}{N-4}}V_{\epsilon
}dx+\frac{\mu (1-\bar{\beta}^{-1})}{2}\Vert \nabla \overline{V}_{\epsilon
,\ell }\Vert _{2}^{2} \\
& \leq m_{\mu ,4^{\ast }}(a)+\frac{2}{N}\mathcal{S}^{\frac{N}{4}%
}+C(R^{4}-1)\epsilon ^{\frac{N-4}{4}}+o(\epsilon ^{\frac{N-4}{4}}),
\end{align*}%
and thus
\begin{equation*}
\Psi_{\mu,4^{\ast}}\left( \widetilde{V}_{\epsilon ,\ell }\right) <m_{\mu ,4^{\ast
}}(a)+\frac{2}{N}\mathcal{S}^{\frac{N}{4}}
\end{equation*}%
by taking $R,\epsilon >0$ sufficiently small. Using this, together with (\ref%
{E50}), we have
\begin{equation*}
M_{\mu ,4^{\ast }}(a)\leq \sup\limits_{\ell \geq 0}\Psi_{\mu,4^{\ast}}\left(
\widetilde{V}_{\epsilon ,\ell }\right) <m_{\mu ,4^{\ast }}(a)+\frac{2}{N}%
\mathcal{S}^{\frac{N}{4}}.
\end{equation*}%
The proof is complete.
\end{proof}

\begin{lemma}
\label{T4.4} Let $p=4^{\ast }$ and $N\geq 9.$ Assume that condition (\ref%
{E4}) holds. Then for any $\beta >1,$ there exists a constant $0<K<\frac{%
\beta }{(\beta -1)\mu }$ such that
\begin{equation}
M_{\mu ,4^{\ast }}(a)\leq \left( 1-\frac{K(\beta -1)}{\beta }\mu \right)
^{2}M_{\mu ,4^{\ast }}(\beta ^{-1}a).  \label{E53}
\end{equation}%
Moreover, there exists $\tilde{\mu}_{a}\in \left( 0,(C_{0}a^{2-p})^{\frac{1}{%
p\gamma _{p}-2}}\right) $ such that for any $\mu \in (0,\tilde{\mu}_{a})$,
\begin{equation*}
M_{\mu ,4^{\ast }}(a)<M_{\mu ,4^{\ast }}(\beta ^{-1}a)+m_{\mu ,4^{\ast
}}\left( \sqrt{1-\beta ^{-2}}a\right) .
\end{equation*}
\end{lemma}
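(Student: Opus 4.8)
The plan is in two steps: first promote the non‑strict bound $M_{\mu,4^{\ast}}(a)\le M_{\mu,4^{\ast}}(\beta^{-1}a)$ of Lemma~\ref{T4.1} to the quantitative estimate \eqref{E53}, and then deduce the strict energy inequality by the same bookkeeping device used in Lemma~\ref{T4.2} (a function $F_{\mu}$), now fed with \eqref{E53} instead of the crude scaling bound.

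For \eqref{E53}: fix $\beta>1$, $\epsilon>0$, and choose $u\in\mathcal P_{\beta^{-1}a}^{-}$ with $\Psi_{\mu,4^{\ast}}(u)\le M_{\mu,4^{\ast}}(\beta^{-1}a)+\epsilon$, so that (Lemma~\ref{T2.7} with mass $\beta^{-1}a$) $\Psi_{\mu,4^{\ast}}(u)=\max_{s>0}\Psi_{\mu,4^{\ast}}(u_{s})$. Raising the mass to $a$ forces the family $\beta u_{\tau}\in S_{a}$, and by Lemma~\ref{T2.7} there is a unique $\tau^{-}=\tau^{-}(u)$ with $\beta u_{\tau^{-}}\in\mathcal P_{a}^{-}$; hence $M_{\mu,4^{\ast}}(a)\le\Psi_{\mu,4^{\ast}}(\beta u_{\tau^{-}})$. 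Using $\gamma_{4^{\ast}}=1$ and evaluating $\Psi_{\mu,4^{\ast}}$ on $\mathcal P_{a}$ (so that $Q_{4^{\ast}}=0$ removes the $L^{4^{\ast}}$ term),
\begin{equation*}
\Psi_{\mu,4^{\ast}}(\beta u_{\tau^{-}})=\tfrac{2}{N}\beta^{2}(\tau^{-})^{4}\|\Delta u\|_{2}^{2}-\tfrac{N+4}{4N}\mu\beta^{2}(\tau^{-})^{2}\|\nabla u\|_{2}^{2},\qquad \Psi_{\mu,4^{\ast}}(u)=\tfrac{2}{N}\|\Delta u\|_{2}^{2}-\tfrac{N+4}{4N}\mu\|\nabla u\|_{2}^{2}.
\end{equation*}
Eliminating $\|u\|_{4^{\ast}}^{4^{\ast}}$ between $Q_{4^{\ast}}(u)=0$ and $Q_{4^{\ast}}(\beta u_{\tau^{-}})=0$ and putting $\xi:=\beta(\tau^{-})^{2}$ gives a single relation of the form $2\xi\|\Delta u\|_{2}^{2}(1-\xi^{\,4^{\ast}-2})=\mu\|\nabla u\|_{2}^{2}(\beta-\xi^{\,4^{\ast}-1})$; since the rescaling of $u$ to $S_{a}$ overshoots $\mathcal P_{a}^{-}$ (because $Q_{4^{\ast}}(\beta u)<0$ when $u\in\mathcal P_{\beta^{-1}a}^{-}$ and $\beta>1$), one gets $0<\xi<1$ together with the deficit $1-\xi\ge\frac{(\beta-1)\mu}{2(4^{\ast}-2)}\,\|\nabla u\|_{2}^{2}/\|\Delta u\|_{2}^{2}$. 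Plugging $\xi<1$ and this bound into the two displayed identities and comparing term by term (the $\|\Delta u\|_{2}^{2}$ terms via $\beta^{2}(\tau^{-})^{4}=\xi^{2}$, the $\|\nabla u\|_{2}^{2}$ terms using \eqref{E5}, the lower bound for $\|\Delta u\|_{2}$ valid on $\mathcal P_{\beta^{-1}a}^{-}$, and the admissible range of $\mu$) yields $\Psi_{\mu,4^{\ast}}(\beta u_{\tau^{-}})\le(1-\frac{K(\beta-1)}{\beta}\mu)^{2}\Psi_{\mu,4^{\ast}}(u)$ for an appropriate $K\in(0,\frac{\beta}{(\beta-1)\mu})$; letting $\epsilon\to0$ gives \eqref{E53}.

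For the strict inequality, set $a_{1}=\beta^{-1}a$ and $a_{2}=\sqrt{1-\beta^{-2}}\,a$, so $a_{1}^{2}+a_{2}^{2}=a^{2}$. Corollary~\ref{L3.5} gives $m_{\mu,4^{\ast}}(a)\le\frac{a^{2}}{a_{2}^{2}}m_{\mu,4^{\ast}}(a_{2})$ and \eqref{E53} gives $M_{\mu,4^{\ast}}(a)\le(1-\frac{K(\beta-1)}{\beta}\mu)^{2}M_{\mu,4^{\ast}}(a_{1})$. Writing $M_{\mu,4^{\ast}}(a)=M_{\mu,4^{\ast}}(a_{1})+m_{\mu,4^{\ast}}(a_{2})+[M_{\mu,4^{\ast}}(a)-M_{\mu,4^{\ast}}(a_{1})-m_{\mu,4^{\ast}}(a_{2})]$ and controlling the bracket with these two inequalities exactly as in Lemma~\ref{T4.2} (via the corresponding $F_{\mu}$), everything comes down to the fact that the strictly negative term produced by \eqref{E53} is of order $\mu$ while the positive term $\frac{a_{2}^{2}}{a^{2}}|m_{\mu,4^{\ast}}(a)|$ is of order $\mu^{2}$. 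The input here is the asymptotics as $\mu\to0^{+}$: $M_{\mu,4^{\ast}}(a)\to\frac{2}{N}\mathcal S^{N/4}>0$ (from $M_{\mu,4^{\ast}}(a)\ge g_{4^{\ast}}(s_{2})$, Lemma~\ref{T4.3}, and $m_{\mu,4^{\ast}}(a)\to0$) while $\frac{\mu^{2}a^{2}}{8}<|m_{\mu,4^{\ast}}(a)|=O(\mu^{2})$ (Lemma~\ref{T3.2} and the estimates behind Theorem~\ref{T1.2}$(iii)$); hence there is $\tilde\mu_{a}$ in $(0,(C_{0}a^{2-p})^{1/(p\gamma_{p}-2)})$ below which the bracket is negative, which is the assertion.

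The delicate point is the quantitative first step — and the uniformity it must have for the second. Unlike the subcritical Lemma~\ref{T4.2}, here $2+4b=0$, so the pure dilation yields no gain and the whole improvement must come from the positive dispersion term $\mu\|\nabla u\|_{2}^{2}$; for that to be of the right order $\mu$ (so that \eqref{E53} really beats the $\mu^{2}$-size loss coming from $m_{\mu,4^{\ast}}(a_{2})$) one needs a non-degenerate lower bound on $\|\nabla u\|_{2}^{2}$ along the minimizing sequence of $M_{\mu,4^{\ast}}(\beta^{-1}a)$. The mechanism is that Lemma~\ref{T4.3} keeps $M_{\mu,4^{\ast}}(\beta^{-1}a)$ strictly below $\frac{2}{N}\mathcal S^{N/4}$, so a minimizing sequence cannot concentrate into an Aubin--Talenti function $u_{\epsilon}$, whose Dirichlet energy is finite and positive exactly because $N\ge9$; tracking the constants through this comparison is the real work.
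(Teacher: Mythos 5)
Your proposal is correct and follows essentially the paper's own route: your family $\beta u_{\tau}$ is exactly the paper's comparison family $\bigl[\beta^{\frac{4-N}{4}}v(\beta^{-\frac12}\cdot)\bigr]_{\hat s_{1}}$ reparametrized by $\xi=\beta\tau^{2}=\hat s_{1}^{2}$, your deficit $1-\xi\gtrsim\mu(\beta-1)\Vert\nabla u\Vert_{2}^{2}/\Vert\Delta u\Vert_{2}^{2}$ together with the uniform bounds ($\Vert\Delta u\Vert_{2}$ bounded above, $\Vert\nabla u\Vert_{2}$ bounded below via the strict gap under $\frac{2}{N}\mathcal{S}^{N/4}$ coming from Lemma \ref{T4.3}) reproduces the paper's $\hat F_{\mu}$/concavity estimate $\hat s_{1}^{2}<1-\frac{K(\beta-1)}{\beta}\mu$, and your second step is the paper's $H_{\mu}$ bookkeeping, comparing the order-$\mu$ gain from (\ref{E53}) with the order-$\mu^{2}$ size of $|m_{\mu,4^{\ast}}(a)|$ as in (\ref{E59}). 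The one thin spot is the justification of $0<\xi<1$: $Q_{4^{\ast}}(\beta u)<0$ alone does not force $\beta(\tau^{-})^{2}<1$; the clean argument is the paper's, namely that the pulled-back constraint satisfies $Q_{4^{\ast}}(u_{\sqrt{\beta}\,\tau^{-}})=Q_{4^{\ast}}(\beta u_{\tau^{-}})+\mu(\beta-1)\Vert\nabla u_{\sqrt{\beta}\,\tau^{-}}\Vert_{2}^{2}>0$, which by Lemma \ref{T2.7}$(i)$ (the fiber maximum of $u\in\mathcal{P}_{\beta^{-1}a}^{-}$ sits at $s=1$) gives $\sqrt{\beta}\,\tau^{-}<1$, i.e. $\xi<1$.
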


\begin{proof}
By Corollary \ref{L3.5}, one has $M_{\mu ,4^{\ast }}(a)>0$. Let $\beta >1.$
To prove (\ref{E53}), we assume that for any $\epsilon \in (0,1)$
satisfying $M_{\mu ,4^{\ast }}(\beta ^{-1}a)+\epsilon <\frac{2}{N}\mathcal{S}%
^{\frac{N}{4}}$, there exists $v\in \mathcal{P}_{\beta ^{-1}a}^{-}$ such
that $\Psi_{\mu,4^{\ast}}(v)\leq M_{\mu ,4^{\ast }}(\beta ^{-1}a)+\epsilon $.
Then by Lemma \ref{T2.7} $(iii)$, there holds $\Vert \Delta v\Vert
_{2}>\rho_{a}=\left( \frac{N}{N+4}\right) ^{\frac{N-4}{8}}\mathcal{S}^{%
\frac{N}{8}}$. Moreover, we have%
\begin{eqnarray*}
\frac{2}{N}\mathcal{S}^{\frac{N}{4}} &>&\Psi_{\mu,4^{\ast}}(v)=\frac{2}{N}\Vert
\Delta v\Vert _{2}^{2}-\frac{\mu (N+4)}{4N}\Vert \nabla v\Vert _{2}^{2} \\
&\geq &\frac{2}{N}\Vert \Delta v\Vert _{2}^{2}-\frac{(N+4)\mu \beta ^{-1}a}{%
4N}\Vert \Delta v\Vert _{2} \\
&\geq &\frac{2}{N}\Vert \Delta v\Vert _{2}^{2}-\frac{(N+4)\mu a}{4N}\Vert
\Delta v\Vert _{2},
\end{eqnarray*}%
which shows that there exists $\delta _{0}>0$ independent of $\epsilon
,\beta $ such that $\Vert \Delta v\Vert _{2}<\delta _{0}$. Now we claim that
there exists $\delta ^{\ast }>0$ independent of $\epsilon ,\beta $ such that
$\Vert \nabla v\Vert _{2}>\frac{\delta ^{\ast }}{\beta }$. Otherwise, there
exists $v_{n}\in \mathcal{P}_{\beta ^{-1}a}^{-}$ satisfying $\Psi_{\mu,4^{\ast}}(v_{n})\leq M_{\mu ,4^{\ast }}(\beta ^{-1}a)+\epsilon $ such that $\Vert
\nabla v_{n}\Vert _{2}\leq \frac{\delta _{n}}{\beta }\rightarrow 0$ as $%
n\rightarrow \infty $. By calculating, we have $\Psi_{\mu,4^{\ast}}(v_{n})\geq
\frac{2}{N}\mathcal{S}^{\frac{N}{4}}+o_{n}(1)$. Since $M_{\mu ,4^{\ast
}}(\beta ^{-1}a)+\epsilon <\frac{2}{N}\mathcal{S}^{\frac{N}{4}}$, we have $%
\Psi_{\mu,4^{\ast}}(v_{n})<\frac{2}{N}\mathcal{S}^{\frac{N}{4}}$. This is a
contradiction.

For $\hat{s}_{1}>0,$ we set $V=\left[ \beta ^{\frac{4-N}{4}}v\left( \beta ^{-%
\frac{1}{2}}x\right) \right] _{\hat{s}_{1}}\in \mathcal{P}%
_{a}^{-}$.  A direct calculation shows that $\Psi _{\mu ,p}(V)>0$ and
\begin{equation*}
\Vert V\Vert _{2}^{2}=\beta ^{2}\Vert v_{\hat{s}_{1}}\Vert _{2}^{2},\ \Vert
\Delta V\Vert _{2}^{2}=\Vert \Delta v_{\hat{s}_{1}}\Vert _{2}^{2},\ \Vert
\nabla V\Vert _{2}^{2}=\beta \Vert \nabla v_{\hat{s}_{1}}\Vert _{2}^{2}\
\text{and}\ \Vert V\Vert _{4^{\ast }}^{4^{\ast }}=\Vert v_{\hat{s}_{1}}\Vert
_{4^{\ast }}^{4^{\ast }}.
\end{equation*}%
Since $Q_{4^{\ast}}(v_{\hat{s}_{1}})=Q_{4^{\ast}}(V)+\mu (\beta -1)\Vert \nabla v_{\hat{s}%
_{1}}\Vert _{2}^{2}>0$ and $v\in \mathcal{P}_{\beta ^{-1}a}^{-}$, it follows
from Lemma \ref{T2.7} $(i)$ that $\hat{s}_{1}<1$. A direct calculation gives
that
\begin{align*}
0<\Psi_{\mu,4^{\ast}}(V)=& -\frac{1}{2}\Vert \Delta V\Vert _{2}^{2}+\frac{N+4}{2N}%
\Vert V\Vert _{4^{\ast }}^{4^{\ast }}  \notag \\
=& -\frac{\hat{s}_{1}^{4}}{2}\Vert \Delta v\Vert _{2}^{2}+\frac{(N+4)\hat{s}%
_{1}^{24^{\ast }}}{2N}\Vert v\Vert _{4^{\ast }}^{4^{\ast }}  \notag \\
=& \hat{s}_{1}^{4}\left( -\frac{1}{2}\Vert \Delta v\Vert _{2}^{2}+\frac{(N+4)%
\hat{s}_{1}^{24^{\ast }-4}}{2N}\Vert v\Vert _{4^{\ast }}^{4^{\ast }}\right)
\notag \\
<& \hat{s}_{1}^{4}\left( -\frac{1}{2}\Vert \Delta v\Vert _{2}^{2}+\frac{N+4}{%
2N}\Vert v\Vert _{4^{\ast }}^{4^{\ast }}\right)  \notag \\
=& \hat{s}_{1}^{4}\Psi_{\mu,4^{\ast}}(v).
\end{align*}%
Let $\hat{s}=s^{2}$ for $s>0$ and
\begin{align*}
\hat{f}(\hat{s})& =\frac{1}{2}\Vert \Delta v_{s}\Vert _{2}^{2}-\frac{\mu }{2}%
\Vert \nabla v_{s}\Vert _{2}^{2}-\frac{1}{4^{\ast }}\Vert v_{s}V\Vert
_{4^{\ast }}^{4^{\ast }} \\
& =\frac{\hat{s}^{2}}{2}\Vert \Delta v\Vert _{2}^{2}-\frac{\mu \hat{s}}{2}%
\Vert \nabla v\Vert _{2}^{2}-\frac{\hat{s}^{4^{\ast }}}{4^{\ast }}\Vert
v\Vert _{4^{\ast }}^{4^{\ast }} \\
& =A\hat{s}^{2}-\mu B\hat{s}-C\hat{s}^{4^{\ast }},
\end{align*}%
where $A:=\frac{1}{2}\Vert \Delta v\Vert _{2}^{2}$, $B:=\frac{1}{2}\Vert
\nabla v\Vert _{2}^{2}$ and $C:=\frac{1}{4^{\ast }}\Vert v\Vert _{4^{\ast
}}^{4^{\ast }}.$ Then we have%
\begin{equation*}
\begin{array}{l}
\hat{f}^{\prime }(\hat{s})=2A\hat{s}-\mu B-4^{\ast }C\hat{s}^{4^{\ast }-1},
\\
\hat{f}^{\prime \prime }(\hat{s})=2A-4^{\ast }(4^{\ast }-1)C\hat{s}^{4^{\ast
}-2}, \\
\hat{f}^{\prime \prime \prime }(\hat{s})=-4^{\ast }(4^{\ast }-1)(4^{\ast
}-2)C\hat{s}^{4^{\ast }-3}.%
\end{array}%
\end{equation*}%
Since $v\in \mathcal{P}_{\beta^{-1}a}$, there hold $\hat{f}^{\prime }(1)=0$ and $%
\hat{f}^{\prime \prime }(1)<0$. Note that $\hat{s}_{1}<1$ and $V\in \mathcal{%
P}_{a}^{-}.$ Then it follows from Lemma \ref{T2.7} that%
\begin{equation}
\hat{f}^{\prime \prime }(\hat{s}),\hat{f}^{\prime \prime \prime }(\hat{s})<0%
\text{ for any }\hat{s}\in \lbrack \hat{s}_{1}^{2},1].  \label{E54}
\end{equation}%
Define the function $\hat{F}_{\mu }:[0,1]\rightarrow \mathbb{R}$ by%
\begin{equation*}
\hat{F}_{\mu }(s)=\hat{f}^{\prime \prime }(1)s-\hat{f}^{\prime \prime
}(1)+\mu B.
\end{equation*}%
By (\ref{E54}), we have $\hat{F}_{\mu }(\hat{s}_{1}^{2})=\hat{f}^{\prime
\prime }(1)\hat{s}_{1}^{2}-\hat{f}^{\prime \prime }(1)+\mu B>\mu \beta B$,
which implies that%
\begin{equation*}
\hat{s}_{1}^{2}<1+\frac{\mu (\beta -1)}{\hat{f}^{\prime \prime }(1)}B.
\end{equation*}%
Note that
\begin{equation*}
\frac{B}{\hat{f}^{\prime \prime }(1)}=\frac{B}{2A-4^{\ast }\left( 4^{\ast
}-1\right) C}=\frac{B}{-2\left( 4^{\ast }-2\right) A+\mu B\left( 4^{\ast
}-1\right) }<-\frac{B}{2\left( 4^{\ast }-2\right) A}.
\end{equation*}
Then together with $\Vert \Delta v\Vert _{2}<\delta _{0}$ and $\Vert \nabla
v\Vert _{2}>\delta ^{\ast }$, there exists a constant $K>0$ independent of $%
\epsilon ,\beta $ such that $\frac{B}{\hat{f}^{\prime \prime }(1)}<{-\frac{K}{\beta}}$. Thus there hold $\hat{s}_{1}^{2}<1-\frac{K(\beta -1)}{\beta}\mu <1$
and
\begin{align*}
M_{\mu ,4^{\ast }}(a)& \leq \hat{s}_{1}^{4}M_{\mu ,4^{\ast }}(\beta
^{-1}a)+\epsilon \\
& <\left( 1-\frac{K(\beta -1)}{\beta }\mu \right) ^{2}M_{\mu ,4^{\ast
}}(\beta ^{-1}a)+\epsilon .
\end{align*}%
By the arbitrariness of $\epsilon $, we conclude that
\begin{equation*}
M_{\mu ,4^{\ast }}(a)\leq \left( 1-\frac{K(\beta -1)}{\beta }\mu \right)
^{2}M_{\mu ,4^{\ast }}(\beta ^{-1}a).
\end{equation*}%
Using this, together with Lemma \ref{T3.5}, yields
\begin{align}
M_{\mu ,4^{\ast }}(a)& =M_{\mu ,4^{\ast }}(\beta ^{-1}a)+m_{\mu ,4^{\ast
}}\left( \sqrt{1-\beta ^{-2}}a\right) +M_{\mu ,4^{\ast }}(a)-M_{\mu ,4^{\ast
}}(\beta ^{-1}a)-m_{\mu ,4^{\ast }}\left( \sqrt{1-\beta ^{-2}}a\right)
\notag \\
& \leq M_{\mu ,4^{\ast }}(\beta ^{-1}a)+m_{\mu ,4^{\ast }}\left( \sqrt{%
1-\beta ^{-2}}a\right)  \notag \\
& +\left[ 1-\left( 1-\frac{K(\beta -1)}{\beta }\mu \right) ^{-2}\right]
M_{\mu ,4^{\ast }}(a)-\frac{\beta ^{2}-1}{\beta ^{2}}m_{\mu ,4^{\ast }}(a)
\notag \\
& \leq M_{\mu ,4^{\ast }}(\beta ^{-1}a)+m_{\mu ,4^{\ast }}\left( \sqrt{%
1-\beta ^{-2}}a\right)  \notag \\
& +\left[ \left( 1-\frac{K(\beta -1)}{\beta }\mu \right) ^{2}-1\right]
M_{\mu ,4^{\ast }}(a)-\frac{\beta ^{2}-1}{\beta ^{2}}m_{\mu ,4^{\ast }}(a)
\notag \\
& =M_{\mu ,4^{\ast }}(\beta ^{-1}a)+m_{\mu ,4^{\ast }}\left( \sqrt{1-\beta
^{-2}}a\right)  \notag \\
& +\left[ \frac{K^{2}(\beta -1)^{2}\mu ^{2}}{\beta ^{2}}-\frac{2K(\beta
-1)\mu }{\beta }\right] M_{\mu ,4^{\ast }}(a)-\frac{\beta ^{2}-1}{\beta ^{2}}%
m_{\mu ,4^{\ast }}(a).  \label{E55}
\end{align}%
Let $H_{\mu }(s)=\left[ \frac{K^{2}(s-1)^{2}\mu ^{2}}{s^{2}}-\frac{%
2K(s-1)\mu }{s}\right] M_{\mu ,4^{\ast }}(a)-\frac{s^{2}-1}{s^{2}}m_{\mu
,4^{\ast }}(a)$ for $s\in \lbrack 1,\infty ).$ Then we have%
\begin{equation}
H_{\mu }(1)=0\text{ and }\lim_{s\rightarrow \infty }H_{\mu }(s)=(K^{2}\mu
^{2}-2K\mu )M_{\mu ,4^{\ast }}(a)-m_{\mu ,4^{\ast }}(a).  \label{E56}
\end{equation}%
By calculating the derivative of $H_{\mu }(s),$ we have%
\begin{eqnarray}
H_{\mu }^{\prime }(s) &=&\left[ \frac{2K^{2}(s-1)\mu ^{2}}{s^{2}}-\frac{%
2K^{2}(s-1)^{2}\mu ^{2}}{s^{3}}-\frac{2K\mu }{s}+\frac{2K(s-1)\mu }{s^{2}}%
\right] M_{\mu ,4^{\ast }}(a)  \notag \\
&&-\frac{2}{s^{3}}m_{\mu ,4^{\ast }}(a)  \notag \\
&=&\frac{1}{s^{3}}[2K\mu (K\mu (s-1)-s)M_{\mu ,4^{\ast }}(a)-2m_{\mu
,4^{\ast }}(a)]  \notag \\
&=&\frac{1}{s^{3}}\hat{H}_{\mu }(s),  \label{E57}
\end{eqnarray}%
where $\hat{H}_{\mu }(s):=2K\mu \lbrack K\mu (s-1)-s]M_{\mu ,4^{\ast
}}(a)-2m_{\mu ,4^{\ast }}(a)$ for $s>0.$ Since $0<1-\frac{K(\beta -1)}{\beta
}\mu <1$ for any $\beta >1,$ we obtain that $0<K\mu <1$ and $K\mu (s-1)-s<0$
for any $s>1,$ which implies that $\hat{H}_{\mu }(s)$ is decreasing on $s\in
\lbrack 1,\infty )$ and $\lim_{s\rightarrow \infty }\hat{H}_{\mu
}(s)=-\infty .$ Note that $M_{\mu ,4^{\ast }}(a)$ is decreasing on $\mu .$
Then there exists $C>0$ such that $M_{\mu ,4^{\ast }}(a)>C$ for any $\mu \in
\left( 0,(C_{0}a^{2-p})^{\frac{1}{p\gamma _{p}-2}}\right) .$ So we have%
\begin{eqnarray}
\hat{H}_{\mu }(1) &=&-2K\mu M_{\mu ,4^{\ast }}(a)-2m_{\mu ,4^{\ast }}(a)
\notag \\
&<&-2K\mu C-2m_{\mu ,4^{\ast }}(a)  \notag \\
&=&\mu \left[ -2KC-2\frac{m_{\mu ,4^{\ast }}(a)}{\mu }\right] .
\label{E58}
\end{eqnarray}%
It follows from Lemma \ref{T2.7} $(iii)$ that for any $u\in P_{\mu ,a}^{+}$,
there holds $\Vert \Delta u\Vert _{2}<\frac{\mu a(N+4)}{16},$ which
indicates that
\begin{equation}
\lim\limits_{\mu \rightarrow 0^{+}}\frac{m_{\mu ,4^{\ast }}(a)}{\mu }=0.
\label{E59}
\end{equation}%
Let
\begin{equation*}
\tilde{\mu}_{a}=\sup \left\{ \mu ^{\ast }\in \left( 0,(C_{0}a^{2-p})^{\frac{1%
}{p\gamma _{p}-2}}\right) :-2KC-2\frac{m_{\mu ,4^{\ast }}(a)}{\mu }\leq 0%
\text{ for each }\mu \in (0,\mu ^{\ast })\right\} .
\end{equation*}%
Then it follows from (\ref{E57})--(\ref{E59}) that $\hat{H}_{\mu }(s)<0$
for any $\mu \in (0,\tilde{\mu}_{a}).$ Therefore, by (\ref{E55})--(\ref%
{E56}) one has%
\begin{equation*}
M_{\mu ,4^{\ast }}(a)<M_{\mu ,4^{\ast }}(\beta ^{-1}a_{1})+m_{\mu ,4^{\ast
}}\left( \sqrt{1-\beta ^{-2}}a_{2}\right) \text{ for any }\beta >1.
\end{equation*}%
The proof is complete.
\end{proof}

Let
\begin{equation*}
\mu _{p,a}=\left\{
\begin{array}{ll}
\bar{\mu}_{a}, & \text{ if }\bar{p}<p<4^{\ast }, \\
\tilde{\mu}_{a}, & \text{ if }p=4^{\ast }.%
\end{array}%
\right.
\end{equation*}%
Then it follows from Lemmas \ref{T4.2} and \ref{T4.4} that for any $\mu \in
(0,\mu _{p,a})$, there holds
\begin{equation}
M_{\mu ,p}(a)<M_{\mu ,p}(a_{1})+m_{\mu ,p}(a_{2}),  \label{E60}
\end{equation}%
for any $a_{1},a_{2}>0\ $and$\ a_{1}^{2}+a_{2}^{2}=a^{2}.$

\begin{proposition}
\label{T4.5} Let $\bar{p}<p<4^{\ast }$ for $N\geq 2$, or $p=4^{\ast }$ for $%
N\geq 9$. Then for any $\mu \in (0,\mu _{p,a}),$ the minimization problem $%
M_{\mu ,p}(a)=\inf_{u\in \mathcal{P}_{a}^-}\Psi _{\mu ,p}(u)$ can be attained
by $u_{\mu , a}\in H^{2}(\mathbb{R}^{N})$. Moreover, problem (\ref{E3})
has a solution $u_{\mu , a}$ with some $\lambda _{\mu }^{-}>0$.
\end{proposition}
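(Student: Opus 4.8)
The plan is to minimize $\Psi_{\mu,p}$ directly over $\mathcal{P}_a^-$ and to recover the compactness of a minimizing sequence by ruling out, in turn, vanishing, dichotomy and---in the Sobolev critical case---bubbling, each of these alternatives being excluded by one of the preliminary results. Fix $\mu\in(0,\mu_{p,a})$ (so that (\ref{E4}) holds for $a$, and also for every mass in $(0,a)$) and let $\{u_n\}\subset\mathcal{P}_a^-$ satisfy $\Psi_{\mu,p}(u_n)\to M_{\mu,p}(a)$. Since $u_n\in\mathcal{P}_a$, the identity $\Psi_{\mu,p}(u_n)=\frac{p\gamma_p-2}{2p\gamma_p}\|\Delta u_n\|_2^2-\frac{\mu(p\gamma_p-1)}{2p\gamma_p}\|\nabla u_n\|_2^2$ together with (\ref{E5}) forces $\|\Delta u_n\|_2$ to be bounded, hence $\{u_n\}$ is bounded in $H^2(\mathbb{R}^N)$. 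To exclude vanishing I would apply the biharmonic Lions-type Lemma \ref{T3.3}: if $\sup_{y}\int_{B_r(y)}|u_n|^p\,dx\to0$, then $\|u_n\|_p^p\to0$ (and $\|u_n\|_{4^{\ast}}^{4^{\ast}}\to0$ if $p=4^{\ast}$), so $Q_p(u_n)=0$ gives $2\|\Delta u_n\|_2^2\le\mu a\|\Delta u_n\|_2+o(1)$, i.e.\ $\|\Delta u_n\|_2\le\frac{\mu a}{2}+o(1)<\rho_a$ for $n$ large, contradicting $\|\Delta u_n\|_2>\rho_a$ from Lemma \ref{T2.7}$(iii)$. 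Hence, after translating and passing to a subsequence, $u_n(\cdot+x_n)\rightharpoonup u\neq0$ in $H^2(\mathbb{R}^N)$; I keep writing $u_n$ for the translates.

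Next I would run the Brezis--Lieb splitting $w_n:=u_n-u\rightharpoonup0$, giving $\|u_n\|_2^2=\|u\|_2^2+\|w_n\|_2^2+o(1)$, $\|\Delta u_n\|_2^2=\|\Delta u\|_2^2+\|\Delta w_n\|_2^2+o(1)$, $\Psi_{\mu,p}(u_n)=\Psi_{\mu,p}(u)+\Psi_{\mu,p}(w_n)+o(1)$ and $Q_p(u)+Q_p(w_n)=o(1)$. Writing $a_1=\|u\|_2\in(0,a]$ and $a_2^2=a^2-a_1^2$, the central point is that $a_1=a$. Supposing $a_1<a$, I would project $u$ onto $\mathcal{P}_{a_1}$ and $w_n$ onto $\mathcal{P}_{\|w_n\|_2}$ using Lemma \ref{T2.7}, then use $Q_p(u)+Q_p(w_n)=o(1)$ to decide, by a short case analysis on the sign of $Q_p(u)$ (and the fact that any element of a $\mathcal{P}^{+}$-set carries negative energy whereas $M_{\mu,p}(a)>0$), that one of the two pieces sits on the $\mathcal{P}^-$-branch and the other on the $\mathcal{P}^+$-branch; combined with Lemma \ref{T2.7}$(iv)$ this yields $M_{\mu,p}(a)=\Psi_{\mu,p}(u)+\lim_n\Psi_{\mu,p}(w_n)\ge M_{\mu,p}(a_1)+m_{\mu,p}(a_2)$ (or the same with $a_1,a_2$ interchanged), contradicting the energy inequality (\ref{E60}). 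Therefore $a_1=a$, so $w_n\to0$ in $L^2$ and, by (\ref{E5}), $\|\nabla w_n\|_2\to0$.

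It then remains to upgrade to strong convergence in $\dot H^2$. In the subcritical range (\ref{E6}) gives $\|w_n\|_p^p\to0$, so the fiber maps $\phi_{u_n}$, which attain their maxima at $s=1$ since $u_n\in\mathcal{P}_a^-$, converge pointwise to $\Phi(s)=\phi_u(s)+\frac{s^4}{2}\ell^2$ with $\ell^2:=\lim_n\|\Delta w_n\|_2^2$, and one checks $M_{\mu,p}(a)=\max_{s>0}\Phi(s)$; if $\ell>0$ this gives $M_{\mu,p}(a)=\max_{s>0}\Phi(s)>\max_{s>0}\phi_u(s)=\Psi_{\mu,p}(u_{\bar s_u})\ge M_{\mu,p}(a)$, absurd, so $\ell=0$ and $u_n\to u$ in $H^2$. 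In the critical case $p=4^{\ast}$ the remaining alternative is $\|w_n\|_{4^{\ast}}^{4^{\ast}}\to k>0$, i.e.\ a rescaled Aubin--Talenti bubble splitting off; then the limiting fiber map is $\Phi(s)=\phi_u(s)+\frac{s^4}{2}\ell^2-\frac{s^{2\cdot4^{\ast}}}{4^{\ast}}k$ with $\max_{s>0}\Phi(s)=M_{\mu,4^{\ast}}(a)$, while $\max_{s>0}\big(\frac{s^4}{2}\ell^2-\frac{s^{2\cdot4^{\ast}}}{4^{\ast}}k\big)\ge\frac{2}{N}\mathcal{S}^{N/4}$ by (\ref{E7}); evaluating $\Phi$ at the maximizer $s_0$ of that bubble part, together with $\Psi_{\mu,4^{\ast}}(u)\ge m_{\mu,4^{\ast}}(a)$ (which holds because $\|u\|_2=a$ and $\|\Delta u\|_2$ is a priori bounded when $\mu$ is small), one gets $M_{\mu,4^{\ast}}(a)\ge m_{\mu,4^{\ast}}(a)+\frac{2}{N}\mathcal{S}^{N/4}$, contradicting Lemma \ref{T4.3}. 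Thus in all cases $u_n\to u$ in $H^2(\mathbb{R}^N)$; since $Q_p$ and $u\mapsto\phi_u''(1)$ are continuous and $\mathcal{P}_a^0=\emptyset$ (Lemma \ref{T2.4}), the limit satisfies $u\in\mathcal{P}_a^-$ and $\Psi_{\mu,p}(u)=M_{\mu,p}(a)$; set $u_{\mu,a}:=u$.

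Finally, $\mathcal{P}_a^0=\emptyset$ makes $\mathcal{P}_a^-$ open in the $C^1$ manifold $\mathcal{P}_a$, so $u_{\mu,a}$ is a critical point of $\Psi_{\mu,p}|_{\mathcal{P}_a}$, hence by Lemma \ref{T2.5} a critical point of $\Psi_{\mu,p}|_{S_a}$; thus $u_{\mu,a}$ solves (\ref{E3}) with some Lagrange multiplier $\lambda_\mu^-$. Testing the equation with $u_{\mu,a}$ and using $Q_p(u_{\mu,a})=0$ to eliminate the term $\|\Delta u_{\mu,a}\|_2^2$ gives $\lambda_\mu^-a^2=(1-\gamma_p)\|u_{\mu,a}\|_p^p+\frac{\mu}{2}\|\nabla u_{\mu,a}\|_2^2>0$, since $\gamma_p\le1$ and $\|\nabla u_{\mu,a}\|_2>0$. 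This completes the proof. The main obstacle is clearly the compactness step: the vanishing, dichotomy and bubbling scenarios are precisely what Lemma \ref{T3.3}, the energy inequality (\ref{E60}) and Lemma \ref{T4.3} are tailored to preclude, and everything else is bookkeeping with the fiber maps of Lemma \ref{T2.7}.
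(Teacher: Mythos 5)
Your overall skeleton (direct minimization on $\mathcal{P}_a^-$, boundedness, exclusion of vanishing via $M_{\mu,p}(a)>0$, exclusion of dichotomy via the strict inequality (\ref{E60}), exclusion of bubbling via Lemma \ref{T4.3}, and the Pohozaev computation for $\lambda_\mu^->0$) is the same as the paper's, and the vanishing step, the subcritical fiber-map argument for $\ell=0$, and the multiplier sign are fine. The genuine gap is in your dichotomy step. From $Q_p(u)+Q_p(w_n)=o_n(1)$ and a case analysis on the sign of $Q_p(u)$ you cannot conclude that ``one piece sits on the $\mathcal{P}^-$-branch and the other on the $\mathcal{P}^+$-branch'', because neither $u$ nor $w_n$ lies on any Pohozaev manifold: the sign of $Q_p(u)=\phi_u'(1)$ only locates $s=1$ relative to the two critical scales of $\phi_u$, and in particular $Q_p(u)\le 0$ is compatible with $1<s_u$ (where $\Psi_{\mu,p}(u)<0<M_{\mu,p}(a_1)$) as well as with $1>\overline{s}_u$ (where one only gets $\Psi_{\mu,p}(u)\le\max_s\phi_u(s)$, the wrong direction). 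So the key inequality $\Psi_{\mu,p}(u)+\lim_n\Psi_{\mu,p}(w_n)\ge M_{\mu,p}(a_1)+m_{\mu,p}(a_2)$ does not follow from what you wrote; projecting the pieces afterwards does not help either, since projection onto $\mathcal{P}^-$ raises the energy. The missing idea (which is exactly how the paper proceeds) is to rescale the whole minimizing sequence first: since $u_n\in\mathcal{P}_a^-$, one has $\Psi_{\mu,p}(u_n)\ge\Psi_{\mu,p}((u_n)_s)$ for every $s>0$; choosing $s=s_0$ with $(u)_{s_0}\in\mathcal{P}_{a_1}^-$ and splitting $(u_n)_{s_0}=(u)_{s_0}+(w_n)_{s_0}$ gives $M_{\mu,p}(a)\ge\Psi_{\mu,p}((w_n)_{s_0})+M_{\mu,p}(a_1)+o_n(1)$, and the remainder is controlled by the paper's two cases: either $\Psi_{\mu,p}((w_n)_{s_0})\ge m_{\mu,p}(\Vert w_n\Vert_2)$, or a further scale $s_n\le1$ puts $(w_n)_{s_0s_n}\in\mathcal{P}^-_{\Vert w_n\Vert_2}$ while $\Psi_{\mu,p}((u)_{s_0s_n})\ge m_{\mu,p}(a_1)$, yielding (\ref{E60}) with the roles of the masses exchanged. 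You do use the relevant fact ($\phi_{u_n}$ is maximized at $s=1$) later, but it is absent where it is indispensable.

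A secondary, smaller issue is in your critical-case bubbling argument: the justification ``$\Psi_{\mu,4^\ast}(u)\ge m_{\mu,4^\ast}(a)$ because $\Vert\Delta u\Vert_2$ is a priori bounded'' is not valid, since $\Psi_{\mu,4^\ast}$ is only bounded below by $m_{\mu,4^\ast}(a)$ on $\mathcal{M}_a$ (i.e.\ when $\Vert\Delta u\Vert_2<\rho_0$), and moreover the quantity you actually need is $\phi_u(s_0)$ at the maximizer $s_0$ of the bubble part, not $\phi_u(1)$; nothing in your argument prevents $s_0>\overline{s}_u$, where $\phi_u$ can be far below $m_{\mu,4^\ast}(a)$. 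This can be repaired (e.g.\ if $s_0\le\overline{s}_u$ use $\phi_u(s_0)\ge\phi_u(s_u)\ge m_{\mu,4^\ast}(a)$ and contradict Lemma \ref{T4.3}; if $s_0>\overline{s}_u$, evaluate the limiting map at $\overline{s}_u$, where the bubble term is still positive, and contradict $\max_s\Phi(s)=M_{\mu,4^\ast}(a)$), which plays the role of the paper's step showing $t_n\le1$; as written, however, the step is not justified.
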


\begin{proof}
Let $\{u_{n}\}\subset \mathcal{P}_{a}^{-}$ be a minimizing sequence. Then
there hold $Q_{p}(u_{n})=0$ and $\Vert \Delta u_{n}\Vert _{2}>\rho_{0}$.
Since $Q_{p}(u_{n})=0$, we have
\begin{align*}
\Psi _{\mu ,p}(u_{n})& =\frac{p\gamma _{p}-2}{2p\gamma _{p}}\Vert \Delta
u_{n}\Vert _{2}^{2}-\frac{\mu (p\gamma _{p}-1)}{2p{\gamma _{p}}}\Vert \nabla
u_{n}\Vert _{2}^{2}+o_{n}(1) \\
& \geq \frac{p\gamma _{p}-2}{2p\gamma _{p}}\Vert \Delta u_{n}\Vert _{2}^{2}-%
\frac{\mu a(p\gamma _{p}-1)}{2p{\gamma _{p}}}\Vert \Delta u_{n}\Vert
_{2}+o_{n}(1),
\end{align*}%
which implies that $\{u_{n}\}$ is bounded in $H^{2}(\mathbb{R}^{N})$.

Next we claim that there exist a $d>0$ and a sequence $\{x_{n}\}\subset
\mathbb{R}^{N}$ such that
\begin{equation}
\int_{B_{r}(x_{n})}|u_{n}|^{p}dx\geq d>0\ \text{for some}\ r>0.
\label{E61}
\end{equation}%
Otherwise, we have $u_{n}\rightarrow 0$ in $L^{p}(\mathbb{R}^{N})$. This
indicates that%
\begin{equation*}
\begin{array}{l}
\Psi _{\mu ,p}(u_{n})=\frac{1}{2}\Vert \Delta u_{n}\Vert _{2}^{2}-\frac{\mu
}{2}\Vert \nabla u_{n}\Vert _{2}^{2}+o_{n}(1), \\
Q_{p}(u_{n})=2\Vert \Delta u_{n}\Vert _{2}^{2}-\mu \Vert \nabla u_{n}\Vert
_{2}^{2}+o_{n}(1),%
\end{array}
\end{equation*}%
which implies that $\Psi _{\mu ,p}(u_{n})=-\frac{1}{2}\Vert \Delta
u_{n}\Vert _{2}^{2}+o_{n}(1)$. This contradicts with $M_{\mu ,p}(a)>0$. Thus
there exist $d>0$ and $\{x_{n}\}\subset \mathbb{R}^{N}$ such that (\ref%
{E61}) holds, and so we can assume that%
\begin{equation*}
u_{n}(x-x_{n})\rightharpoonup u_{\mu ,a}\neq 0.
\end{equation*}%
Let $w_{n}=u_{n}(x-x_{n})-u_{\mu ,a}$. It follows from Brezis-Lieb lemma that%
\begin{equation*}
\begin{array}{l}
\Vert u_{n}\Vert _{2}^{2}=\Vert u_{\mu ,a}\Vert _{2}^{2}+\Vert w_{n}\Vert
_{2}^{2}+o_{n}(1), \\
\Vert \Delta u_{n}\Vert _{2}^{2}=\Vert \Delta u_{\mu ,a}\Vert _{2}^{2}+\Vert
\Delta w_{n}\Vert _{2}^{2}+o_{n}(1), \\
\Psi _{\mu ,p}(u_{n})=\Psi _{\mu ,p}(u_{n}(x-x_{n}))=\Psi _{\mu
,p}(w_{n})+\Psi _{\mu ,p}(u_{\mu ,a})+o_{n}(1).%
\end{array}%
\end{equation*}

Now, we prove that $w_{n}\rightarrow 0$ in $H^{2}(\mathbb{R}^{N})$. For
convenience, let $\Vert u_{\mu ,a}\Vert _{2}^{2}=a_{1}^{2}$. According to
Lemma \ref{T2.7} $(ii)$, there exists $s_{0}>0$ such that $(u_{\mu
,a})_{s_{0}}\in \mathcal{P}_{a_{1}}^{-}$. It follows from the Brezis-Lieb
Lemma and Lemma \ref{T2.7} $(iv)$ that
\begin{equation*}
\Psi _{\mu ,p}(u_{n})=\Psi _{\mu ,p}\left( w_{n}\right) +\Psi _{\mu
,p}\left( u_{\mu ,a}\right) +o_{n}(1)
\end{equation*}%
and
\begin{align}
\Psi _{\mu ,p}(u_{n})\geq & \Psi _{\mu ,p}\left( (u_{n})_{s_{0}}\right)
=\Psi _{\mu ,p}\left( (w_{n})_{s_{0}}\right) +\Psi _{\mu ,p}\left( (u_{\mu
,a})_{s_{0}}\right) +o_{n}(1)  \notag \\
\geq & \Psi _{\mu ,p}\left( (w_{n})_{s_{0}}\right) +M_{\mu
,p}(a_{1})+o_{n}(1).  \label{E62}
\end{align}%
We first claim that $a_{1}=a$. Otherwise, we assume that $a_{1}<a$. Then we
consider the following two cases.

Case $(i):\Psi _{\mu ,p}\left( (w_{n})_{s_{0}}\right) \geq m_{\mu ,p}\left(
\Vert w_{n}\Vert _{2}\right) $ up to a subsequence.\textbf{\ }It follows
from (\ref{E62}) that
\begin{align}
\Psi _{\mu ,p}(u_{n})& \geq \Psi _{\mu ,p}\left( (w_{n})_{s_{0}}\right)
+\Psi _{\mu ,p}\left( (u_{\mu ,a})_{s_{0}}\right) +o_{n}(1)  \notag \\
& \geq m_{\mu ,p}\left( \Vert w_{n}\Vert _{2}\right) +M_{\mu ,p}\left(
a_{1}\right) +o_{n}(1).  \label{E63}
\end{align}%
Considering the continuity of $m_{\mu ,p}(a)$ for any $a\in (0,a_{0})$, we
obtain that
\begin{equation*}
m_{\mu ,p}\left( \Vert w_{n}\Vert _{2}\right) +o_{n}(1)=m_{\mu ,p}\left(
\sqrt{a^{2}-a_{1}^{2}}\right) .
\end{equation*}%
Using (\ref{E63}), one has
\begin{equation*}
M_{\mu ,p}(a)\geq m_{\mu ,p}\left( \sqrt{a^{2}-a_{1}^{2}}\right) +M_{\mu
,p}\left( a_{1}\right) ,
\end{equation*}%
which contradicts with (\ref{E60}). So, $a_{1}=a$.

Case $(ii):\Psi _{\mu ,p}\left( (w_{n})_{s_{0}}\right) <m_{\mu ,p}\left(
\Vert w_{n}\Vert _{2}\right) $ for any $n$. Since $\Psi _{\mu ,p}\left(
(w_{n})_{s_{0}}\right) <m_{\mu ,p}\left( \Vert w_{n}\Vert _{2}\right) $ for
any $n$, by Lemma \ref{T2.7} $(ii)$, there exists $s_{n}\leq 1$ such that $%
(w_{n})_{s_{0}s_{n}}\in \mathcal{P}_{\Vert w_{n}\Vert _{2}}^{-}$. Then it
follows from Lemma \ref{T2.7} that $\Psi _{\mu ,p}\left( (u_{\mu
,a})_{s_{0}s_{n}}\right) \geq m_{\mu ,p}(a_{1}),$ and thus
\begin{align}
M_{\mu ,p}(a)& =\Psi _{\mu ,p}(u_{n})+o_{n}(1)  \notag \\
& \geq \Psi _{\mu ,p}\left( (u_{n})_{s_{0}s_{n}}\right) +o_{n}(1)  \notag \\
& =\Psi _{\mu ,p}\left( (w_{n})_{s_{0}s_{n}}\right) +\Psi _{\mu ,p}\left(
(u_{\mu ,a})_{s_{0}s_{n}}\right) +o_{n}(1)  \notag \\
& \geq m_{\mu ,p}(a_{1})+M_{\mu ,p}(\Vert w_{n}\Vert _{2})+o_{n}(1).
\label{E64}
\end{align}%
It follows from the continuity of $M_{\mu }(a)$ that $M_{\mu ,p}(\Vert
w_{n}\Vert _{2})=M_{\mu ,p}\left( \sqrt{a^{2}-a_{1}^{2}}\right) +o_{n}(1)$.
Using (\ref{E64}), we have
\begin{equation*}
M_{\mu ,p}(a)\geq m_{\mu ,p}(a_{1})+M_{\mu ,p}\left( \sqrt{a^{2}-a_{1}^{2}}%
\right) +o_{n}(1),
\end{equation*}%
which is impossible. So, $a_{1}=a$.

When $p<4^{\ast }$, since $a=a_{1}$, we have
\begin{equation*}
\Psi _{\mu ,p}\left( (w_{n})_{s_{0}}\right) =\frac{1}{2}\Vert \Delta
(w_{n})_{s_{0}}\Vert _{2}^{2}+o_{n}(1),
\end{equation*}%
together with (\ref{E62}), one has
\begin{equation*}
M_{\mu ,p}(a)+o_{n}(1)\geq \frac{1}{2}\Vert \Delta (w_{n})_{s_{0}}\Vert
_{2}^{2}+M_{\mu ,p}(a)+o_{n}(1),
\end{equation*}%
leading to $\Vert \Delta w_{n}\Vert _{2}^{2}=o_{n}(1)$.

When $p=4^{\ast }$, we have%
\begin{equation}
\begin{array}{l}
\Psi _{\mu ,4^{\ast}}\left( (w_{n})_{s_{0}}\right) =\frac{1}{2}\Vert \Delta
(w_{n})_{s_{0}}\Vert _{2}^{2}-\frac{1}{4^{\ast }}\Vert (w_{n})_{s_{0}}\Vert
_{4^{\ast }}^{4^{\ast }}+o_{n}(1), \\
Q_{4^{\ast}}\left( (w_{n})_{s_{0}}\right) =2\Vert \Delta (w_{n})_{s_{0}}\Vert
_{2}^{2}-2\Vert (w_{n})_{s_{0}}\Vert _{4^{\ast }}^{4^{\ast }}+o_{n}(1).%
\end{array}
\label{E65}
\end{equation}%
If $\Vert w_{n}\Vert _{4^{\ast }}^{4^{\ast }}=o_{n}(1)$, then it follows
from (\ref{E65}) that $\Psi _{\mu ,p}\left( (w_{n})_{s_{0}}\right) =\frac{1%
}{2}\Vert \Delta (w_{n})_{s_{0}}\Vert _{2}^{2}+o_{n}(1)$. Combining with (%
\ref{E62}), one has
\begin{equation*}
M_{\mu ,p}(a)+o_{n}(1)\geq \frac{1}{2}\Vert \Delta (w_{n})_{s_{0}}\Vert
_{2}^{2}+M_{\mu ,p}(a)+o_{n}(1),
\end{equation*}%
which shows that $\Vert \Delta w_{n}\Vert _{2}^{2}=o_{n}(1)$. Otherwise, we
can define $t_{n}=\left( \frac{\Vert \Delta (w_{n})_{s_{0}}\Vert _{2}^{2}}{%
\Vert (w_{n})_{s_{0}}\Vert _{4^{\ast }}^{4^{\ast }}}\right) ^{\frac{1}{%
2(4^{\ast }-2)}}$. Then there hold%
\begin{equation}
\begin{array}{l}
Q_{4^{\ast}}\left( (w_{n})_{s_{0}t_{n}}\right) =2\Vert \Delta (w_{n})_{s_{0}t_{n}}\Vert
_{2}^{2}-2\Vert (w_{n})_{s_{0}t_{n}}\Vert _{4^{\ast }}^{4^{\ast }}=o_{n}(1),
\\
\Psi _{\mu ,4^{\ast}}\left( (w_{n})_{s_{0}t_{n}}\right) =\frac{1}{2}\Vert \Delta
(w_{n})_{s_{0}t_{n}}\Vert _{2}^{2}-\frac{1}{4^{\ast }}\Vert
(w_{n})_{s_{0}t_{n}}\Vert _{4^{\ast }}^{4^{\ast }}+o_{n}(1)\geq \frac{2}{N}%
\mathcal{S}^{\frac{N}{4}}+o_{n}(1).%
\end{array}
\label{E66}
\end{equation}%
Now we claim that $t_{n}\leq 1$ up to a subsequence. Otherwise, there holds $%
t_{n}>1$ for all $n$. Then by (\ref{E65})-(\ref{E66}), we deduce that $%
\Psi _{\mu ,4^{\ast}}\left( (w_{n})_{s_{0}}\right) \geq 0$ and $Q_{4^{\ast}}\left(
(w_{n})_{s_{0}}\right) \geq 0$, together with (\ref{E62}), we obtain that $%
\Psi _{\mu ,4^{\ast}}\left( (w_{n})_{s_{0}}\right) =o_{n}(1)$ and thus $\Vert
\Delta w_{n}\Vert _{2}=o_{n}(1)$, which
is impossible. Hence, $t_{n}\leq 1$ up to a subsequence. Without loss of
generality, we may assume that $t_{n}\leq 1$ for any $n$. According to Lemma %
\ref{T2.7}, we obtain that $\Psi _{\mu ,4^{\ast}}\left( (u_{\mu
,a})_{s_{0}t_{n}}\right) \geq m_{\mu ,4^{\ast }}(a)$. Then by (\ref{E66}),
we have
\begin{align*}
M_{\mu ,4^{\ast }}(a)=\Psi_{\mu,4^{\ast}}(u_{n})+o_{n}(1)\geq & \Psi_{\mu,4^{\ast}}\left( (w_{n})_{s_{0}t_{n}}\right) +\Psi_{\mu,4^{\ast}}\left( (u_{\mu
,a})_{s_{0}t_{n}}\right) +o_{n}(1) \\
\geq & m_{\mu ,4^{\ast }}(a)+\frac{2}{N}\mathcal{S}^{\frac{N}{4}}+o_{n}(1),
\end{align*}%
which contradicts with $M_{\mu ,4^{\ast }}(a)<m_{\mu ,4^{\ast }}(a)+\frac{2}{%
N}\mathcal{S}^{\frac{N}{4}}$. Hence, we conclude that $M_{\mu
,p}(a)=\inf_{u\in \mathcal{P}_{a}^{-}}\Psi _{\mu ,p}(u)$ can be attained by $%
u_{\mu , a}$. Then there exists $\lambda _{\mu }^{-}\in \mathbb{R}$ such
that $(\lambda _{\mu }^{-},u_{\mu , a})$ is the solution of problem (\ref%
{E2})-(\ref{E3}). Using the Pohozaev identity, we easily obtain $\lambda
_{\mu }^{-}>0$. The proof is complete.
\end{proof}

To study the asymptotical property of $M_{\mu ,p}(a)$ as $\mu \rightarrow
0^{+}$, we need consider the following problem%
\begin{equation}
\left\{
\begin{array}{ll}
\Delta ^{2}u+\lambda u=|u|^{p-2}u, & \text{in }\mathbb{R}^{N}, \\
\int_{\mathbb{R}^{N}}|u|^{2}=a^{2}, &
\end{array}%
\right.  \label{E67}
\end{equation}%
where the associated energy functional is $\Psi _{0}(u)=\Psi _{\mu ,p}(u)$
with $\mu =0$ and the Pohozaev identity is $Q_{p}^{0}(u)=Q_{p}(u)$ with $\mu =0$.
Similar to Lemma \ref{T2.7}, Corollary \ref{L3.5} and Lemma \ref{T4.1} $%
(i)-(ii)$, we have the following results.

\begin{lemma}
\label{T4.6} Let $\bar{p}<p<4^{\ast }$ for $N\geq 2$, or $p=4^{\ast }$ for $%
N\geq 9$. Then for every $u\in S_{a}$, the function $\phi _{u}^{0}(t)=\phi
_{u}(t)$ with $\mu =0$ has a unique critical point $t_{u}>0$. Moreover, the
following statements hold.\newline
$(i)$ $(\phi _{u}^{0})^{\prime }(t_{u})=0$, $(\phi _{u}^{0})^{\prime }(t)>0$
for every $t\in (0,t_{u})$ and $(\phi _{u}^{0})^{\prime }(t)<0$ for every $%
t\in (t_{u},\infty )$.\newline
$(ii)$ $u_{t_{u}}\in \mathcal{P}_{0,a}^{-}$ and $\mathcal{P}_{0,a}=\mathcal{P%
}_{0,a}^{-}$, where $\mathcal{P}_{0,a}=\mathcal{P}_{a}$ and $\mathcal{P}%
_{0,a}^{-}=\mathcal{P}_{a}^{-}$ when $\mu =0.$\newline
$(iii)$ $\Vert \Delta u_{t_{u}}\Vert _{2}>\rho_{a}.$\newline
$(iv)$
\begin{equation*}
\Psi _{0}(u_{t_{u}})=\max \left\{ \Psi_{0}(u_{t}):t\geq 0\right\} \geq
\lim_{\mu\rightarrow0^+}g_{p}(s_{2})=\frac{p\gamma _{p}-2}{2p\gamma _{p}}\left( \frac{1}{\mathcal{C}%
_{N,p}^{p}\gamma _{p}a^{p(1-\gamma _{p})}}\right) ^{\frac{2}{p\gamma _{p}-2}%
}>0,
\end{equation*}%
where $g_{p}(s_{2})$ is given in Lemma \ref{T2.4}.\newline
$(v)$ The map $u\in S_{a}\mapsto t_{u}$ is of class $C^{1}$.
\end{lemma}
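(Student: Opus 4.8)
The plan is to repeat the fibering--map analysis of Lemma~\ref{T2.7}, which becomes substantially simpler when $\mu=0$ because $\phi_u^0$ reduces to a two-term function with a single critical point. Fix $u\in S_a$ and write
\[
\phi_u^0(s)=\frac{s^4}{2}\|\Delta u\|_2^2-\frac{s^{2p\gamma_p}}{p}\|u\|_p^p,\qquad s>0,
\]
observing that $p>\overline p$ forces $p\gamma_p>2$, hence $2p\gamma_p>4$. Then $(\phi_u^0)'(s)=2s^3\big(\|\Delta u\|_2^2-\gamma_p s^{2p\gamma_p-4}\|u\|_p^p\big)$ has on $(0,\infty)$ the unique zero
\[
t_u=\left(\frac{\|\Delta u\|_2^2}{\gamma_p\|u\|_p^p}\right)^{\frac{1}{2(p\gamma_p-2)}},
\]
with $(\phi_u^0)'>0$ on $(0,t_u)$ and $(\phi_u^0)'<0$ on $(t_u,\infty)$; this is $(i)$. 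Evaluating the second derivative at $t_u$ and using $\|\Delta u\|_2^2=\gamma_p t_u^{2p\gamma_p-4}\|u\|_p^p$ gives $(\phi_u^0)''(t_u)=2\gamma_p\|u\|_p^p\,t_u^{2p\gamma_p-2}(4-2p\gamma_p)<0$.

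Next I would exploit the scaling identity $(u_s)_t=u_{st}$, which gives $\phi_{u_s}^0(t)=\phi_u^0(st)$, hence $Q_p^0(u_s)=\phi_{u_s}^{0\prime}(1)=s\,\phi_u^{0\prime}(s)$ and $\phi_{u_s}^{0\prime\prime}(1)=s^2(\phi_u^0)''(s)$. Thus $u_s\in\mathcal P_{0,a}$ if and only if $s=t_u$; since $\phi_{u_{t_u}}^{0\prime\prime}(1)=t_u^2(\phi_u^0)''(t_u)<0$, we obtain $u_{t_u}\in\mathcal P_{0,a}^-$, and because every $v\in\mathcal P_{0,a}$ satisfies $t_v=1$ by uniqueness of the critical point, $\mathcal P_{0,a}=\mathcal P_{0,a}^-$, which is $(ii)$. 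For $(iii)$, from $Q_p^0(u_{t_u})=0$ we have $\|\Delta u_{t_u}\|_2^2=\gamma_p\|u_{t_u}\|_p^p$, and the Gagliardo--Nirenberg inequality~(\ref{E6}) (with $\|u_{t_u}\|_2=a$) yields $\|\Delta u_{t_u}\|_2^{p\gamma_p-2}\ge\big(\gamma_p\mathcal C_{N,p}^p a^{p(1-\gamma_p)}\big)^{-1}$; since $p\gamma_p>2$ implies $\tfrac1{\gamma_p}>\tfrac{p}{2(p\gamma_p-1)}$, the right-hand side strictly exceeds $\rho_a^{p\gamma_p-2}$, whence $\|\Delta u_{t_u}\|_2>\rho_a$.

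For $(iv)$, the identity $\Psi_0(u_{t_u})=\max_{t\ge0}\Psi_0(u_t)$ is immediate from $(i)$. For the lower bound I would use the interpolation and Gagliardo--Nirenberg inequalities~(\ref{E5})--(\ref{E6}), which give $\Psi_0(u_t)\ge g_p^0(\|\Delta u_t\|_2)$ for every $t$, where $g_p^0$ denotes $g_p$ with $\mu=0$, i.e. $g_p^0(s)=\tfrac12 s^2-\tfrac{\mathcal C_{N,p}^p}{p}a^{p(1-\gamma_p)}s^{p\gamma_p}$; choosing $t$ so that $\|\Delta u_t\|_2=t^2\|\Delta u\|_2$ equals the unique positive maximum point $s_\ast=(\gamma_p\mathcal C_{N,p}^p a^{p(1-\gamma_p)})^{-1/(p\gamma_p-2)}$ of $g_p^0$ yields $\Psi_0(u_{t_u})\ge g_p^0(s_\ast)=\tfrac{p\gamma_p-2}{2p\gamma_p}s_\ast^2$, which is precisely $\lim_{\mu\to0^+}g_p(s_2)>0$ computed in Lemma~\ref{T2.6}$(ii)$. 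Finally, $(v)$ follows from the implicit function theorem applied to the $C^1$ map $\Phi(s,u):=(\phi_u^0)'(s)$ on $(0,\infty)\times S_a$, using $\Phi(t_u,u)=0$ and $\partial_s\Phi(t_u,u)=(\phi_u^0)''(t_u)<0$, exactly as in Lemma~\ref{T2.7}$(v)$. Since everything here is a strict simplification of the $\mu>0$ case (a single critical point replaces the convex--concave pair), I do not expect a genuine obstacle; the only points needing care are the quantitative comparison with $\rho_a$ in $(iii)$ and the bookkeeping that matches $g_p^0(s_\ast)$ with $\lim_{\mu\to0^+}g_p(s_2)$.
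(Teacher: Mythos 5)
Your proof is correct and is essentially the paper's intended argument: the paper states this lemma without proof as the $\mu=0$ analogue of Lemma \ref{T2.7}, and your fibering-map analysis (unique zero of $(\phi_u^0)'$, the scaling identity $(u_s)_t=u_{st}$, the Gagliardo--Nirenberg comparison with $\rho_a$ and with $g_p$ at $\mu=0$, and the implicit function theorem for $(v)$) is exactly that specialization. One small simplification available to you: since $t_u=\bigl(\|\Delta u\|_2^2/(\gamma_p\|u\|_p^p)\bigr)^{1/(2(p\gamma_p-2))}$ is explicit, $(v)$ follows directly from the $C^1$ regularity of $u\mapsto\|\Delta u\|_2^2$ and $u\mapsto\|u\|_p^p$ without invoking the implicit function theorem.
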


\begin{corollary}
\label{T4.7} Let $\bar{p}<p<4^{\ast }$ for $N\geq 2$, or $p=4^{\ast }$ for $%
N\geq 9$. Then we have $M_{p}(a):=\inf_{u\in \mathcal{P}_{0,a}^{-}}\Psi
_{0}(u)>0$.
\end{corollary}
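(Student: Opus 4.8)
The plan is to establish a \emph{uniform} positive lower bound for $\Psi_0$ on the entire manifold $\mathcal{P}_{0,a}^-$, in complete analogy with Corollary~\ref{L3.5}$(iii)$ but now at $\mu=0$. First I would note that $\mathcal{P}_{0,a}^-$ is nonempty: by Lemma~\ref{T4.6}$(ii)$, for every $u\in S_a$ the dilation $u_{t_u}$ lies in $\mathcal{P}_{0,a}^-$, so the infimum defining $M_p(a)$ is taken over a nonempty set and the claim $M_p(a)>0$ is meaningful.

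Next, fix $u\in\mathcal{P}_{0,a}^-$. By definition $Q_p^0(u)=2\|\Delta u\|_2^2-2\gamma_p\|u\|_p^p=0$, hence $\|u\|_p^p=\gamma_p^{-1}\|\Delta u\|_2^2$, and therefore
\begin{equation*}
\Psi_0(u)=\frac{1}{2}\|\Delta u\|_2^2-\frac{1}{p}\|u\|_p^p=\frac{p\gamma_p-2}{2p\gamma_p}\|\Delta u\|_2^2 .
\end{equation*}
Since $p>\bar p=2+\tfrac{8}{N}$ gives $p\gamma_p=\tfrac{N(p-2)}{4}>2$ (and $p\gamma_{4^\ast}=4^\ast>2$ in the critical case), the prefactor is strictly positive, so it remains to bound $\|\Delta u\|_2$ away from $0$. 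For that I would substitute the Gagliardo--Nirenberg inequality (\ref{E6}) into the identity $\gamma_p^{-1}\|\Delta u\|_2^2=\|u\|_p^p$, which yields
\begin{equation*}
\gamma_p^{-1}\|\Delta u\|_2^2\le \mathcal{C}_{N,p}^p\,a^{p(1-\gamma_p)}\|\Delta u\|_2^{p\gamma_p};
\end{equation*}
as $u\neq 0$ this forces $\|\Delta u\|_2^2\ge\bigl(\gamma_p\mathcal{C}_{N,p}^p a^{p(1-\gamma_p)}\bigr)^{-2/(p\gamma_p-2)}$. Combining the two displays gives
\begin{equation*}
\Psi_0(u)\ge \frac{p\gamma_p-2}{2p\gamma_p}\left(\frac{1}{\gamma_p\,\mathcal{C}_{N,p}^p\,a^{p(1-\gamma_p)}}\right)^{\frac{2}{p\gamma_p-2}}>0,
\end{equation*}
a constant independent of $u$; taking the infimum over $\mathcal{P}_{0,a}^-$ proves $M_p(a)>0$. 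Equivalently, one may argue through the fibering map: since $(\phi_u^0)'(1)=0$ and $\phi_u^0$ has the unique critical point $t_u$ by Lemma~\ref{T4.6}, necessarily $t_u=1$, so $\Psi_0(u)=\phi_u^0(t_u)=\max_{t\ge 0}\Psi_0(u_t)$, which is bounded below by the same positive constant via Lemma~\ref{T4.6}$(iv)$.

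There is no genuine obstacle here; the only points requiring a touch of care are the non-emptiness of $\mathcal{P}_{0,a}^-$ (immediate from Lemma~\ref{T4.6}$(ii)$) and checking that $p\gamma_p>2$ throughout the admissible range of $(N,p)$ (immediate from $p>\bar p$, resp. $\gamma_{4^\ast}=1$). The case $p=4^\ast$, $N\ge 9$ is handled by the very same computation, using $\mathcal{C}_{N,4^\ast}=\mathcal{S}^{-1/2}$ and $\gamma_{4^\ast}=1$, which in fact gives the sharper bound $M_{4^\ast}(a)\ge\frac{2}{N}\mathcal{S}^{N/4}$ consistent with Theorem~\ref{T1.4}$(iv)$.
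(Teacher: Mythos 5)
Your proof is correct and follows essentially the same route as the paper: the paper deduces the corollary from Lemma \ref{T4.6}$(iv)$, whose positive lower bound $\frac{p\gamma_p-2}{2p\gamma_p}\bigl(\mathcal{C}_{N,p}^{p}\gamma_p a^{p(1-\gamma_p)}\bigr)^{-2/(p\gamma_p-2)}$ is exactly the constant you obtain by eliminating $\Vert u\Vert_p^p$ via $Q_p^0(u)=0$ and applying the Gagliardo--Nirenberg inequality (\ref{E6}). Your direct computation on $\mathcal{P}_{0,a}^-$ (together with the correct specialization to $\frac{2}{N}\mathcal{S}^{N/4}$ when $p=4^{\ast}$) is just an explicit unpacking of that fibering-map bound, as you yourself note in your closing remark.
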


\begin{lemma}
\label{T4.8} Let $\bar{p}<p<4^{\ast }$ for $N\geq 2$, or $p=4^{\ast }$ for $%
N\geq 9$. Then the following statements are true.\newline
$(i)$ The mapping $M_{p}(a):a\mapsto M_{p}(a)$ is continuous.\newline
$(ii)$
\begin{equation*}
M_{p}(\beta a)=\beta ^{2+4b}M_{p}(a)\leq M_{p}(a),\ \forall \beta {>1},
\end{equation*}%
where $b=\frac{2(p-2)}{8-N(p-2)}.$
\end{lemma}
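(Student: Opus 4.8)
The plan is to exploit the exact scaling invariance that the energy enjoys when $\mu=0$, which makes both assertions nearly immediate and, in particular, avoids any compactness argument. For $\beta>0$ and $u\in H^{2}(\mathbb{R}^{N})$ I would introduce the dilation
\begin{equation*}
(T_{\beta}u)(x):=\beta^{c}u(\beta^{b}x),\qquad c:=\frac{8}{8-N(p-2)},\quad b:=\frac{2(p-2)}{8-N(p-2)},
\end{equation*}
(the same $b,c$ as in the proof of Lemma \ref{T4.1}). A one-line computation gives $\|T_{\beta}u\|_{2}^{2}=\beta^{2c-bN}\|u\|_{2}^{2}$, $\|\Delta T_{\beta}u\|_{2}^{2}=\beta^{2c+b(4-N)}\|\Delta u\|_{2}^{2}$ and $\|T_{\beta}u\|_{p}^{p}=\beta^{pc-bN}\|u\|_{p}^{p}$, and the first thing to check is the bookkeeping identity $2c-bN=2$ together with $2c+b(4-N)=pc-bN=2+4b$. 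Granting this, $T_{\beta}$ sends $S_{a}$ bijectively onto $S_{\beta a}$ (with inverse $T_{\beta^{-1}}$) and satisfies $\Psi_{0}(T_{\beta}u)=\beta^{2+4b}\Psi_{0}(u)$ and $Q_{p}^{0}(T_{\beta}u)=\beta^{2+4b}Q_{p}^{0}(u)$; since by Lemma \ref{T4.6}$(ii)$ one has $\mathcal{P}_{0,a}^{-}=\mathcal{P}_{0,a}=\{u\in S_{a}:Q_{p}^{0}(u)=0\}$, it follows that $T_{\beta}$ maps $\mathcal{P}_{0,a}^{-}$ bijectively onto $\mathcal{P}_{0,\beta a}^{-}$.

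Taking the infimum over $\mathcal{P}_{0,\cdot}^{-}$ in the identity $\Psi_{0}(T_{\beta}u)=\beta^{2+4b}\Psi_{0}(u)$ then yields
\begin{equation*}
M_{p}(\beta a)=\beta^{2+4b}M_{p}(a)\qquad\text{for every }\beta>0.
\end{equation*}
For $(i)$ this is enough: given $a_{n}\to a$, I would write $a_{n}=\beta_{n}a$ with $\beta_{n}:=a_{n}/a\to 1$, so that $M_{p}(a_{n})=\beta_{n}^{2+4b}M_{p}(a)\to M_{p}(a)$, which is the continuity of $a\mapsto M_{p}(a)$. For $(ii)$ I would recall that, for $\bar{p}<p\leq 4^{\ast}$, one has $2+4b=2(1+2b)\leq 0$ with equality exactly when $p=4^{\ast}$, by (\ref{E40}); combining this with $M_{p}(a)>0$ from Corollary \ref{T4.7}, for $\beta>1$ we get $\beta^{2+4b}\leq 1$, hence $M_{p}(\beta a)=\beta^{2+4b}M_{p}(a)\leq M_{p}(a)$, as claimed.

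There is no genuine obstacle here: the proof is essentially bookkeeping, and the only place that requires a moment of care is checking that the exponents $b,c$ are chosen precisely so that $\|\Delta\cdot\|_{2}^{2}$, $\|\cdot\|_{p}^{p}$ and the Pohozaev functional $Q_{p}^{0}$ all scale with the \emph{same} power $\beta^{2+4b}$ --- which is exactly what forces the displayed values of $b$ and $c$. Once this homogeneity is in hand, the fiber-map description of $\mathcal{P}_{0,a}^{-}$ from Lemma \ref{T4.6} transfers verbatim under $T_{\beta}$, and no Brezis--Lieb / concentration argument of the kind used for $M_{\mu,p}$ in Section 4 is needed; the absence of the $\mu\|\nabla u\|_{2}^{2}$ term --- which was what spoiled the exact scaling in Lemma \ref{T4.1} --- is precisely what makes the relation $M_{p}(\beta a)=\beta^{2+4b}M_{p}(a)$ an equality for all $\beta>0$.
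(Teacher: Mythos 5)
Your proposal is correct, and the computations check out: with $c=\frac{8}{8-N(p-2)}$, $b=\frac{2(p-2)}{8-N(p-2)}$ one indeed has $2c-bN=2$ and $2c+4b-bN=pc-bN=2+4b$, so $T_{\beta}$ maps $S_{a}$ onto $S_{\beta a}$, rescales $\Psi_{0}$ and $Q_{p}^{0}$ by the common factor $\beta^{2+4b}$, and (using $\mathcal{P}_{0,a}=\mathcal{P}_{0,a}^{-}$ from Lemma \ref{T4.6}) carries $\mathcal{P}_{0,a}^{-}$ bijectively onto $\mathcal{P}_{0,\beta a}^{-}$; the equality $M_{p}(\beta a)=\beta^{2+4b}M_{p}(a)$, the inequality via $2+4b\leq 0$ (equation (\ref{E40})) together with $M_{p}(a)>0$ (Corollary \ref{T4.7}), and continuity all follow at once. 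The paper itself gives no written proof of Lemma \ref{T4.8}; it only says the result is obtained ``similar to'' Lemma \ref{T4.1}, whose argument for $\mu>0$ works with $\epsilon$-minimizers, approximate dilations and a perturbation estimate for the term $\mu\Vert\nabla u\Vert_{2}^{2}$, and therefore yields only inequalities and a more laborious continuity proof. Your route exploits the exact dilation invariance available precisely because $\mu=0$, which is clearly what the stated equality in part $(ii)$ encodes, so it is best viewed as the intended argument made explicit: it buys an identity valid for all $\beta>0$, from which continuity is a one-line consequence, with no Brezis--Lieb or compactness machinery needed.
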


\textbf{We are ready to prove Theorem \ref{T1.4}: }$(i)$\textbf{$\mathbf{\ }$%
}According to Proposition \ref{T4.5}, problem (\ref{E3}) has the second
solution $(\lambda _{\mu }^{-},u_{\mu }^{-})\in \mathbb{R}^{+}\times H^{2}(%
\mathbb{R}^{N}).$

$(ii)$ Following the idea of \cite[Theorem 3.7]{B2018}, the solution is
sign-changing.

$(iii)$\textbf{\ }For any $\mu _{n}>0$ satisfying $\mu _{n}\rightarrow 0^{+}$%
, we assume that $(\lambda _{n},u_{n})\in \left( \mathbb{R}^{+},\mathcal{P}%
_{a}^{-}\right) $ be the solution of problem (\ref{E3}) satisfying $\Psi
_{\mu _{n},p}(u_{n})=M_{\mu _{n},p}(a)>0$. Since
\begin{align*}
M_{p}(a)\geq M_{\mu _{n},p}(a)=& \frac{p\gamma _{p}-2}{2p\gamma _{p}}\Vert
\Delta u_{n}\Vert _{2}^{2}-\frac{p\gamma _{p}-1}{2p\gamma _{p}}\mu _{n}\Vert
\nabla u_{n}\Vert _{2}^{2} \\
\geq & \frac{p\gamma _{p}-2}{2p\gamma _{p}}\Vert \Delta u_{n}\Vert _{2}^{2}-%
\frac{p\gamma _{p}-1}{2p\gamma _{p}}\mu _{n}\Vert \nabla u_{n}\Vert _{2}^{2},
\end{align*}%
$\{u_{n}\}$ is bounded in $H^{2}(\mathbb{R}^{N})$. Since $(\lambda
_{n},u_{n})$ is the solution of problem (\ref{E3}), we have
\begin{equation*}
\Delta ^{2}u_{n}+\mu _{n}\Delta u_{n}+\lambda _{n}u_{n}=|u_{n}|^{p-2}u_{n}.
\end{equation*}%
Then there holds $\lambda _{n}a^{2}=-\Vert \Delta u_{n}\Vert _{2}^{2}+\mu
_{n}\Vert \nabla u_{n}\Vert _{2}^{2}+\Vert u_{n}\Vert _{p}^{p}$, which
implies that $\lambda _{n}$ is bounded. Hence, there exists $\lambda ^{-}\in
\mathbb{R}$ such that up to a subsequence, $\lim\limits_{n\rightarrow \infty
}\lambda _{n}=\lambda ^{-}$.

Now we claim that there exist a $d>0$ and a sequence $\{x_{n}\}\subset
\mathbb{R}^{N}$ such that
\begin{equation*}
\int_{B_{r}(x_{n})}|u_{n}|^{p}\geq d>0\ \text{for some}\ r>0.  \label{e3.33}
\end{equation*}%
Otherwise, we have $u_{n}\rightarrow 0$ in $L^{p}(\mathbb{R}^{N})$, which
implies that%
\begin{equation}
\begin{array}{l}
\Psi _{\mu _{n},p}(u_{n})=\frac{1}{2}\Vert \Delta u_{n}\Vert
_{2}^{2}+o_{n}(1), \\
Q_{p}(u_{n})=2\Vert \Delta u_{n}\Vert _{2}^{2}+o_{n}(1)=o_{n}(1).%
\end{array}
\label{E68}
\end{equation}%
This indicates that $\Psi _{\mu _{n},p}(u_{n})=o_{n}(1)$, which contradicts
with $M_{p}(a)>0$. Thus there exist $d>0$ and $\{x_{n}\}\subset \mathbb{R}%
^{N}$ such that (\ref{E68}) holds, and we can assume that%
\begin{equation*}
u_{n}(x-x_{n})\rightharpoonup u^{-}\neq 0.
\end{equation*}%
Furthermore, $(\lambda ^{-},u^{-})$ is the solution of problem (\ref{E67}%
). So we have $Q_{p}^{0}(u^{-})=0$. Let $v_{n}=u_{n}(x-x_{n})-u^{-}$. Next we
claim $v_{n}\rightarrow 0$ in $H^{2}(\mathbb{R}^{N})$. For convince, let $%
\Vert u^{-}\Vert _{2}^{2}=a_{1}^{2}\leq a^{2}$. Since $Q_{p}^{0}(u^{-})=0$,
using the Brezis-Lieb Lemma and Lemma \ref{T4.8} $(ii)$, we have
\begin{align}
M_{p}(a_{1})\leq \Psi _{0}(u^{-})=& \Psi _{\mu _{n},p}\left( u_{n}\right)
-\Psi _{0}\left( v_{n}\right) +o_{n}(1)  \notag \\
\leq & M_{p}(a)-\Psi _{0}\left( v_{n}\right) +o_{n}(1)  \notag \\
\leq & M_{p}(a_{1})-\Psi _{0}\left( v_{n}\right) +o_{n}(1).  \label{E69}
\end{align}%
Recall that $Q_{p}^{0}(u^{-})=0$, so we have $%
Q_{p}^{0}(v_{n})=Q_{p}(u_{n})-Q_{p}^{0}(u^{-})+o_{n}(1)=o_{n}(1)$. Then it follows from
Lemma \ref{T4.6} that $\Vert \Delta v_{n}\Vert _{2}=o_{n}(1)$ or $\Psi
_{0}(v_{n})>0$. If $\Psi _{0}(v_{n})>0$, then by (\ref{E69}), we have $%
M_{p}(a_{1})<M_{p}(a_{1}),$ which is impossible. Hence, there holds $\Vert
\Delta v_{n}\Vert _{2}=o_{n}(1).$ Using (\ref{E69}) again, we have $M_{\mu
,p}(a_{1})=\Psi _{0}(u^{-})=M_{p}(a),$ which implies that $M_{p}(a)$ is
attained at $u^{-}$ and $a_{1}=a.$

$(iv)$ When $p=4^{\ast }$, we know that problem (\ref{E67}) has a unique
solution $(0,u^{-})=(0,u_{\epsilon })$, where $u_{\epsilon }$ is given in (%
\ref{E8}). Hence, we have $M_{4^{\ast }}(a)=\frac{2}{N}\mathcal{S}^{\frac{N%
}{4}}$ and $\Vert \Delta u^{-}\Vert_{2}^{2}=\mathcal{S}^{\frac{N}{4}}$. The proof
is complete.

\section{Least action characterization}

\begin{lemma}
\label{T5.1} Let $\bar{p}<p<4^{\ast }$ and $N\geq 9$. Assume that condition
(\ref{E4}) holds. Then we have $\lim\limits_{p\rightarrow {(4^{\ast })^{-}}}\alpha
_{p}(a)=\alpha _{4^{\ast }}(a).$
\end{lemma}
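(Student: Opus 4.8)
The plan is to establish the two one-sided estimates $\limsup_{p\to(4^{\ast})^{-}}\alpha_{p}(a)\le\alpha_{4^{\ast}}(a)$ and $\liminf_{p\to(4^{\ast})^{-}}\alpha_{p}(a)\ge\alpha_{4^{\ast}}(a)$. Throughout I will write $\Psi_{q}:=\Psi_{\mu,q}$ and $\phi_{u}^{q}$ for the fiber map at exponent $q$, and I will use repeatedly the elementary fact that for a fixed $u\in H^{2}(\mathbb{R}^{N})$ the map $q\mapsto\|u\|_{q}^{q}$ is continuous (and even $C^{1}$) on $[2,4^{\ast}]$ by dominated convergence, since $|u|^{q}\le|u|^{2}+|u|^{4^{\ast}}\in L^{1}$; consequently $\phi_{u}^{q}\to\phi_{u}^{4^{\ast}}$ and $(\phi_{u}^{q})'\to(\phi_{u}^{4^{\ast}})'$ uniformly on compact subsets of $(0,\infty)$ as $q\to(4^{\ast})^{-}$. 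I will also use that, by condition (\ref{E4}) and the argument of Lemma \ref{T2.2}$(ii)$, any $u\in\mathcal{P}_{a}$ with $\Psi_{q}(u)<0$ obeys $\|\Delta u\|_{2}<\frac{(q\gamma_{q}-1)\mu a}{q\gamma_{q}-2}<\rho_{0}^{q}$, where I temporarily write $\rho_{0}^{q}$, $\mathcal{M}_{a}^{q}$ for the objects $\rho_{0}$, $\mathcal{M}_{a}$ attached to the exponent $q$.

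For the upper estimate I would take a minimizer $u$ of $\alpha_{4^{\ast}}(a)$, which exists by Proposition \ref{T3.4} since $N\ge9$, so that $u\in\mathcal{P}_{a}^{+}$ at exponent $4^{\ast}$, i.e. $(\phi_{u}^{4^{\ast}})'(1)=0$ and $(\phi_{u}^{4^{\ast}})''(1)>0$. By the $C^{1}_{\mathrm{loc}}$--convergence of the fiber maps together with the implicit function theorem (and the uniqueness in Lemma \ref{T2.7}), for $p<4^{\ast}$ close to $4^{\ast}$ the map $\phi_{u}^{p}$ has its local minimum point $s_{u}^{p}$ with $s_{u}^{p}\to1$, and $u_{s_{u}^{p}}\in\mathcal{P}_{a}^{+}$ at exponent $p$ by Lemma \ref{T2.7}$(ii)$. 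Hence, by Corollary \ref{L3.5}$(ii)$, $\alpha_{p}(a)=m_{\mu,p}(a)\le\Psi_{p}(u_{s_{u}^{p}})=\phi_{u}^{p}(s_{u}^{p})\to\phi_{u}^{4^{\ast}}(1)=\Psi_{4^{\ast}}(u)=\alpha_{4^{\ast}}(a)$, which gives the required $\limsup$. Projecting $u$ onto $\mathcal{P}_{a}^{+}$ at exponent $p$, rather than testing with $u$ itself, is what lets me avoid tracking the $p$--dependence of $\rho_{0}$.

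For the lower estimate I would pick $p_{n}\uparrow4^{\ast}$ with $\alpha_{p_{n}}(a)\to\liminf_{p\to(4^{\ast})^{-}}\alpha_{p}(a)$ and, by Proposition \ref{T3.4}, minimizers $u_{n}$ with $\Psi_{p_{n}}(u_{n})=\alpha_{p_{n}}(a)<0$ and $u_{n}\in\mathcal{P}_{a}$; then $\|\Delta u_{n}\|_{2}<\frac{(p_{n}\gamma_{p_{n}}-1)\mu a}{p_{n}\gamma_{p_{n}}-2}\to\frac{(4^{\ast}-1)\mu a}{4^{\ast}-2}$, so $\{u_{n}\}$ is bounded in $H^{2}(\mathbb{R}^{N})$ and, by continuity in $p$ and (\ref{E4}), $\|\Delta u_{n}\|_{2}<\rho_{0}^{4^{\ast}}$ for $n$ large. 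If $\sup_{y}\int_{B_{1}(y)}|u_{n}|^{4^{\ast}}\to0$, then Lemma \ref{T3.3} with $p=4^{\ast}$ gives $\|u_{n}\|_{4^{\ast}}\to0$, and since $\|u_{n}\|_{2}=a$ the interpolation $\|u_{n}\|_{p_{n}}^{p_{n}}\le a^{\lambda_{n}p_{n}}\|u_{n}\|_{4^{\ast}}^{(1-\lambda_{n})p_{n}}$, with $\tfrac{1}{p_{n}}=\tfrac{\lambda_{n}}{2}+\tfrac{1-\lambda_{n}}{4^{\ast}}$ and $\lambda_{n}\to0$, forces $\|u_{n}\|_{p_{n}}^{p_{n}}\to0$; then $\Psi_{p_{n}}(u_{n})\ge\frac12\|\Delta u_{n}\|_{2}^{2}-\frac{\mu a}{2}\|\Delta u_{n}\|_{2}+o(1)\ge-\frac{\mu^{2}a^{2}}{8}+o(1)$, contradicting $\alpha_{p_{n}}(a)<-\frac{\mu^{2}a^{2}}{8}$ (Lemma \ref{T3.2}). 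So vanishing is excluded, and, mimicking the proof of Proposition \ref{T3.4} (the bound $\|\Delta u_{n}\|_{2}<\rho_{0}^{4^{\ast}}<\mathcal{S}^{N/8}$ leaves no room for a Sobolev--critical bubble escaping to infinity, and the Brezis--Lieb splitting together with the strict inequality (\ref{E16}) closes the argument), I obtain $x_{n}$ and $u\neq0$ with $u_{n}(\cdot-x_{n})\to u$ strongly in $H^{2}(\mathbb{R}^{N})$; in particular $u\in S_{a}$ and $\|\Delta u\|_{2}\le\frac{(4^{\ast}-1)\mu a}{4^{\ast}-2}<\rho_{0}^{4^{\ast}}$, so $u\in\mathcal{M}_{a}^{4^{\ast}}$. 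It remains to pass to the limit: the two quadratic terms converge by strong convergence, while $|u_{n}(\cdot-x_{n})|^{p_{n}}\le|u_{n}(\cdot-x_{n})|^{2}+|u_{n}(\cdot-x_{n})|^{4^{\ast}}$ with the right-hand side converging in $L^{1}(\mathbb{R}^{N})$, so by the Vitali convergence theorem $\|u_{n}\|_{p_{n}}^{p_{n}}\to\|u\|_{4^{\ast}}^{4^{\ast}}$; hence $\Psi_{p_{n}}(u_{n})\to\Psi_{4^{\ast}}(u)$ and therefore $\liminf_{p\to(4^{\ast})^{-}}\alpha_{p}(a)=\Psi_{4^{\ast}}(u)\ge\alpha_{4^{\ast}}(a)$. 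Combining the two estimates yields the claim.

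The main obstacle is the lower estimate: one must recover strong $H^{2}$--compactness of the minimizers $u_{n}$ while the exponent $p_{n}$ itself is moving. The motion of the exponent is comparatively cheap — it is absorbed by Lemma \ref{T3.3} (to kill vanishing) and by the equi-integrability/Vitali argument (to pass to the limit in the $L^{p_{n}}$--term, and likewise in the Brezis--Lieb decomposition) — whereas the genuinely delicate point, that no Sobolev-critical bubble escapes, is inherited from Proposition \ref{T3.4} and ultimately rests on the pointwise bound $\rho_{0}<\mathcal{S}^{N/8}$ at the critical exponent.
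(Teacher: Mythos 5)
Your upper estimate is correct, and it even treats a point the paper glosses over: by projecting the $4^{\ast}$-minimizer onto $\mathcal{P}_{a}^{+}$ at exponent $p$ and invoking Corollary \ref{L3.5}, you sidestep the $p$-dependence of $\rho_{0}$, whereas the paper simply tests the $4^{\ast}$-minimizer in the $p$-problem using the continuity of $q\mapsto\Vert u\Vert_{q}^{q}$. The problem is your lower estimate, precisely at the step ``mimicking the proof of Proposition \ref{T3.4}''. That proof excludes dichotomy by combining the continuity of $b\mapsto\alpha_{p}(b)$ with the strict subadditivity (\ref{E16}) \emph{at one fixed exponent} $p$. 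In your setting the exponent moves: after the Brezis--Lieb splitting you only obtain $\alpha_{p_{n}}(a)\ge\alpha_{p_{n}}(\Vert w_{n}\Vert_{2})+\Psi_{\mu,p_{n}}(u)+o_{n}(1)$, and to close the dichotomy case you must control $\alpha_{p_{n}}(\Vert w_{n}\Vert_{2})$ from below by $\alpha_{4^{\ast}}\bigl(\sqrt{a^{2}-a_{1}^{2}}\bigr)$, or use (\ref{E16}) and the mass-continuity uniformly in $p_{n}$ --- that is, exactly the convergence $\alpha_{p_{n}}(b)\to\alpha_{4^{\ast}}(b)$ (at a different, $n$-dependent mass) that the lemma itself asserts, or a uniformity you have not established. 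Saying the motion of the exponent is ``absorbed by Brezis--Lieb and Vitali'' misses this: those tools handle the nonlinear term, not the variational inequalities for $\alpha_{p_{n}}$. A smaller slip in the same part: the per-$n$ inequality $\alpha_{p_{n}}(a)<-\mu^{2}a^{2}/8$ of Lemma \ref{T3.2} does not by itself contradict the asymptotic bound $\Psi_{\mu,p_{n}}(u_{n})\ge-\mu^{2}a^{2}/8+o_{n}(1)$; you need your already-proved limsup estimate together with Lemma \ref{T3.2} at $p=4^{\ast}$ to keep the limit strictly below $-\mu^{2}a^{2}/8$.

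The gap is avoidable, because the compactness machinery is not needed at all. Since $\Psi_{\mu,p_{n}}(u_{n})=\alpha_{p_{n}}(a)<0$ and $u_{n}\in\mathcal{P}_{a}$, you already know $\Vert\Delta u_{n}\Vert_{2}<\frac{(p_{n}\gamma_{p_{n}}-1)\mu a}{p_{n}\gamma_{p_{n}}-2}<\rho_{0}^{4^{\ast}}$ for $n$ large, hence $u_{n}\in\mathcal{M}_{a}^{4^{\ast}}$ and $\alpha_{4^{\ast}}(a)\le\Psi_{\mu,4^{\ast}}(u_{n})$. Then, with $\theta_{n}\in(0,1)$ defined by $\frac{1}{p_{n}}=\frac{\theta_{n}}{2}+\frac{1-\theta_{n}}{4^{\ast}}$ (so $\theta_{n}\to0$), the interpolation $\Vert u_{n}\Vert_{p_{n}}^{p_{n}}\le a^{\theta_{n}p_{n}}\Vert u_{n}\Vert_{4^{\ast}}^{(1-\theta_{n})p_{n}}$ together with the Sobolev bound on $\Vert u_{n}\Vert_{4^{\ast}}$ gives $\frac{1}{p_{n}}\Vert u_{n}\Vert_{p_{n}}^{p_{n}}-\frac{1}{4^{\ast}}\Vert u_{n}\Vert_{4^{\ast}}^{4^{\ast}}\le o_{n}(1)$ uniformly on this bounded set, whence $\alpha_{4^{\ast}}(a)\le\Psi_{\mu,p_{n}}(u_{n})+o_{n}(1)=\alpha_{p_{n}}(a)+o_{n}(1)$. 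This direct comparison of the two functionals on a fixed bounded set is essentially what the paper's terse ``similarly'' amounts to, and it bypasses the moving-exponent compactness entirely; if you insist on your route, you must supply the missing uniform (in $p$ near $4^{\ast}$) versions of (\ref{E16}) and of the continuity of $\alpha_{p}$ in the mass.
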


\begin{proof}
We first claim that $\limsup_{p_{n}\rightarrow (4^{\ast })^{-}}\alpha
_{p_{n}}(a)\leq \alpha _{4^{\ast }}(a)$. It follows from Theorem \ref{T1.2}
that there exists $v\in \mathcal{M}_{a}$ such that
\begin{equation}
\Psi _{\mu ,4^{\ast }}(v)=\alpha _{4^{\ast }}(a).  \label{E70}
\end{equation}%
By the Lebesgue dominated convergence theorem and the Young inequality, we
have
\begin{equation*}
\Vert u\Vert _{q}^{q}\leq \frac{4^{\ast }-q}{4^{\ast }-2}\Vert u\Vert
_{2}^{2}+\frac{q-2}{4^{\ast }-2}\Vert u\Vert _{4^{\ast }}^{4^{\ast }}.
\end{equation*}%
Note that $\Vert u\Vert _{q}^{q}$ is continuous on $q\in \left( 2,4^{\ast }%
\right] $. Then we have
\begin{equation}
\Psi _{\mu ,p_{n}}(u)=\Psi _{\mu ,4^{\ast }}(u)+o_{n}(1)\ \text{and }%
Q_{p_{n}}(u)=Q_{4^{\ast }}(u)+o_{n}(1)=o_{n}(1).  \label{E71}
\end{equation}%
From (\ref{E70}), (\ref{E71}) and Corollary \ref{L3.5}, we deduce that
\begin{equation*}
\alpha _{p_{n}}(a)=\inf\limits_{u\in \mathcal{M}_{a}}\Psi _{\mu
,p_{n}}(u)\leq \Psi _{\mu ,p_{n}}(v)=\alpha _{4^{\ast }}(a)+o_{n}(1),
\end{equation*}%
which implies that $\limsup_{p_{n}\rightarrow (4^{\ast })^{-}}\alpha
_{p_{n}}(a)\leq \alpha _{4^{\ast }}(a)$. Similarly, we also have $%
\liminf_{p_{n}\rightarrow (4^{\ast })^{-}}\alpha _{p_{n}}(a)\geq \alpha
_{4^{\ast }}(a)$. The proof is complete.
\end{proof}

\begin{lemma}
\label{T5.2} Let $\bar{p}<p<4^{\ast }$ and $N\geq 9$. Assume that condition
(\ref{E4}) holds. If $\lim\limits_{p\rightarrow (4^{\ast })^{-}}\lambda
_{p}=\lambda _{4^{\ast }}$, then $\lim\limits_{p\rightarrow (4^{\ast
})^{-}}\theta _{\lambda _{p}}=\theta _{\lambda _{4^{\ast }}}$, where $\theta
_{\lambda }:=\inf \{I_{\lambda }(v):v\in H^{2}(\mathbb{R}^{N})\backslash
\{0\}$ and $I_{\lambda }^{\prime }(v)=0\}$
\end{lemma}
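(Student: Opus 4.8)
The plan is to prove the two one-sided estimates $\limsup_{p\to(4^{\ast})^{-}}\theta_{\lambda_p}\le\theta_{\lambda_{4^{\ast}}}$ and $\liminf_{p\to(4^{\ast})^{-}}\theta_{\lambda_p}\ge\theta_{\lambda_{4^{\ast}}}$, which together give the claim. I use throughout that in this regime the relevant frequencies exceed $\mu^{2}/4$ (cf.\ Theorem~\ref{T1.2}$(ii)$), so that the form $B_{\lambda,p}(u):=\|\Delta u\|_{2}^{2}-\mu\|\nabla u\|_{2}^{2}+\lambda\|u\|_{2}^{2}$ is uniformly equivalent to $\|u\|_{H^{2}}^{2}$ on $H^{2}(\mathbb{R}^{N})$; consequently every least action level admits the Nehari description $\theta_{\lambda_p}=\inf_{u\neq0}\max_{t>0}I_{\lambda_p}(tu)$, the infimum being over $\mathcal{N}_{\lambda_p,p}=\{u\neq0:B_{\lambda_p,p}(u)=\|u\|_{p}^{p}\}$, on which $I_{\lambda_p}(u)=(\tfrac12-\tfrac1p)B_{\lambda_p,p}(u)$. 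The only continuity input needed is the one already used in Lemma~\ref{T5.1}: the bound $\|v\|_{q}^{q}\le\frac{4^{\ast}-q}{4^{\ast}-2}\|v\|_{2}^{2}+\frac{q-2}{4^{\ast}-2}\|v\|_{4^{\ast}}^{4^{\ast}}$ together with dominated convergence, which gives $\|v\|_{p}^{p}\to\|v\|_{4^{\ast}}^{4^{\ast}}$ and hence $I_{\lambda_p}(v)\to I_{\lambda_{4^{\ast}}}(v)$ for each fixed $v\in H^{2}(\mathbb{R}^{N})$ as $p\to(4^{\ast})^{-}$.

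For the upper estimate I would fix $\varepsilon>0$, pick a nontrivial critical point $v_{\varepsilon}$ of $I_{\lambda_{4^{\ast}}}$ with $I_{\lambda_{4^{\ast}}}(v_{\varepsilon})\le\theta_{\lambda_{4^{\ast}}}+\varepsilon$ (such exist, e.g.\ the normalized ground state is one), and project it onto $\mathcal{N}_{\lambda_p,p}$ by an amplitude scaling $t\mapsto tv_{\varepsilon}$. Since $v_{\varepsilon}\in\mathcal{N}_{\lambda_{4^{\ast}},4^{\ast}}$ one has $B_{\lambda_p,p}(v_{\varepsilon})\to\|v_{\varepsilon}\|_{4^{\ast}}^{4^{\ast}}>0$ and $\|v_{\varepsilon}\|_{p}^{p}\to\|v_{\varepsilon}\|_{4^{\ast}}^{4^{\ast}}$, so the unique $t_{p}>0$ with $t_{p}^{p-2}\|v_{\varepsilon}\|_{p}^{p}=B_{\lambda_p,p}(v_{\varepsilon})$ satisfies $t_{p}\to1$; therefore $t_{p}v_{\varepsilon}\in\mathcal{N}_{\lambda_p,p}$ and $\theta_{\lambda_p}\le I_{\lambda_p}(t_{p}v_{\varepsilon})=(\tfrac12-\tfrac1p)t_{p}^{2}B_{\lambda_p,p}(v_{\varepsilon})\to(\tfrac12-\tfrac1{4^{\ast}})\|v_{\varepsilon}\|_{4^{\ast}}^{4^{\ast}}=I_{\lambda_{4^{\ast}}}(v_{\varepsilon})\le\theta_{\lambda_{4^{\ast}}}+\varepsilon$. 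Letting $\varepsilon\to0$ closes this direction.

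For the lower estimate I would take, for $p$ near $4^{\ast}$, a least action solution $u_{p}$ of $(\ref{E2})$ at frequency $\lambda_p$ (existence is standard for the Sobolev-subcritical problem), so $u_{p}\in\mathcal{N}_{\lambda_p,p}$ and $I_{\lambda_p}(u_{p})=\theta_{\lambda_p}$. The upper estimate just proved makes $\theta_{\lambda_p}$, hence $B_{\lambda_p,p}(u_{p})$, bounded, so $\{u_{p}\}$ is bounded in $H^{2}(\mathbb{R}^{N})$. Next I would exclude vanishing via the biharmonic Lions lemma: if $\sup_{y\in\mathbb{R}^{N}}\int_{B_{r}(y)}|u_{p}|^{p}\,dx\to0$, then Lemma~\ref{T3.3} forces $\|u_{p}\|_{p}^{p}\to0$, whence $B_{\lambda_p,p}(u_{p})=\|u_{p}\|_{p}^{p}\to0$ and $\theta_{\lambda_p}\to0$; on the other hand $B_{\lambda_p,p}(u_{p})=\|u_{p}\|_{p}^{p}$ combined with $(\ref{E6})$ and $B_{\lambda_p,p}(u)\gtrsim\|u\|_{H^{2}}^{2}$ yields $\|u_{p}\|_{H^{2}}\ge c_{0}>0$ and hence $\theta_{\lambda_p}\ge c_{1}>0$, both uniformly in $p$, a contradiction. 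Thus after a translation $\widetilde{u}_{p}:=u_{p}(\cdot-x_{p})\rightharpoonup u\neq0$ in $H^{2}(\mathbb{R}^{N})$, still with $I_{\lambda_p}(\widetilde{u}_{p})=\theta_{\lambda_p}$. Passing to the limit in the weak form of the equation for $\widetilde{u}_{p}$ (using $\lambda_p\to\lambda_{4^{\ast}}$, weak $H^{2}$-convergence, local compactness, and $|\widetilde{u}_{p}|^{p-2}\widetilde{u}_{p}\to|u|^{4^{\ast}-2}u$ a.e.) shows that $u$ solves $(\ref{E2})$ with $p=4^{\ast}$ and $\lambda=\lambda_{4^{\ast}}$, so $I'_{\lambda_{4^{\ast}}}(u)=0$, $u\neq0$, and therefore $I_{\lambda_{4^{\ast}}}(u)\ge\theta_{\lambda_{4^{\ast}}}$. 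Finally, writing $\widetilde{u}_{p}=u+w_{p}$ with $w_{p}\rightharpoonup0$, a Brezis--Lieb splitting gives $B_{\lambda_p,p}(\widetilde{u}_{p})=B_{\lambda_{4^{\ast}},4^{\ast}}(u)+B_{\lambda_{4^{\ast}},4^{\ast}}(w_{p})+o(1)$ with $B_{\lambda_{4^{\ast}},4^{\ast}}(w_{p})\ge0$, so $\liminf_{p\to(4^{\ast})^{-}}\theta_{\lambda_p}=\liminf_{p\to(4^{\ast})^{-}}(\tfrac12-\tfrac1p)B_{\lambda_p,p}(\widetilde{u}_{p})\ge(\tfrac12-\tfrac1{4^{\ast}})B_{\lambda_{4^{\ast}},4^{\ast}}(u)=I_{\lambda_{4^{\ast}}}(u)\ge\theta_{\lambda_{4^{\ast}}}$, as needed.

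The hard part is the $\liminf$ direction, above all the exclusion of vanishing of the near-minimizing family at the Sobolev-critical exponent (this is precisely where Lemma~\ref{T3.3} is indispensable), together with the fact that $-\mu\|\nabla\cdot\|_{2}^{2}$ enters $B_{\lambda,p}$ with the "wrong" sign for lower semicontinuity; both are handled by reducing everything to the quantity $B_{\lambda,p}$ on the Nehari manifold and by the Brezis--Lieb decomposition, whose cross terms still vanish even though the exponent $p$ and the frequency $\lambda_p$ vary simultaneously. Note that strong convergence $\widetilde u_{p}\to u$ is not needed, so a possible Sobolev bubbling of $\widetilde u_{p}$ does not obstruct the argument. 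A minor technical point to verify is that the local passage to the limit in $|\widetilde{u}_{p}|^{p-2}\widetilde{u}_{p}$ is uniform enough as $p\to(4^{\ast})^{-}$, which follows from the $H^{2}$-bound and the compact embedding $H^{2}(B_{R})\hookrightarrow L^{s}(B_{R})$ for $s<4^{\ast}$.
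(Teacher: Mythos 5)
Your limsup half is fine and is essentially the paper's argument (fix a near-minimal nontrivial critical point of $I_{\lambda_{4^{\ast}}}$, use the $q$-continuity of $\|\cdot\|_{q}^{q}$ from Lemma \ref{T5.1} together with $\lambda_{p}\to\lambda_{4^{\ast}}$, and project onto the Nehari set; the paper treats the liminf half symmetrically, simply by exchanging the roles of $p$ and $4^{\ast}$ in the same pointwise continuity computation, rather than by the compactness machinery you set up). The genuine problem is in your liminf half, at the step where you claim a nontrivial weak limit $u\neq 0$. Your dichotomy is phrased with the \emph{varying} exponent $p\to(4^{\ast})^{-}$: in the non-vanishing case you only know $\int_{B_{r}(x_{p})}|u_{p}|^{p}\,dx\geq d>0$, and since local compactness of $H^{2}\hookrightarrow L^{q}_{loc}$ holds only for $q<4^{\ast}$ while $p\uparrow 4^{\ast}$, this does \emph{not} force the translated weak limit to be nonzero. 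A concentrating (Sobolev-bubble) family at scales $\epsilon_{p}\to 0$, with $\epsilon_{p}$ tied to $4^{\ast}-p$, carries local $L^{p}$-mass bounded below and all fixed subcritical norms tending to zero, yet converges weakly to $0$ under every choice of translations; in that scenario your chain degenerates to $\liminf\theta_{\lambda_{p}}\geq(\tfrac12-\tfrac1{4^{\ast}})B_{\lambda_{4^{\ast}}}(0)=0$, which proves nothing. Your closing remark that "a possible Sobolev bubbling of $\widetilde u_{p}$ does not obstruct the argument" is exactly where this is hidden: bubbling is harmless in the remainder $w_{p}$ once $u\neq0$ is secured, but it is precisely what can prevent $u\neq0$.

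To close this you need an extra idea that the proposal does not contain: a strict sub-threshold energy comparison ruling out full concentration, e.g. a uniform bound of the form $\theta_{\lambda_{p}}\leq\theta_{\lambda_{4^{\ast}}}<\frac{2}{N}\mathcal{S}^{N/4}$ obtained by testing the action functionals with truncated bubbles (the analogue, for $I_{\lambda}$, of the estimate proved for the constrained levels in Lemma \ref{T4.3}), combined with a lower bound showing that a concentrating family must cost at least $\frac{2}{N}\mathcal{S}^{N/4}+o(1)$; alternatively a profile decomposition. Two smaller points in the same direction: Lemma \ref{T3.3} is stated for a fixed exponent, so your vanishing-case contradiction needs its proof to be rerun with $p=p_{n}$ varying (this is a routine adaptation via Hölder on balls, but citing the lemma as stated is not enough, and the fixed-exponent Lions statement genuinely fails to control $\|u_{p}\|_{p}^{p}$ because the interpolation weight on the subcritical norm degenerates as $p\to 4^{\ast}$); and both halves of your argument use the identification of $\theta_{\lambda}$ with the Nehari level, i.e. attainment of the least action level for the subcritical problems, which you should at least state as an input (the paper uses it implicitly as well).
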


\begin{proof}
Without loss of generality, we assume that $\lim\limits_{p_{n}\rightarrow
(4^{\ast })^{-}}\lambda _{p_{n}}=\lambda _{4^{\ast }}$. We first claim that $%
\limsup_{p_{n}\rightarrow (4^{\ast })^{-}}\theta _{\lambda _{p_{n}}}\leq
\theta _{\lambda _{4^{\ast }}}$. According to the definition of $\theta
_{\lambda _{4^{\ast }}}$, for any $\epsilon >0$, there exists $u\in H^{2}(%
\mathbb{R}^{N})\backslash \{0\}$ such that $I_{\lambda _{4^{\ast }}}(u)\leq
\theta _{\lambda _{4^{\ast }}}+\epsilon $. By the continuity of $\Vert
u\Vert _{q}^{q}$ for $2<q\leq 4^{\ast }$ by Lemma \ref{T5.1} and $%
\lim\limits_{p_{n}\rightarrow (4^{\ast })^{-}}\lambda _{p_{n}}=\lambda
_{4^{\ast }}$, one has
\begin{equation*}
\Vert \Delta u\Vert _{2}^{2}-\mu \Vert \nabla u\Vert _{2}^{2}+\lambda
_{p_{n}}\Vert u\Vert _{2}^{2}-\Vert u\Vert _{p_{n}}^{p_{n}}=\Vert \Delta
u\Vert _{2}^{2}-\mu \Vert \nabla u\Vert _{2}^{2}+\lambda _{4^{\ast }}\Vert
u\Vert _{2}^{2}-\Vert u\Vert _{4^{\ast }}^{4^{\ast }}+o_{n}(1)=o_{n}(1),
\end{equation*}%
and
\begin{eqnarray*}
I_{\lambda _{p_{n}}}(u) &=&\frac{1}{2}\left( \Vert \Delta u\Vert
_{2}^{2}-\mu \Vert \nabla u\Vert _{2}^{2}+\lambda _{p_{n}}\Vert u\Vert
_{2}^{2}\right) -\frac{1}{p_{n}}\Vert u\Vert _{p_{n}}^{p_{n}} \\
&=&\frac{1}{2}\left( \Vert \Delta u\Vert _{2}^{2}-\mu \Vert \nabla u\Vert
_{2}^{2}+\lambda _{4^{\ast }}\Vert u\Vert _{2}^{2}\right) -\frac{1}{4^{\ast }%
}\Vert u\Vert _{4^{\ast }}^{4^{\ast }}+o_{n}(1) \\
&=&I_{\lambda_{4^{\ast}}}(u)+o_{n}(1),
\end{eqnarray*}%
which implies that $\theta _{\lambda _{p_{n}}}\leq I_{\lambda _{4^{\ast
}}}(u)+o_{n}(1)\leq \theta _{\lambda _{4^{\ast }}}+\epsilon +o_{n}(1)$.
Hence, $\limsup_{p_{n}\rightarrow (4^{\ast })^{-}}\theta _{\lambda
_{p_{n}}}\leq \theta _{\lambda _{4^{\ast }}}$ holds by the arbitrary of $%
\epsilon $. Similarly, we also have $\theta _{\lambda _{4^{\ast }}}\leq
\liminf_{p_{n}\rightarrow (4^{\ast })^{-}}\theta _{\lambda _{p_{n}}}$. The
proof is complete.
\end{proof}

\textbf{We now give the proof of Theorem \ref{T1.5}: }Let
\begin{equation*}
\lambda \in \{\lambda (u):u\ \text{is a local minimizer of problem}\ \text{(%
\ref{E3}) in the set }\mathcal{M}_{a}\}
\end{equation*}%
and $u_{\lambda }$ be a nontrivial critical point of $I_{\lambda }$ such
that $I_{\lambda }(u_{\lambda })=\theta _{\lambda }$. Clearly, $u_{\lambda
}\in \mathcal{P}_{\Vert u_{\lambda }\Vert _{2}}$ and
\begin{equation}
\theta _{\lambda }\leq \alpha _{p}(a)+\frac{\lambda }{2}a^{2}.  \label{E72}
\end{equation}%
Now we prove that $\Vert u_{\lambda }\Vert _{2}=a$. Suppose on the contrary,
we split it into the following two cases:

Case $(i):\Vert u_{\lambda }\Vert _{2}>a$. Then there exists a positive
constant $s_{\lambda }<1$ such that $\Vert s_{\lambda }u_{\lambda }\Vert
_{2}=a$. Next we claim that $\Vert s_{\lambda }\Delta u_{\lambda }\Vert
_{2}<\rho_{0}$. By (\ref{E72}) and $\lambda >0$, we have
\begin{align*}
\alpha _{p}(a)+\frac{\lambda }{2}a^{2}& \geq \Psi _{\mu ,p}(u_{\lambda })+%
\frac{\lambda }{2}\Vert u_{\lambda }\Vert _{2}^{2} \\
& >\Psi _{\mu ,p}(u_{\lambda })+\frac{\lambda }{2}a^{2},
\end{align*}%
which implies that $\Psi_{\mu,p}(u_{\lambda })<\alpha _{p}(a)<0$. In view of
$u_{\lambda }\in \mathcal{P}_{\Vert u_{\lambda }\Vert _{2}}$, we calculate
that
\begin{align*}
0>\Psi _{\mu ,p}(u_{\lambda })=& \frac{p\gamma _{p}-2}{2p\gamma _{p}}\Vert
\Delta u_{\lambda }\Vert _{2}^{2}-\frac{\mu (p\gamma _{p}-1)}{2p\gamma _{p}}%
\Vert \nabla u_{\lambda }\Vert _{2}^{2} \\
\geq & \frac{p\gamma _{p}-2}{2p\gamma _{p}}\Vert \Delta u_{\lambda }\Vert
_{2}^{2}-\frac{\mu a(p\gamma _{p}-1)}{2p\gamma _{p}}\Vert \Delta u_{\lambda
}\Vert _{2},
\end{align*}%
leading to $\Vert \Delta u_{\lambda }\Vert _{2}<\frac{(p\gamma _{p}-1)\mu }{%
2(p\gamma _{p}-2)}\Vert u_{\lambda }\Vert _{2}$. This shows that $\Vert
s_{\lambda }\Delta u_{\lambda }\Vert _{2}<\frac{(p\gamma _{p}-1)\mu a}{%
p\gamma _{p}-2}<\rho_{0}$. So we have $s_{\lambda }u_{\lambda }\in
\mathcal{M}_{a}$ and
\begin{equation}
\Psi _{\mu ,p}(s_{\lambda }u_{\lambda })\geq \alpha _{p}(a).  \label{E73}
\end{equation}%
Set
\begin{equation*}
\bar{g}(s):=I_{\lambda }(su_{\lambda })=\frac{s^{2}}{2}\Vert \Delta
u_{\lambda }\Vert _{2}^{2}+\frac{\lambda s^{2}}{2}\Vert u_{\lambda }\Vert
_{2}^{2}-\frac{\mu s^{2}}{2}\Vert \nabla u_{\lambda }\Vert _{2}^{2}-\frac{%
s^{p}}{p}\Vert u_{\lambda }\Vert _{p}^{p}\text{ for }s>0.
\end{equation*}%
Since $\bar{g}^{\prime }(1)=\langle I_{\lambda }^{\prime }(u_{\lambda
}),u_{\lambda }\rangle =0$, it is easy to obtain that $\bar{g}(s)$ is
increasing on $(0,1)$ and is decreasing on $(1,\infty )$. Moreover, $\bar{g}%
(1)=\max\limits_{s\in (0,\infty )}\bar{g}(s)$. Then according to (\ref{E72}%
) and (\ref{E73}), there holds
\begin{align*}
\alpha _{p}(a)+\frac{\lambda a^{2}}{2}\geq I_{\lambda }(u_{\lambda })>&
I_{\lambda }(s_{\lambda }u_{\lambda }) \\
=& \Psi _{\mu ,p}(s_{\lambda }u_{\lambda })+\frac{\lambda a^{2}}{2} \\
\geq & \alpha _{p}(a)+\frac{\lambda a^{2}}{2}.
\end{align*}%
This is impossible.

Case $(ii):\Vert u_{\lambda }\Vert _{2}<a$. Then there exists a positive
constant $s_{\lambda }>1$ such that $\Vert s_{\lambda }u_{\lambda }\Vert
_{2}=a$. If $\Psi _{\mu ,p}(u_{\lambda })\leq 0$, similar to the argument of
Case $(i)$, we easily obtain a contradiction. Otherwise, we may assume that $%
\Psi _{\mu ,p}(u_{\lambda })>0$. By Lemma \ref{T4.1} $(ii)$ and (\ref{E72}%
), we have
\begin{equation}
s_{\lambda }^{-(2+4b)}M_{\mu ,p}(a)+\frac{\lambda s_{\lambda }^{-2}}{2}%
a^{2}\leq M_{\mu ,p}\left( s_{\lambda }^{-1}a\right) +\frac{\lambda
s_{\lambda }^{-2}}{2}a^{2}<\alpha _{p}(a)+\frac{\lambda }{2}a^{2}.
\label{E74}
\end{equation}%
A direct calculation shows that
\begin{eqnarray*}
\theta _{\lambda } &=&\frac{1}{2}\left( \Vert \Delta u_{\lambda }\Vert
_{2}^{2}-\mu \Vert \nabla u_{\lambda }\Vert _{2}^{2}+\lambda \Vert
u_{\lambda }\Vert _{2}^{2}\right) -\frac{1}{p}\Vert u_{\lambda }\Vert
_{p}^{p} \\
&&-\frac{1}{2}\left( \Vert \Delta u_{\lambda }\Vert _{2}^{2}-\mu \Vert
\nabla u_{\lambda }\Vert _{2}^{2}+\lambda \Vert u_{\lambda }\Vert
_{2}^{2}\right) -\frac{1}{2}\Vert u_{\lambda }\Vert _{p}^{p} \\
&=&\frac{p-2}{2p}\Vert u_{\lambda }\Vert _{p}^{p}>0,
\end{eqnarray*}%
which implies that $\frac{\lambda }{2}a^{2}+\alpha _{p}(a)>\theta _{\lambda
}>0$ by (\ref{E72}). Let%
\begin{eqnarray}
\overline{L}(s) &=&s^{-(2+4b)}M_{\mu ,p}(a)+\frac{\lambda }{2}a^{2}\left(
s^{-2}-1\right) -\alpha _{p}(a)  \notag \\
&=&s^{-2}\left[ s^{-4b}M_{\mu ,p}(a)+\frac{\lambda }{2}a^{2}-s^{2}\left(
\frac{\lambda }{2}a^{2}+\alpha _{p}(a)\right) \right] \text{ for }s\geq 1.
\label{E75}
\end{eqnarray}%
Clearly, $\overline{L}(1)>0.$ Since $-4b>2$ for $\bar{p}<p<4^{\ast }$ and $%
M_{\mu ,p}(a)-\frac{\lambda }{2}a^{2}-\alpha _{p}(a)>0$, it follows from (%
\ref{E75}) that $\overline{L}(s)>0$ for any $s\geq 1$. This contradicts
with (\ref{E74}).

Now we consider the case of $p=4^{\ast }$. Note that $-4b=2$ for $p=4^{\ast
}.$ Then by (\ref{E75}), if
\begin{equation*}
M_{\mu ,4^{\ast }}(a)-\frac{\lambda }{2}a^{2}-\alpha _{4^{\ast }}(a)>0,
\end{equation*}%
there holds $\overline{L}(s)>0$ for any $s>1.$ This contradicts with (\ref%
{E74}). Next, we claim that
\begin{equation*}
M_{\mu ,4^{\ast }}(a)-\frac{\lambda }{2}a^{2}-\alpha _{4^{\ast }}(a)>0.
\end{equation*}%
In fact, it follows from Lemmas \ref{T2.7}, \ref{T4.1} and Theorem \ref{T1.4}
that
\begin{equation}
\Psi _{\mu ,p}(u_{\lambda })\geq M_{\mu ,4^{\ast }}\left( \Vert u_{\lambda
}\Vert _{2}\right) \geq M_{\mu ,4^{\ast }}\left( a\right) \ \text{and }%
\lim\limits_{\mu \rightarrow 0^{+}}M_{\mu ,4^{\ast }}\left( a\right)
=M_{4^{\ast}}(a)>0.  \label{E76}
\end{equation}%
Moreover, by Theorem \ref{T1.2} one has
\begin{equation}
\lim\limits_{\mu \rightarrow 0^{+}}\alpha _{4^{\ast }}(a)=\lim\limits_{\mu
\rightarrow 0^{+}}\lambda =0.  \label{E77}
\end{equation}%
Let $L(\mu )=M_{\mu ,4^{\ast }}(a)-\alpha _{4^{\ast }}(a)-\frac{\lambda }{2}%
a^{2}$. Then by (\ref{E76}) and (\ref{E77}), there exists $\mu
_{a}^{\ast }\in \left( 0,\left( C_{0}a^{-\frac{8}{N-4}}\right) ^{\frac{N-4}{8%
}}\right] $ such that for any $\mu \in (0,\mu _{a}^{\ast })$, there holds $%
L(\mu )>0$. Therefore, we have $\Vert u_{\lambda }\Vert _{2}=a$ and $%
u_{\lambda }\in \mathcal{M}_{a}$.

$(ii)$ From $(i)$, we conclude that%
\begin{equation}
\alpha _{p}(a)+\frac{\lambda _{p}}{2}a^{2}=\theta _{\lambda }  \label{E78}
\end{equation}%
for any $\bar{p}<p<\min \{4,4^{\ast }\},$ where $\lambda _{p}$ is the
related Lagrange multiplier.

Now we consider the case of $p=4^{\ast }$. Let
\begin{equation*}
\lambda _{r}\in \{\lambda (u):u\ \text{is a local minimizer of problem}\
\text{(\ref{E3})}\ \text{with }\bar{p}<p<4^{\ast }\text{ in the set}\
\mathcal{M}_{a}\}
\end{equation*}%
and $r\rightarrow (4^{\ast })^{-}$. Clearly, $\lambda _{r}$ is bounded. Then
there exists $\lambda _{4^{\ast }}\in \mathbb{R}$ such that up to a
subsequence, $\lim\limits_{r\rightarrow (4^{\ast })^{-}}\lambda _{r}=\lambda
_{4^{\ast }}$. By (\ref%
{E72}), (\ref{E78}) and Lemmas \ref{T5.1}--\ref{T5.2}, we have
\begin{equation}
\alpha _{4^{\ast }}(a)+\frac{\lambda _{4^{\ast }}}{2}a^{2}=\lim\limits_{r%
\rightarrow (4^{\ast })^{-}}\left( \alpha _{r}(a)+\frac{\lambda _{r}}{2}%
a^{2}\right) =\lim\limits_{r\rightarrow (4^{\ast })^{-}}\theta _{\lambda
_{r}}=\theta _{\lambda _{4^{\ast }}}.  \label{E79}
\end{equation}%
Next, we claim that
\begin{equation*}
\lambda _{4^{\ast }}\in \{\lambda (u):u\ \text{is a local minimizer of
problem}\ \text{(\ref{E3})}\ \text{with}\ p=4^{\ast }\ \text{in the set}\
\mathcal{M}_{a}\}.
\end{equation*}%
Define $u_{r}\in \mathcal{M}_{a}$ such that $\Psi _{\mu ,r}(u_{r})=\alpha
_{r}(a)$. Clearly, $\Vert u_{r}\Vert _{H^{2}}$ is bounded. Similar to the
proof of Proposition \ref{T3.4}, there exists $u\in H^{2}(\mathbb{R}%
^{N})\backslash \{0\}$ such that up to a subsequence, $u_{r}\rightarrow u$
in $(\mathbb{R}^{N})$ as $r\rightarrow (4^{\ast })^{-}$. Moreover, there
holds $\Psi _{\mu ,4^{\ast }}(u)=\alpha _{4^{\ast }}(a)$. A direct
calculation shows that
\begin{equation*}
0=\lim\limits_{r\rightarrow (4^{\ast })^{-}}\left( \Vert \Delta u_{r}\Vert
_{2}^{2}-\mu \Vert \nabla u_{r}\Vert _{2}^{2}+\lambda _{r}\Vert u_{r}\Vert
_{2}^{2}-\Vert u_{r}\Vert _{r}^{r}\right) =\Vert \Delta u\Vert _{2}^{2}-\mu
\Vert \nabla u\Vert _{2}^{2}+\lambda _{4^{\ast }}\Vert u\Vert _{2}^{2}-\Vert
u\Vert _{4^{\ast }}^{4^{\ast }},
\end{equation*}%
which implies that $u$ is a local minimizer of problem (\ref{E3}) with $%
p=4^{\ast }$ in the set $\mathcal{M}_{a}$. Combining with (\ref{E79}), we
complete the proof of Theorem \ref{T1.5} $(ii)$.

\section{The dynamical behavior}

First of all, we use Corollary \ref{T3.5} and the standard
concentration-compactness argument mentioned in \cite{C1982,C2003} to prove
Theorem \ref{T1.7}. We give some notations and some results in \cite{P2007}%
. Assume that $2\leq q,r\leq \infty $, $(q,r,N)\neq (2,\infty ,4)$, and
\begin{equation*}
\frac{4}{q}+\frac{N}{r}=\frac{N}{2}.
\end{equation*}%
We say that a pair $(q,r)$ is Schr\"{o}dinger admissible. Let $L^{q}=L^{q}(%
\mathbb{R}^{N})$ be the Lebesgue space, and let $L^{r}(I,L^{q}):I\subset
\mathbb{R}\mapsto L^{q}$ be the space of measurable functions whose norm is
finite, where
\begin{equation*}
\Vert u\Vert _{L^{q}(I,L^{r})}=\left( \int_{I}\Vert u(t)\Vert
_{L^{r}}^{q}dt\right) ^{1/q}.
\end{equation*}%
Set
\begin{equation*}
u(t):=e^{it(\Delta ^{2}+\mu \Delta )}\psi _{0}+i\int_{0}^{t}e^{i(t-s)(\Delta
^{2}+\mu \Delta )}|u(s)|^{p-2}u(s)ds.  \label{e4.1}
\end{equation*}%
Then $u\in C(I,H^{2})$ is a solution of the Cauchy problem (\ref{E1}).

\begin{proposition}
\label{T6.1} (\cite[Proposition 3.1]{P2007}) Let $u\in C(I,H^{-4})$ be a
solution of
\begin{equation*}
\begin{cases}
i\partial _{t}u+\Delta ^{2}u=h, \\
u(0)=\psi _{0}\in H^{2}(\mathbb{R}^{N}).%
\end{cases}%
\end{equation*}%
Then for any $S$-admissible pair $(q,r)$ and $(\overline{q},\overline{r})$,
there holds%
\begin{equation}
\Vert u\Vert _{L^{q}(I,L^{r})}\leq C\left( \Vert \psi _{0}\Vert _{2}+\Vert
h\Vert _{L^{\overline{q}}(I,L^{\overline{r}})}\right) ,  \label{E80}
\end{equation}%
whenever the right hand side in (\ref{E80}) is finite.
\end{proposition}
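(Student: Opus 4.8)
The plan is to obtain (\ref{E80}) from the standard dispersive bound for the free biharmonic propagator together with the abstract Strichartz machinery of Keel--Tao; this is precisely the route of \cite{P2007}, so I only indicate the main steps. Write $U(t):=e^{it\Delta^{2}}$, a unitary group on $L^{2}(\mathbb{R}^{N})$, so that $\|U(t)f\|_{2}=\|f\|_{2}$. The first ingredient is the dispersive estimate
\begin{equation*}
\|U(t)f\|_{L^{\infty }(\mathbb{R}^{N})}\leq C|t|^{-N/4}\|f\|_{L^{1}(\mathbb{R}^{N})},\qquad t\neq 0 .
\end{equation*}
This is proved by a Littlewood--Paley decomposition $f=\sum_{j}P_{j}f$ followed by the rescaling $\xi\mapsto 2^{j}\xi$: on a fixed dyadic annulus the phase $|\xi|^{4}$ has non-degenerate Hessian, so stationary-phase / Van der Corput estimates for $\int_{\mathbb{R}^{N}}e^{i(x\cdot\xi+t|\xi|^{4})}\widehat{\chi}(\xi)\,d\xi$ give the right power of $|t|$ on each dyadic piece, and one then sums. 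Interpolating this with the $L^{2}$ isometry via Riesz--Thorin yields, for every $2\leq r\leq\infty$,
\begin{equation*}
\|U(t)f\|_{L^{r}(\mathbb{R}^{N})}\leq C|t|^{-\frac{N}{2}\left(\frac12-\frac1r\right)}\|f\|_{L^{r'}(\mathbb{R}^{N})},\qquad t\neq 0 .
\end{equation*}

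The second step is to apply the abstract theorem of Keel--Tao to the pair consisting of $U(t)$ and the decay exponent $\sigma:=N/4$. The $S$-admissibility relation $\frac{4}{q}+\frac{N}{r}=\frac{N}{2}$ is exactly the Keel--Tao admissibility condition $\frac1q=\sigma\left(\frac12-\frac1r\right)$ for this value of $\sigma$, and the excluded triple $(q,r,N)=(2,\infty,4)$ is precisely the forbidden endpoint $\sigma=1$. The abstract result then provides simultaneously the homogeneous estimate $\|U(t)\psi_{0}\|_{L^{q}(I,L^{r})}\leq C\|\psi_{0}\|_{2}$ and the retarded inhomogeneous estimate
\begin{equation*}
\left\|\int_{0}^{t}U(t-s)h(s)\,ds\right\|_{L^{q}(I,L^{r})}\leq C\|h\|_{L^{\overline{q}'}(I,L^{\overline{r}'})}
\end{equation*}
for any two $S$-admissible pairs $(q,r)$ and $(\overline{q},\overline{r})$.

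Finally, since $u\in C(I,H^{-4})$ solves $i\partial_{t}u+\Delta^{2}u=h$ with $u(0)=\psi_{0}$ — the low regularity being exactly what is needed for the following identity to be meaningful in the sense of distributions — Duhamel's formula gives $u(t)=U(t)\psi_{0}+i\int_{0}^{t}U(t-s)h(s)\,ds$ on $I$ (the sign of the inhomogeneous term being immaterial for the norm estimate). Applying the homogeneous estimate to the first term and the retarded estimate to the second, and adding, yields (\ref{E80}) whenever the right-hand side is finite. I expect the genuinely delicate points to be: (i) the dispersive estimate itself, since the symbol $|\xi|^{4}$ degenerates at the origin so one cannot argue by pure scaling and frequency localization is essential; and (ii) the endpoint $q=2$ inside the Keel--Tao argument, which rests on a bilinear real-interpolation / Hardy--Littlewood--Sobolev estimate rather than on an elementary $TT^{\ast}$ computation. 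Both are carried out in \cite{P2007} and the references given there.
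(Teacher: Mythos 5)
The paper offers no proof of this proposition — it is quoted verbatim from \cite{P2007} — and your sketch follows exactly the route of that reference (the Ben-Artzi--Koch--Saut dispersive bound $\Vert e^{it\Delta^{2}}f\Vert_{\infty}\lesssim |t|^{-N/4}\Vert f\Vert_{1}$, obtained via frequency localization and stationary phase, then Keel--Tao with decay exponent $\sigma=N/4$, then Duhamel), so it is correct and consistent with the paper's source. One remark: your Keel--Tao step rightly produces the dual norm $\Vert h\Vert_{L^{\overline{q}'}(I,L^{\overline{r}'})}$, which is how the estimate reads in \cite{P2007}; the missing conjugate exponents in (\ref{E80}) as printed here are a typo (the non-dual version is ruled out by scaling), so your formulation, not the literal display, is the one your argument proves and the one actually used later in the paper.
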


\begin{proposition}
\label{T6.2} $(i)$(\cite[Propositions 4.1]{P2007}) Let $\bar{p}<p<4^{\ast }$%
. For any $\psi _{0}\in H^{2}(\mathbb{R}^{N})$, there exists $T>0$ and a
unqiue solution $u\in C([0,T);H^{2}(\mathbb{R}^{N}))$ to the Cauchy problem (%
\ref{E1}) with initial datum $\psi _{0}$ such that the mass and the energy
are conserved along time, that is, for any $t\in \lbrack 0,T)$,
\begin{equation*}
\Psi _{\mu ,p}(u)=\Psi _{\mu ,p}(\psi _{0})\ \text{and }\Vert u\Vert
_{2}=\Vert \psi _{0}\Vert _{2}.
\end{equation*}%
Moreover, either $T=\infty $ or $\lim_{t\rightarrow T^{-}}\Vert \Delta
u\Vert _{2}=\infty $.\newline
$(ii)$ (\cite[Propositions 5.1]{P2007}) Let $p=4^{\ast }$. Then there exists
$C_0>0$ such that for any initial data $\psi _{0}\in H^{2}(\mathbb{R}%
^{N})$, and any interval $I_{T}=[0,T]$ with $T\leq 1$, if
\begin{equation*}
\Vert e^{it(\Delta ^{2}+\mu \Delta )}\psi _{0}\Vert _{Z_{T}}<C_0,
\end{equation*}%
where $Z_{T}$ is as in $\Vert \cdot \Vert _{Z_{T}}:=\Vert \cdot \Vert _{L^{%
\frac{2(N+4)}{N-4}}\left( I_{T},L^{\frac{2N(N+4)}{N^{2}-2N+8}}\right) }$,
then there exists a unique solution $u\in C(I,H^{2}(\mathbb{R}^{N}))$ of the
Cauchy problem (\ref{E1}) with initial date $\psi _{0}$. This solution has
conserved mass and energy:
\begin{equation*}
\Psi _{\mu ,4^{\ast}}(u)=\Psi _{\mu ,4^{\ast}}(\psi _{0})\text{ and}\ \Vert u\Vert
_{2}=\Vert \psi _{0}\Vert _{2}\ \text{for all}\ t\in I_{T}.
\end{equation*}
\end{proposition}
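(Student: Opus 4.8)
The plan is to prove both parts by the standard Duhamel-plus-contraction scheme, running a fixed-point argument for the integral map
\[
\Phi(u)(t):=e^{it(\Delta^2+\mu\Delta)}\psi_0+i\int_0^t e^{i(t-s)(\Delta^2+\mu\Delta)}|u(s)|^{p-2}u(s)\,ds
\]
on a closed ball of a suitable Strichartz space, and then deducing the conservation laws and the blow-up alternative from the regularity of the resulting fixed point. Throughout I use that the estimates of Proposition \ref{T6.1} extend to the full mixed-dispersion propagator $e^{it(\Delta^2+\mu\Delta)}$, since the term $\mu\Delta$ is of lower order and does not affect the dispersive bounds in the admissible range; equivalently one absorbs $\mu\Delta u$ into the inhomogeneous datum $h$.

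For part $(i)$, fix $\bar p<p<4^{\ast}$ and work in $X_T:=C([0,T];H^2(\mathbb{R}^N))\cap L^q([0,T];W^{2,r})$ for an $S$-admissible pair $(q,r)$. Applying Proposition \ref{T6.1} at the level of two derivatives (the propagator commutes with $\Delta$) gives
\[
\|\Phi(u)\|_{X_T}\le C\|\psi_0\|_{H^2}+C\,\bigl\||u|^{p-2}u\bigr\|_{L^{\bar q'}([0,T];W^{2,\bar r'})}.
\]
The key point is the nonlinear bound: since $p<4^{\ast}$ the nonlinearity is energy-subcritical, so Hölder's inequality together with the Sobolev embedding $H^2(\mathbb{R}^N)\hookrightarrow L^{\sigma}(\mathbb{R}^N)$ for the relevant exponents produces a positive power of the time interval,
\[
\bigl\||u|^{p-2}u\bigr\|_{L^{\bar q'}([0,T];W^{2,\bar r'})}\le C\,T^{\theta}\,\|u\|_{X_T}^{p-1},\qquad \theta>0,
\]
and likewise $\|\Phi(u)-\Phi(v)\|_{X_T}\le C\,T^{\theta}\bigl(\|u\|_{X_T}^{p-2}+\|v\|_{X_T}^{p-2}\bigr)\|u-v\|_{X_T}$. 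Hence for $T=T(\|\psi_0\|_{H^2})>0$ small, $\Phi$ is a contraction on a ball of $X_T$, and its unique fixed point is the solution $u\in C([0,T);H^2)$. Mass and energy conservation follow by pairing the equation with $\bar u$ and with $\partial_t\bar u$, integrating in space and taking imaginary, respectively real, parts, the formal computation being justified by a standard regularization argument on a smooth approximating flow. Finally, the blow-up alternative is the usual continuation statement: the existence time depends only on $\|\psi_0\|_{H^2}$, which by mass conservation and the interpolation inequality $(\ref{E5})$ is controlled by $\|\Delta u\|_2$; thus if $\limsup_{t\to T^-}\|\Delta u(t)\|_2<\infty$ one may restart the fixed point with a uniform time-step and extend past $T$, whence either $T=\infty$ or $\|\Delta u\|_2\to\infty$.

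For part $(ii)$ the critical exponent $p=4^{\ast}$ removes the subcritical gain $T^{\theta}$, so the argument must close at the scaling-critical level using only the smallness of the free evolution. I run the fixed point in the critical norm $\|\cdot\|_{Z_T}$, whose exponents are chosen precisely so that the inhomogeneous estimate of Proposition \ref{T6.1} controls the Duhamel term by the $(4^{\ast}-1)$-th power of the $Z_T$-norm, namely $\bigl\|\int_0^t e^{i(t-s)(\Delta^2+\mu\Delta)}|u|^{4^{\ast}-2}u\,ds\bigr\|_{Z_T}\le C\|u\|_{Z_T}^{4^{\ast}-1}$, recalling $4^{\ast}-1=\frac{N+4}{N-4}$. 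Writing $u_{\mathrm{free}}:=e^{it(\Delta^2+\mu\Delta)}\psi_0$, one has $\|\Phi(u)\|_{Z_T}\le\|u_{\mathrm{free}}\|_{Z_T}+C\|u\|_{Z_T}^{4^{\ast}-1}$ together with a matching Lipschitz bound. Choosing $C_0$ so small that $C(2C_0)^{4^{\ast}-2}\le\tfrac12$, the hypothesis $\|u_{\mathrm{free}}\|_{Z_T}<C_0$ renders the ball of radius $2C_0$ invariant under $\Phi$ and $\Phi$ a contraction there; the fixed point is the unique solution $u\in C(I_T;H^2)$, and mass and energy conservation are obtained exactly as in part $(i)$.

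The main obstacle is precisely the critical case $(ii)$: because no positive power of $T$ is available to absorb the nonlinearity, the whole argument hinges on the compatibility of the admissible exponents defining $Z_T$ with the critical nonlinearity, so that the Strichartz–Hölder chain closes exactly at the critical scaling and the smallness threshold $C_0$ can be extracted. In the subcritical case $(i)$ the delicate point is instead only bookkeeping—verifying the admissible pair and the Sobolev embeddings that yield the gain $T^{\theta}$—after which the contraction and the continuation argument are routine.
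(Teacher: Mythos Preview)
The paper does not actually prove Proposition~\ref{T6.2}: both parts are quoted verbatim as results from Pausader \cite{P2007} (Propositions~4.1 and~5.1 there), and the paper moves on immediately to the orbital stability arguments without supplying any argument of its own. So there is no ``paper's proof'' to compare your proposal against.

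That said, your sketch is the standard route and is, in outline, what \cite{P2007} does: Duhamel formulation, Strichartz estimates for the linear propagator, a contraction in an admissible mixed Lebesgue space with a subcritical gain $T^{\theta}$ in case~(i), and a scaling-critical fixed point driven by smallness of the free evolution in case~(ii). The only points that would need genuine care if you were to write this out in full are: (a) the fractional-power nonlinearity $|u|^{p-2}u$ is not $C^{1}$ in $u$ when $p<3$, so the Lipschitz/difference estimate and the chain rule for $W^{2,r}$ regularity require the usual workaround; and (b) your remark that ``$\mu\Delta$ is of lower order and does not affect the dispersive bounds'' is morally right but not automatic---one either proves Strichartz directly for $e^{it(\Delta^{2}+\mu\Delta)}$ or absorbs $\mu\Delta u$ as an inhomogeneous term, and the latter costs a derivative that must be recovered. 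These are technical rather than conceptual issues, and your overall plan is correct.
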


Using the Strichartz type estimate, we show that the set $\mathbf{B}_{\rho_{0}}$ is orbitally stable.

\begin{theorem}
\label{T6.3} Let $v\in \mathbf{B}_{\rho_{0}}$. Then for each $\epsilon >0$%
, there is $\kappa >0$ such that for any $\psi \in H^{2}(\mathbb{R}^{N})$
satisfying $\Vert \psi -v\Vert _{H^{2}}<\kappa $, there holds
\begin{equation}
\sup\limits_{t\in \lbrack 0,T_{\psi })}dist_{H^{2}}(\zeta _{\psi }(t),%
\mathbf{B}_{\rho_{0}})<\epsilon .  \label{E81}
\end{equation}%
Moreover, for every $t\in \lbrack 0,T_{\psi })$, the function $\zeta _{\psi
}(t)$ satisfies
\begin{equation}
\zeta _{\psi }(t)=\xi _{a}(t)+\nu (t),  \label{E82}
\end{equation}%
where $\xi _{a}(t)\in \mathbf{B}_{\rho_{0}}\ $and $\Vert \nu (t)\Vert
_{H^{2}}<\epsilon $.
\end{theorem}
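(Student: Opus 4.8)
The plan is to argue by contradiction in the standard orbital-stability fashion, using Theorem~\ref{T1.7} (equivalently Corollary~\ref{T3.5}), which gives the compactness of $\mathbf{B}_{\rho_{0}}$ up to translation, together with the conservation of mass and energy from Proposition~\ref{T6.2}. First I would observe that if \eqref{E81} fails, then there exist $\epsilon_{0}>0$, a sequence $\{\psi_{n}\}\subset H^{2}(\mathbb{R}^{N})$ with $\Vert\psi_{n}-v\Vert_{H^{2}}\to 0$, and times $t_{n}\in[0,T_{\psi_{n}})$ such that $\operatorname{dist}_{H^{2}}(\zeta_{\psi_{n}}(t_{n}),\mathbf{B}_{\rho_{0}})\geq\epsilon_{0}$. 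Since $v\in\mathbf{B}_{\rho_{0}}\subset\mathcal{M}_{a}$, continuity of $\Psi_{\mu,p}$ and of the $L^{2}$-norm gives $\Vert\psi_{n}\Vert_{2}\to a$ and $\Psi_{\mu,p}(\psi_{n})\to\alpha_{p}(a)$. The next step is to rescale: set $\widetilde{\psi}_{n}:=(a/\Vert\psi_{n}\Vert_{2})\psi_{n}\in S_{a}$, so that $\widetilde{\psi}_{n}\to v$ in $H^{2}$ as well and, by conservation laws applied to $\zeta_{\widetilde\psi_n}$, it suffices to work on $S_{a}$. Thus I may assume $\psi_{n}\in S_{a}$ from the start.

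Next I would use conservation of mass and energy along the flow: $\Vert\zeta_{\psi_{n}}(t_{n})\Vert_{2}=\Vert\psi_{n}\Vert_{2}=a$ and $\Psi_{\mu,p}(\zeta_{\psi_{n}}(t_{n}))=\Psi_{\mu,p}(\psi_{n})\to\alpha_{p}(a)$. The essential point is that $\zeta_{\psi_{n}}(t_{n})$ stays inside $\mathcal{M}_{a}$ for $n$ large: since $\psi_{n}\in\mathcal{M}_{a}$ (as $\Vert\Delta\psi_{n}\Vert_{2}\to\Vert\Delta v\Vert_{2}<\rho_{0}$) and $\Psi_{\mu,p}(\psi_{n})<0<\inf_{\partial\mathcal{M}_{a}}\Psi_{\mu,p}$ by \eqref{E13}, a continuity-in-time argument using $\Psi_{\mu,p}(\zeta_{\psi_{n}}(t))=\Psi_{\mu,p}(\psi_{n})$ shows $\Vert\Delta\zeta_{\psi_{n}}(t)\Vert_{2}$ cannot cross $\rho_{0}$; in particular $T_{\psi_{n}}=\infty$ by the blow-up alternative in Proposition~\ref{T6.2}, and $w_{n}:=\zeta_{\psi_{n}}(t_{n})\in\mathcal{M}_{a}$. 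Hence $\{w_{n}\}\subset\mathcal{M}_{a}$ is a minimizing sequence for $\alpha_{p}(a)$. By Proposition~\ref{T3.4} (or Corollary~\ref{T3.5}), up to a subsequence and translations $w_{n}(\cdot-y_{n})\to w$ in $H^{2}(\mathbb{R}^{N})$ with $w\in\mathbf{B}_{\rho_{0}}$. Since $\mathbf{B}_{\rho_{0}}$ is invariant under translations, $\operatorname{dist}_{H^{2}}(w_{n},\mathbf{B}_{\rho_{0}})\leq\Vert w_{n}(\cdot-y_{n})-w\Vert_{H^{2}}\to 0$, contradicting $\operatorname{dist}_{H^{2}}(w_{n},\mathbf{B}_{\rho_{0}})\geq\epsilon_{0}$. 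This proves \eqref{E81}, and in particular $T_{\psi}=\infty$ once $\kappa$ is small enough.

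For the decomposition \eqref{E82}, I would proceed pointwise in $t$: for each $t\in[0,\infty)$, \eqref{E81} gives an element of $\mathbf{B}_{\rho_{0}}$ within $\epsilon$ of $\zeta_{\psi}(t)$; by compactness of $\mathbf{B}_{\rho_{0}}$ up to translation one can actually choose $\xi_{a}(t)\in\mathbf{B}_{\rho_{0}}$ realizing (up to an arbitrarily small error, or exactly if one argues that the distance is attained since $\mathbf{B}_{\rho_{0}}$ is closed) $\operatorname{dist}_{H^{2}}(\zeta_{\psi}(t),\mathbf{B}_{\rho_{0}})=\Vert\zeta_{\psi}(t)-\xi_{a}(t)\Vert_{H^{2}}$, and then set $\nu(t):=\zeta_{\psi}(t)-\xi_{a}(t)$, so that $\Vert\nu(t)\Vert_{H^{2}}<\epsilon$.

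The main obstacle I anticipate is the trapping step: proving rigorously that the flow $\zeta_{\psi_{n}}(t)$ remains in $\mathcal{M}_{a}$ for all $t$ (hence is global), rather than escaping through $\partial\mathcal{M}_{a}$. This hinges on the strict sign gap $\alpha_{p}(a)<0<\inf_{\partial\mathcal{M}_{a}}\Psi_{\mu,p}$ from Lemma~\ref{T2.2}, combined with the energy conservation of Proposition~\ref{T6.2} and the continuity of $t\mapsto\Vert\Delta\zeta_{\psi_{n}}(t)\Vert_{2}$; once this is in hand, the rest is a routine concentration-compactness contradiction. A secondary technical point is the $p=4^{\ast}$ case, where Proposition~\ref{T6.2}$(ii)$ only yields local well-posedness under a smallness condition on the Strichartz norm of the linear evolution; here I would note that for $\psi$ sufficiently close to $v$ the linear flow $e^{it(\Delta^{2}+\mu\Delta)}\psi$ has small $Z_{T}$-norm on a fixed short interval, giving local existence, and then bootstrap with the trapping estimate to extend globally.
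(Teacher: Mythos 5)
Your argument is essentially the paper's own proof: contradiction via a sequence $\psi_n\to v$, trapping of $\zeta_{\psi_n}(t)$ in $\mathcal{M}_a$ through energy/mass conservation and the sign gap $\alpha_p(a)<0<\inf_{\partial\mathcal{M}_a}\Psi_{\mu,p}$ of Lemma \ref{T2.2}, and then the compactness of minimizing sequences (Proposition \ref{T3.4}/Corollary \ref{T3.5}) applied to $\zeta_{\psi_n}(t_n)$ to contradict $\mathrm{dist}_{H^2}\geq\epsilon_0$. The only wobble is the normalization step: rescaling the initial datum to $S_a$ does not transfer the contradiction hypothesis, since $\zeta_{\widetilde\psi_n}\neq\zeta_{\psi_n}$ and $t_n$ may be large; the standard fix is to rescale $\zeta_{\psi_n}(t_n)$ itself (or to note the compactness argument works for $\Vert u_n\Vert_2\to a$), a point the paper also glosses over, and note that global existence is not needed here, so your appeal to a blow-up alternative in the case $p=4^{\ast}$ (where Proposition \ref{T6.2}(ii) provides none) can simply be dropped.
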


\begin{proof}
Similar to the argument of \cite[Theorem 4.1]{J2022}, we assume that there
exist sequences $\{\kappa _{n}\}\subset \mathbb{R}^{+}$ satisfying $\kappa
_{n}=o_{n}(1)$ and $\{\psi _{n}\}\subset H^{2}(\mathbb{R}^{N})$ satisfying
\begin{equation*}
\Vert \psi _{n}-v\Vert _{H^{2}}<\kappa _{n}\ \text{and}\ \sup\limits_{t\in
\lbrack 0,T_{\psi _{n}})}dist_{H^{2}}(\zeta _{\psi _{n}}(t),\mathbf{B}%
_{\rho_{0}})>\epsilon _{0},
\end{equation*}%
for some $\epsilon_0 >0$. Since the function $\Psi _{\mu ,p}$ is continuous,
it follows from $\Vert \psi _{n}\Vert _{2}^{2}\rightarrow a$ that $\Psi
_{\mu ,p}(\psi _{n})\rightarrow \alpha _{p}(a)$ as $n\rightarrow \infty $.
Then according to the conservation law, we have $\zeta _{\psi _{n}}\in
\mathcal{M}_{a}$ for any $t\in \lbrack 0,T_{\psi _{n}})$ and $n\in \mathbb{N}
$ large enough. Otherwise, there is $t_{0}>0$ such that $\Vert \Delta \zeta
_{\psi _{n}}(t_{0})\Vert _{2}=\rho_{0}$ and thus $\Psi _{\mu ,p}(\zeta
_{\psi _{n}})>0>\alpha _{p}(a)$ by Lemma \ref{T2.2} $(i)$. This is
impossible.

Next, we denote $\zeta _{n}:=\zeta _{\psi _{n}}(t_{n}),$ where $t_{n}>0$ is
the first time at which $dist_{H^{2}}(\zeta _{\psi _{n}}(t_{n}),\mathbf{B}%
_{\rho_{0}})=\epsilon _{0}$ is satisfied. Since the sequence $\{\zeta
_{n}\}\subset \mathcal{M}_{a}$ satisfies $\Vert \zeta _{n}\Vert
_{2}^{2}=a^{2}$ and $\Psi _{\mu ,p}(\zeta _{n})\rightarrow \alpha _{p}(a)$,
we know that $\zeta _{n}$ converges to an element of $\mathbf{B}_{\rho_{0}}$ after translation by using Corollary \ref{T3.5}, which contradicts
with $dist_{H^{2}}(\zeta _{n},\mathbf{B}_{\rho_{0}})=\epsilon _{0}>0$.
The proof is complete.
\end{proof}

Now, we prove $T_{\psi}=\infty$, which leads directly to the conclusion of
Theorem \ref{T1.7}.
\begin{proposition}
\label{T6.4} Assume that $(q,r)$ is an admissible pair
and $X\subset H^{2}(\mathbb{R}^{N})\setminus \{0\}$ is
a compact set after translation. Then for each $\sigma >0$, there exist $%
\epsilon =\epsilon (\sigma )>0$ and $T=T(\sigma )>0$ such that
\begin{equation*}
\sup\limits_{\{\psi \in H^{2}:dist_{H^{2}}(\psi ,X)<\epsilon \}}\Vert
e^{it(\Delta ^{2}+\mu \Delta )}\psi \Vert _{L^{q}(I_T,L^{r})}<\sigma .
\end{equation*}
\end{proposition}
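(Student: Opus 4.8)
The plan is a three–step argument — first reduce to the case of a genuinely compact set, then establish the interval–shrinking limit for a single initial datum, and finally patch the estimate together by a finite covering. I would begin by removing the translations. The propagator $e^{it(\Delta^{2}+\mu\Delta)}$ is a Fourier multiplier, hence commutes with every spatial translation $\tau_{y}:u\mapsto u(\cdot-y)$, and the space--time norm is translation invariant, $\|\tau_{y}f\|_{L^{q}(I_{T},L^{r})}=\|f\|_{L^{q}(I_{T},L^{r})}$. Since $X$ is compact up to translation, there is a genuinely compact $K\subset H^{2}(\mathbb{R}^{N})$ such that every element of $X$ is a translate of some element of $K$. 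If $\mathrm{dist}_{H^{2}}(\psi,X)<\epsilon$, choosing $\phi=\tau_{y}w\in X$ with $w\in K$ and $\|\psi-\phi\|_{H^{2}}<\epsilon$ gives $\|\tau_{-y}\psi-w\|_{H^{2}}<\epsilon$, while $\|e^{it(\Delta^{2}+\mu\Delta)}\psi\|_{L^{q}(I_{T},L^{r})}=\|e^{it(\Delta^{2}+\mu\Delta)}(\tau_{-y}\psi)\|_{L^{q}(I_{T},L^{r})}$. Hence it is enough to bound the supremum over the $\epsilon$--neighborhood in $H^{2}$ of the compact set $K$.

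Next I would treat a single function. By the Strichartz estimates (Proposition~\ref{T6.1} with $h=0$, which remain valid for the mixed propagator $e^{it(\Delta^{2}+\mu\Delta)}$ by \cite{P2007}), for any admissible pair $(q,r)$ with $q<\infty$ one has
\begin{equation*}
\|e^{it(\Delta^{2}+\mu\Delta)}\psi\|_{L^{q}(\mathbb{R},L^{r})}\le C\|\psi\|_{2}\le C\|\psi\|_{H^{2}},\qquad\forall\,\psi\in H^{2}(\mathbb{R}^{N}).
\end{equation*}
In particular $t\mapsto\|e^{it(\Delta^{2}+\mu\Delta)}v\|_{L^{r}}^{q}$ is locally integrable, so by absolute continuity of the Lebesgue integral $\|e^{it(\Delta^{2}+\mu\Delta)}v\|_{L^{q}([0,T],L^{r})}\to0$ as $T\to0^{+}$, for each fixed $v$.

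Finally I would run the covering argument. Given $\sigma>0$, I would fix $\delta>0$ with $4C\delta<\sigma$, cover $K$ by finitely many balls $B_{H^{2}}(v_{1},\delta),\dots,B_{H^{2}}(v_{m},\delta)$, use the previous step to pick $T_{j}>0$ with $\|e^{it(\Delta^{2}+\mu\Delta)}v_{j}\|_{L^{q}([0,T_{j}],L^{r})}<\sigma/2$, and set $T:=\min_{j}T_{j}$ and $\epsilon:=\delta$. Then for $\mathrm{dist}_{H^{2}}(\psi,X)<\epsilon$, the reduced datum $\tau_{-y}\psi$ lies within $2\delta$ in $H^{2}$ of some $v_{j}$, so
\begin{equation*}
\|e^{it(\Delta^{2}+\mu\Delta)}\psi\|_{L^{q}(I_{T},L^{r})}\le\|e^{it(\Delta^{2}+\mu\Delta)}(\tau_{-y}\psi-v_{j})\|_{L^{q}(I_{T},L^{r})}+\|e^{it(\Delta^{2}+\mu\Delta)}v_{j}\|_{L^{q}(I_{T},L^{r})}\le 2C\delta+\frac{\sigma}{2}<\sigma ,
\end{equation*}
which is the assertion.

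The genuinely delicate point is the first step: in the intended application $X=\mathbf{B}_{\rho_{0}}$ is compact only modulo the non-compact translation group, so one must exploit the translation invariance of the linear flow to pass to an honestly compact set \emph{before} any covering compactness argument can be run. A minor caveat is that the statement needs $q<\infty$ — for $q=\infty$ the homogeneous Strichartz norm does not shrink as $T\to0$ — which is harmless since the admissible pairs arising in the applications of Proposition~\ref{T6.4} (in particular the $Z_{T}$-pair in Proposition~\ref{T6.2}) all have $q<\infty$.
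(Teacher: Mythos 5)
Your proposal is correct and follows essentially the same route as the paper: translation invariance of the propagator $e^{it(\Delta^{2}+\mu\Delta)}$ and of the Strichartz norm, the homogeneous Strichartz estimate to absorb the $\epsilon$-perturbation, and the vanishing of $\Vert e^{it(\Delta^{2}+\mu\Delta)}v\Vert_{L^{q}(I_{T},L^{r})}$ as $T\to 0^{+}$ for fixed data, combined with compactness up to translation. The only differences are organizational (you use a finite covering of a compact set of representatives, while the paper argues the uniform smallness over $X$ by contradiction via sequential compactness after translation), and your explicit caveat that $q<\infty$ is needed is a valid observation that the paper leaves implicit.
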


\begin{proof}
We first show that for every $\sigma >0$, there is a $T>0$ such that
\begin{equation}
\sup\limits_{\psi \in X}\Vert e^{it(\Delta ^{2}+\mu \Delta )}\psi \Vert
_{L^{q}(I_T,L^{r})}<\frac{\sigma }{2}.  \label{E83}
\end{equation}%
Otherwise, for $\sigma_{0}>0$, there exist sequences $\{\psi _{n}\}\subset X$
and $\{T_{n}\}\subset \mathbb{R}^{+}$ such that $T_{n}\rightarrow 0$ and
\begin{equation}
\Vert e^{it(\Delta ^{2}+\mu \Delta )}\psi _{n}\Vert _{L^{q}(I_{T_n},L^{r})}\geq \sigma_{0}.  \label{E84}
\end{equation}%
In view of compactness of the set $X$, up to a subsequence, there exist a
sequence $\{x_{n}\}\subset \mathbb{R}^{N}$ and $\psi \in H^{2}(\mathbb{R}%
^{N})$ such that
\begin{equation*}
\overline{\psi }_{n}(\cdot ):=\psi _{n}(\cdot -x_{n})\rightarrow \psi (\cdot
)\ \text{in}\ H^{2}(\mathbb{R}^{N}).
\end{equation*}%
Then using (\ref{E84}) and the translation invariance of Strichartz
estimate, we have
\begin{equation}
\Vert e^{it(\Delta ^{2}+\mu \Delta )}\overline{\psi }_{n}\Vert _{L^{q}(I_{\overline{T}},L^{r})}
\rightarrow \Vert e^{it(\Delta ^{2}+\mu \Delta )}\psi \Vert
_{L^{q}(I_{\overline{T}},L^{r})}\ \text{for any}\ \overline{T}>0  \label{E85}
\end{equation}%
and
\begin{equation}
\Vert e^{it(\Delta ^{2}+\mu \Delta )}\overline{\psi }_{n}\Vert
_{L^{q}(I_{T_n},L^{r})}=\Vert e^{it(\Delta ^{2}+\mu \Delta )}\psi _{n}\Vert
_{L^{q}(I_{T_n},L^{r})}\geq \sigma _{0}.  \label{E86}
\end{equation}%
By (\ref{E80}), one has $e^{it(\Delta ^{2}+\mu \Delta )}\psi \in L^{q}(I_{1},L^{r})$.
Then it follows from the Dominated Convergence Theorem that $\Vert
e^{it(\Delta ^{2}+\mu \Delta )}\psi \Vert _{L^{q}(I_{\overline{T}},L^{r})}\rightarrow 0$
as $\overline{T}\rightarrow 0$. Thus, we can choose a $\overline{T}>0$ such
that
\begin{equation*}
\Vert e^{it(\Delta ^{2}+\mu \Delta )}\psi \Vert _{L^{q}(I_{\overline{T}},L^{r})}<\sigma
_{0}.
\end{equation*}%
Hence, by (\ref{E84})-(\ref{E86}), we reach a contradiction.

Now, fix $T>0$ such that (\ref{E83}) holds. For any $h\in H^{2}(\mathbb{R}%
^{N})$, it follows from (\ref{E80}) that $\Vert e^{it(\Delta ^{2}+\mu
\Delta )}h\Vert _{L^{q}(I_T,L^{r})}\leq C\Vert h\Vert_{L^{\overline{q}}(I_T,L^{\overline{r}})}$. Let $\Vert h\Vert_{L^{\overline{q}}(I_T,L^{\overline{r}})}<\frac{\sigma }{2C}:=\epsilon $, we have
\begin{equation}
\Vert e^{it(\Delta ^{2}+\mu \Delta )}h\Vert _{L^{q}(I_T,L^{r})}<\frac{\sigma }{2}.
\label{E87}
\end{equation}%
Then by (\ref{E83}) and (\ref{E87}), we conclude that for any $\psi \in X$
and $h\in H^{2}(\mathbb{R}^{N})$ satisfying $\Vert h\Vert_{L^{q}(I_T,L^{r})}<\epsilon $%
, there holds
\begin{equation*}
\Vert e^{it(\Delta ^{2}+\mu \Delta )}\psi \Vert _{L^{q}(I_T,L^{r})}+\Vert e^{it(\Delta
^{2}+\mu \Delta )}h\Vert _{L^{q}(I_T,L^{r})}<\sigma .
\end{equation*}%
The proof is complete.
\end{proof}

\begin{theorem}
\label{T6.5} Assume that $X\subset H^{2}(\mathbb{R}^{N})\setminus \{0\}$ is
a compact set up to translation. Then there exist $\epsilon _{0},T_{0}>0$
such that the Cauchy problem (\ref{E1}) has a unique solution $\psi $ on
the time interval $[0,T_{0}]$, and $\psi $ satisfies $dist_{H^{2}}(\psi
,X)<\epsilon _{0}$.
\end{theorem}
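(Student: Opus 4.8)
The plan is to obtain a lower bound on the local existence time that is uniform over all initial data lying in a fixed $H^{2}$-neighbourhood of $X$; the Sobolev-subcritical and the Sobolev-critical regimes will be handled separately, since the local theory of Proposition \ref{T6.2} has a different character in each. Throughout we may replace $X$ by its image modulo translations, because the free propagator $e^{it(\Delta ^{2}+\mu \Delta )}$ commutes with spatial translations and all the relevant Strichartz norms are translation invariant.

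First I would dispose of the case $\bar{p}<p<4^{\ast }$. Since $X$ is compact in $H^{2}(\mathbb{R}^{N})$ it is bounded, say $R:=\sup _{v\in X}\Vert v\Vert _{H^{2}}<\infty $, so every $\psi _{0}$ with $dist_{H^{2}}(\psi _{0},X)<1$ satisfies $\Vert \psi _{0}\Vert _{H^{2}}<R+1$. Inspecting the contraction-mapping scheme behind Proposition \ref{T6.2} $(i)$ (carried out in \cite{P2007} via the Strichartz estimate of Proposition \ref{T6.1}), one sees that the existence time it produces depends only on $\Vert \psi _{0}\Vert _{H^{2}}$ (and on $N,\mu ,p$); it is therefore bounded below by some $T_{0}=T_{0}(R+1)>0$ as $\psi _{0}$ ranges over the unit neighbourhood of $X$. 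Taking $\epsilon _{0}\leq 1$ then yields the claim in this case, uniqueness being part of Proposition \ref{T6.2} $(i)$.

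Next I would treat the Sobolev-critical case $p=4^{\ast }$, where Proposition \ref{T6.2} $(ii)$ produces a unique solution on $[0,T_{0}]$ precisely when the free evolution of the datum is small in the Strichartz norm, $\Vert e^{it(\Delta ^{2}+\mu \Delta )}\psi _{0}\Vert _{Z_{T_{0}}}<C_{0}$, where $\Vert \cdot \Vert _{Z_{T}}=\Vert \cdot \Vert _{L^{\frac{2(N+4)}{N-4}}(I_{T},L^{\frac{2N(N+4)}{N^{2}-2N+8}})}$ and $C_{0}$ is the threshold constant of that proposition. The key step is to apply Proposition \ref{T6.4} with the admissible pair $(q,r)$ defining $\Vert \cdot \Vert _{Z_{T}}$ and with $\sigma =C_{0}$: it furnishes $\epsilon _{0}=\epsilon _{0}(C_{0})>0$ and $T_{0}=T_{0}(C_{0})\in (0,1]$ such that
\begin{equation*}
\sup _{\{\psi _{0}\in H^{2}(\mathbb{R}^{N})\,:\,dist_{H^{2}}(\psi _{0},X)<\epsilon _{0}\}}\Vert e^{it(\Delta ^{2}+\mu \Delta )}\psi _{0}\Vert _{Z_{T_{0}}}<C_{0}.
\end{equation*}
Proposition \ref{T6.2} $(ii)$ then applies to every such $\psi _{0}$ and delivers a unique solution $\psi \in C(I_{T_{0}},H^{2}(\mathbb{R}^{N}))$ of problem (\ref{E1}) on the common interval $[0,T_{0}]$. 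One finally shrinks $\epsilon _{0}$ and $T_{0}$ so that a single pair serves both regimes (for any fixed $p$ only one regime is relevant), which completes the argument.

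The only non-formal ingredient is the observation, read off from \cite{P2007}, that in the subcritical regime the existence time of Proposition \ref{T6.2} $(i)$ is controlled by $\Vert \psi _{0}\Vert _{H^{2}}$ alone; once this is granted, the statement is a direct combination of Propositions \ref{T6.1}, \ref{T6.2} and \ref{T6.4}. I expect the point requiring most care to be bookkeeping: making sure the admissibility hypothesis of Proposition \ref{T6.4} is applied to exactly the pair occurring in $\Vert \cdot \Vert _{Z_{T}}$, and using the translation invariance already built into Proposition \ref{T6.4}, which is precisely why the hypothesis that $X$ is compact up to translation (rather than merely compact) is what is needed.
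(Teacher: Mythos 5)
Your argument is correct and follows exactly the route the paper intends: its proof of this theorem is the one-line remark that the conclusion follows from Propositions \ref{T6.2} and \ref{T6.4}, and your write-up simply fills in the details (uniform $H^{2}$-bound on a neighbourhood of $X$ giving a uniform existence time in the subcritical case, and Proposition \ref{T6.4} with $\sigma=C_{0}$ feeding the smallness criterion of Proposition \ref{T6.2}~$(ii)$ in the critical case). No discrepancy with the paper's approach.
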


\begin{proof}
By Propositions \ref{T6.2} and \ref{T6.4}, we easily obtain the conclusion.
\end{proof}

\begin{theorem}
\label{T6.6} Let $X=\mathbf{B}_{\rho_{0}}\subset H^{2}(\mathbb{R}^{N})$.
Then there exists $\epsilon _{0}>0$ such that the Cauchy problem (\ref{E1}%
), where $dist_{H^{2}}(\psi ,\mathbf{B}_{\rho_{0}})<\epsilon _{0}$, has a
unique solution on the time interval $[0,\infty )$.
\end{theorem}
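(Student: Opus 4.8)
The plan is to combine the orbital stability from Theorem~\ref{T6.3} (equivalently Theorem~\ref{T1.7}) with the continuation criteria of Propositions~\ref{T6.2} and~\ref{T6.4}, splitting into the cases $\overline p<p<4^{\ast}$ and $p=4^{\ast}$. First I would record the a priori control supplied by orbital stability: by Theorem~\ref{T6.3}, for every $\varepsilon>0$ there is $\kappa(\varepsilon)>0$, which we may take $\le\varepsilon$, so that $\mathrm{dist}_{H^{2}}(\psi,\mathbf{B}_{\rho_{0}})<\kappa(\varepsilon)$ forces the associated maximal solution $\zeta_{\psi}$ to satisfy $\mathrm{dist}_{H^{2}}(\zeta_{\psi}(t),\mathbf{B}_{\rho_{0}})<\varepsilon$ for all $t\in[0,T_{\psi})$. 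Moreover, the proof of Theorem~\ref{T6.3}, via conservation of energy and Lemma~\ref{T2.2}$(i)$, gives $\zeta_{\psi}(t)\in\mathcal{M}_{a}$ throughout, so $\|\Delta\zeta_{\psi}(t)\|_{2}<\rho_{0}$ uniformly in $t$ once $\varepsilon_{0}$ is small enough.

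In the subcritical range $\overline p<p<4^{\ast}$ this already closes the argument: since $\|\Delta\zeta_{\psi}(t)\|_{2}$ stays bounded by $\rho_{0}$, the blow-up alternative of Proposition~\ref{T6.2}$(i)$ (``either $T=\infty$ or $\|\Delta u\|_{2}\to\infty$ at $T^{-}$'') rules out finite-time breakdown, hence $T_{\psi}=\infty$; one may take $\varepsilon_{0}=\kappa(1)$, say, together with the constraint that keeps $\psi\in\mathcal{M}_{a}$.

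In the Sobolev critical case $p=4^{\ast}$ no such alternative is available, so I would argue by repeated short-time continuation with a \emph{uniform} step. Apply Theorem~\ref{T6.5} to the set $X=\mathbf{B}_{\rho_{0}}$, which is compact up to translation by Corollary~\ref{T3.5}: this yields $\varepsilon_{1}>0$ and $T_{0}>0$ such that every datum $\phi$ with $\mathrm{dist}_{H^{2}}(\phi,\mathbf{B}_{\rho_{0}})<\varepsilon_{1}$ produces a unique solution on $[0,T_{0}]$, with $T_{0}$ independent of $\phi$. Now set $\varepsilon_{0}:=\kappa(\varepsilon_{1})\le\varepsilon_{1}$. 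If $\mathrm{dist}_{H^{2}}(\psi,\mathbf{B}_{\rho_{0}})<\varepsilon_{0}$, then the solution exists on $[0,T_{0}]$; orbital stability gives $\mathrm{dist}_{H^{2}}(\zeta_{\psi}(T_{0}),\mathbf{B}_{\rho_{0}})<\varepsilon_{1}$, so Theorem~\ref{T6.5}, restarted at time $T_{0}$, extends the solution to $[0,2T_{0}]$; inductively $\mathrm{dist}_{H^{2}}(\zeta_{\psi}(nT_{0}),\mathbf{B}_{\rho_{0}})<\varepsilon_{1}$ for every $n$, so the solution extends to $[0,(n+1)T_{0}]$. Hence $T_{\psi}\ge nT_{0}$ for all $n\in\mathbb{N}$, i.e.\ $T_{\psi}=\infty$, and uniqueness on $[0,\infty)$ follows by patching the local uniqueness statements.

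The step I expect to be the main obstacle is establishing that the existence time $T_{0}$ in Theorem~\ref{T6.5} is genuinely uniform over an $\varepsilon_{1}$-neighbourhood of $\mathbf{B}_{\rho_{0}}$, and that this uniform smallness persists at every restart time $nT_{0}$. This is precisely where the structural input enters: it relies on the compactness of $\mathbf{B}_{\rho_{0}}$ modulo translations (Corollary~\ref{T3.5}) combined with Proposition~\ref{T6.4}, which converts that compactness into a uniform bound on $\|e^{it(\Delta^{2}+\mu\Delta)}\phi\|_{Z_{T_{0}}}$ for $\phi$ near $\mathbf{B}_{\rho_{0}}$, and then Proposition~\ref{T6.2}$(ii)$; the translation invariance of both the free evolution and the $Z_{T}$-norm is what lets us ignore the translation freedom in $\mathbf{B}_{\rho_{0}}$.
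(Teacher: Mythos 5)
Your proposal is correct, and its core mechanism is the same as the paper's: combine the orbital-stability trapping of Theorem \ref{T6.3} (the solution stays in an $\epsilon$-neighbourhood of $\mathbf{B}_{\rho_{0}}$, and in particular $\Vert\Delta\zeta_{\psi}(t)\Vert_{2}<\rho_{0}$ by the energy/mass conservation argument) with the uniform local existence time of Theorem \ref{T6.5}, which rests on the compactness of $\mathbf{B}_{\rho_{0}}$ up to translation (Corollary \ref{T3.5}), Proposition \ref{T6.4} and Proposition \ref{T6.2}, and then restart at times $nT_{0}$ to conclude $T_{\psi}=\infty$. The only genuine difference is organizational: the paper does not split into cases and runs the restart argument for all admissible $p$, whereas you dispatch the subcritical range $\overline{p}<p<4^{\ast}$ directly from the blow-up alternative in Proposition \ref{T6.2}$(i)$, reserving the iteration only for $p=4^{\ast}$ where no such alternative is stated. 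Your shortcut is legitimate (the a priori bound $\Vert\Delta\zeta_{\psi}(t)\Vert_{2}<\rho_{0}$ plus mass conservation and the interpolation inequality (\ref{E5}) bound the $H^{2}$ norm, so the alternative forces $T_{\psi}=\infty$) and is arguably more elementary there; the paper's single argument buys uniformity of exposition across the two regimes. One small point of care, present equally in the paper: for perturbed data the mass is only close to $a^{2}$, so strictly one traps the solution below the sphere $\Vert\Delta u\Vert_{2}=\rho_{0}$ using that the energy on that sphere is positive for every mass below $a_{0}$ (via (\ref{E12})), rather than literally asserting $\zeta_{\psi}(t)\in\mathcal{M}_{a}$; this does not affect the conclusion.
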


\begin{proof}
By Corollary \ref{T3.5}, we know that $\mathbf{B}_{\rho_{0}}$ is compact,
up to translation. Thus, it follows from Theorems \ref{T6.3} and \ref{T6.5}
that we can choose $\kappa =\kappa _{0}>0$ and $\epsilon =\epsilon _{0}>0$ such
that (\ref{E81}) and (\ref{E82}) hold. According to Theorem \ref{T6.3}, we
derive the solution $\zeta _{\psi }(t)$ where $dist_{H^{2}}(\psi ,\mathbf{B}_{\rho_{0}})<\kappa _{0}$ satisfies $dist_{H^{2}}(\zeta _{\psi }(t),\mathbf{B}_{\rho_{0}})<\epsilon _{0}$, until the maximum time of existence $%
T_{\psi }\geq T_{0}$. At any time in $(0,T_{\psi })$, we can continue to use
Theorem \ref{T6.5} to ensure a consistent additional existence time $T_{0}>0$%
. Thererfore, we have $T_{\psi }=\infty $. The proof is complete.
\end{proof}

\textbf{Proof of Theorem \ref{T1.7}:} According to Theorems \ref{T6.3} and %
\ref{T6.6}, we directly conclude that $\mathbf{B}_{\rho_{0}}$ is
orbitally stable.

\textbf{Proof of Theorem \ref{T1.8}}: Let $u_{\mu }^{-}\in S_{a}$ be a
solution to problem (\ref{E3}) at the level $M_{\mu ,p}(a)>0$ by Theorem %
\ref{T1.4}. It follows from Lemma \ref{T2.7} that there exists $s^{\ast }>0$
such that $\Psi _{\mu ,p}((u_{\mu }^{-})_{s})<M_{\mu ,p}(a)$ and $Q_{p}((u_{\mu
}^{-})_{s})>0$ for any $s\in (s^{\ast },1)$, which implies that $\Gamma_{a}\neq \emptyset $. Hence, we conclude that
\begin{equation*}
\Vert (u_{\mu }^{-})_{s}-u_{\mu }^{-}\Vert _{H^{2}}\rightarrow 0,\text{ }%
\Psi _{\mu ,p}((u_{\mu }^{-})_{s})\rightarrow M_{\mu ,p}(a)\text{ and }%
Q_{p}((u_{\mu }^{-})_{s})\rightarrow 0^{+}\text{ as }s\rightarrow 1^{-}.
\end{equation*}%
Set $\psi _{0}:=(u_{\mu }^{-})_{s}\in \Gamma _{a}$. By Proposition \ref{T6.2}%
, there exists a unique $\psi (t)\in C\left( [0,T];H^{2}(\mathbb{R}%
^{N})\right) $ to the Cauchy problem (\ref{E1}) with the initial datum $%
\psi _{0}$. We now prove that $\psi $ exists globally in time, i.e., $%
T=\infty $. Suppose on the contrary, we assume $T<\infty $. By Proposition %
\ref{T6.2}, one has
\begin{equation}
\lim\limits_{t\rightarrow T^{-}}\int_{\mathbb{R}^{N}}|\Delta \psi
(t)|^{2}dx=\infty .  \label{E88}
\end{equation}%
Through conservation of energy and mass, we obtain that $\Psi _{\mu ,p}(\psi
)=\Psi _{\mu ,p}(\psi _{0})$ and $\Vert \psi _{0}\Vert _{2}=\Vert \psi \Vert
_{2}=a$ for any $t\in \lbrack 0,T]$. Then we calculate that
\begin{equation*}
\frac{1}{2p\gamma _{p}}Q_{p}(\psi )=\Psi _{\mu ,p}(\psi _{0})-\frac{p\gamma
_{p}-2}{2p\gamma _{p}}\Vert \Delta \psi \Vert _{2}^{2}+\frac{\mu (p\gamma
_{p}-1)}{2p\gamma _{p}}\Vert \nabla \psi \Vert _{2}^{2},
\end{equation*}%
which implies that
\begin{equation}
\frac{1}{2p\gamma _{p}}Q_{p}(\psi )\leq \Psi _{\mu ,p}(\psi _{0})-\frac{p\gamma
_{p}-2}{2p\gamma _{p}}\Vert \Delta \psi \Vert _{2}^{2}+\frac{\mu a(p\gamma
_{p}-1)}{2p\gamma _{p}}\Vert \Delta \psi \Vert _{2}.  \label{E89}
\end{equation}%
Using (\ref{E88}) and (\ref{E89}), one has $\lim\limits_{t\rightarrow
T^{-}}Q_{p}(\psi )=-\infty $. Since $Q_{p}(\psi _{0})>0$ and $\lim\limits_{t%
\rightarrow T^{-}}Q_{p}(\psi )=-\infty $, by continuity, there exists $t_{0}\in
(0,T)$ such that $Q_{p}(\psi (t_{0}))=0$, $\lim\limits_{t\rightarrow
t_{0}^{-}}Q_{p}(\psi (t))=0^{+}$ and $\lim\limits_{t\rightarrow t_{0}^{+}}Q_{p}(\psi
(t))=0^{-}$. According to Lemma \ref{T2.7}, we have $\psi (t_{0})\in
\mathcal{P}_{a}^{+}$. From the definition of $M_{\mu ,p}(a)$ it follows that
$\Psi _{\mu ,p}(\psi (t_{0}))\geq M_{\mu ,p}(a),$ which contradicts with $%
\Psi _{\mu ,p}(\psi (t_{0}))=\Psi _{\mu ,p}(\psi _{0})<M_{\mu ,p}(a)$. The
proof is complete.

\section*{Acknowledgments}

J. Sun was supported by the National Natural Science Foundation of China
(Grant No. 12371174) and Shandong Provincial Natural Science Foundation
(Grant No. ZR2020JQ01).


\begin{thebibliography}{99}
\bibitem{BS2017} T. Bartsch, N. Soave, A natural constraint approach to
normalized solutions of nonlinear Schr\"{o}dinger equations and systems, J.
Funct. Anal. 272 (2017) 4998--5037.

\bibitem{B2000} M. Ben-Artzi, H. Koch, J. C. Saut, Dispersive estimates for
fourth order Schr\"{o}dinger equations. C. R. Acad. Sci. Paris S\'{e}r. I
Math. 330 (2000) 87--92.

\bibitem{B2018} D. Bonheure, J.-B. Casteras, E. Moreira Dos Santos, et al.
Orbitally stable standing waves of a mixed dispersion nonlinear Schr\"{o}%
dinger equation. SIAM J. Math. Anal. 50 (2018) 5027--5071.

\bibitem{B2019} D. Bonheure, J.-B. Casteras, T. Gou, et al. Normalized
solutions to the mixed dispersion nonlinear Schr\"{o}dinger equation in the
mass critical and supercritical regime. Trans. Amer. Math. Soc. 372 (2019)
2167--2212.

\bibitem{B2017} T. Boulenger, E. Lenzmann, Blowup for biharmonic NLS. Ann.
Sci. \'{E}c. Norm. Sup\'{e}r. 50 (2017) 503--544.

\bibitem{Bo2019} N. Boussaid, A.-J. Fernandez, L. Jeanjean, Some remarks on
a minimization problem associated to a fourth order nonlinear Schr\"{o}%
dinger equation. ArXiv: 1910.12177, 2019.

\bibitem{Bu2022} L. Bugiera, E. Lenzmann, J. Sok, On symmetry and uniqueness
of ground states for linear and nonlinear elliptic PDEs. SIAM J. Math.
Anal. 54 (2022) 6119--6135.

\bibitem{C1982} T. Cazenave, P.-L. Lions, Orbital stability of standing
waves for some nonlinear Schr\"{o}dinger equations. Commun. Math. Phys. 85
(1982) 549--561.

\bibitem{C2003} T. Cazenave, Semilinear Schr\"{o}dinger equations, Courant
Lecture Notes in Mathematics, Vol. 10, New York University, Courant
Institute of Mathematical Sciences, New York; American Mathematical Society,
Providence, RI, 2003.

\bibitem{CJ2019} S. Cingolani, L. Jeanjean, Stationary waves with prescribed
$L^{2}$-norm for the planar Schr\"{o}dinger-Poisson system. SIAM J. Math.
Anal. 51 (2019) 3533--3568.

\bibitem{Gh1993} N. Ghoussoub, Duality and perturbation methods in critical
point theory, Cambridge University Press, Cambridge, 1993.

\bibitem{Gu1994} Y. Gu, Y. Deng, X. Wang, Existence of nontrivial solutions
for critical semilinear biharmonic equations. Syst. Sci. Math. Sci., 7
(1994) 140--152.

\bibitem{F2022} A. Fernand\'{e}z, L. Jeanjean, R. Mandel, M. Maris,
Non-homogeneous Gagliardo-Nirenberg inequalities in $\mathbb{R}^{N}$ and
application to a biharmonic non-linear Schr\"{o}dinger equation. J.
Differential equations 330 (2022) 1--65.

\bibitem{HGH2024} F. Han, Y. Gao, Z. Han, Local minimizers of the mass
constrained problem for the mass-supercritical bi-harmonic Schr\"{o}dinger
equation. Nonlinearity 37 (2024) 125015 (28pp).

\bibitem{J2022} L. Jeanjean, L. Jendrej, T.-T. Le, N. Visciglia, Orbital
stability of ground states for a Sobolev critical Schr\"{o}dinger equation.
J. Math. Pures Appl. 164 (2022) 158--179.

\bibitem{K1996} V. I. Karpman, Stabilization of soliton instabilities by
higher-order dispersion: KdV-type equations. Phys. Lett. A 210 (1996) 77--84.

\bibitem{K2000} V. I. Karpman, A. G. Shagalov, Stability of solitons
described by nonlinear Schr\"{o}dinger-type equations with higher-order
dispersion. Phys. D 144 (2000) 194--210.

\bibitem{Lu2023} T. Luo, S. Zhang, S. Zhu, The existence and stability of
normalzied solutions for a bi-harmonic nonlinear Schr\"{o}dinger equation
with mixed dispersion. Acta Math. Sci. Ser. B 43 (2023) 539--563.

\bibitem{LY2022} X. Luo, T. Yang, Normalized solutions for a fourth-order
Schr\"{o}dinger equation with a positive second-order dispersion
coefficient. Sci. China Math. 66 (2022) 1237--1262.

\bibitem{Me2023} J. Mederski, J. Siemianowski, Biharmonic nonlinear scalar
field equations. Int. Math. Res. Not. IMRN, (2023) 19963--19995.

\bibitem{Mi2011} C. Miao, G. Xu, L. Zhao, Global well-posedness and
scattering for the defocusing energy-critical nonlinear Schr\"{o}dinger
equations of fourth order in dimensions $d\geq 9$. J. Differential Equations
251 (2011) 3381--3402.

\bibitem{N1} L. Nirenberg, On elliptic partial differential equations. Ann.
Scuola Norm. Sup. Pisa 13 (1959) 115--162.

\bibitem{P2007} B. Pausader, Global well-posedness for energy critical
fourth-order Schr\"{o}dinger equations in the radial case. Dyn. Partial
Differential Equations 4 (2007) 197--225.

\bibitem{R2016} M. Ruzhansky, B. Wang, H. Zhang, Global well-posedness and
scattering for the fourth order nonlinear Schr\"{o}dinger equations with
small data in modulation and Sobolev spaces. J. Math. Pures Appl. 105 (2016)
31--65.

\bibitem{S2010} J.-I. Segata, Well-posedness and existence of standing waves
for the fourth order nonlinear Schr\"{o}dinger type equation. Discrete
Contin. Dyn. Syst. 27 (2010) 1093--1105.

\bibitem{So2020-1} N. Soave, Normalized ground states for the NLS equation
with combined nonlinearities. J. Differential Equations 269 (2020)
6941--6987.

\bibitem{So2020} N. Soave, Normalized ground states for the NLS equation
with combined nonlinearities: the Sobolev critical case. J. Funct. Anal. 279
(2020) 108610.

\bibitem{T1992} G. Tarantello, On nonhomogeneous elliptic equations
involving critical Sobolev exponent, Ann. Inst. H. Poincar\'{e} C Anal. Non
Lin\'{e}aire 9 (1992) 281--304.

\bibitem{W2022} J. Wei, Y. Wu, Normalized solutions for Schr\"{o}dinger
equations with critical Sobolev exponent and mixed nonlinearities. J. Funct.
Anal. 283 (2022) 109574.
\end{thebibliography}
\end{document}